\newtheorem{lemma}{Lemma}[section]
\newtheorem{theorem}{Theorem}[section]
\newtheorem{prop}{Proposition}[section]
\newtheorem{corollary}{Corollary}[section]
\newtheorem{remark}{Remark}[section]
\newcommand{\bremark}{\begin{remark} \em}
\newcommand{\eremark}{\end{remark} }
\newcommand{\inn}{{\quad\hbox{in } }}
\newcommand{\ass}{\quad \hbox{as } }
\newcommand{\nn}{ {\nabla} }
\newcommand{\pp}{ {\partial} }
\newcommand{\vp}{\varphi}
\newcommand{\R} {\mathbb R}
\newcommand{\cuad}{{\sqcap\kern-.68em\sqcup}}
\newcommand{\foral}{\quad\mbox{for all}\quad}
\newcommand{\be}{\begin{equation}}
\newcommand{\ee}{\end{equation}}
\newcommand{\la}{\lambda}
\newcommand{\equ}[1]{(\ref{#1})}
\newcommand{\norm}{\|}
\numberwithin{equation}{section}
\definecolor{orange}{rgb}{0.9,0.5,0}
\newcommand{\ch}[1]{#1}
\long\def\elim#1{}
\long\def\hide#1{{\color{blue}#1}}
\long\def\hide#1{}
\newcommand\blfootnote[1]{%
  \begingroup
  \renewcommand\thefootnote{}\footnote{#1}%
  \addtocounter{footnote}{-1}%
  \endgroup
}
\begin{document}

\title[Infinite time blow-up in the Keller-Segel system]{Existence and stability of infinite time blow-up in the Keller-Segel system}

\author[J.~D\'avila]{Juan D\'avila}
\address{\noindent J.~D\'avila: Department of Mathematical Sciences University of Bath, Bath BA2 7AY, United Kingdom}
\email{jddb22@bath.ac.uk}

\author[M.~del Pino]{Manuel del Pino}
\address{\noindent M.~del Pino: Department of Mathematical Sciences University of Bath, Bath BA2 7AY, United Kingdom}
\email{m.delpino@bath.ac.uk}

\author[J.~Dolbeault]{Jean Dolbeault}
\address{\noindent J.~Dolbeault: Ceremade, UMR CNRS nº7534, PSL university, Universit\'e Paris-Dauphine, Place de Lattre de Tassigny, 75775 Paris 16, France}
\email{dolbeault@ceremade.dauphine.fr}

\author[M.~Musso]{Monica Musso}
\address{\noindent M.~Musso: Department of Mathematical Sciences University of Bath, Bath BA2 7AY, United Kingdom.}
\email{m.musso@bath.ac.uk}

\author[J.~Wei]{Juncheng Wei}
\address{\noindent J.~Wei: Department of Mathematics University of British Columbia, Vancouver, BC V6T 1Z2, Canada}
\email{jcwei@math.ubc.ca}

\begin{abstract}
Perhaps the most classical diffusion model for chemotaxis is the Keller-Segel system
\begin{equation}\tag{$\ast$}
\label{ks0}
\left\{ \begin{aligned}
u_t =&\; \Delta u - \nabla \cdot(u \nabla v) \quad \inn \R^2\times(0,\infty),\\
v =&\; (-\Delta_{\R^2})^{-1} u := \frac 1{2\pi} \int_{\R^2} \, \log \frac 1{|x-z|}\,u(z,t)\, dz,
\\ & \qquad\ u(\cdot ,0) = u_0 \ge 0\quad\hbox{in } \R^2.
\end{aligned}
\right.
\end{equation}
We consider the {\em critical mass case} $\int_{\R^2} u_0(x)\, dx = 8\pi$ which corresponds to the exact threshold between finite-time blow-up and self-similar diffusion towards zero. We find a radial function $u_0^*$ with mass $8\pi$ such that for any initial condition $u_0$ sufficiently close to $u_0^*$ the solution $u(x,t)$ of \equ{ks0} is globally defined and blows-up in infinite time. As $t\to+\infty $ it has the approximate profile
$$
u(x,t) \approx \frac 1{\la^2} \ch{U}\left (\frac {x-\xi(t)}{\la(t)} \right ), \quad \ch{U}(y)= \frac{8}{(1+|y|^2)^2},
$$
where $\lambda(t) \approx \frac c{\sqrt{\log t}}, \ \xi(t)\to q $ for some $c>0$ and $q\in \R^2$.
This result answers affirmatively the nonradial stability conjecture raised in \cite{g}.

\end{abstract}

\keywords{Patlak-Keller-Segel system; chemotaxis; critical mass; blow-up; infinite time blow-up; inner-outer gluing scheme; rate; blow-up profile}
\subjclass[2020]{35K15; 35B40; 35B44}
\maketitle

\thispagestyle{empty}

\section{Introduction}

This paper deals with the classical Keller-Segel problem in $\R^2$,
\begin{equation}
\label{ks}
\left\{ \begin{aligned}
u_t =& \Delta u - \nabla \cdot(u \nabla v) \quad \inn \R^2\times(0,\infty),\\
v =& (-\Delta_{\R^2})^{-1} u := \frac 1{2\pi} \int_{\R^2} \, \log \frac 1{|x-z|}\,u(z,t)\, dz,
\\ & \qquad\ u(\cdot ,0) = u_0 \quad\hbox{in } \R^2,
\end{aligned}
\right.
\end{equation}
which is a well-known model for the dynamics of a population density $u(x,t)$ evolving by diffusion with a chemotactic drift. We consider positive solutions which are well defined, unique and smooth up to a maximal time $0< T \le +\infty$.
This problem formally preserves mass, in the sense that
$$
\int_{\R^2} u(x,t)dx = \int_{\R^2} u_0(x)\,dx =: M \foral t\in (0,T).
$$
An interesting feature of \equ{ks} is the connection between the second moment of the solution and its mass which is precisely given by
$$
\frac d{dt} \int_{\R^2}|x|^2\,u(x,t)\,dx = 4M - \frac {M^2} {2\pi},
$$
provided that the second moments are finite. If $M>8\pi$, the negative rate of production of the second moment and the positivity of the solution implies finite blow-up time. If $M<8\pi$ the solution lives at all times and diffuses to zero with a self similar profile according to~\cite{bdp}.
When $M = 8\pi$ the solution is globally defined in time. If the initial second moment is finite, it is preserved in time, and there is {\em infinite time blow-up} for the solution, as was shown in~\cite{bcm}.

Globally defined in time solutions of \equ{ks} are of course its positive finite mass steady states, which consist of the family
\be\label{st}
U_{\la,\xi}(x) = \frac 1{\la^2} \ch{U}\left ( \frac{x-\xi}{\la} \right ), \quad \ch{U}(y) = \frac 8{(1+|y|^2)^2 }, \quad \la>0, \ \xi\in \R^2.
\ee
We observe that all these steady states have the exact mass $8\pi$ and infinite second moment
$$
\int_{\R^2} U_{\la,\xi}(x)\, dx\, = \, 8\pi, \quad \int_{\R^2}|x|^2\,U_{\la,\xi}(x)\, dx\, = \, +\infty.
$$
As a consequence, if a solution of~\equ{ks} is attracted by the family $(U_{\la,\xi})$, its mass must be larger than $8\pi$ and if the initial second moment is finite, then blow-up occurs in a singular limit corresponding to $\lambda\to0_+$.

In the critical mass $M=8\pi$ case, the infinite-time blow-up in \equ{ks} when the second moment is finite, takes place in the form of a bubble in the form \equ{st} with $\la=\la(t)\to 0$ according to~\cite{biler2,bcm}. Formal rates and precise profiles were derived in~\cite{sire,campos} to be
\[
\lambda(t) \sim \frac c{\sqrt{\log t}}\ass t\to+\infty.
\]
A radial solution with this rate was built by Ghoul and Masmoudi in~\cite{g} and its stability within the radial class was established.  The framework of the construction in \cite{g} was actually fully nonradial, but for stability a spectral gap inequality only known in the radial case was used. Numerical
evidence for this inequality was obtained in~\cite{MR3196188}, and stability for general nonradial perturbation was conjectured in \cite{g}.
A related spectral estimate, useful in the analysis of finite time blow-up was found in \cite{collot2019spectral}.

In this paper we construct an infinite-time blow-up solution with a different method to that in~\cite{g}, which in particular leads to a proof of the stability assertion among non-radial functions. The following is our main result.



\begin{theorem}\label{teo1} There exists a nonnegative, radially symmetric function $u_0^*(x)$ with critical mass $\int_{\R^2} u_0^*(x)\,dx =8\pi$ and finite second moment $\int_{\R^2}|x|^2\, u_0^*(x)\,dx <+\infty$ such that for every $u_1(x)$ sufficiently close (in suitable sense) to $u_0^*$ with $\int_{\R^2} u_1\,dx =8\pi$, we have that the solution $u(x,t)$ of system $\equ{ks}$ with initial condition
$u(x,0)= u_1(x) $
has the form
\begin{align}
\label{exp1}
u(x,t)\, =\, \frac 1{\la(t)^2} \ch{U\Bigl(\frac {x-\xi(t)}{\la(t)} \Bigr) (1+ o(1))} , \quad \ch{U}(y)= \frac{8}{(1+|y|^2)^2}
\end{align}
uniformly on bounded sets of \ch{$\R^2$}, and
$$
\la(t)\ =\ \frac c{\sqrt{\log t}}\,(1+o(1)), \quad \xi(t)\to q \ass t\to+\infty,
$$
for some number $c>0$ and some $q\in \R^2$.
\end{theorem}

{\em Sufficiently close} for the perturbation $u_1(x) := u_0^*(x) + \vp(x)$ in this result is measured in the $C^1$-weighted norm for some $\sigma>1$
\[
\| \vp\|_{*\sigma} := \| (1+|\cdot| ^{4+\sigma}) \vp \|_{L^\infty(\R^2)} + \| (1+|\cdot|^{5+\sigma}) \nn \vp (x) \|_{L^\infty(\R^2)}<+\infty .
\]
\ch{The perturbation $\varphi$ must have zero mass too.}

\ch{``Uniformly on bounded sets'' of $\R^2$ in \eqref{exp1} means that for any bounded $K\subset \R^2$
\[
\lim_{t\to \infty}
\sup_{x\in K}
\la(t)^2 U\Bigl(\frac {x-\xi(t)}{\la(t)} \Bigr)^{-1}
\left| u(x,t)- \frac 1{\la(t)^2} U\Bigl(\frac {x-\xi(t)}{\la(t)} \Bigr)
\right| =0.
\]
The expansion of $u(x,t)$ can be made more precise though, and this is explained along the proof of theorem.}

The scaling parameter is rather simple to find at main order from the approximate conservation of second moment, \ch{see Section~\ref{sect-formal}.} The center $\xi(t)$ actually obeys a relatively simple system of nonlocal ODEs.

We devote the rest of this paper to the proof of Theorem \ref{teo1}. Our approach borrows elements of constructions in the works~\cite{1,2,3,4} based on the so-called {\em inner-outer gluing scheme}, where a system is derived for an inner equation defined near the blow-up point and expressed in the variable of the blowing-up bubble, and an outer problem that sees the whole picture in the original scale. The result of Theorem \ref{teo1} has already been announced in~\cite{d} in connection with~\cite{1,2,3}.

\medskip
There is a huge literature on chemotaxis in biology and in mathematics. The Patlak-Keller-Segel model~\cite{Patlak_1953,ks} is used in mathematical biology to describe the motion of mono-cellular organisms, like Dictyostelium Discoideum, which move randomly but experience a drift in presence of a chemo-attractant. Under certain circumstances, these cells are able to emit the chemo-attractant themselves. Through the chemical signal, they coordinate their motion and eventually aggregate. Such a self-organization scenario is at the basis of many models of chemotaxis and is considered as a fundamental mechanism in biology. Of course, the aggregation induced by the drift competes with the noise associated with the random motion so that aggregation occurs only if the chemical signal is strong enough. A classical survey of the mathematical problems in chemotaxis models can be found in~\cite{MR2013508,MR2073515}. After a proper adimensionalization, it turns out that all coefficients in the Patlak-Keller-Segel model studied in this paper can be taken equal to $1$ and that the only free parameter left is the total mass. For further considerations on chemotaxis, we shall refer to~\cite{Hillen_2008} for biological models and to~\cite{Chavanis2008} for physics backgrounds.

In many situations of interest, cells are moving on a substrate. The two-dimen\-sional case is therefore of special interest in biology, but also turns out to be particularly interesting from the mathematical point of view as well, because of scaling properties, at least in the simplest versions of the Keller-Segel model. Boundary conditions induce various additional difficulties. In the idealized situation of the Euclidean plane $\R^2$, it is known since the early work of W.~J\"{a}ger and S.~Luckhaus in~\cite{j} that solutions globally exist if the mass $M$ is small and blow-up in finite time if $M$ is large. The blow-up in a bounded domain is studied in~\cite{j,biler1,MR1361006,nagai2001blowup,s}. The precise threshold for blow-up, $M=8\pi$, has been determined in~\cite{MR2103197,bdp}, with sufficient conditions for global existence if $M\le8\pi$ in~\cite{bdp} (also see \cite{MR1620286} in the radial case). The key estimate is the boundedness of the free energy, which relies on the logarithmic Hardy-Littlewood-Sobolev inequality established in optimal form in~\cite{MR1143664}. We refer to~\cite{MR3379847} for a review of related results. If $M<8\pi$, diffusion dominates: intermediate asymptotic profiles and exact rates of convergence have been determined in~\cite{MR3196188}. Also see~\cite{MR2850755,MR3466844}. In the supercritical case $M>8\pi$, various formal expansions are known for many years, starting with~\cite{MR1478048,MR1415081,MR1918569} which were later justified in~\cite{raphael,mizoguchi}, in the radial case, and in~\cite{collot2019refined}, in the non-radially symmetric regime. This latter result is based on the analysis of the spectrum of a linearized operator done in~\cite{collot2019spectral}, based on the earlier work~\cite{Dejak_2012}, and relies on a scalar product already considered in~\cite{raphael} and similar to the one used in~\cite{MR2996772,MR3196188} in the subcritical mass regime. An interesting subproduct of the blow-up mechanism in~\cite{raphael,MR1627338} is that the blow-up takes the form of a concentration in the form of a Dirac distribution with mass exactly $8\pi$ at blow-up time, as was expected from~\cite{MR1627338,0728}, but it is still an open question to decide whether this is, locally in space, the only mechanism of blow-up.

The critical mass case $M=8\pi$ is more delicate. If the second moment is infinite, there is a variety of behaviors as observed for instance in~\cite{MR3004770,MR3165232,MR3781311}. For solutions with finite second moment, blow-up is expected to occur as $t\to+\infty$: see~\cite{kavallaris} for grow-up rates in $\R^2$, and~\cite{sw} for the higher-dimensional radial case. The existence in $\R^2$ of a global radial solution and first results of large time asymptotics were established in~\cite{biler2} using cumulated mass functions. In~\cite{bcm}, the infinite time blow-up was proved without symmetry assumptions using the free energy and an assumption of boundedness of the second moment. Also see~\cite{MR3524608,MR3781311} for an existence result under weaker assumptions, and further estimates on the solutions. Asymptotic stability of the family of steady states determined by \equ{st} under the mass constraint $M=8\pi$ has been determined in~\cite{figalli}. The blow-up rate $\lambda(t)$ and the shape of the limiting profile $U$ were identified in formal asymptotic expansions in~\cite{v1,v2,PhysRevE.66.046133,sire,MR2276287} and also in~\cite[Chapter~8]{campos}. 
As already mentioned, a radial solution with rate $\lambda(t)\sim(\log t)^{-1/2}$ was built and its stability within the radial class was established in~\cite{g}.

\section{\texorpdfstring{\ch{Formal derivation of the behavior of the parameters}}{}}
\label{sect-formal}

\ch{
We consider here a first approximation to a solution $u(x,t)$ of \eqref{ks},  globally defined in time, such that on bounded sets in $x$,}
\be
u(x,t) = \frac {1}{\la(t)^2}U\left ( \frac{x- \xi(t)}{\la(t)} \right)(1+ o(1)) \quad \text{as } t\to+\infty
\label{beh}\ee
for certain functions $0<\la(t) \to 0$ and $\xi(t) \to q\in \R^2$, \ch{where we recall that}
$$
U(y) = \frac 8{(1+|y|^2)^2}.
$$


We know that \equ{beh} can only happen in the critical mass, finite second moment case:
$$ \int_{\R^2} u(x,t)dx = 8\pi , \quad
\int_{\R^2} |x|^2 u(x,t)dx <+\infty,
$$
which according to the results in \cite{bcm,g,sire} is consistent with a behavior of the form \equ{beh}. Since the second moment of $U$ is infinite, we do not expect the approximation \equ{beh} be uniform in $\R^2$ but sufficiently far, a faster decay in $x$ should take place as we shall see next. We will find an approximate asymptotic expression for the scaling parameter $\la(t)$ that matches with this behavior.

\medskip
Let us introduce the function
$
\Gamma_0 : = (-\Delta )^{-1} U .
$
We directly compute
$$
\Gamma_0 (y) = \log \frac 8 {(1+ |y|^2)^2}
$$
and hence $\Gamma_0$ solves the Liouville equation
$$
-\Delta \Gamma_0\, = \, e^{\Gamma_0} = U \inn \R^2.
$$
Then $\nabla \Gamma_0(y) \approx -\frac {4y}{|y|^2} $ for all large $y$, and hence we get, away from $x=\xi$,
$$
-\nabla \cdot ( u\nabla (-\Delta )^{-1} u) \approx 4\nabla u \cdot \frac {x-\xi }{|x-\xi|^2}.
$$
Therefore, defining
\be
\mathcal E(u) : = \Delta u - \nabla\cdot ( u\nabla (-\Delta )^{-1} u)
\label{E(u)}
\ee
and writing in polar coordinates
$$
u(r,\theta, t) = u(x,t) , \quad x= \xi(t) + re^{i\theta},
$$
we find
$
\mathcal E(u) \approx \pp_r^2 u + \frac 5 r \pp_r u $.
Hence, assuming \ch{that} $\dot \xi (t)\to 0 $ sufficiently fast, equation \ch{\equ{ks}} approximately reads
\[
\pp_t u = \pp_r^2 u + \frac 5 r \pp_r u ,
\]
which can be idealized as a homogeneous heat equation in $\R^6$ for radially symmetric functions. It is therefore reasonable to believe
that beyond the self-similar region $r\gg \sqrt{t}$ the behavior changes into a function of $r/\sqrt{t}$ with fast decay at $+\infty$ that yields finiteness of the second moment. To obtain a first global approximation, we simply cut-off the bubble \equ{beh} beyond the self-similar zone.
We introduce a further parameter $\alpha(t) $
and set
\begin{align}
\label{u1}
\ch{\bar u}(x,t) = \frac {\alpha(t) }{\la^2}U\Bigl ( \frac{\ch{x-\xi}}{\la} \Bigr) \chi(x,t) ,
\end{align}
where
\begin{align}
\label{defchi-1}
\index{$\chi$}
\chi(x,t)= \chi_0 \Bigl( \frac {\ch{x-\xi}}{\sqrt{t}} \Bigr)
\end{align}
with $\chi_0$
a smooth radial cut-off function such that
\begin{align}
\label{chi0}
\index{$\chi_0$}
\ch{
\chi_0(z) = \begin{cases} 1 \ & \hbox{ if } |z|\le 1, \\ 0\ & \hbox{ if } |z|\ge 2 .
\end{cases}}
\end{align}
\ch{We} introduce the parameter $\alpha(t)$ because the total mass of the actual solution
should equal $8\pi$ for all $t$.
But
\begin{align}
\label{expansion-mass}
\frac{1}{\lambda^2}\int_{\R^2} U\Bigl(\frac{x-\xi}{\lambda}\Bigr) \chi(x,t)\, dx =   8 \pi + 16\pi \Upsilon \frac{\lambda^2}{t} + O\Bigl( \frac{\lambda^4}{t^2}\Bigr),
\end{align}
as $t\to\infty$,
where
\begin{align}
\label{defUpsilon}
\Upsilon = \int_0^\infty (\tilde \chi_0(s)-1) s^{-3}ds <0,
\end{align}
\ch{and $\chi_0(x) = \tilde \chi_0(|x|)$.}
To achieve
$  \int_{\R^2} \ch{\bar u}(x,t)\,dx = 8\pi $
we set $\ch{\alpha=\bar\alpha}$ where
\begin{align}
\nonumber
\ch{\bar\alpha}(t) = 1 - 2 \Upsilon \frac{\la^2} {t} + O\Bigl( \frac{\lambda^4}{t^2}\Bigr) .
\end{align}
Next we will obtain an approximate value of the scaling parameter $\la(t)$ that is consistent with the existence of a solution $u(x,t)\approx \ch{\bar u}(x,t)$ where $\ch{\bar u}$ is the function in \equ{u1} with $\alpha =\ch{\bar\alpha}$.
Let us consider the ``error operator''
\be
\label{Su}
S(u) = - u_t + \mathcal E(u),
\ee
where $\mathcal E(u) $ is defined in \eqref{E(u)}.
We have the following well-known identities,
valid for an arbitrary function $\omega(x)$ of class $C^2(\R^2)$ with finite mass
and $D^2 \omega(x) = O(|x|^{-4-\sigma})$ for large $|x|$.
We have
\be \label{moment}
\int_{\R^2} \mathcal |x|^2\mathcal E(\omega )\, dx = 4M - \frac {M^2}{2\pi}, \quad M= \int_{\R^2} \omega(x) dx
\ee
and
\be\label{moment2}
\int_{\R^2} x\mathcal E(\omega )\, dx = 0, \quad \int_{\R^2} \mathcal E(\omega )\, dx = 0.
\ee
Let us recall the simple proof of \equ{moment}. Integrating by parts on finite balls with large radii and using the behavior of the boundary terms we get
the identities
\begin{align}
\nonumber
\int_{\R^2} \mathcal |x|^2\Delta \omega\, dx & = 4M , \\
\nonumber
\int_{\R^2} |x|^2 \nabla\cdot (\ch{\omega} \nabla (-\Delta)^{-1})\omega )  \, dx &= - 2 \int_{\R^2} x\cdot \ch{\omega} \nabla (-\Delta)^{-1}\omega\,dx\\
\nonumber
&= \frac 1\pi \int_{\R^2} \int_{\R^2} \omega (x)\omega (y) \frac {x\cdot (x-y)}{|x-y|^2} dx\,dy \\
\nonumber
&= \frac 1{2\pi} \int_{\R^2} \int_{\R^2}\omega (x)\omega (y) \frac {(x-y)\cdot (x-y)}{|x-y|^2} dx\,dy \\
\label{momento}
& = \frac{M^2}{2\pi}
\end{align}
and then \equ{moment} follows. The proof of \equ{moment2} is even simpler. For a solution $u(x,t)$ of \ch{\equ{ks}} we then get
\be
\nonumber
\frac d{dt} \int_{\R^2} u(x,t)|x|^2 dx = 4M - \frac{M^2} {2\pi} , \quad M= \int_{\R^2} u(x,t) dx .
\ee

In particular, if $u(x,t)$ is sufficiently close to $\ch{\bar u}(x,t)$ and since $\int_{\R^2} \ch{\bar u}(x,t) dx =8\pi$, we get the approximate validity of the identity
$$
\frac d{dt} \int_{\R^2} \ch{\bar u}(x,t)|x|^2 dx = 0 .
$$
This means
$$
a I(t):= \int_{\R^2} \frac {\ch{\bar\alpha}}{\la^2 }U\left (\frac {x-\xi}\la \right ) \chi_0 \left (\frac {\ch{x-\xi}}{\sqrt{t}} \right ) |x|^2 dx \, =\, constant .
$$
We readily check that for some constant $\kappa$
$$
I(t) = 16\pi \la^2 \int_0^{\frac{\sqrt{t}}\la } \frac { \rho^3 d\rho } { (1+\rho^2)^2 } + \kappa + o(1) = 16\pi \la^2\log \frac{\sqrt{t}}\la + \kappa + o(1)
\quad \text{as } \lambda\to 0.
$$
Then we conclude that $\la(t)$ approximately satisfies
$$
\la^2 \log t \, = \, c^2 \,=\, constant
$$
and hence we get at main order
$$
\la(t) = \frac c{\sqrt{\log t}} .
$$
We also notice that the center of mass is preserved for a true solution, thanks to \equ{moment2}:
$$
\frac d{dt} \int_{\R^2} x u(x,t) dx = 0 .
$$
\ch{Since} the center of mass of $\ch{\bar u}(x,t)$ is exactly $\xi(t)$ we then get that approximately $$\xi(t) =  constant  = q. $$

\section{The approximations \texorpdfstring{$u_0$}{u0} and \texorpdfstring{$u_1$}{u1}}
\label{sect-approx2}

\ch{
From now on we to consider the Keller-Segel system starting at a large $t_0$:
\begin{equation}
\label{ks1}
\left\{ \begin{aligned}
u_t =& \Delta u - \nabla \cdot(u \nabla v) \quad \inn \R^2\times(t_0,\infty),\\
v =& (-\Delta_{\R^2})^{-1} u := \frac 1{2\pi} \int_{\R^2} \, \log \frac 1{|x-z|}\,u(z,t)\, dz,
\\ & \qquad\ u(\cdot ,t_0) = u_0 \quad\hbox{in } \R^2,
\end{aligned}
\right.
\end{equation}
which is equivalent to \eqref{ks}. We do this so that some expansions for $t$ large take a simpler form.}

In this section we will define a basic approximation to a solution of the Keller-Segel system \ch{\equ{ks1}}. Let us consider parameter functions
$$
0<\la(t)\to 0 ,\quad  \xi(t)\to q, \quad \alpha (t)\to 1  \ass  t\to +\infty
$$
that we will later specify.
Let us consider the functions
\[
U (y) = \frac{8}{(1+|y|^2)^2}, \quad
\Gamma_0(y) = \log U(y)
\]
and define the approximate solution $u_0(x,t)$ as
\begin{align}
\label{defu0}
\index{$u_0$}
u_0(x,t) &= \frac{\alpha}{\lambda^2} U \Bigl( \frac{x-\xi}{\lambda} \Bigr) \chi(x,t),
\\
\nonumber
v_0(x,t) &= (-\Delta_x)^{-1} u_0 = \frac 1{2\pi}\int_{\R^2 } \log \frac 1{|x-\bar x|} \, u_0(\bar x,t)\, d\bar x,
\end{align}
\ch{where $\chi$ is the cut-off function \eqref{defchi}.}
\ch{
We consider the error operator
\begin{align}
\nonumber
S(u)
&= -\pp_t u + \mathcal E(u ),
\end{align}
where
\begin{align*}
\mathcal E(u ) = \Delta_x u - \nabla_x \cdot ( u \nabla_x v),
\quad v = (-\Delta_{\ch{x}})^{-1} u .
\end{align*}
and next measure the error of approximation $S(u_0)$.}

We have
\begin{align}
\nonumber
 -\pp_t u_0 (x,t)
&=
- \frac{\dot{\alpha}}{\lambda^2} U(y) \chi_0(z)
+ \alpha \frac{\dot\lambda}{\lambda^3} Z_0 \chi_0(z)
+\frac{\alpha}{\lambda^3} \dot\xi \cdot \nn_y U (y) \, \chi_0(z)
\\
\label{ptu0}
& \quad
+ \frac{\alpha}{\ch{\lambda^2} \sqrt t}
U(y)
\dot\xi \cdot  \nabla_z \chi_0(z)
+  \frac{\alpha }{2\lambda^2 t } U(y)
\nabla_z\chi_0(z) \cdot z,
\end{align}
\[
z = \frac{x-\xi}{\sqrt t}
\]
where
\begin{align}
\label{Z0}
Z_0(y) = 2 U(y) + y\cdot \nabla_y U(y)  , \quad y = \frac{x-\xi}{\lambda}.
\end{align}
We also have
\begin{align*}
\mathcal E(u_0) & =
\Delta_x u_0 - \nabla_x \cdot ( u_0 \nabla_x v_0 )
\\
& = \frac{2\alpha}{\lambda^3 t^{1/2} }
\nabla_z \chi_0(z) \cdot \nabla_y U (y)
+ \frac{\alpha }{t} \frac{1}{\lambda^2}
\Delta_z \chi_0(z)  U(y)
- \frac{\alpha}{\lambda^2 \sqrt t} U(y) \nabla_z \chi_0(z)\cdot \nabla_x v_0
\\
&
+ \frac{\alpha \chi_0(z)}{\lambda^4}
\Bigl[ (\chi_0(z) \alpha-1) U^2(y)
- \nabla_y U(y) \cdot ( \nabla_y v_0- \nabla_y  \Gamma_0 )
\Bigr]\ .
\end{align*}

Let us decompose
\be \label{v0}
v_0(y) = \alpha \Gamma_0(y) + \mathcal R(y).
\ee
For the term $\mathcal R$ in \equ{v0} we directly estimate
\begin{align}
\label{est-mathcalR}
|\nabla_y \mathcal R(y)| \leq
\begin{cases}
\frac{\lambda^2}{t} \frac{1}{|y|}  & |y| \geq \frac{\sqrt t}{\lambda},
\\
0  & |y| \leq \frac{\sqrt t}{\lambda}.
\end{cases}
\end{align}
Then
\begin{align*}
\mathcal E(u_0)
& = \frac{2\alpha}{\lambda^3 t^{1/2} }
\nabla_z \chi_0(z) \cdot \nabla_y U (y)
+ \frac{\alpha }{t} \frac{1}{\lambda^2}
\Delta_z \chi_0(z)  U(y)
- \frac{\alpha}{\lambda^2 \sqrt t} U(y) \nabla_z \chi_0(z) \nabla_x v_0
\\
& \quad
+ \frac{\alpha \chi_0(z)}{\lambda^4}
\Bigl[ (\alpha-1) U^2(y) - (\alpha-1) \nabla_y U(y) \cdot\nabla_y \Gamma_0(y)
+ \alpha (\chi_0(z)-1)U^2(y)
\\
& \quad
- \nabla_y U(y) \cdot \nabla_y \mathcal R(y)
\Bigr].
\end{align*}
and thus
\begin{align}
\index{$S(u_0)$}
\nonumber
S(u_0)
&=
- \frac{\dot{\alpha}}{\lambda^2} U(y) \chi_0(z)
+ \alpha \frac{\dot\lambda}{\lambda^3} Z_0 \chi_0(z)
+\frac{\alpha}{\lambda^3} \dot\xi \cdot \nn_y U (y) \, \chi_0(z)
\\
\nonumber
& \quad
+ \frac{\alpha}{\ch{\lambda^2 \sqrt t}} U(y)
\dot\xi \cdot  \nabla_z \chi_0(z)
+  \frac{\alpha }{2\lambda^2 t } U(y)
\nabla_z\chi_0(z) \cdot z
\\
\nonumber
& \quad
+ \frac{2\alpha}{\lambda^3 t^{1/2} }
\nabla_z \chi_0(z) \cdot \nabla_y U (y)
+ \frac{\alpha }{t} \frac{1}{\lambda^2}
\Delta_z \chi_0(z) U(y)
- \frac{\alpha}{\lambda^2 \sqrt t} U(y) \nabla_z \chi_0(z) \cdot \nabla_x v_0
\\
\nonumber
& \quad
- \frac{\alpha(\alpha-1) \chi_0(z)}{\lambda^4}
\nabla_y \cdot ( U(y) \nabla_y \Gamma_0(y))
\\
\nonumber
& \quad
+ \frac{\alpha \chi_0(z)}{\lambda^4}
\Bigl[
 \alpha (\chi-1)U^2(y)
- \nabla_y U(y) \cdot \nabla_y \mathcal R(y)
\Bigr].
\end{align}

For a function $v(\zeta)$ defined for $\zeta \in \R^2$ consider the operator
\begin{align}
\label{def-laplacian6}
\Delta_6 \ch{v}(\zeta)  = \Delta v(\zeta) +  4\frac{\zeta}{|\zeta|^2} \cdot \nn_\zeta v (\zeta)  . \end{align}
The reason for the notation is that for radial functions $\ch{ v= v(r)}$, $r=|\zeta|$,  we have
$$
\Delta_6 \ch{ v} = \pp^2_r \ch{ v} + \frac 5r  \pp_r \ch{ v},
$$
which corresponds to Laplace's operator in $\R^6$ on radial functions.

Let  $\tilde \varphi_\lambda(\zeta,t)$ be the (radial) solution to
\begin{align}
\label{deftildephilambda}
\left\{
\begin{aligned}
\partial_t \tilde \varphi_\lambda
&= \Delta_6 \tilde \varphi_\lambda
+  E(\zeta,t)
\quad \text{in }\R^2 \times ( \frac{t_0}{2},\infty),
\\
\tilde \varphi_\lambda(\cdot,\frac{t_0}{2})&= 0
\quad \text{in }\R^2,
\end{aligned}
\right.
\end{align}
given by Duhamel's formula,
where $E(\zeta,t)$ is the radial function
\begin{align}
\label{defE}
E( \zeta ,t;\lambda) =
\frac{\dot \lambda}{\lambda^3} Z_0\Bigl(\frac{\zeta}{\lambda}\Bigr)
\chi_0\Bigl( \frac{\zeta}{\sqrt t}\Bigr)
\ch{+} \frac{1}{2\lambda^2 t} U \Bigl(\frac{\zeta}{\lambda}\Bigr) \nabla_z \chi_0(z) \cdot z
+ \tilde E (x,t)  ,
\end{align}
and
\begin{align}
\nonumber
\tilde E(\zeta,t;\lambda)
&=
\frac{2}{\lambda^3 t^{1/2}}  \nabla_z \chi_0(z) \cdot \nabla_y U (y)
+ \frac{1}{\lambda^2 t}
\Delta_z \chi_0(z) U (y)
\\
\label{defTildeE}
& \quad
- \frac{1}{\lambda^3 t^{1/2}} U(y) \nabla_z \chi_0(z) \cdot \nabla_y \Gamma_0(y) ,
\end{align}
with $z = \frac{\zeta}{\sqrt t}$, $ y = \frac{\zeta}{\lambda} $.

We then define
\begin{align}
\label{defphilambda}
\index{$\varphi_\lambda$}
\varphi_\lambda(x,t) = \tilde \varphi_\lambda(x-\xi(t),t) .
\end{align}

\ch{The reason to define $\varphi_\lambda$ for $t> \frac{t_0}{2}$ is that it gives better properties for the first approximation of $\lambda$ constructed in Section~\ref{sect-mass-varphilambda}.}
Since $\lambda(t)$ is defined naturally for $t>t_0$, we will need to define $\lambda(t)$ for $\frac{t_0}{2}<t<t_0$ in an appropriate way \ch{(see Proposition~\ref{prop-lambda0} and Section~\ref{sect-mass-varphilambda}).}
We will write $\lambda = \lambda_0 + \lambda_1$ where both of these functions are constructed so that they are  defined for $t>\frac{t_0}{2}$. The construction of $\lambda_0$ is given in Proposition~\ref{prop-lambda0}.
In particular $\lambda_0(t) = \frac{c_0}{\sqrt{\log t}}(1+o(1))$ as $t\to \infty$.
\ch{Note that  $\varphi_\lambda(\cdot,t_0)$ is not zero.}

We define the approximate solution
\begin{align}
\label{defu1}
\index{$u_1$}
u_1 := u_0 + \varphi_\lambda
\end{align}
which depends on the parameter functions $\alpha(t)$, $\xi(t)$, $\la(t)$. Correspondingly, we write
$$
v_1 \ :=\  (-\Delta_{\ch{x}} )^{-1} (u_1)\, .
$$

We will establish in the next sections that a suitable choice of these functions makes it possible to find an actual solution  \ch{of \equ{ks1}} as a lower order perturbation of $u_1$.

\section{The first error of approximation}

We will assume the following conditions on $\lambda$, $\alpha$, $\xi$
\begin{align}
\label{conditions}
\left\{
\begin{aligned}
&
|\lambda(t)| + t \log (t) |\dot \lambda(t)|  \leq \frac{C}{\sqrt{\log (t)}}
\\
&
|\dot \xi(t) | \leq \frac{C}{t^{\gamma}}
\\
&
|\alpha(t)-1|\leq \frac{C}{t \log t} ,
\quad
|\dot \alpha(t)|\leq \frac{C}{t^2 \ch{\, \log t}} ,
\end{aligned}
\right.
\end{align}
where $\frac{3}{2}<\gamma<2$.

We compute
\begin{align*}
S(u_1) & = S(u_0+\varphi_\lambda)
= S(u_0) - \partial_t \varphi_\lambda
+ \mathcal L_{u_0}[\varphi_\lambda]
- \nabla\cdot( \varphi_\lambda \nabla \psi_\lambda).
\end{align*}
where
\begin{align*}
\mathcal L_{u_0}[\varphi]
&= \Delta \varphi - \nabla\cdot ( \varphi \nabla v_0) - \nabla \cdot ( u_0 \nabla \psi ),
\\
\psi_\lambda &= (-\Delta)^{-1} \varphi_\lambda,
\quad
v_0 = ( -\Delta)^{-1} u_0 .
\end{align*}
Then
\begin{align}
\nonumber
S(u_1)
&=
- \frac{\dot{\alpha}}{\lambda^2} U(y) \chi
+ (\alpha-1) \frac{\dot\lambda}{\lambda^3} Z_0 \chi
+\frac{\alpha}{\lambda^3} \dot\xi \cdot \nn_y U (y) \, \chi
+ \frac{\alpha}{\ch{\lambda^2} \sqrt t} U(y)
\dot\xi \cdot  \nabla \chi_0
\\
\nonumber
& \quad
+  \frac{(\alpha -1)}{2 t }  \frac{1}{\lambda^2} U
\nabla_z\chi_0 \cdot \frac{x-\xi}{\sqrt t}
+ \frac{2(\alpha-1)}{\lambda^3 t^{1/2} }
\nabla_z \chi_0 \cdot \nabla_y U
\\
\nonumber
& \quad
+ \frac{(\alpha-1) }{t} \Delta \chi_0 \frac{1}{\lambda^2} U
\ch{
-\frac{\alpha^2-1}{\lambda^3 \sqrt t} U \nabla_z \chi_0 \cdot \nabla_y \Gamma_0
- \frac{\alpha}{\lambda^3 \sqrt t} U \nabla_z \chi_0 \cdot \nabla_y \mathcal R
}
\\
\nonumber
& \quad
- \frac{\alpha (\alpha-1) \chi}{\lambda^4}
\nabla_y \cdot ( U \nabla_y \Gamma_0)
+ \frac{\alpha^2 \chi ( 1-\chi)}{\lambda^4} U^2
- \frac{\alpha \chi}{\lambda^4} \nabla _y U \cdot \nabla_y \mathcal R
\\
\label{Su1}
\index{$S(u_1)$}
&\quad
+ \nabla \varphi_\lambda \cdot \dot \xi
- \frac{4}{r} \partial_r \varphi_\lambda
- \nabla\cdot ( \varphi_\lambda \nabla v_0) - \nabla \cdot ( u_0 \nabla \psi_\lambda )
-  \nabla\cdot ( \varphi_\lambda \nabla \psi_\lambda),
\end{align}
where $\mathcal R$ is defined in the decomposition \eqref{v0}.

\begin{lemma}
\label{lemma-est-varphilambda}
Let $\varphi_\lambda$ be defined by \eqref{defphilambda}\ch{-\eqref{deftildephilambda}} with $\lambda$ satisfying \eqref{conditions}.
Then
\begin{align}
\label{est-varphi-lambda1}
|\varphi_\lambda(x,t)|
+ ( |x-\xi|+\lambda) |\nabla \varphi_\lambda(x,t)|
\leq C \frac{1}{t  \ch{ \log t}  } \begin{cases}
\frac{1}{\lambda^2 + |x-\xi|^2} & |x-\xi|\leq \sqrt t
\\
\frac{1}{t}
e^{-\frac{|x-\xi|^2}{4t}} & |x-\xi|\geq \sqrt t.
\end{cases}
\end{align}
We also have
\begin{align}
\label{est-varphi-lambda2}
|\nabla \varphi_\lambda(x,t)|
\leq  \frac{\ch{C}}{t \ch{ \log t} } \frac{|x-\xi|}{(\lambda + |x-\xi|)^4} , \quad |x-\xi| \leq \sqrt t.
\end{align}
\end{lemma}
\begin{proof}
In terms of the function $\tilde \varphi_\lambda$ defined in \eqref{deftildephilambda}, with $r = |x-\xi|$ we claim that
\begin{align*}
|\tilde \varphi_\lambda(r,t)|
\leq C \frac{1}{t \ch{ \log t} } \begin{cases}
\frac{1}{\lambda^2+r^2} & r\leq \sqrt t,
\\
\frac{1}{t}
e^{-\frac{r^2}{4t}} & r\geq \sqrt t.
\end{cases}
\end{align*}

For the proof of this we use barriers.
Consider
\[
\psi_1(r,t) = \frac{1}{t \ch{ \log t}} \frac{1}{\lambda^2+r^2}
\]
and note that
\begin{align*}
\partial_t \psi_1
- \Bigl(\partial_{rr} +\frac{5}{r}\partial_r\Bigr)
\psi_1 \geq c \frac{\lambda^{-4} }{t \ch{ \log t} (1+r/\lambda)^4} ,\quad
r \leq 2 \delta \sqrt t
\end{align*}
for some $c>0$, $\delta>0$.

Let $\chi_{\delta\sqrt t}(r,t ) = \ch{\tilde \chi_0}(\frac{r}{\delta \sqrt t}) $ where \ch{$\tilde\chi_0\in C^\infty(\R)$ is such that $\tilde\chi_0(s) = 1$ for $s \leq 1$ and $\tilde\chi_0(s) = 0$ for $s \geq 2$.}
Consider
\begin{align*}
\psi (r,t) =
\psi_1 (r,t) \chi_{\delta \sqrt t}(r,t) + \frac{C_1}{t^2 \ch{ \log t}} e^{-\frac{r^2}{4t}} .
\end{align*}
The function $\tilde E$ \eqref{defTildeE} can be estimated by
\begin{align*}
|\tilde E(\zeta,t) |\leq \frac{1}{\lambda^2 t^3} h_1\Bigl(\frac{\zeta}{\sqrt t}\Bigr)
\end{align*}
where $h_1(z)$ is a smooth function with compact support.
Then  $E$  \eqref{defE} has the estimate
\begin{align*}
|\tilde E(\zeta,t) | \leq C \frac{|\lambda \dot \lambda|}{(r^2+\lambda^2)^2} +  \frac{1}{\lambda^2 t^3} h_2\Bigl(\frac{\zeta}{\sqrt t}\Bigr)
\end{align*}
where $h_2(z)$ is a smooth function with compact support.

Then for $C_1$ sufficiently large
\begin{align*}
\partial_t \psi
- \Bigl(\partial_{rr} +\frac{5}{r}\partial_r\Bigr)
\psi \geq
c |E(r,t)|,
\end{align*}
where $c>0$.

By the comparison principle,
\[
|\tilde \varphi_\lambda(r,t) | \leq C \ch{\psi(r,t)} ,
\]
for some uniform constant $C$.
After a suitable scaling,
from standard parabolic estimates we also get
\begin{align*}
(\lambda + r)
|\nabla_x \tilde \varphi_\lambda(r,t)|
\leq C \ch{\psi(r,t)}.
\end{align*}
With these two inequalities we obtain \eqref{est-varphi-lambda1}.

To prove \eqref{est-varphi-lambda2} we change variables $y = \frac{x-\xi}{\lambda}$ in the equation \eqref{deftildephilambda} and define
\[
\tilde \varphi_\lambda(r, t)
= \frac{1}{\lambda^2}
\hat \varphi_\lambda \Bigl( \frac{r}{\lambda},t \Bigr).
\]
We get the equation, after interpreting $\rho = |y|$, $y\in \R^6$
\[
\lambda^2 \partial_t \hat \varphi
= \Delta_{\R^6}  \hat \varphi
+ \lambda \dot \lambda ( 2 \hat \varphi_\lambda
+ y \cdot \nabla_y \hat \varphi_\lambda)
+ \lambda^4 E (\lambda y,t),
\]
where $E$ is defined in \eqref{defE}. Differentiating with respect to $y$ and using the bound we already have for $\nabla_y \hat \varphi_\lambda $ from \eqref{est-varphi-lambda2}, and using standard parabolic estimates, we get
\begin{align*}
|D^2_y \hat \varphi_\lambda(y,t)|
\leq \frac{C}{t \ch{\log t} } \frac{1}{(1+|y|)^4} ,
\quad |y|\leq \sqrt{t \log t}.
\end{align*}
Using that $\nabla \hat \varphi_\lambda(0,t)=0$ we deduce
that
\begin{align*}
|\nabla_y \hat \varphi_\lambda(y,t)|
\leq \frac{C}{t \ch{\log t} } \frac{|y|}{(1+|y|)^4}
,
\quad |y|\leq \sqrt{t \log t},
\end{align*}
\ch{which readily gives \eqref{est-varphi-lambda2}.}

\end{proof}

\begin{lemma}
Assuming \eqref{conditions} we have
\begin{align}
\label{est-Su1-inner}
\lambda^4 |S(u_1)| \chi(x,t)  \leq C \frac{1}{t \ch{\,\log t} } \frac{\ch{\log(2+|y|)}}{1+|y|^6} , \quad y = \frac{x-\xi}{\lambda},
\end{align}
and
\begin{align}
\label{est-Su1-outer}
|S(u_1)| (1-\chi) \leq C \frac{1}{t^4 \ch{ \log t}}
e^{-c\frac{|x|^2}{t}},
\end{align}
for some $c\in (0,\frac{1}{4})$.
\end{lemma}
\begin{proof}
Let us analyze the terms involving $\varphi_\lambda$.
We estimate, using \ch{Lemma~\ref{lemma-est-varphilambda}},
\begin{align*}
\left| \lambda^2 U(y) \varphi_\lambda (\xi+ \lambda y) \right|
\leq
C
\frac{1}{t  \ch{\log t} }\frac{1}{(1+|y|)^6} ,
\quad |y|\leq \sqrt {t \log t}.
\end{align*}

Similarly, by \eqref{v0}
\begin{align}
\nonumber
-\frac{4}{r} \partial_r \tilde \varphi_\lambda
- \nabla \tilde \varphi_\lambda \cdot \nabla v_0
&=
-\frac{4}{r} \partial_r \tilde \varphi_\lambda
- \nabla \tilde \varphi_\lambda \cdot \nabla \Gamma_0
- (\alpha-1) \nabla \tilde \varphi_\lambda \cdot \nabla \Gamma_0
- \nabla \tilde \varphi_\lambda \cdot \nabla \mathcal R
\\
\label{S1a}
&=
4\Bigr( \frac{r}{r^2+\lambda^2} - \frac{1}{r}\Bigl) \partial_r \tilde  \varphi_\lambda
- (\alpha-1) \nabla \tilde \varphi_\lambda \cdot \nabla \Gamma_0
- \nabla \tilde \varphi_\lambda \cdot \nabla \mathcal R.
\end{align}
By \eqref{est-varphi-lambda2}
\begin{align*}
\Bigl|
\lambda^4
4\Bigr( \frac{r}{r^2+\lambda^2} - \frac{1}{r}\Bigl) \partial_r \varphi_\lambda
\Bigr|
&\leq
\frac{C}{t  \ch{\log t}} \frac{1}{(1+|y|)^6} , \quad |y|\leq \sqrt{t \log t}.
\end{align*}
The other terms in \eqref{S1a} are estimated similarly, using the hypotheses on $\alpha$  and the estimate on $\mathcal R$ \eqref{est-mathcalR}, and we get
\begin{align*}
\left|
-\frac{4}{r} \partial_r \tilde \varphi_\lambda
- \nabla \tilde \varphi_\lambda \cdot \nabla v_0
\right| \leq \frac{C}{t  \ch{\log t}} \frac{1}{(1+|y|)^6} , \quad |y|\leq \sqrt{t \log t}.
\end{align*}

The terms involving $\psi_\lambda = (-\Delta)^{-1}\varphi_\lambda$ are estimated using the formula
\begin{align*}
\partial_r \psi_\lambda(r,t) = \frac{1}{r} \int_0^r \varphi_\lambda (s,t)sds .
\end{align*}

\ch{
In $ \lambda^4 S(u_1)$ we have also the term $- \dot{\alpha}\lambda^2  U(y) \chi  $, which thanks to \eqref{conditions} can be estimated as
\begin{align*}
\Bigl| \lambda^2 \dot{\alpha} U(y) \chi \Bigr|
& \leq \frac{C \lambda^2}{t^2 \log t} \frac{1}{(1+|y|)^4}
\chi(y,t)
\leq \frac{C }{t \log t} \frac{1}{(1+|y|)^6}
\chi(y,t).
\end{align*}
}

The remaining terms are estimated similarly, and we obtain \eqref{est-Su1-inner}.

The stated inequality \eqref{est-Su1-outer}  follows from the Gaussian decay of $\varphi_\lambda$ in Lemma~\ref{lemma-est-varphilambda}.
\end{proof}

\section{The inner-outer gluing system}

Let us consider the initial approximation
\[
u_1(x,t) = u_0(x,t) + \varphi_\lambda(x,t)
\]
built in \ch{Section~\ref{sect-approx2}}
for a given choice of the parameter functions $\la(t)$, $\alpha(t)$, $\xi(t) $ \ch{satisfying \eqref{conditions}}. Here
$u_0$ is the function defined in \eqref{defu0} and $\varphi_\lambda$ that in \eqref{defphilambda}.
We look for a solution of the Keller-Segel equation \ch{\equ{ks1}}
in the form of a small perturbation of $u_1$,
namely
\begin{align}
\label{def-u}
u (x,t)= u_1 (x,t)+ \Phi(x,t).
\end{align}
We write the perturbation $\Phi$ \ch{as a sum of an}  ``inner" contribution, better expressed in the scale of $u_0$, and
a remote effect that takes into consideration the ``outer" regime.
Precisely, we write
\begin{align}
\label{def-varphi}
\Phi(x,t) = \frac{1}{\lambda^2} \phi^i(y,t)\chi(x,t)  + \varphi^o(x,t), \quad y= \frac{x-\xi}{\lambda} ,
\end{align}
where $\chi$ is the smooth cut-off
\begin{align}
\label{defchi}
\chi(x,t)= \chi_0 \Bigl( \frac {\ch{x-\xi}}{\sqrt{t}} \Bigr)
\end{align}
with $\chi_0$ a smooth radial cut-off function such that $ \chi_0(z) = 1 $ if $|z|\le 1$, $ \chi_0(z) = 1 $ if $ |z|\ge 2 $.
(The same as defined in \eqref{defchi-1}.)


\medskip
\ch{Recall} $S(u)$ given by
\[
S(u) = - \partial_t u + \Delta u -
\nabla \cdot (u \nabla v), \quad v = ( -\Delta)^{-1} u ,
\]
where the operators act on the original variable $x$ unless otherwise indicated.
\ch{In the computations that follow we will express the equation}
$$
S(u_1+\Phi) = 0
$$
for $\Phi$ given by \equ{def-varphi}, as a parabolic system in its inner and outer contributions  $\phi^i$ and
$\vp^o$. The coupling in that system will be small if $\phi^i(y,t)$ decays sufficiently fast in space and time. That can only be achieved for suitable choices of the parameters $\alpha, \la, \xi$ that yield certain solvability conditions
satisfied.  The set of all these relations is what we call the inner-outer gluing system.
Next we formulate this system. It will be necessary to successively refine  its original expression by further decomposing $\phi^i$ into two contributions with separate space decay, finally arriving at the equations
\eqref{inner4a}, \eqref{inner4b}, \eqref{outer4} and
\eqref{eq-param} which are the ones we will actually solve.

\medskip
Let us observe that
\begin{align*}
S(u_1+\Phi)
&= S(u_1) - \partial_t \Bigl( \frac{1}{\lambda^2} \phi^i \chi \Bigr) - \partial_t \varphi^o
+ \mathcal L_{u_1}\Bigr[\frac{1}{\lambda^2} \phi^i \chi \Bigl]
+ \mathcal L_{u_1}[\varphi^o]
\\
& \quad
- \nabla\cdot( \Phi \nabla (-\Delta)^{-1} \Phi ) ,
\end{align*}
where
\begin{align*}
\mathcal L_{u_1}[\varphi]
&= \Delta \varphi - \nabla\cdot ( \varphi \nabla v_1) - \nabla \cdot ( u_1 \nabla (-\Delta)^{-1} \varphi )
, \qquad
v_1 = ( -\Delta)^{-1} u_1 .
\end{align*}
We use the notation
\[
\psi = \frac{1}{\lambda^2}(-\Delta)^{-1} \phi^i , \quad \hat \psi = \ch{\frac{1}{\lambda^2} (-\Delta)^{-1} (  \phi^i \chi )} ,
\]
in the expressions that follow.
We expand
\begin{align*}
\mathcal L_{u_1}[\frac{1}{\lambda^2} \phi^i \chi ]
&= \chi \frac{1}{\lambda^2} \Delta \phi^i
+  \frac{2}{\lambda^2}\nabla \chi \cdot \nabla \phi^i + \frac{1}{\lambda^2} \phi^i \Delta \chi
- \nabla\cdot ( \frac{1}{\lambda^2} \phi^i \chi \nabla v_1)
- \nabla \cdot ( u_1 \nabla \hat \psi ) .
\end{align*}
We have
\begin{align*}
\nabla \cdot ( u_1 \nabla \hat \psi )
&=
\nabla \cdot ( \frac{\alpha }{\lambda^2} U \nabla \psi ) \chi
+
\nabla \cdot ( \frac{\alpha }{\lambda^2} U \nabla ( \hat \psi -\psi) ) \chi
+
\frac{\alpha }{\lambda^2} U \nabla \chi \cdot \nabla \hat \psi
\\
&
\quad
+
\nabla \cdot ( \varphi_\lambda \nabla  \psi )
+ \nabla \cdot ( \vp_\la \nn (\hat \psi -\psi) )
\end{align*}
and
\begin{align*}
\nabla\cdot ( \frac{1}{\lambda^2} \phi^i \chi \nabla v_1)
&=
\nabla\cdot ( \frac{1}{\lambda^2} \phi^i  \nabla v_1) \chi
+
\frac{1}{\lambda^2} \phi^i \nabla  \chi \cdot \nabla v_1 .
\end{align*}
Recall the notation
\begin{align*}
v_1 = v_0 + \psi_\lambda, \quad
v_0 = \ch{\frac{\alpha}{\lambda^2} (-\Delta)^{-1} ( U \chi ) } , \quad
\psi_\lambda = (-\Delta)^{-1} \varphi_\lambda ,
\end{align*}
and also \eqref{v0}
\[
v_0 = \alpha \Gamma_0 + \mathcal{R} ,
\quad
\mathcal{R} = \ch{ \frac{\alpha}{\lambda^2} (-\Delta)^{-1} \bigl( U
\ch{(\chi-1) } \bigl) }.
\]
Then
\begin{align*}
\nabla\cdot ( \frac{1}{\lambda^2} \phi^i \chi \nabla v_1)
&=
\nabla\cdot ( \frac{1}{\lambda^2} \phi^i  \nabla v_0) \chi
+
\nabla\cdot ( \frac{1}{\lambda^2} \phi^i  \nabla \psi_\lambda ) \chi
+
\frac{1}{\lambda^2} \phi^i \nabla  \chi \cdot \nabla v_0
\\
& \quad
+ \frac{1}{\lambda^2} \phi^i \nabla  \chi \cdot \nabla \psi_\lambda
\\
&=
\frac{\alpha}{\lambda^2}  \nabla\cdot ( \phi^i  \nabla  \Gamma_0) \chi
+
\nabla\cdot ( \frac{1}{\lambda^2} \phi^i  \nabla \mathcal R) \chi
+
\nabla\cdot ( \frac{1}{\lambda^2} \phi^i  \nabla \psi_\lambda ) \chi
\\
& \quad
+
\frac{\alpha}{\lambda^2}
 \phi^i \nabla  \chi \cdot \nabla  \Gamma_0
+
\frac{1}{\lambda^2} \phi^i \nabla  \chi \cdot \nabla \mathcal R
+
\frac{1}{\lambda^2} \phi^i \nabla  \chi \cdot \nabla \psi_\lambda .
\end{align*}
Therefore
\begin{align*}
\mathcal L_{u_1}[\frac{1}{\lambda^2} \phi^i \chi ]
&= \chi \frac{1}{\lambda^2} \Delta \phi^i
+  \frac{2}{\lambda^2}\nabla \chi \cdot \nabla \phi^i + \frac{1}{\lambda^2} \phi^i \Delta \chi
\\
& \quad
- \Bigl[
\nabla\cdot ( \ch{\frac{\alpha}{\lambda^2} \phi^i  \nabla  \Gamma_0} ) \chi
+
\nabla\cdot ( \frac{1}{\lambda^2} \phi^i  \nabla \mathcal R) \chi
+
\nabla\cdot ( \frac{1}{\lambda^2} \phi^i  \nabla \psi_\lambda ) \chi
\\
& \quad \quad
+
\ch{\frac{\alpha}{\lambda^2} \phi^i \nabla  \chi \cdot \nabla  \Gamma_0}
+
\frac{1}{\lambda^2} \phi^i \nabla  \chi \cdot \nabla \mathcal R
+
\frac{1}{\lambda^2} \phi^i \nabla  \chi \cdot \nabla \psi_\lambda
\Bigr]
\\
& \quad
-
\Bigl[
\nabla \cdot ( \frac{\alpha }{\lambda^2} U \nabla \psi ) \chi
+
\nabla \cdot ( \frac{\alpha }{\lambda^2} U \nabla ( \hat \psi -\psi) ) \chi
+
\frac{\alpha }{\lambda^2} U \nabla \chi \cdot \nabla \hat \psi
\\
&
\quad \quad
+
\nabla \cdot ( \varphi_\lambda \nabla  \psi )
+
\nabla \cdot ( \varphi_\lambda \nabla (\hat \psi - \psi)  )
\Bigr] .
\end{align*}

Next we expand
\begin{align*}
\mathcal L_{u_1}[\varphi^o]
&= \Delta \varphi^o - \nabla\cdot ( \varphi^o \nabla v_1) - \nabla \cdot (u_1 \nabla \psi^o) , \quad \psi^{o} = (-\Delta)^{-1} \varphi^o.
\end{align*}
We have
\begin{align*}
\nabla \cdot (u_1 \nabla \psi^o)
&=
\nabla \cdot (  \frac{\alpha }{\lambda^2} U \chi  \nabla \psi^o )
+
\nabla \cdot ( \varphi_\lambda \nabla \psi^o )
\\
&=
\nabla \cdot ( \frac{\alpha }{\lambda^2} U \nabla \psi^o ) \chi
+
\frac{\alpha }{\lambda^2} U \nabla \chi \cdot \nabla  \psi^o
+
\nabla \cdot ( \varphi_\lambda \nabla  \psi^o ) \chi
\\
& \quad
+
\nabla \cdot ( \varphi_\lambda \nabla  \psi^o ) (1-\chi ),
\end{align*}
\ch{and}
\begin{align*}
\nabla\cdot ( \varphi^o \nabla v_1)
&=
\nabla\cdot ( \varphi^o \nabla v_0)
+\nabla\cdot ( \varphi^o \nabla \psi_\lambda)
\\
&=
\alpha \nabla\cdot ( \varphi^o \nabla \Gamma_0)
+ \nabla\cdot ( \varphi^o \nabla \ch{\mathcal R})
+\nabla\cdot ( \varphi^o \nabla \psi_\lambda)
\\
&=
\nabla \varphi^o \cdot \nabla \Gamma_0
-
\frac{1}{\lambda^2} U \varphi^o
+
(\alpha-1) \nabla\cdot ( \varphi^o \nabla \Gamma_0)
\\
& \quad
+
\nabla\cdot ( \varphi^o \nabla \mathcal R)
+
\nabla\cdot ( \varphi^o \nabla \psi_\lambda).
\end{align*}

Therefore,
\begin{align*}
\mathcal L_{u_1}[\varphi^o]
&=
\Delta \varphi^o
- \Bigl[
\nabla \cdot ( \frac{\alpha }{\lambda^2} U \nabla \psi^o ) \chi
+
\frac{\alpha }{\lambda^2} U \nabla \chi \cdot \nabla  \psi^o
+
\nabla \cdot ( \varphi_\lambda \nabla  \psi^o ) \chi
\\
& \quad
+
\nabla \cdot ( \varphi_\lambda \nabla  \psi^o ) (1-\chi )
\Bigr]
\\
& \quad
-\Bigl[
\nabla \varphi^o \cdot \nabla \Gamma_0
-
\frac{1}{\lambda^2} U \varphi^o
+
(\alpha-1) \nabla\cdot ( \varphi^o \nabla \Gamma_0)
\\
& \quad
+
\nabla\cdot ( \varphi^o \nabla \mathcal R)
+
\nabla\cdot ( \varphi^o \nabla \psi_\lambda)
\Bigr].
\end{align*}
Based on the previous formulas we formulate the inner equation
\begin{align*}
\lambda^4 \partial_t( \frac{1}{\lambda^2} \phi^i )
 & = L [\phi^i]
- (\alpha-1) \nabla_y \cdot ( U \nabla_y \psi)
- (\alpha-1) \nabla_y \cdot ( \phi^i \nabla \Gamma_0)
+ \lambda^4 S(u_1)
\\
& \quad
- \lambda^2 \nabla_y \cdot( \varphi_\lambda \nabla_y \psi^o)
- \lambda^2 \nabla_y \cdot( \varphi^o \nabla_y \psi_\lambda)
+ \lambda^2 U \varphi^o
- \alpha \nabla_y \cdot ( U \nabla_y \psi^o)
\\
& \quad
- \lambda^2 \nabla_y \cdot ( \varphi_\lambda \nabla_y \psi)
- \nabla_y \cdot( \phi^i \nabla_y \psi_\lambda)
- (\alpha-1) \lambda^2 \nabla \cdot ( \varphi^o \nabla \Gamma_0 )
\\
& \quad
\ch{- \alpha
\nabla_y \cdot ( U \nabla_y ( \hat \psi - \psi ) ) }
\ch{-\lambda^2 \nabla_y \cdot (  \varphi_\lambda  \nabla_y (\hat \psi - \psi))}
- \nabla_y \cdot ( (\phi^i \ch{\chi} + \ch{\lambda^2}\varphi^o) \nabla_y ( \hat \psi + \psi^o)) ,
\end{align*}
where
\begin{align}
\label{def-L}
\index{$L$, linerized operator around $U$}
L[\phi] = \Delta_y \phi
-  \nabla_y \cdot ( U \nabla_y \psi)
-  \nabla_y \cdot ( \phi \nabla \Gamma_0) .
\end{align}

We slightly modify the inner equation into the form
\begin{align}
\label{inner}
\lambda^2 \partial_t \phi^i
& = L [\phi^i]
+ B_0 [\phi^i]
+ E_1 \tilde \chi + F (\phi^i,\varphi^o,\textbf{p})\tilde \chi
\end{align}
where
\[
\textbf{p} = (\lambda,\alpha,\xi),
\]
\begin{align}
\nonumber
E_1(y,t)  = \lambda^4 S(u_1(\textbf{p}))(x,t), \quad y = \frac{x-\xi}{\lambda},
\end{align}
\begin{align}
\nonumber
F(\phi^i,\varphi^o,\textbf{p}) &=
- \lambda^2 \nabla_y \cdot( \varphi_\lambda \nabla_y \psi^o)
- \lambda^2 \nabla_y \cdot( \varphi^o \nabla_y \psi_\lambda)
+ \lambda^2 U \varphi^o
\\
\nonumber
 & \quad
- (\alpha-1) \lambda^2 \nabla_y \cdot ( \varphi^o \nabla_y \Gamma_0 )
- \alpha \nabla_y \cdot ( U \nabla_y \psi^o)
\\
\nonumber
& \quad
+ \lambda \dot \xi \cdot \nabla_y \phi^i
- \lambda^2 \nabla_y \cdot ( \varphi_\lambda \nabla_y \psi)
- \nabla_y \cdot( \phi^i \nabla_y \psi_\lambda)
\\
\nonumber
& \quad
- (\alpha-1) \nabla_y \cdot ( U \nabla_y \psi)
- (\alpha-1) \nabla_y \cdot ( \phi^i \nabla_y \Gamma_0)
\\
\nonumber
& \quad
- \alpha
\nabla_y \cdot ( U \nabla_y ( \hat \psi - \psi ) )
- \lambda^2 \nabla_y \cdot ( \varphi_\lambda \nabla_y ( \hat \psi - \psi ) )
\\
\label{def-F}
&\quad
- \nabla_y \cdot ( (\phi^i \chi + \lambda^2 \varphi^o) \nabla_y ( \hat \psi + \psi^o)) , \qquad
\hat \psi = (-\Delta_y)^{-1} ( \phi^i \chi) ,
\end{align}
\index{$F$, RHS of the inner equation v.1}
\begin{align}
\label{defB0}
B_0[\phi^i] &= \lambda \dot \lambda ( 2\phi^i + y \cdot \nabla_{\ch{y}} \phi^i ) ,
\end{align}
and
\begin{align}
\label{tilde-chi}
\tilde \chi(y,t) = \chi_0\Bigl( \frac{\lambda y}{2\sqrt t}\Bigr) ,
\end{align}
\index{$\tilde\chi$}
\ch{with $\chi_0$ as in \eqref{chi0}.}
Similarly we formulate the outer equation as
\begin{align}
\label{outer}
\partial_t \varphi^o
&= \Delta \varphi^o - \nabla \Gamma_0 \cdot \nabla \varphi^o
+ G(\phi^i,\varphi^o,\mathbf{p})
\end{align}
where
\begin{align}
\nonumber
G(\phi^i,\varphi^o,\mathbf{p}) &=
S(u_1,\textbf{p} ) (1-\chi)
+ \frac{2}{\lambda^2} \nabla \chi \cdot \nabla  \phi^i + \frac{1}{\lambda^2} \phi^i \Delta \chi -
\frac{1}{\lambda^2} \phi^i \partial_t \chi
- \frac{\alpha}{\lambda^2} \phi^i
\nabla \chi \cdot \nabla \Gamma_0
\\
\nonumber
& \quad
+ \frac{1}{\lambda^2} U \varphi^o (1-\chi)
- \alpha \lambda^2 U \nabla \chi\cdot \nabla \psi^o
- \nabla \cdot ( \varphi_\lambda \nabla \psi^o)
(1-\chi)
\\
\nonumber
& \quad
-(\alpha-1) \nabla\cdot(\varphi^o\nabla \Gamma_0) (1-\chi)
- \nabla \cdot (\varphi^o \nabla \mathcal R)
- \nabla\cdot(\varphi^o\nabla \psi_\lambda) (1-\chi)
\\
\nonumber
& \quad
- \frac{1}{\lambda^2}
\nabla \cdot( \phi^i \nabla \mathcal R ) \chi
- \frac{1}{\lambda^2} \phi^i \nabla \chi \cdot \nabla \mathcal R
- \frac{1}{\lambda^2} \phi^i \nabla \chi \cdot \nabla \psi_\lambda
\\
\nonumber
& \quad
- \frac{\alpha}{\lambda^2} U \nabla \chi \cdot \nabla \hat \psi
- \nabla \cdot ( \varphi_\lambda \nabla ( \hat \psi - \psi ) )(1-\chi)
\\
\label{def-G}
& \quad
\ch{
-\nabla( \varphi_\lambda \nabla  \psi ) (1-\chi)}
- \nabla \cdot ( ( \frac{1}{\lambda^2}\phi^i\chi + \varphi^o) \nabla ( \hat \psi + \psi^o)) (1-\chi) .
\end{align}
\index{$G$, RHS of the inner equation v.1}

%

\ch{If $\phi^i$, $\varphi^o$ is a solution to system \eqref{inner}, \eqref{outer}, then $u$} given by \eqref{def-u}, \eqref{def-varphi} satisfies the Keller-Segel system \eqref{ks1}.
\index{gluing 1:  \eqref{inner}, \eqref{outer}}

\subsection{Choice of \texorpdfstring{$\lambda_0$}{lambda0} and \texorpdfstring{$\alpha_0$}{alpha0}}
\label{section-lambda0-alpha0}
%
%
We explain the choice of $\lambda_0$ in the context of the elliptic equation
\begin{align}
\label{eqh}
L[\phi]=h \quad \text{in }\R^2,
\end{align}
where $h$ is radial.

\begin{lemma}
\label{lemma-radial-L}
Let $h(y)$ be a radial function such that
\begin{align*}
\| (1+|y|)^{\gamma} h(y) \|_{L^\infty(\R^2)} < \infty,
\end{align*}
for some 
$\gamma >4$
and satisfying
\begin{align}
\label{masszero}
\int_{\R^2} h(y) dy = 0
\\
\label{secondMomenth}
\int_{\R^2} h(y) |y|^2 dy = 0 .
\end{align}
Then there exists a radial solution $\phi(y) $ of equation \eqref{eqh} such that
\begin{align}
\label{estPhi}
|\phi(y) | \leq C  \| (1+|y|)^{\gamma} h(y) \|_{L^\infty(\R^2)}
\frac{1}{(1+|y|)^{\gamma-2}}
, \quad \text{if } \gamma\not=6
\end{align}
\begin{align}
\label{est-gamma-6}
|\phi(y) | \leq C  \| (1+|y|)^{\gamma} h(y) \|_{L^\infty(\R^2)}
\frac{\log(1+|y|)}{(1+|y|)^{4}}
, \quad \text{if }\gamma=6 ,
\end{align}
and
\begin{align}
\label{zeroMassphi}
\int_{\R^2} \phi(y) dy = 0.
\end{align}
\end{lemma}
\begin{proof}
Defining $g = \frac{\phi}{U}- (-\Delta)^{-1} \phi$ we obtain the equation
\begin{align}
\label{eq-g}
\nabla \cdot ( U \nabla g ) = h  .
\end{align}
Assuming $\gamma>6$ we choose the radial function $g$ defined by
\[
g(\rho) = - \int_\rho^\infty \frac{1}{r U(r)} \int_0^r h(s) s ds dr , \quad \rho = |y|,
\]
and using \eqref{masszero} we get
\[
|g(\rho) | \leq C \| (1+|y|)^{\gamma} h \|_{L^\infty(\R^2)}
\frac{1}{(1+|y|)^{\gamma-6} }.
\]
\ch{Now} we solve Liouville's equation
\begin{align}
\label{Liouville-0}
-\Delta \psi - U \psi = U g \quad \text{in }\R^2, \quad\psi(\rho)\to 0 \quad \text{as }\rho\to\infty .
\end{align}
\ch{Multiplying \eqref{eq-g} by $|y|^2$ and using} \eqref{secondMomenth} we see that
\[
\int_{\R^2} \ch{g Z_0} dy \ch{= \frac{1}{2} \int_{\R^2} h(y) |y|^2 dy}= 0 ,
\]
\ch{with $Z_0$ defined in \eqref{Z0}.}
Then by the variations of parameter formula we find that \eqref{Liouville-0} has a unique solution $\psi$, which satisfies
\begin{align}
\label{decayPsi}
|\psi(y)| + (1+|y|)|\nabla \psi(y)| \leq \| (1+|y|)^{\gamma} h \|_{L^\infty(\R^2)}
\frac{1}{(1+|y|)^{\gamma-4} }.
\end{align}
Then we see that $\phi $ defined by $\phi = U g + U \psi $ satisfies \eqref{eqh}, \eqref{estPhi}
and \eqref{zeroMassphi}
because \ch{$\phi = - \Delta \psi$} and $\psi$ has the decay \eqref{decayPsi}.

If \ch{$4<\gamma \leq 6$} we do almost the same, except that we define
\[
g(\rho) = \int_0^\rho \frac{1}{r U(r)} \int_0^r h(s) s ds dr.
\]
\end{proof}

\begin{remark}
\ch{We observe that $L[Z_0]=0$.
This can also be seen in the context of the Lemma~\ref{lemma-radial-L}, where $\phi=Z_0$ which corresponds to $g$ being constant. Indeed, suppose $g \equiv 1$. Then from \eqref{Liouville-0} $\psi = -1 - \frac{1}{2}z_0$, where $z_0$ is defined in  \eqref{defZLiouville}.
This gives $\phi = U g + U \psi = -\frac{1}{U}z_0 = -\frac{1}{2}Z_0$. This shows that $L[Z_0]=0$.}

\ch{If $h$ doesn't satisfy the zero second moment condition \eqref{secondMomenth}, then a solution still exists but with worse decay and non-zero mass.
More precisely,  if $h$ is radial, $\| (1+|y|)^{\gamma} h(y) \|_{L^\infty(\R^2)} < \infty$
for some $\gamma > 6$, and satisfies only \eqref{masszero}, then one can construct a solution $\phi$ to \eqref{eqh}, but any such solution has the estimate
\[
|\phi(y)|\leq  C  \| (1+|y|)^{\gamma} h(y) \|_{L^\infty(\R^2)}
\frac{\log(1+|y|)}{(1+|y|)^{4}} ,
\]
so worse decay than the one in \eqref{estPhi}.
Moreover, the mass of $\phi$ becomes
\begin{align*}
\int_{\R^2} \phi  = -\int_{\R^2} \Delta \psi = - \int_{\R^2} g Z_0 = - \frac{1}{2} \int_{\R^2} h(y) |y|^2 dy .
\end{align*}}
\end{remark}

For the inner equation \eqref{inner} it is then natural to impose that the first error $S(u_1) \chi$ satisfies the second moment condition
\begin{align*}
\int_{\R^2} S(u_1) \chi |y|^2 dy = 0,
\quad \text{for all }t>t_0.
\end{align*}

The next lemma gives a way of expressing the second moment of $u_1$.

\begin{lemma}
\label{second-moment-u1}
Let $u_1$ be defined in \eqref{defu1}. Then
\begin{align}
\nonumber
\int_{\R^2} S(u_1) |x-\xi|^2dx
&=
4 \int_{\R^2} \varphi_\lambda dx
-  \alpha \int_{\R^2} \tilde E(x-\xi,t;\lambda) |x-\xi|^2 dx
\\
\nonumber
& \quad
+ \int_{\R^2} \nabla \varphi_\lambda \, dx
\cdot \dot \xi
- \frac{\dot \alpha}{\lambda^2} \int_{\R^2} U \chi |x-\xi|^2 dx
- (1-\alpha) \int_{\R^2} E (x-\xi,t;\lambda) |x-\xi|^2dx
\\
\label{second-moment-Su1}
& \quad
+ 4 \Bigl( \int_{\R^2} u_0 + \int_{\R^2} \varphi_\lambda \Bigr)
\Bigl( 1 - \frac{1}{8\pi} \int_{\R^2} u_0
 - \frac{1}{8\pi} \int_{\R^2} \varphi_\lambda \Bigr).
\end{align}
where $E$, $\tilde E$ are defined in \eqref{defE}, \eqref{defTildeE}.
\end{lemma}

\begin{proof}[Proof of Lemma~\ref{second-moment-u1}]
Using \eqref{momento} we see that
\begin{align*}
\int_{\R^2} S(u_1) |x-\xi|^2dx
&=
-\int_{\R^2} \partial_t u_0 |x-\xi|^2dx
-\int_{\R^2} \partial_t \varphi_\lambda|x-\xi|^2dx
\\
& \quad
+ 4 \Bigl( \int_{\R^2} u_0 + \int_{\R^2} \varphi_\lambda \Bigr)
\Bigl( 1 - \frac{1}{8\pi} \int_{\R^2} u_0
 - \frac{1}{8\pi} \int_{\R^2} \varphi_\lambda \Bigr) .
\end{align*}
But recall that $\varphi_\lambda(x,t) = \tilde \varphi_\lambda(x-\xi(t),t)$ where $\tilde \varphi_\lambda$ satisfies \eqref{deftildephilambda}. Multiplying that equation by $|\zeta|^2$ and integrating on $\R^2$ results in
\begin{align*}
\int_{\R^2} \partial_t \tilde \varphi_\lambda |\zeta|^2 \,d\zeta = - 4 \int_{\R^2} \tilde \varphi_\lambda \,d\zeta
+ \int_{\R^2} E(\zeta,t)|\zeta|^2 \,d\zeta.
\end{align*}
Therefore
\begin{align*}
\int_{\R^2} \partial_t\varphi_\lambda|x-\xi|^2\,dx
=
-4\int_{\R^2} \varphi_\lambda \,dx
- \frac{(x-\xi)\cdot\dot\xi}{|x-\xi|} \int_{\R^2} \partial_r \tilde\varphi_\lambda
+ \int_{\R^2} E(\zeta,t)|\zeta|^2 \,d\zeta
\end{align*}
and then
\begin{align}
\nonumber
\int_{\R^2} S(u_1) |x-\xi|^2dx
&=  -\int_{\R^2} \partial_t u_0 |x-\xi|^2dx
+ 4 \int_{\R^2} \varphi_\lambda dx
+  \int_{\R^2} \nabla \varphi_\lambda \, dx
\cdot \dot \xi
-  \int_{\R^2} E(x-\xi,t)|x-\xi|^2dx
\\
\label{Su1-a}
& \quad
+ 4 \Bigl( \int_{\R^2} u_0 + \int_{\R^2} \varphi_\lambda \Bigr)
\Bigl( 1 - \frac{1}{8\pi} \int_{\R^2} u_0
 - \frac{1}{8\pi} \int_{\R^2} \varphi_\lambda \Bigr) .
\end{align}
But from the formula for $ \partial_t u_0$ \eqref{ptu0} and the definitions of $E$ and $\tilde E$ \eqref{defE}, \eqref{defTildeE} we get
\begin{align*}
-\partial_t u_0(x,t) =
- \frac{\dot{\alpha}}{\lambda^2} U(y) \chi_0(z) + \alpha E (x-\xi,t) - \alpha \tilde E(x-\xi,t).
\end{align*}
Hence
\begin{align*}
& \int_{\R^2} ( \partial_t u_0 +E(x-\xi,t))|x-\xi|^2dx
\\
& \quad =
\int_{\R^2} ( \partial_t u_0 +\alpha E(x-\xi))|x-\xi|^2dx
+ (1-\alpha) \int_{\R^2} E(x-\xi,t) |x-\xi|^2dx
\\
& \quad =
\frac{\dot \alpha}{\lambda^2} \int_{\R^2} U \chi |x-\xi|^2 dx
+  \alpha \int_{\R^2} \tilde E(x-\xi,t) |x-\xi|^2 dx
+(1-\alpha) \int_{\R^2} E(x-\xi,t) |x-\xi|^2dx .
\end{align*}
Replacing this in \eqref{Su1-a} we obtain \eqref{second-moment-Su1}.
\end{proof}

In the definition \eqref{defu1} of $u_1$  we will stress the dependence on the parameters by writing $\mathbf{p} = (\lambda,\alpha,\xi)$ and $u_1 = u_1(\mathbf{p})$.
\ch{At this point we would like to construct $\lambda_0$ and $\alpha_0$} so that setting $\mathbf{p}_0 = ( \lambda_0 , \alpha_0 , 0 ) $ we have
\begin{align}
\label{massU1}
\int_{\R^2} u_1(\mathbf{p}_0) dx
& = 8 \pi,
\\
\label{secondMomentSu1}
\int_{\R^2} S(u_1(\mathbf{p}_0))|x-\xi|^2 dx & = O \Bigl( \frac{1}{t^{\frac{3}{2}+\sigma}}\Bigr) ,
\end{align}
for some $\sigma>0$. The reason for allowing in \eqref{secondMomentSu1} an error is that it is difficult to solve with right hand side equal to 0 and a remainder of size $\ch{O(t^{-\frac{3}{2}-\sigma})}$ with $\sigma>0$ is sufficiently small to proceed with the rest of the construction.

\ch{Assuming that \eqref{massU1} holds,}
we get
\begin{align*}
\int_{\R^2} S(u_1) |x-\xi|^2dx
&=
4 \int_{\R^2} \varphi_\lambda dx
-  \alpha \int_{\R^2} \tilde E(x-\xi,t;\lambda) |x-\xi|^2 dx
\\
\nonumber
& \quad
+ \int_{\R^2} \nabla \varphi_\lambda \, dx
\cdot \dot \xi
- \frac{\dot \alpha}{\lambda^2} \int_{\R^2} U \chi |x-\xi|^2 dx
- (1-\alpha) \int_{\R^2} E (x-\xi,t;\lambda) |x-\xi|^2dx .
\end{align*}
It turns out that the main terms in the expression for $\int_{\R^2} S( \ch{u_1} ) |x-\xi|^2 dx$ are the first two.
So the equation
\[
\int_{\R^2} S(u_1(\mathbf{p_0})) |x-\xi|^2 dx = 0
\]
is at main order given by
\begin{align*}
4 \int_{\R^2} \varphi_\lambda dx
- \int_{\R^2} \tilde E |x-\xi|^2dx
&= 0 .
\end{align*}
It will be shown later that
\begin{align}
\label{secondMtildeE}
\int_{\R^2}
\tilde E |x-\xi|^2 dx = - 64 \pi \Upsilon \frac{\lambda^2}{t}
+ O \Big( \frac{ \lambda^4 }{t^2} \Big) ,
\end{align}
see Lemma~\ref{lemma-second-moment-tE}, where $\Upsilon$ is given in \eqref{defUpsilon}, so that the equation we want to solve becomes at main order,
\begin{align*}
\int_{\R^2} \varphi_\lambda dx + 16 \pi \Upsilon \frac{\lambda^2}{t} =0.
\end{align*}
In
\S\ref{sect-mass-varphilambda} we will show that
\begin{align}
\label{massVP1}
\int_{\R^2} \varphi_\lambda dx
=  -4\pi \int_{t/2}^{t-\lambda^2} \frac{\lambda \dot \lambda}{t-s}ds
-2\pi \frac{\lambda^2}{t}
- 16 \pi \Upsilon \frac{\lambda^2}{t}
+ O \Big( \frac{ \lambda^4 \log \log t}{t} \Big)
\end{align}
see Corollary~\ref{coro-mass-vp-lambda}.
Using \eqref{massVP1} we see that
\ch{
\begin{align}
\label{mass-vp-plus-16pil2ot}
\int_{\R^2} \varphi_\lambda dx + 16 \pi \Upsilon \frac{\lambda^2}{t}
=
-4 \pi \Bigl[
\int_{t/2}^{t-\lambda^2} \frac{\lambda \dot \lambda}{t-s}ds
+ \frac{\lambda^2}{2t}\Bigr]  + O \Big( \frac{ \lambda^4 \log \log t}{t} \Big)
\end{align}
}
\ch{so that the equation for $\lambda$ is at main order}
\ch{
\begin{align}
\nonumber
\int_{t/2}^{t-\lambda^2} \frac{\lambda \dot \lambda}{t-s}ds
+ \frac{\lambda^2}{2t} = 0.
\end{align}
}
\ch{One can check that $\lambda^*(t) = \frac{c_0}{\sqrt{\log t}}$, where $c_0>0$ is an arbitrary constant, is an approximate solution. Indeed
\begin{align*}
\int_{t/2}^{t-(\lambda^*)^2} \frac{\lambda^*(s) \dot \lambda^*(s)}{t-s}ds
+ \frac{(\lambda^*)^2}{2t}
&\approx
\lambda^*(t) \dot \lambda^*(t)\int_{t/2}^{t-(\lambda^*)^2} \frac{ds}{t-s}
+ \frac{\lambda^*(t)^2}{2t}
\\
&\approx
\lambda^*(t) \dot \lambda^*(t) \log t
+ \frac{\lambda^*(t)^2}{2t}
\\
&= \frac{1}{2}
\frac{d}{dt} \bigl[
\lambda^*(t)^2 \log t
\Bigr]=0.
\end{align*}
}

\ch{The error left out in the  approximation \eqref{mass-vp-plus-16pil2ot} is too big. We give next a result that shows that for an appropriate modification of $\lambda^*$ we can achieve a smaller error.}
Let us write $\tilde E(\lambda)$ the expression defined in \eqref{defTildeE} with the explicit dependence on $\lambda$.
\begin{prop}
\label{prop-lambda0}
Let $c_0>0$ be fixed.
For $t_0>0$ sufficiently large there exists $\lambda_0:[\frac{t_0}{2},\infty) \to (0,\infty) $ such that
\begin{align}
\label{eq-lambda0}
\int_{\R^2} \varphi_{\lambda_0} dx
- \frac{1}{4}\int_{\R^2} \tilde E(\lambda_0) |x-\xi|^2dx
& = O\Bigl( \frac{1}{t^{\frac{3}{2}+\sigma}}\Bigr), \quad t >t_0 ,
\end{align}
for some $\sigma>0$. Moreover, for arbitrarily $\varepsilon>0$ small, $\lambda_0$ has the expansion
\begin{align*}
\lambda_0(t) &= \frac{c_0}{\sqrt{\log t}}
+ O\Bigl( \frac{1}{(\log t)^{\frac{3}{2}-\varepsilon}}\Bigr),
\\
\dot\lambda_0(t) &= -\frac{c_0}{2 t (\log t)^{3/2}}
+ O\Bigl(   \frac{1}{t (\log t)^{\frac{5}{2} -\varepsilon} }  \Bigr),
\\
|\ddot \lambda_0(t) | & \leq  \frac{C}{t^2 (\log t)^{3/2}} ,
\end{align*}
as $  t \to \infty$.
\end{prop}

We will prove this result in \S\ref{subsect-prop-lambda0}.

\ch{Once $\lambda_0$ is constructed in Proposition~\ref{prop-lambda0} we choose $\alpha_0$ so that \eqref{massU1} holds, by imposing
\begin{align}
\label{def-alpha0}
\alpha_0(t) \int_{\R^2} U(y) \chi_0\Bigl(\frac{\lambda_0(t) y}{\sqrt t}\Bigr)\,dy + \int_{\R^2} \varphi_{\lambda_0}(x,t)\,dx=8\pi, \quad t>t_0.
\end{align}
}

We note that by \eqref{expansion-mass}, \eqref{eq-lambda0} and \eqref{secondMtildeE} we get
\begin{align*}
\alpha_0(t)
&= O\Bigl( \frac{1}{t^{\frac{3}{2}+\sigma}}\Bigr)
\end{align*}
as $t\to\infty$.
A byproduct of the proof of Proposition~\ref{prop-lambda0} is that
\begin{align}
\label{bound-dt-mass-varphilambda0}
\left| \frac{d}{dt} \int_{\R^2} \varphi_{\lambda_0}dx\right|\leq \frac{C}{t^2} ,
\end{align}
and from this and \eqref{def-alpha0} we get
\begin{align}
\label{bound-dt-alpha0}
|\dot\alpha_0(t)|\leq  \frac{C}{t^2} .
\end{align}

As a corollary \ch{of Proposition~\ref{prop-lambda0}} we get:
\begin{corollary}
\label{cor-second-moment-Su10}
Let $\textbf{p}_0=(\lambda_0,\alpha_0,0)$
with
$\alpha_0$ defined by \eqref{massU1} and $\lambda_0$ be given by Proposition~\ref{prop-lambda0}. Then
\begin{align*}
\int_{\R^2} S(u_1(\textbf{p}_0)) |x-\xi|^2 dx
=  O \Bigl( \frac{1}{t^{\frac{3}{2}+\sigma}}\Bigr) ,
\end{align*}
for some $\sigma>0$.
\end{corollary}
\begin{proof}

Using Lemma~\ref{second-moment-u1}  we have
\begin{align*}
\int_{\R^2} S(u_1) |x-\xi|^2 dx
& =4 \int_{\R^2} \varphi_{\lambda_0} dx
-  \int_{\R^2} \tilde E(x-\xi,t;\lambda_0) |x-\xi|^2 dx
\\
& \quad
- \frac{\dot \alpha_0}{\lambda^2} \int_{\R^2} U \chi |x-\xi|^2 dx
- (1-\alpha_0) \int_{\R^2} E(x-\xi,t;\lambda_0) |x-\xi|^2dx
\\
&= O \Bigl( \frac{1}{t^{\frac{3}{2}+\sigma}}\Bigr) ,
\end{align*}
for some $\sigma>0$, since $\dot \alpha_0(t) = O( \frac{1}{t^2 \log t})$ and
\begin{align*}
\int_{\R^2} E(x-\xi,t;\lambda_0) |x-\xi|^2dx = O\Bigl( \frac{\lambda_0^2}{t}\Bigr)
\end{align*}
by \eqref{secondMtildeE} and a direct estimate for the remaining terms in $E$ (c.f. \eqref{defE}).
\end{proof}

\subsection{A further improvement of the approximation}

We introduce a correction $\phi_0^i(y) $, $y=\frac{x-\xi}{\lambda}$ in the inner approximation to eliminate the radial part of $S(u_1(\textbf{p})) $ (defined in \eqref{Su1}), which we define as
\begin{align}
\nonumber
S_0(u_1(\textbf{p}))
&=
- \frac{\dot{\alpha}}{\lambda^2} U(y) \chi
+ (\alpha-1) \frac{\dot\lambda}{\lambda^3} Z_0 \chi
+  \frac{(\alpha -1)}{2 t }  \frac{1}{\lambda^2} U
\nabla_z\chi_0 \cdot \frac{x-\xi}{\sqrt t}
\\
\nonumber
& \quad
+ \frac{2(\alpha-1)}{\lambda^3 t^{1/2} }
\nabla_z \chi_0 \cdot \nabla_y U
+ \frac{(\alpha-1) }{t} \Delta \chi_0 \frac{1}{\lambda^2} U
\ch{
-\frac{\alpha^2-1}{\lambda^3 \sqrt t} U \nabla_z \chi_0 \cdot \nabla_y \Gamma_0
- \frac{\alpha}{\lambda^3 \sqrt t} U \nabla_z \chi_0 \cdot \nabla_y \mathcal R }
\\
\nonumber
& \quad
- \frac{\alpha (\alpha-1) \chi}{\lambda^4}
\nabla_y \cdot ( U \nabla_y \Gamma_0)
+ \frac{\alpha^2 \chi ( 1-\chi)}{\lambda^4} U^2
- \frac{\alpha \chi}{\lambda^4} \nabla _y U \cdot \nabla_y \mathcal R.
\\
\label{S0u1}
&\quad
- \frac{4}{r} \partial_r \varphi_\lambda
- \nabla\cdot ( \varphi_\lambda \nabla v_0) - \nabla \cdot ( u_0 \nabla \psi_\lambda )
-  \nabla\cdot ( \varphi_\lambda \nabla \psi_\lambda).
\end{align}
\index{$S_0$}
With this definition
\begin{align*}
S(u_1) = S_0(u_1) +  \frac{\alpha}{\lambda^3} \dot\xi \cdot \nn_y U (y) \, \chi
+ \frac{\alpha}{\ch{\lambda^2} \sqrt t} U(y)
\dot\xi \cdot  \nabla \chi_0 ,
\end{align*}
and the terms not in $S(u_1)$ correspond to $ \frac{\alpha}{\lambda^3} \dot\xi \cdot \nn_y U (y) \, \chi
+ \frac{\alpha}{\ch{\lambda^2} \sqrt t} U(y)$ which are in mode 1.

Then we want $\phi_0^i$ to be an appropriate solution to the equation
\begin{align}
\label{eq-phii0}
L[\phi^i_0] + \lambda^4 S_0(u_1(\textbf{p}_0))(x,t) = c_0(t) W_2 \quad \text{in }\R^2,
\quad
x = \xi + \lambda y,
\end{align}
where $L$ is the linear operator \eqref{def-L},
$t>t_0$ is regarded as a parameter,
$W_2(y)$ is a fixed smooth radial function with compact support, and
\begin{align}
\label{defW2}
\int_{\R^2} W_2(y)dy=0, \quad
\int_{\R^2} W_2(y)|y|^2dy=1.
\end{align}
\index{$W_2$}

By Lemma~\ref{second-moment-u1} and Proposition~\ref{prop-lambda0}, the choice $\textbf{p} = \textbf{p}_0$ is so that \eqref{massU1}, \eqref{secondMomentSu1} hold. Since the difference between $S(u_1)$ and $S_0(u_1)$ contains terms in mode 1 only, we get from Corollary~\ref{cor-second-moment-Su10}
\begin{align}
\label{secondMomentS0u1}
\int_{\R^2} \lambda^4 S_0(u_1(\mathbf{p}_0))|y|^2 dy & = O \Bigl( \frac{1}{t^{\frac{3}{2}+\sigma}}\Bigr) .
\end{align}
In \eqref{eq-phii0} we select $c_0(t)$ such that
\begin{align*}
\int_{\R^2}
[ \lambda^4S_0(u_1(\textbf{p}_0)) + c_0(t) W_2] |y|^2dy = 0, \quad t>t_0
\end{align*}
and thanks to \eqref{secondMomentS0u1} we have
\begin{align}
\label{estc0}
|c_0(t)|\leq \frac{C}{t^{\frac{3}{2}+\sigma}} , \quad t>t_0 .
\end{align}

Note that we have
\[
\int_{\R^2} S_0(u_1(\textbf{p}_0)) d x =0,
\]
which follows from the constant mass in time of $u_1(\mathbf{p}_0)$ in \eqref{massU1} and the form of the operator $S_0$ \eqref{S0u1}.

We let $\phi^i_0$ be the solution
to \eqref{eq-phii0} constructed in Lemma~\ref{lemma-radial-L}.
\ch{By \eqref{est-gamma-6} and \eqref{est-Su1-inner}}
\begin{align}
\label{est-Phii0}
|\phi^i_0(y,t)|
\leq  \ch{\frac{C}{t}}
\frac{\log ( 1+|y|)}{1+|y|^4},
\end{align}
and
\begin{align*}
\int_{\R^2} \phi^i_0 (y,t)dy=0, \quad t>t_0.
\end{align*}

\subsection{Reformulation of the system}

In the outer problem \eqref{outer} we would like to separate the effect of the initial condition from the coupling $G(\phi^i,\varphi^o,\mathbf{p})$.

We take the initial condition in \eqref{outer} to be
\[
\varphi^o(\cdot,t_0)= \varphi_0^* ,
\]
and let $\ch{\varphi^*(x,t)}$ denote the solution of
\begin{align}
\label{eq-cond-in}
\left\{
\begin{aligned}
\partial_t \ch{\varphi^*}
&= \Delta \ch{\varphi^*}
-  \nabla_x \Gamma_0 \Big (\frac{x-\xi}{\lambda}\Big)
\cdot \nabla \ch{\varphi^*}
\quad \text{in }\R^2 \times (t_0,\infty)
\\
\ch{\varphi^*} (\cdot,t_0)&=\ch{\varphi^*_0}
\quad \text{in }\R^2.
\end{aligned}
\right.
\end{align}
The initial condition $\varphi^*_0(x)$  will be later used to prove the stability claimed in Theorem~\ref{teo1}. The topology for $\varphi_0^*$ will be specified later on.

Note that $\ch{\nabla_x \Gamma_0 (\frac{x-\xi}{\lambda})= -4 \frac{x-\xi}{|x-\xi|^2 +\lambda^2}}$ so that $\ch{\varphi^*} $ is a function of the parameters \ch{$\lambda,\xi$}. Therefore we will write $\ch{ \varphi^*}(x,t;\textbf{p})$ when convenient.

\medskip

We decompose
\begin{align}
\label{decompPhii}
\left\{
\begin{aligned}
\phi^i &= \phi^i_0 + \phi
\\
\varphi^o &= \ch{\varphi^*} + \varphi
\\
\textbf{p} &= \textbf{p}_0 + \textbf{p}_1
\end{aligned}
\right.
\end{align}
where
\begin{align}
\nonumber
\textbf{p}_0 = (\lambda_0,\alpha_0,0) , \quad
 \textbf{p}_1 = (\lambda_1,\alpha_1,\xi_1) ,
\end{align}
with $\lambda_0$ the function constructed in Proposition~\ref{prop-lambda0} and $\alpha_0$ chosen so that \ch{\eqref{massU1} holds}.

%

\medskip

We substitute the expressions for $\phi^i$, $\varphi^o$ and $\textbf{p}$ in
\eqref{decompPhii} into the equations \eqref{inner}, \eqref{outer},
\ch{and are led} to the following problem for  $\phi$, $\varphi$
\begin{align}
\label{inner3}
\left\{
\begin{aligned}
\lambda^2 \partial_t \phi
& = L [\phi]
+ B_0 [\phi]
+ E_2 \tilde \chi_2+ F_2 (\phi,\varphi,\textbf{p}_1,\ch{\varphi^*_0})\tilde \chi
\quad \text{in }\R^2 \times (t_0,\infty)
\\
\phi(\cdot,t_0) &=  \phi_0 \quad \text{in }\R^2
\end{aligned}
\right.
\end{align}
\begin{align}
\label{outer3}
\left\{
\begin{aligned}
\partial_t \varphi
&= \Delta \varphi - \ch{\nabla_x \Gamma_0(\frac{x-\xi}{\lambda})} \cdot \nabla \varphi
+ G_2(\phi,\varphi,\mathbf{p}_1,\ch{\varphi^*_0})
\quad \text{in }\R^2 \times (t_0,\infty)
\\
\varphi (\cdot,t_0) &=  0
\quad \text{in }\R^2 ,
\end{aligned}
\right.
\end{align}
\index{gluing 2:  \eqref{inner3}, \eqref{outer3}}
where \ch{$\tilde\chi$ is defined in \eqref{tilde-chi}},
\begin{align}
\nonumber
E_2 &= - \partial_t \phi^i_0 + B_0[\phi^i_0]
+ c_0(t) W_2
\end{align}
\index{$E_2$}
\begin{align}
\nonumber
F_2 (\phi,\varphi,\textbf{p}_1,\ch{\varphi^*_0})
&= F(\phi^i_0 + \phi ,\ch{ \varphi^*} + \varphi ,\textbf{p}_0+\textbf{p}_1)
+ \lambda^4 [ S_0( u_1( \textbf{p}_0 + \textbf{p}_1 ) ) - S_0( u_1( \textbf{p}_0) ) ]
\\
\label{def-F2}
& \quad
+\lambda \alpha  \dot\xi_1 \cdot \nn_y U (y) \, \chi
+ \frac{\alpha \ch{\lambda^2}}{ \sqrt t} U(y)
\dot\xi_1 \cdot  \nabla \chi_0
\end{align}
\index{$F_2$, RHS of the inner equation v.2}
\begin{align}
\label{def-G2}
G_2(\phi,\varphi,\mathbf{p}_1,\ch{\varphi^*_0})
&=  G(\phi^i_0 + \phi , \ch{\varphi^*} + \varphi ,\textbf{p}_0+\textbf{p}_1)
+ \lambda^{-4} E_2 (1-\tilde \chi_2)\chi
\end{align}
\index{$G_2$, RHS of the outer equation v.2}
\begin{align}
\nonumber
\tilde \chi_2(x,t) = \chi_0\Bigl(\frac{x-\xi}{t^{\frac{1}{2}-\delta}}\Bigr)  ,
\end{align}
\index{$\tilde\chi_2$}
$\delta>0$ is a small constant to be fixed later on, and $\chi_0$ is as in \eqref{chi0}.
We recall that $F$ and $G$ are defined in \eqref{def-F} and \eqref{def-G}.
\ch{The expressions for $F_2$ and $G_2$ depend on the initial condition $\varphi^*_0$ through $\varphi^*$ \eqref{eq-cond-in} and $\phi_0$. The role of $\phi_0$ will be clarified later on.}

By the estimate for $\ddot\lambda_0$ in Proposition~\ref{prop-lambda0} and \eqref{estc0} we get
\begin{align}
\label{est-E2}
|E_2(y,t)| \leq
\frac{C}{t^2 \ch{(\log t)^2}} \frac{\log(1+|y|)}{1+|y|^4}
+ \frac{C}{t^{\frac{3}{2}+\sigma}} |W_2(y)|,
\quad |y| \leq C \sqrt {t \log t}.
\end{align}
The reason that we introduce the cut-off $\tilde \chi_2$ is to achieve
\begin{align}
\nonumber
|E_2 \tilde \chi_2(y,t)|\leq \frac{C}{t^\nu (1+|y|)^{6+\sigma}} ,
\end{align}
if $\nu < 1 +2\delta - \frac{\sigma}{2}$. We will choose $\delta$ and $\sigma$ positive small numbers such that $2\delta-\frac{\sigma}{2}>0$ so that we can find $1<\nu< 1 +2\delta - \frac{\sigma}{2}$.

\subsection{Splitting the inner solution \texorpdfstring{$\phi$}{phi}}

We perform one more change in the formulation \eqref{inner3}, \eqref{outer3}, which consists in decomposing
\[
\phi = \phi_1 + \phi_2 .
\]
The function $\phi_1$ will solve an equation with part of the right hand side of \eqref{inner3}, which will be projected  so that it satisfies the zero second moment condition.

For any $h(y,t)$ with sufficient spatial decay we define
\begin{align}
\label{def-m0-m2}
m_0[h](t) =  \int_{\R^2} h(y,t)  dy ,\quad
m_2[h](t) = \int_{\R^2} h(y,t) |y|^2 dy ,
\end{align}
and
\begin{align*}
m_{1,j}[h](t) =
\int_{\R^2} h(y,t)y_j dy, \quad j=1,2,
\end{align*}
which denote the mass, second moment and center of mass of $h$.

Let $ W_0 \in C^\infty(\R^2)$ be radial with compact support such that
\index{$W_0$}
\[
\int_{\R^2} W_0 dy =1, \quad
\int_{\R^2}  W_0|y|^2 dy =0 .
\]
Let $W_{1,j}$, $j=1,2$ be a smooth functions with compact support and with the form $W_{1,j} (y) = \tilde W(|y|) y_j$ so that
\[
\int_{\R^2} W_{1,j}(y) y_j = 1.
\]
We recall that $W_2$ defined in \eqref{defW2}.

Then, $h - m_0[h]W_0$ has zero mass, $h-m_2[h]W_2$ has zero second moment, and $h - \ch{m_{1,1}}[h] W_{1,1} - \ch{m_{1,2}}[h]W_{1,2}$ has zero center of mass.


\medskip

We modify of the operator $B_0$ appearing in \eqref{inner3}, and defined in \eqref{defB0}.
The idea is to work with a variant of it, which coincides with it for radial functions, but for functions without radial part it is cutoff outside the region $|y|\lesssim \frac{\sqrt t}{\lambda}$.
More precisely, we decompose $\phi $ in a  radial part $[\phi]_{rad}$ defined
by
\begin{align}
\label{def-radial-part}
[\phi]_{rad}(\rho,t) = \frac 1{2\pi} \int_0^{2\pi}  \phi(\rho e^{i\theta}, t) d\theta
\end{align}
 and a term with no radial mode $\phi_1 = \phi - [\phi]_{rad}$.
We note that the other linear terms in the equation behave well with this decomposition.
Then we define
\begin{align}
\label{defB}
B[\phi] =  \lambda \dot \lambda ( 2 [\phi]_{rad} +  y \cdot \nabla [\phi]_{rad})
+ \lambda \dot \lambda ( 2\phi_1 +  y \cdot \nabla \phi_1)  \chi_0\Bigl( \frac{\lambda y}{5 \sqrt t} \Bigr)
\end{align}
where $\chi_0$ is a smooth cut-off in $\R$ with $\chi_0(s) = 1$ for $s\leq 1$ and $ \chi_0(s)=1$ for $s \geq 2$.

\medskip

With these definitions we introduce the following system for $\phi_1$, $\phi_2$,  $\varphi $, $\textbf{p}_1$,
\begin{align}
\label{inner4a}
\left\{
\begin{aligned}
\lambda^2 \partial_t \phi_1
& = L [\phi_1]
+ B [\phi_1]
+ F_3(\phi_1+\phi_2,\varphi,\textbf{p}_1,\ch{\varphi_0^*})
\\
& \quad
- m_0[F_3(\phi_1+\phi_2,\varphi,\textbf{p}_1,\ch{\varphi_0^*})] W_0
- m_2[F_3(\phi_1+\phi_2,\varphi,\textbf{p}_1,\ch{\varphi_0^*})] W_2
\\
& \quad
- m_{1,1}[F_3(\phi_1+\phi_2,\varphi,\textbf{p}_1,\ch{\varphi_0^*})] W_{1,1}
- m_{1,2}[F_3(\phi_1+\phi_2,\varphi,\textbf{p}_1,\ch{\varphi_0^*})] W_{1,2}
\\
& \qquad \qquad
\quad \text{in }\R^2 \times (t_0,\infty)
\\
\phi_1(\cdot,t_0) &= 0\quad \text{in }\R^2,
\end{aligned}
\right.
\end{align}
\begin{align}
\label{inner4b}
\left\{
\begin{aligned}
\lambda^2 \partial_t \phi_2
& = L [\phi_2]
+ B [\phi_2]
+ m_2[F_3(\phi_1+\phi_2,\varphi,\textbf{p}_1,\ch{\varphi_0^*})] W_2
\quad \text{in }\R^2 \times (t_0,\infty)
\\
\phi_2(\cdot,t_0) &=  \phi_0 \quad \text{in }\R^2,
\end{aligned}
\right.
\end{align}
\begin{align}
\label{outer4}
\left\{
\begin{aligned}
\partial_t \varphi
&= \Delta \varphi - \nabla \Gamma_0 \cdot \nabla \varphi
+ \ch{G_2(\phi_1+\phi_2,\varphi,\mathbf{p}_1,\varphi_0^*)}
\quad \text{in }\R^2 \times (t_0,\infty)
\\
\varphi (\cdot,t_0) &= 0
\quad \text{in }\R^2,
\end{aligned}
\right.
\end{align}
\index{gluing 3: \eqref{inner4a}, \eqref{inner4b}, \eqref{outer4}}
where
\begin{align}
\label{def-F3}
F_3(\phi,\varphi,\textbf{p}_1 ,\ch{\varphi_0^*})
= E_2 \tilde \chi_2 + F_2(\phi,\varphi,\textbf{p}_1 ,\ch{\varphi_0^*}) \tilde \chi,
\end{align}
\index{$F_3$, RHS of the inner equation v.3}

A solution $\phi_1$, $ \phi_2$, $\varphi$ to \eqref{inner4a}, \eqref{inner4b} and \eqref{outer4} gives a solution to the system \eqref{inner3}, \eqref{outer3} provided $\textbf{p}_1$ is such that the following equations are satisfied
\begin{align}
\label{eq-param}
\left\{
\begin{aligned}
0&=m_0[ F_3(\phi_1+\phi_2,\varphi,\textbf{p}_1,\ch{\varphi_0^*}) ]
\\
0&=m_{1,j}[ F_3(\phi_1+\phi_2,\varphi,\textbf{p}_1,\ch{\varphi_0^*}) ] , \quad j=1,2.
\end{aligned}
\right.
\end{align}

\hide{\color{red}
\todo{eliminate?}
In order for the first equation in  \eqref{eq-param} to be satisfied, we impose that
\begin{align*}
\lambda \dot \alpha_1(t) =
\frac{1}{\int_{\R^2} U \chi dy}
\int_{\R^2}
F_3^{(0)}(\phi_1+\phi_2,\varphi,\textbf{p}_1,\ch{\varphi_0^*}) dy ,
\end{align*}
where
\begin{align}
\nonumber
F_3^{(0)}(\phi,\varphi,\textbf{p}_1,\ch{\varphi_0^*})
&=
E_2 \tilde \chi_2
+ F(\phi^i_0 + \phi ,\tilde \varphi_0^* + \varphi ,\textbf{p}_0+\textbf{p}_1)
\\
\nonumber
& \quad
+
\lambda^4 [ S_0(u_1(\lambda_0+\lambda_1,\alpha_0,0)) - S_0(u_1(\textbf{p}_0)) ] \tilde \chi
\\
\nonumber
& \quad
- \alpha \alpha_1 \nabla_y \cdot ( U \nabla_y \Gamma_0 ) \chi
+\lambda \alpha  \dot\xi_1 \cdot
\nabla_y U (y) \, \chi
\\
\nonumber
& \quad + \tilde F_3(\phi,\varphi,\textbf{p}_1,\ch{\varphi_0^*}) ,
\end{align}
where \ch{$F$ is defined in \eqref{def-F} and $\tilde F_3$ is defined in \eqref{def-tildeF3}}.

We choose $\alpha_1$ so that $\alpha_1(t)\to 0$ as $t\to \infty$, that is,
\begin{align}
\label{eq-alpha1}
\alpha_1(t)
= - \int_t^\infty
\frac{1}{\ch{\lambda} \int_{\R^2} U \chi dy}
\int_{\R^2}
F_3^{(0)}(\phi_1+\phi_2,\varphi,\textbf{p}_1,\ch{\varphi_0^*}) dy ds.
\end{align}

The third equation in \eqref{eq-param} is satisfied if
\begin{align*}
\lambda \alpha \dot \xi_{1,j}(t)
= -\frac{1}{\int_{\R^2} \partial_{y_j} U y_j \chi dy }
\int_{\R^2} F_3^{(1,j)} (\phi_1+\phi_2,\varphi,\textbf{p}_1,\ch{\varphi_0^*}) y_j dy,
\end{align*}
where $\xi_1 = (\xi_{1,1},\xi_{1,2})$ and
\begin{align*}
F_3^{(1,j)} (\phi,\varphi,\textbf{p}_1,\ch{\varphi_0^*})
&=
E_2 \tilde \chi_2
+ F(\phi^i_0 + \phi ,\tilde \varphi_0^* + \varphi ,\textbf{p}_0+\textbf{p}_1)
\\
& \quad
+
\lambda^4 [ S_0(u_1(\lambda_0+\lambda_1,\alpha_0,0)) - S_0(u_1(\textbf{p}_0)) ] \tilde \chi
\\
& \quad
- \lambda^2 \dot  \alpha_1 U \chi
- \alpha \alpha_1 \nabla_y \cdot ( U \nabla_y \Gamma_0 ) \chi
\\
& \quad + \tilde F_3(\phi,\varphi,\textbf{p}_1,\ch{\varphi_0^*}) .
\end{align*}
We select $\xi_1$ given by the formula
\begin{align}
\label{eq-xi1}
\xi_{1,j}(t) = q_j -
\int_{t_0}^t \frac{1}{\lambda \alpha \int_{\R^2} \partial_{y_j} U  y_j \chi dy }
\int_{\R^2} F_3^{(1,j)} (\phi_1+\phi_2,\varphi,\textbf{p}_1,\ch{\varphi_0^*}) y_j dy ds ,
\end{align}
where $q = (q_1,q_2) \in \R^2$ is a fixed point (as in the statement of Theorem~\ref{teo1}).
}

\subsection{Mass and second moment}
In this section we derive some formulas for the mass and second moment appearing in the right hand side of \eqref{inner4a}.

In the computation of  $m_0[ F_3(\phi,\varphi,\textbf{p}_1 ,\varphi_0^*) ]$ and  $m_2[ F_3(\phi,\varphi,\textbf{p}_1 ,\varphi_0^*) ]$, the following formulas will be useful.


\begin{lemma}
\label{lemma-mass-Su1}
We have
\begin{align}
\nonumber
\int_{\R^2} S(u_1(\mathbf{p})) dx
&=  - \partial_t \int_{\R^2} u_0 d x - \partial_t \int_{\R^2} \varphi_\lambda dx
\\
\nonumber
& = -  \partial_t \Bigl\{ 8 \pi \alpha \Bigl[ 1+2\Upsilon \frac{\lambda^2}{t} \Bigr]+ \alpha e_1 \Bigl( \frac{\lambda^2}{t}\Bigr)  +  \int_{\R^2} \varphi_\lambda dx \Bigr\}
\end{align}
and
\begin{align}
\nonumber
\int_{\R^2}
( S(u_1(\mathbf{p}))-S(u_1(\mathbf{p}_0))) dx
& = -  \partial_t \Bigl\{  \alpha_1 \Bigl[ 8 \pi \Bigl( 1+2\Upsilon \frac{\lambda^2}{t}\Bigr)+e_1 \Bigl( \frac{\lambda^2}{t}\Bigr)   \Bigr]
+ 16 \pi \alpha_0 \Upsilon \frac{\lambda^2-\lambda_0^2}{t}
\\
\nonumber
& \quad
+ \alpha_0 \Bigl( e_1\Bigl(\frac{\lambda^2}{t}\Bigr)
-e_1\Bigl(\frac{\lambda_0^2}{t}\Bigr) \Bigr)
+ \int_{\R^2} (\varphi_\lambda-\varphi_{\lambda_0}) dx \Bigr\} ,
\end{align}
where $e_1(s)$ is defined by
\begin{align}
\label{massu0a}
\int_{\R^2} u_0 dx =
8 \pi \alpha \Bigl[ 1+2\Upsilon \frac{\lambda^2}{t} \Bigr] + \alpha e_1\Bigl( \frac{\lambda^2}{t}\Bigr) .
\end{align}
\end{lemma}
Recall that  $\Upsilon$ is given in \eqref{defUpsilon} and note that
\begin{align*}
e_1(s) = O(s^2) , \quad \text{as }s\to 0.
\end{align*}
\begin{proof}
For this we recall that (c.f. \eqref{Su})
\begin{align*}
S(u_1(\mathbf{p})) = - \partial_t u_0 - \partial_t \varphi_\lambda + \mathcal{E}(u_0+\varphi_\lambda) ,
\end{align*}
so
\begin{align}
\nonumber
\int_{\R^2} S(u_1(\mathbf{p})) dx
&=  - \partial_t \int_{\R^2} u_0 d x - \partial_t \int_{\R^2} \varphi_\lambda dx
\\
\nonumber
& = -  \partial_t \Bigl\{ 8 \pi \alpha \Bigl[ 1+2\Upsilon \frac{\lambda^2}{t} \Bigr]+ \alpha e_1 \Bigl( \frac{\lambda^2}{t}\Bigr)  +  \int_{\R^2} \varphi_\lambda dx \Bigr\} .
\end{align}
Therefore
\begin{align}
\nonumber
\int_{\R^2}
( S(u_1(\mathbf{p}))-S(u_1(\mathbf{p}_0))) dx
& = -  \partial_t \Bigl\{  \alpha_1 \Bigl[ 8 \pi \Bigl( 1+2\Upsilon \frac{\lambda^2}{t}\Bigr)+e_1 \Bigl( \frac{\lambda^2}{t}\Bigr)   \Bigr]
+ 16 \pi \alpha_0 \Upsilon \frac{\lambda^2-\lambda_0^2}{t}
\\
\nonumber
& \quad
+ \alpha_0 \Bigl( e_1\Bigl(\frac{\lambda^2}{t}\Bigr)
-e_1\Bigl(\frac{\lambda_0^2}{t}\Bigr) \Bigr)
+ \int_{\R^2} (\varphi_\lambda-\varphi_{\lambda_0}) dx \Bigr\}.
\end{align}
\end{proof}

\begin{lemma}
\label{lemma-second-m}
We have
\begin{align}
\nonumber
\lambda^4 m_2 & [S_0( u_1( \textbf{p}_0 + \textbf{p}_1 ) ) - S_0( u_1( \textbf{p}_0) )  ]
\\
\nonumber
& = - 32\pi \alpha_1
- \frac{\dot\alpha}{\lambda^2}
\int_{\R^2} U(\frac{x-\xi}{\lambda}) \chi_0(\frac{x-\xi}{\lambda}) |x-\xi|^2\,dx
+ \frac{\dot\alpha_0}{\lambda_0^2}
\int_{\R^2} U(\frac{x}{\lambda_0}) \chi(\frac{x}{\lambda_0}) |x|^2\,dx
\\
\nonumber
& \quad- 4 \Bigl[ \alpha e_1\Bigl(\frac{\lambda^2}{t}\Bigr) -  \alpha_0 e_1\Bigl(\frac{\lambda_0^2}{t} \Bigr)\Bigr]
- \Bigl[ \alpha e_2\Bigl(\frac{\lambda^2}{t}\Bigr) -  \alpha_0 e_2\Bigl(\frac{\lambda_0^2}{t} \Bigr)\Bigr]
\\
\nonumber
& \quad - \Bigl( \int_{\R^2} (\varphi_{\lambda}-\varphi_{\lambda_0})\,dx\Bigr)^2
\\
\nonumber
& \quad
-(1-\alpha) \int_{\R^2} E(x-\xi,t,\lambda)|x-\xi|^2\, dx
+(1-\alpha_0) \int_{\R^2} E(x,t,\lambda_0)|x|^2\, dx
\\
\nonumber
& \quad
- |\xi|^2  \int_{\R^2} S(u_1(\mathbf{p}_0))dx .
\end{align}
\end{lemma}
\begin{proof}
We have defined the second moment $m_2$ \eqref{def-m0-m2} integrating with respect to $y$. Note that
\[
\lambda^4
\int_{\R^2} f(y) |y|^2dy = \int_{\R^2} f\Bigl( \frac{x-\xi}{\lambda}\Bigr)|x-\xi|^2 dx ,
\]
and therefore
\begin{align*}
\lambda^4 m_2 [S_0( u_1( \textbf{p}_0 + \textbf{p}_1 ) ) - S_0( u_1( \textbf{p}_0) )  ]
&=
\lambda^4 \int_{\R^2} S_0( u_1( \textbf{p}_0 + \textbf{p}_1 ) ) (\xi + \lambda y) |y|^2 dy
\\
& \quad
- \lambda^4 \int_{\R^2} S_0( u_1( \textbf{p}_0 ) ) (\xi + \lambda y) |y|^2 dy
\\
&= \int_{\R^2} S_0( u_1( \textbf{p}_0 + \textbf{p}_1 ) ) (x) |x-\xi|^2 dy
\\
& \quad
-  \int_{\R^2} S_0( u_1( \textbf{p}_0 ) ) (x) |x-\xi|^2 dy.
\end{align*}

We have by Lemma~\ref{second-moment-u1},
\begin{align}
\nonumber
\int_{\R^2} S(u_1) |x-\xi|^2dx
&=
4 \int_{\R^2} \varphi_\lambda dx
-  \alpha \int_{\R^2} \tilde E(x-\xi,t;\lambda) |x-\xi|^2 dx
\\
\nonumber
& \quad
+ \int_{\R^2} \nabla \varphi_\lambda \, dx
\cdot \dot \xi
- \frac{\dot \alpha}{\lambda^2} \int_{\R^2} U \chi |x-\xi|^2 dx
\\
\nonumber
& \quad
- (1-\alpha) \int_{\R^2} E (x-\xi,t;\lambda) |x-\xi|^2dx
\\
\label{Su1-2a}
& \quad
+ 4 \Bigl( \int_{\R^2} u_0 + \int_{\R^2} \varphi_\lambda \Bigr)
\Bigl( 1 - \frac{1}{8\pi} \int_{\R^2} u_0
 - \frac{1}{8\pi} \int_{\R^2} \varphi_\lambda \Bigr).
\end{align}
where $E$, $\tilde E$ are defined in \eqref{defE}, \eqref{defTildeE}.
Let
\begin{align*}
m = \int_{\R^2} (u_0 + \varphi_\lambda) dx,
\quad \delta m = m - 8 \pi.
\end{align*}
Since
\begin{align*}
\int_{\R^2} (u_0 + \varphi_{\lambda_0}) dx = 8 \pi,
\end{align*}
by \eqref{massU1}, we have
\begin{align*}
\delta m= \int_{\R^2} (\varphi_\lambda - \varphi_{\lambda_0})\,dx.
\end{align*}
Replacing $m$ in \eqref{Su1-2a} we get
\begin{align}
\nonumber
\int_{\R^2} S(u_1(\mathbf{p})) |x-\xi|^2dx
&=
32 \pi
- 4 \int_{\R^2} u_0 dx
- \frac{1}{2\pi}(\delta m)^2
-  \alpha \int_{\R^2} \tilde E(x-\xi,t;\lambda) |x-\xi|^2 dx
\\
\nonumber
& \quad
+ \int_{\R^2} \nabla \varphi_\lambda \, dx \cdot \dot \xi
- \frac{\dot \alpha}{\lambda^2} \int_{\R^2} U \chi |x-\xi|^2 dx
\\
\label{Su1-3a}
& \quad
- (1-\alpha) \int_{\R^2} E (x-\xi,t;\lambda) |x-\xi|^2dx .
\end{align}

Also under \eqref{conditions} we have by
\eqref{secondMtildeE}:
\begin{align}
\label{secondMtildeE-2a}
\int_{\R^2}
\tilde E |x-\xi|^2 dx = - 64 \pi \Upsilon \frac{\lambda^2}{t} +e_2\Bigl( \frac{\lambda^2}{t}\Bigr),
\end{align}
where
\begin{align*}
e_2(s) = O(s^2) , \quad \text{as }s\to 0.
\end{align*}

Combining \eqref{Su1-3a}, \eqref{massu0a} and \eqref{secondMtildeE-2a} we get
\begin{align*}
\int_{\R^2} S(u_1(\mathbf{p})) |x-\xi|^2dx
&=
32 \pi (1-\alpha)
- \frac{1}{2\pi}(\delta m)^2
- \frac{\dot \alpha}{\lambda^2} \int_{\R^2} U \chi |x-\xi|^2 dx
\\
& \quad
+ \int_{\R^2} \nabla \varphi_\lambda \, dx \cdot \dot \xi
- (1-\alpha) \int_{\R^2} E (x-\xi,t;\lambda) |x-\xi|^2dx
\\
& \quad
-4 \alpha e_1 \Big( \frac{ \lambda^2 }{t} \Big)
- \alpha  e_2  \Big( \frac{ \lambda^2 }{t} \Big) .
\end{align*}
We can apply this formula to $\mathbf{p} = \mathbf{p}_0$ and get
\begin{align*}
\int_{\R^2} S(u_1(\mathbf{p}_0)) |x|^2dx
&=
32 \pi (1-\alpha_0)
- \frac{\dot \alpha_0}{\lambda_0^2} \int_{\R^2} U(\frac{x}{\lambda_0}) \chi |x|^2 dx
\\
& \quad
- (1-\alpha_0) \int_{\R^2} E (x,t;\lambda_0) |x|^2dx
\\
& \quad
-4 \alpha_0 e_1 \Big( \frac{ \lambda_0^2 }{t} \Big)
- \alpha_0  e_2  \Big( \frac{ \lambda_0^2 }{t} \Big) .
\end{align*}
Note that
\begin{align*}
\int_{\R^2} S_0(u_1(\mathbf{p}_0))|x-\xi|^2dx
&= \int_{\R^2} S_0(u_1(\mathbf{p}_0))|x|^2dx
+ |\xi|^2  \int_{\R^2} S_0(u_1(\mathbf{p}_0))dx
\\
&= \int_{\R^2} S_0(u_1(\mathbf{p}_0)|x|^2dx
+ |\xi|^2  \int_{\R^2} S(u_1(\mathbf{p}_0)dx
\end{align*}
because
\begin{align*}
 \int_{\R^2} S_0(u_1(\mathbf{p}_0) x_j dx =0.
\end{align*}
Therefore,
\begin{align}
\nonumber
\int_{\R^2} & [ S(u_1(\mathbf{p}))-S(u_1(\mathbf{p}_0))] |x-\xi|^2dx
\\
\nonumber
& = - 32\pi \alpha_1
- \frac{\dot\alpha}{\lambda^2}
\int_{\R^2} U(\frac{x-\xi}{\lambda}) \chi_0(\frac{x-\xi}{\lambda}) |x-\xi|^2\,dx
+ \frac{\dot\alpha_0}{\lambda_0^2}
\int_{\R^2} U(\frac{x}{\lambda_0}) \chi(\frac{x}{\lambda_0}) |x|^2\,dx
\\
\nonumber
& \quad- 4 \Bigl[ \alpha e_1\Bigl(\frac{\lambda^2}{t}\Bigr) -  \alpha_0 e_1\Bigl(\frac{\lambda_0^2}{t} \Bigr)\Bigr]
- \Bigl[ \alpha e_2\Bigl(\frac{\lambda^2}{t}\Bigr) -  \alpha_0 e_2\Bigl(\frac{\lambda_0^2}{t} \Bigr)\Bigr]
\\
\nonumber
& \quad - \Bigl( \int_{\R^2} (\varphi_{\lambda}-\varphi_{\lambda_0})\,dx\Bigr)^2
\\
\nonumber
& \quad
-(1-\alpha) \int_{\R^2} E(x-\xi,t,\lambda)|x-\xi|^2\, dx
+(1-\alpha_0) \int_{\R^2} E(x,t,\lambda_0)|x|^2\, dx
\\
\nonumber
& \quad
- |\xi|^2  \int_{\R^2} S(u_1(\mathbf{p}_0)dx .
\end{align}

\end{proof}

\medskip

\section{Proof of Theorem~\ref{teo1}}
\label{sect-proof-existence}

Next we define norms, which are suitably adapted to the  terms  in  the inner linear problems \eqref{inner4a}, \eqref{inner4b}.
Let us write the linearized versions of these problems as
\begin{align}
\label{linear-000}
\left\{
\begin{aligned}
\lambda^2 \partial_t \phi &= L[\phi] + B[\phi]+  h(y,t)  \quad \text{in }\R^2 \times (t_0,\infty),
\\
\phi(\cdot,t_0) &= 0 \quad \text{in }\R^2 .
\end{aligned}
\right.
\end{align}
Given positive numbers $\nu$, \ch{$p$}, $\epsilon$ and $m\in \R$, we let
\begin{align}
\label{normh0}
\|h\|_{0,\nu,m,p,\epsilon} = & \inf K \quad\text{such that }
\\
\nonumber
& |h(y,t)| \leq
\frac{K}{t^{\nu} (\log t)^{m}}
\ch{\frac{  1  }{(1+|y|)^p} }
\begin{cases}
\displaystyle
1
& |y|\leq \sqrt { t  \log t },
\vspace{1mm}
\\
\displaystyle
\frac{\ch{(t \log t)^{\epsilon/2}}}{|y|^\epsilon}
& |y|\geq  \sqrt { t  \log t } .
\end{cases}
\end{align}
\index{norm for the inner problem $\norm\ \norm_{0,\nu,m,p,\epsilon}$}
We also defie
\begin{align}
\nonumber
\| \phi \|_{1,\nu,m,p,\epsilon} = & \inf K \quad\text{such that }
\\
\nonumber
& | \phi(y,t)| + (1+|y|) |\nabla_y \phi(y,t)| \leq
\frac{K}{t^{\nu} (\log t)^{m}}
\ch{\frac{  1  }{(1+|y|)^p} }
\begin{cases}
\displaystyle
1
& |y|\leq \sqrt { t  \log t },
\vspace{1mm}
\\
\displaystyle
\frac{\ch{(t \log t)^{\epsilon/2}}}{|y|^\epsilon}
& |y|\geq  \sqrt { t  \log t } .
\end{cases}
\end{align}
\index{norm for the inner problem $\norm\ \norm_{1,\nu,m,p,\epsilon}$}

We develop a solvability theory of problem \eqref{linear-000} that involves uniform space-time bounds in terms of the above norms.
We will establish two results: one in which
the solution ``loses" one power of $t$ on bounded sets with respect to the time-decay of $h$, under radial symmetry and the condition of spatial average 0 at all times. Our second result states that for a general $h$
this loss is only $t^{\frac 12} $ if in addition the center of  mass and second-moment of $h$ are zero at all times.



For the first result we introduce a parameter in the problem in order to get a fast decay of the solution:
\begin{align}
\label{linear-000a}
\left\{
\begin{aligned}
\lambda^2 \partial_t \phi &= L[\phi] + B[\phi]+  h(y,t)  \quad \text{in }\R^2 \times (t_0,\infty),
\\
\phi(\cdot,t_0) &= c_1 \tilde Z_0 \quad \text{in }\R^2 ,
\end{aligned}
\right.
\end{align}
where $\tilde Z_0$ is defined as
\begin{align}
\nonumber
\tilde Z_0(\rho ) &= ( Z_0(\rho) - m_{Z_0} U ) \chi_0\Bigl( \frac{\rho}{3\lambda(t_0)\sqrt{t_0} }\Bigr),
\end{align}
where $m_{Z_0}$ is such that
\begin{align*}
\int_{\R^2} \tilde Z_0 = 0.
\end{align*}

\begin{prop}
\label{prop-linear-without-second-moment-1}
Assume \eqref{conditions}.
Let $\sigma>0$, $\epsilon>0$ with $\sigma+\epsilon<2$ and  $1<\nu<\frac{7}{4}$. Let $0<q<1$.
Then there exists a number $C>0$ such that for $t_0$ sufficiently large and all radially symmetric $h=h(|y|,t)$ with $\|h\|_{0,\nu,m,6+\sigma,\epsilon}<\infty$ and
\begin{align*}
\int_{\R^2} h(y,t)dy &= 0
,\quad \text{for all } t>t_0 ,
\end{align*}
there exists  $c_1 \in \R$ and  solution $\phi(y,t) = \mathcal T^{i,2}_{\textbf{p}} [h]$ of problem
\ch{\eqref{linear-000a}}
that defines a linear operator of $h$ and
satisfies the estimate
\begin{align*}
\|  \phi \|_{1,\nu-1,m+q-1,4,2+\sigma+\epsilon}  \leq \frac{C}{(\log t_0)^{1-q}}  \|h\|_{0,\nu,m,6+\sigma,\epsilon}.
\end{align*}
Moreover $c_1$ is a linear operator of $h$ and
\begin{align*}
|c_1| & \leq  C \frac{1 }{ t_0^{\nu-1} (\log t_0)^{m}}  \|h\|_{0,\nu,m,6+\sigma,\epsilon}.
\end{align*}
\end{prop}

\begin{prop}
\label{prop-linear-with-second-moment-1}
Assume \eqref{conditions}.
Let $0<\sigma<1$, $\epsilon>0$ with $\sigma+\epsilon<\frac{3}{2}$
and  $1<\nu< \min( 1+\frac{\epsilon}{2},3-\frac{\sigma}{2}, \frac{5}{4})$.
Let $0<q<1$.
Then there is $C$ such that for \ch{$t_0$} large the following holds.
Suppose that  $h$ satisfies  $\|h\|_{0,\nu,m,6+\sigma,\epsilon}<\infty$ and
\begin{align}
\nonumber
\int_{\R^2} h(y,t)dy&=0 , \quad
\int_{\R^2} h(y,t)|y|^2dy=0,
\\
\nonumber
\int_{\R^2} h(y,t) y_j dy&=0 , \quad j=1,2 ,\quad \text{for all } t>t_0 .
\end{align}
Then there exists a solution $\phi(y,t)= \mathcal T^{i,1}_{\textbf{p}} [h]$ of problem
\ch{\eqref{linear-000}}
that defines a linear operator of $h$ and
satisfies
\begin{align}
\nonumber
\|  \phi \|_{1,\nu-\frac 12 ,m+\frac {q-1}{2},4,2+\sigma+\epsilon} \leq C \|h\|_{0,\nu,m,6+\sigma,\epsilon}.
\end{align}
\end{prop}

The proof of the Propositions~\ref{prop-linear-without-second-moment-1} and \ref{prop-linear-with-second-moment-1} is divided into different steps and presented in sections~\ref{sectLT1}--\ref{sect-linear-nonradial}.

Next we consider the linear outer problem:
\begin{align}
\label{outer1b}
\left\{
\begin{aligned}
\partial_t \phi^o
&= L^o  [\phi^o] +  g(x,t), \quad \text{in }\R^2 \times(t_0,\infty)
\\
\phi^o(\cdot,t_0)&=\phi^o_0, \quad \text{in }\R^2 .
\end{aligned}
\right.
\end{align}
where
\begin{align}
\nonumber
L^o [\varphi ] :=\Delta_x \varphi- \nabla_x\Big[ \Gamma_0 \Big (\frac{x-\xi(t)}{\lambda(t)}\Big) \Big]\cdot \nabla_x \vp .
\end{align}

For a given function $g(x,t) $ we consider the norm $\| g \|_{**,o}$ defined as the least $K\ge 0$ such that for all
$(x,t)\in \R^2\times (t_0,\infty)$
\begin{align}
\label{norm-outer}
|g(x,t)|  \le   K\frac 1{t^a (\log t)^\beta}  \frac 1 { 1+ |\zeta |^b} , \quad \zeta = \frac{x-\xi(t)}{\sqrt{t}}.
\end{align}
\index{norm of the outer RHS, $\norm \ \norm_{**,o}$}
Accordingly, we consider for a function $\phi^o(x,t)$ the norm  $\| \phi \|_{*,o}$ defined as the least $K\ge 0$ such that
\begin{align}
\label{norm-phio}
|\phi^o(x,t)| +  (\la +|x-\xi|) |\nn_x \phi^o (x,t)| \le    K\frac 1{t^{a-1} (\log t)^\beta}  \frac 1 { 1+ |\zeta |^{b}} , \quad \zeta = \frac{x-\xi}{\sqrt{t}}
\end{align}
\index{norm of the outer solution, $\norm \ \norm_{*,o}$}
for all $(x,t)\in \R^2\times (t_0,\infty)$.


We assume that  the parameters $a,b,\beta$  satisfy the constraints
\begin{align}
\label{cond2b}
1<a < 4,\quad  2<b< 6, \quad  a< 1+ \frac b2, \quad
\quad
\beta \in \R.
\end{align}

\begin{prop}
\label{thmOuter}
Assume that the parameter functions $\textbf{p} =(\lambda, \alpha,\xi)$ satisfy conditions \eqref{conditions} and the numbers  $a,b,\beta$ satisfy \eqref{cond2b}.
Then there is a constant $C$ so that for $t_0$ sufficiently large and
for $\|g\|_{**,o}<\infty$,
there exists a solution  $\phi^o= \mathcal T ^o_{\textbf{p}}[g ]$  of \eqref{outer1b} with $\phi^o_0=0$,
which defines a linear operator of  $g$ and satisfies
\begin{align*}
\|\phi^o\|_{*,o}
\leq  C \|g\|_{ **,o} .
\end{align*}
\end{prop}

For the initial condition $\phi_0^o$ in \eqref{outer1b} we consider the norm $\| \varphi_0^o \|_{*,b}$ defined as
\begin{align}
\nonumber
\| \phi^o_0 \|_{*,b} =& \inf K \quad \text{such that }
\\
\nonumber
& |\phi^o_0(x)| + (\lambda(t_0) +|x| )  |\nabla_x \phi^o(x)| \leq \frac{K}{( 1+ \frac{|x|}{\sqrt{t}} )^b} .
\end{align}
\index{norm initial condition, v1, $\norm \ \norm_{*,b}$}
We have an estimate for the solution of \eqref{outer1b} with $g=0$ and $\| \phi^o_0 \|_{*,b}<\infty$.
\begin{prop}
\label{thmOuter-ci}
Assume that the parameter functions $\textbf{p} =(\lambda, \alpha,\xi)$ satisfy conditions \eqref{conditions} and the numbers  $a,b,\beta$ satisfy \eqref{cond2b}.
Then there is a constant $C$ so that for $t_0$ sufficiently large and for $\| \phi^o_0 \|_{*,b}<\infty$ there exists a solution  $\phi^o$  of \eqref{outer1b}, which defines a linear operator of $ \phi^o_0$ and satisfies
\begin{align*}
\|\phi^o\|_{*,o}
\leq  C t_0^{a-1} (\log t_0)^\beta
\| \phi^o_0 \|_{*,b} .
\end{align*}
\end{prop}

The proofs of Propositions~\ref{thmOuter} and \ref{thmOuter-ci} are contained in Section~\ref{sect-thmOuter}.


%

\medskip

In what follows we work with $\textbf{p}_1 $ of the form
\[
\textbf{p}_1 = (0, \alpha_1,\xi_1)  ,
\]
that is, we take $\lambda = \lambda_0$, $\alpha = \alpha_0+\alpha_1$, $\xi = \xi_1$, where $\lambda_0$ and $\alpha_0$ have been fixed in Section~\ref{section-lambda0-alpha0},
and we write
\[
\textbf{p} = \textbf{p}_0+\textbf{p}_1.
\]

Next we define suitable operators that allow us to formulate
the system of equations \eqref{inner4a}, \eqref{inner4b}, \eqref{outer4}, and \eqref{eq-param} as a fixed point problem.
We let
\begin{align*}
\mathcal A_{i1}[\phi_1,\phi_2,\varphi, \textbf{p}_1]
&= \mathcal T^{i,1}_{\textbf{p}} \Bigl[   F_3(\phi_1+\phi_2,\varphi,\textbf{p}_1,\ch{\varphi_0^*})
\\
& \quad
- m_0[F_3(\phi_1+\phi_2,\varphi,\textbf{p}_1,\ch{\varphi_0^*})] W_0
- m_2[F_3(\phi_1+\phi_2,\varphi,\textbf{p}_1,\ch{\varphi_0^*})] W_2
\\
& \quad
- m_{1,1}[F_3(\phi_1+\phi_2,\varphi,\textbf{p}_1,\ch{\varphi_0^*})] W_{1,1}
- m_{1,2}[F_3(\phi_1+\phi_2,\varphi,\textbf{p}_1,\ch{\varphi_0^*})] W_{1,2} \Bigr]
\end{align*}
\begin{align*}
\mathcal A_{i2}[\phi_1,\phi_2,\varphi,\textbf{p}_1,\ch{\varphi_0^*}]
&=
\mathcal T^{i,2}_{\textbf{p}} \left[
 m_2[F_3(\phi_1+\phi_2,\varphi,\textbf{p}_1),\ch{\varphi_0^*}] W_2
\right]
\end{align*}
\begin{align*}
A_o[\phi_1,\phi_2,\varphi,\textbf{p}_1, \ch{\varphi_0^*} ]
&=
\ch{
\mathcal T^o_{\textbf{p}}
[ G_2(\phi_1+\phi_2,\varphi,\textbf{p}_1, \varphi_0^*)] } .
\end{align*}
Then the equations \eqref{inner4a}, \eqref{inner4b},\eqref{outer4} can be written as
\begin{align*}
\phi_1 &= \mathcal A_{i1}[\phi_1,\phi_2,\varphi,\textbf{p}_1,
\ch{\varphi_0^*}]
\\
\phi_2 &= \mathcal A_{i2}[\phi_1,\phi_2,\varphi,\textbf{p}_1,
\ch{\varphi_0^*}]
\\
\varphi &=  \mathcal A_o[\phi_1,\phi_2,\varphi,\textbf{p}_1,
\ch{\varphi_0^*}.
]
\end{align*}

Next we consider the equations \eqref{eq-param}, that is, $m_0[F_3(\phi_1+\phi_2,\varphi,\textbf{p}_1,\ch{\varphi_0^*})] (t)\equiv 0$ and $m_{1,j}[F_3(\phi_1+\phi_2,\varphi,\textbf{p}_1,\ch{\varphi_0^*})] (t) \equiv 0$.
By \eqref{def-F3} and  \eqref{def-F2}
\begin{align*}
m_0[F_2 (\phi,\varphi,\textbf{p}_1, \varphi^*_0 ) \tilde \chi ]
&= \lambda_0^4 m_0[S_0( u_1( \textbf{p}_0 + \textbf{p}_1 ) ) - S_0( u_1( \textbf{p}_0) )  ]
+m_0[ E_2 \tilde \chi_2]
\\
& \quad
+ m_0[ F(\phi^i_0 + \phi ,  \varphi^* + \varphi ,\textbf{p}_0+\textbf{p}_1) \tilde \chi]
\\
& \quad
+ \lambda_0^4 m_0[ ( S_0( u_1( \textbf{p}_0 + \textbf{p}_1 ) ) - S_0( u_1( \textbf{p}_0) )   ) (\tilde \chi-1) ] ,
\end{align*}
and using Lemma~\ref{lemma-mass-Su1},
\begin{align*}
m_0[F_2 (\phi,\varphi,\textbf{p}_1, \varphi^*_0 ) \tilde \chi ]
&=  -  \lambda_0^2 \partial_t \Bigl\{  \alpha_1 \Bigl[ 8 \pi \Bigl( 1+2\Upsilon \frac{\lambda_0^2}{t}\Bigr)+e_1 \Bigl( \frac{\lambda_0^2}{t}\Bigr)   \Bigr]
+m_0[ E_2 \tilde \chi_2]
\\
& \quad
+ m_0[ F(\phi^i_0 + \phi ,  \varphi^* + \varphi ,\textbf{p}_0+\textbf{p}_1) \tilde \chi]
\\
& \quad
+ \lambda_0^4 m_0[ ( S_0( u_1( \textbf{p}_0 + \textbf{p}_1 ) ) - S_0( u_1( \textbf{p}_0) )   ) (\tilde \chi-1) ] .
\end{align*}
This motivates the definition
\begin{align}
\nonumber
& \mathcal A_{p,\alpha_1} [\phi_1,\phi_2,\varphi,\textbf{p}_1,\ch{\varphi_0^*}]
\\
\nonumber
& \quad
 = -
\frac{1}{8\pi( 1+2\Upsilon\frac{\lambda_0^2}{t}) + e_1(\frac{\lambda_0^2}{t})}
\int_t^\infty
\frac{1}{\lambda_0^2}
\Bigl\{
m_0[ E_2 \tilde \chi_2] (s)
+ m_0[ F(\phi^i_0 + \phi ,  \varphi^* + \varphi ,\textbf{p}_0+\textbf{p}_1) \tilde \chi](s)
\\
\label{def-Aalpha}
& \qquad\qquad\qquad\qquad\qquad\qquad\qquad\qquad
+ \lambda_0^4 m_0[ ( S_0( u_1( \textbf{p}_0 + \textbf{p}_1 ) ) - S_0( u_1( \textbf{p}_0) )   ) (\tilde \chi-1) ] (s)
\Bigr\}ds
\end{align}

Similarly, by \eqref{def-F3} and \eqref{def-F2}
\begin{align*}
m_{1,j}[F_2 (\phi,\varphi,\textbf{p}_1, \varphi^*_0 ) \tilde \chi ]
&=
\lambda_0 \alpha\dot \xi_{1,j} \int_{\R^2} \partial_{y_j}U(y) y_j \tilde \chi dy
+  \frac{\alpha \lambda_0^2}{\sqrt t} \dot \xi_{1,j}
\int_{\R^2} U(y) \partial_{z_j} \chi_0(\frac{\lambda y}{\sqrt t}) y_j dy
\\
& \quad
+m_{1,j}[ E_2 \tilde \chi_2] + m_{1,j}[ F(\phi^i_0 + \phi ,  \varphi^* + \varphi ,\textbf{p}_0+\textbf{p}_1) \tilde \chi] .
\end{align*}
This motivates the definition
\begin{align}
\nonumber
& \mathcal A_{p,\xi_1} [\phi_1,\phi_2,\varphi,\textbf{p}_1,\ch{\varphi_0^*}]
\\
\nonumber
& \quad =
\int_t^\infty
\frac{1}{ \lambda_0 \alpha \int_{\R^2} \partial_{y_j}U(y) y_j \tilde \chi dy}
\Bigl\{
\frac{\alpha \lambda_0^2}{\sqrt t} \dot \xi_{1,j}
\int_{\R^2} U(y) \partial_{z_j} \chi_0(\frac{\lambda y}{\sqrt t}) y_j dy
\\
\label{Apxi1}
& \qquad  \qquad  \qquad
+m_{1,j}[ E_2 \tilde \chi_2](s) + m_{1,j}[ F(\phi^i_0 + \phi ,  \varphi^* + \varphi ,\textbf{p}_0+\textbf{p}_1) \tilde \chi](s) \Big\}ds
\end{align}

Then we define $\mathcal A_p$ by
\begin{align}
\label{def-Ap}
\mathcal A_p [\phi_1,\phi_2,\varphi,\textbf{p}_1,\ch{\varphi_0^*}]
= ( 0,  \mathcal A_{p,\alpha_1} [\phi_1,\phi_2,\varphi,\textbf{p}_1,\ch{\varphi_0^*}],
\mathcal A_{p,\xi_1} [\phi_1,\phi_2,\varphi,\textbf{p}_1,\ch{\varphi_0^*}]).
\end{align}
Then
\[
\mathbf{p}_1 = \mathcal A_p [\phi_1,\phi_2,\varphi,\textbf{p}_1,\ch{\varphi_0^*}]
\]
is equivalent to the equations \eqref{eq-param}.

We write
\[
\vec \phi = (\phi_1,\phi_2,\varphi,\textbf{p}_1) ,
\]
and
\[
\mathcal A[\vec \phi] =
(\mathcal A_{i1}[\vec \phi,\ch{\varphi_0^*}] ,
\mathcal A_{i2}[\vec \phi,\ch{\varphi_0^*}] ,
\mathcal A_{o}[\vec \phi,\ch{\varphi_0^*}] ,
\mathcal A_{p}[\vec \phi,\ch{\varphi_0^*}] ),
\]
and the objective is to find $\vec\phi$ such that
\[
\vec\phi = \mathcal A[\vec \phi].
\]
The operator $\mathcal A$ depends on the initial condition $\varphi_0^*$ appearing in the parabolic problem \eqref{eq-cond-in}, and we will stress its dependence later on when proving the stability assertion in Theorem~\ref{teo1}.

We define the spaces on which we will consider the operator $\mathcal A$ to set up the fixed point problem.
For certain choices of constants $\nu$,  $q$, $\sigma$, $\epsilon$, $a$, $b$, $\beta$, $\gamma$, $\Upsilon$
that we will make precise later,
we let
\begin{align*}
X_{i} = \Bigl\{ \, \phi \in L^\infty( \R^2 \times (t_0,\infty)  ) \ &| \ \nabla_y \phi \in L^\infty( \R^2 \times (t_0,\infty)  ) , \, \|\phi\|_{1,\nu-\frac{1}{2},\frac{q-1}{2},4,2+\sigma+\epsilon}<\infty,
\\
& \quad
\int_{\R^2} \phi(y,t)dy=0, \
\int_{\R^2} \phi(y,t) y dy = 0,  \
\ t>t_0 \, \Bigr\} ,
\end{align*}
\begin{align*}
X_o = \{ \varphi \in  L^\infty( \R^2 \times (t_0,\infty)  ) \  | \ \nabla_y \phi \in L^\infty( \R^2 \times (t_0,\infty)  ) , \,
\|\varphi\|_{*,o}<\infty
\,
\} ,
\end{align*}
\begin{align*}
X_p = \{ \,  ( 0,\alpha_1,\xi_1) \in C^1([t_0,\infty))\   |\
\|\alpha_1\|_{C^1,\nu+\frac{1}{2},\Upsilon}<\infty, \
\|\xi_1\|_{C^1,\gamma,0}<\infty\,
\}
\end{align*}
where the norms $\|\phi\|_{1,\nu-\frac{1}{2},\frac{q-1}{2},4,2+\sigma+\epsilon}$ and   $\|\varphi\|_{*,o}$ are defined in \eqref{normh0}, \eqref{norm-outer} and  $\|\xi_1\|_{C^1,\mu,m}$ is defined by
\begin{align}
\nonumber
\|g\|_{C^0,\mu,m} = \sup_{t\geq t_0} \,  t^\mu (\log t)^m |g(t)|.
\end{align}
\begin{align}
\nonumber
\| g \|_{C^1,\mu,m}
= \| g \|_{C^0,\mu,m} +  \|\dot g\|_{C^0,\mu+1,m}.
\end{align}
\index{norm for $\alpha_1$, $\xi_1$, $\norm \ \norm_{C^1,\mu,m} $}
for a function $g \in C^1([t_0 , \infty))$.

We choose in the definition of the outer norm \eqref{norm-phio}
\begin{align}
\label{cond-nu-a1}
a =  \nu +\frac{5}{2} , \quad 2\nu+3<b<6, \quad \beta < \frac{1+q}{2}.
\end{align}
With these choices we see that \eqref{cond2b} are satisfied.
Also $\nu$ will be in the range $1<\nu<\frac{3}{2}$ so the interval for $b$ is not empty in \eqref{cond-nu-a1}.

We use the following notation: for
$\textbf{p}_1= (0,\alpha_1,\xi_1)$
\begin{align*}
\|\textbf{p}_1 \|_{X_p}
=
\|\alpha_1\|_{C^1,\nu+\frac{1}{2},\Upsilon}
+ \|\xi_1\|_{C^1,1+\gamma,0},
\end{align*}
\index{norm $\norm \textbf{p}_1 \norm_{X_p}$}
and for
$ \vec \phi = ( \phi_1,\phi_2, \varphi, \textbf{p}_1) $
\begin{align}
\label{norma}
\| \vec \phi \|_X =
\|\phi_1\|_{1,\nu-\frac{1}{2},\frac{q-1}{2},4,2+\sigma+\epsilon}
+
\|\phi_2\|_{1,\nu-\frac{1}{2},\frac{q-1}{2},4,2+\sigma+\epsilon}
+ \|\varphi\|_{*,o}
+ \| \textbf{p}_1 \|_{X_p} .
\end{align}
\index{norm $\norm \vec \phi \norm_X$}

\ch{With the above notation,
given $\varphi_0^*$ with $\|\varphi_0^*\|_{*,b} $ sufficiently small,
we consider the fixed point problem
\begin{align}
\label{fixed1}
\vec \phi = \mathcal A[\vec \phi]  ,
\end{align}
with $\vec \phi $ in a suitable close ball of $X$.
A solution of this fixed point problem yields a solution of the system of equations \eqref{inner4a}, \eqref{inner4b}, \eqref{outer4}, \eqref{eq-param}, which in turn gives a solution to \eqref{ks1}.
}

We claim that for some constant $C$ independent of $t_0\gg 1$,  if
$ t_0^{a-1}  (\log t_0)^\beta  \| \varphi_0^* \|_{*,b}\leq 1 $,
and  $\|\vec\phi\|_X\leq 1$, then
\begin{align}
\label{est-Ai1}
\| \mathcal A_{i1}[\phi_1,\phi_2,\varphi,\textbf{p}_1,\varphi_0^*] \|_{1,\nu-\frac{1}{2},\frac{q-1}{2},4,2+\sigma+\epsilon}
\leq \frac{C}{t_0^\vartheta}
+  C (\log t_0)^{\frac{\sigma}{2}} t_0^{\nu+1+\frac{\sigma}{2}}
\| \varphi_0^* \|_{*,b},
\end{align}
for some $\vartheta>0$ small, a constant $C$ independent of $t_0$, and $t_0$ sufficiently large.

Indeed,  by Proposition~\ref{prop-linear-with-second-moment-1} we have
\begin{align*}
\| \mathcal A_{i1}[\phi_1,\phi_2,\varphi,\textbf{p}_1,\ch{\varphi_0^*}] \|_{1,\nu-\frac{1}{2},\frac{q-1}{2},4,2+\sigma+\epsilon}
\leq
C
\| F_3(\phi_1+\phi_2,\varphi,\textbf{p}_1,\ch{\varphi_0^*})  \|_{0,\nu,6+\sigma,\epsilon} .
\end{align*}
We recall the expansion of $F_3$ in \eqref{def-F3}. To estimate $ E_2 \tilde \chi_2$ we use \eqref{est-E2} to get
\begin{align}
\label{estimate-E2-tildechi}
\| E_2 \tilde \chi \|_{0,\nu,0,6+\sigma,\epsilon}
\leq \frac{C }{t_0^{1+2\delta-\frac{\sigma}{2}-\nu} (\log t_0)^{2}}
\end{align}
where $\delta$, $\sigma$ are positive small constants and are assumed to satisfy $2\delta-\frac{\sigma}{2}>0$. Then we take $\nu$ in the range
\begin{align}
\label{firstdelta-sigma}
1<\nu<1+2\delta-\frac{\sigma}{2},
\end{align}
with $\nu$ close to 1.

Let us consider the term $ \lambda^4 [ S_0(\textbf{p}_0 + \textbf{p}_1) - S_0(\textbf{p}_0)]$ in  $F_3(\phi_1+\phi_2,\varphi,\textbf{p}_1,\ch{\varphi_0^*}) $ (c.f. \eqref{def-F3}).
The formula $ \lambda^4 [ S_0(\textbf{p}_0 + \textbf{p}_1) - S_0(\textbf{p}_0)]$ (c.f. \eqref{S0u1}) contains for example the term, evaluated at $y = \frac{x-\xi_1}{\lambda_0}$,
\begin{align}
\nonumber
& -\lambda_0^2 \dot \alpha U(y) \chi_0\Bigl( \frac{\lambda_0 y }{\sqrt t}\Bigr)
+\lambda_0^2 \dot \alpha_0 U\Bigl(\frac{\xi_1+\lambda_0y}{\lambda_0}\Bigr) \chi_0\Bigl( \frac{\xi_1+\lambda_0 y }{\sqrt t}\Bigr)
\\
\nonumber
& \quad = -\lambda_0^2 \dot \alpha_1 U(y) \chi_0\Bigl( \frac{\lambda_0 y }{\sqrt t}\Bigr)
-\lambda_0^2 \dot \alpha_0 \Big[ U(y)-U\Bigl(\frac{\xi_1+\lambda_0y}{\lambda_0}\Bigr)    \Bigr] \chi_0\Bigl( \frac{\lambda_0 y }{\sqrt t}\Bigr)
\\
\label{dotalphaU}
& \qquad
-\lambda_0^2 \dot \alpha_0 U\Bigl(\frac{\xi_1+\lambda_0y}{\lambda_0}\Bigr)
\Bigl[ \chi_0\Bigl( \frac{\xi_1+\lambda_0 y }{\sqrt t}\Bigr)-\chi_0\Bigl( \frac{\lambda_0 y }{\sqrt t}\Bigr) \Bigr]
\end{align}
But
\begin{align*}
\left|  -\lambda_0^2 \dot \alpha_1 U(y) \chi_0\Bigl( \frac{\lambda_0 y }{\sqrt t}\Bigr) \right|
& \leq C \frac{1}{t^{\nu+\frac{1-\sigma}{2}} (\log t)^{\Upsilon-\frac{\sigma}{2}}}
\frac{1}{(1+|y|)^{6+\sigma }}
\chi_0\Bigl( \frac{\lambda_0 y }{\sqrt t}\Bigr)\|\alpha_1\|_{C^1,\nu+\frac{1}{2},\Upsilon} ,
\end{align*}
so
\begin{align*}
\Bigl\|  -\lambda_0^2 \dot \alpha_1 U(y) \chi_0\Bigl( \frac{\lambda_0 y }{\sqrt t}\Bigr) \Bigr\|_{0,\nu,6+\sigma,\epsilon}
\leq \frac{C}{t_0^\vartheta} \|\alpha_1\|_{C^1,\nu+\frac{1}{2},\Upsilon} ,
\end{align*}
for some $\vartheta>0$.

Similarly,
\begin{align*}
\left| -\lambda_0^2 \dot \alpha_0 \Big[ U(y)-U\Bigl(\frac{\xi_1+\lambda_0y}{\lambda_0}\Bigr)    \Bigr] \chi_0\Bigl( \frac{\lambda_0 y }{\sqrt t}\Bigr)  \right|
& \leq C \frac{1}{\log t} \frac{1}{t^2 \log t}
\frac{1}{(1+|y|)^5} \frac{|\xi_1|}{\lambda_0}\chi_0\Bigl( \frac{\lambda_0 y }{\sqrt t}\Bigr) \\
\\
& \leq C \frac{1}{t^{2+\gamma} (\log t)^{\frac{3}{2}}}
\frac{(t\log t)^{\frac{1+\sigma}{2}}}{(1+|y|)^{6+\sigma}}
\chi_0\Bigl( \frac{\lambda_0 y }{\sqrt t}\Bigr) \|\xi_1\|_{C^1,\gamma,0}
\\
&  \leq C \frac{1}{t^{\frac{3-\sigma}{2}+\gamma} (\log t)^{1-\frac{\sigma}{2}}}
\frac{1}{(1+|y|)^{6+\sigma}}
\chi_0\Bigl( \frac{\lambda_0 y }{\sqrt t}\Bigr) \|\xi_1\|_{C^1,\gamma,0}
\end{align*}
so
\begin{align*}
\Bigl\|   -\lambda_0^2 \dot \alpha_0 \Big[ U(y)-U\Bigl(\frac{\xi_1+\lambda_0y}{\lambda_0}\Bigr)    \Bigr] \chi_0\Bigl( \frac{\lambda_0 y }{\sqrt t}\Bigr) \Bigr\|_{0,\nu,6+\sigma,\epsilon}
\leq \frac{C}{t_0^\vartheta} \|\alpha_1\|_{C^1,\nu+\frac{1}{2},\Upsilon} ,
\end{align*}
for some $\vartheta>0$.
The last term in the expression \eqref{dotalphaU} is similar.

The terms in  $ \lambda^4 [ S_0(\textbf{p}_0 + \textbf{p}_1) - S_0(\textbf{p}_0)]$ that contain the function $\varphi_{\lambda_0}$ are
\begin{align*}
&
{\lambda_0}^4
\Bigl[ - \frac{4}{r} \partial_r \varphi_{\lambda_0}
- \nabla\cdot ( \varphi_{\lambda_0} \nabla v_0) - \nabla \cdot ( u_0 \nabla \psi_{\lambda_0} )
-  \nabla\cdot ( \varphi_{\lambda_0} \nabla \psi_{\lambda_0})
\Bigr]
\\
&= 4 \frac{{\lambda_0}^2}{\rho (\rho^2+1)} \partial_\rho \varphi_{\lambda_0}
-(\alpha-1) {\lambda_0}^2 \nabla_y \varphi_{\lambda_0} \cdot \nabla_y \Gamma_0
+ {\lambda_0}^2 \nabla_y \varphi_{\lambda_0} \cdot \nabla_y \mathcal R
+ 2 {\lambda_0}^2 U \chi \varphi_{\lambda_0}
\\
& \quad
-  \alpha \nabla_y (U \chi)  \cdot \nabla_y \psi_{\lambda_0}
- {\lambda_0}^2 \nabla_y ( \varphi_{\lambda_0} \nabla_y \psi_{\lambda_0}) .
\end{align*}
In $ {\lambda_0}^4 [ S_0(\textbf{p}_0 + \textbf{p}_1) - S_0(\textbf{p}_0)]$
these terms appear evaluated at $y$ and then at $\frac{\xi_1}{\lambda_0}+y$.
Using estimates for the the second derivative of $\varphi_{\lambda_0}$ similar to Lemma~\ref{lemma-est-varphilambda} and assuming
\begin{align}
\label{nu2}
\sigma<1, \quad \nu<1+\gamma,
\end{align}
we get
\begin{align*}
\| \lambda^4 [ S_0(\textbf{p}_0 + \textbf{p}_1) - S_0(\textbf{p}_0)] \|_{0,\nu,6+\sigma,\epsilon}  \leq C \frac{1}{t_0^\vartheta} \| \vec\phi\|_{X}.
\end{align*}


\medskip

The main term in $F_3(\phi_1+\phi_2,\varphi,\textbf{p}_1,\ch{\varphi_0^*}) $ that depends on the outer solution is $
\lambda^2 U \varphi^o $
with $\varphi^o = \varphi^* + \varphi $ defined in \eqref{decompPhii}.
Then we have
\begin{align*}
| \lambda^2 U \varphi (y,t) \tilde \chi|
& \leq \frac{\lambda^2}{t^{a-1} (\log t)^\beta} \frac{1}{(1+|y|)^4} \tilde \chi \| \varphi \|_{*,o}
\\
&
\leq C \frac{1}{t^{\nu +\frac{3}{2}}  (\log t)^{\beta+1} }  \frac{1}{(1+|y|)^{4}} \tilde \chi \| \varphi \|_{*,o}
\\
&
\leq C \frac{(t \log t)^{1+\frac{\sigma}{2}}}{t^{\nu + \frac{3}{2}}  (\log t)^{\beta+1} }  \frac{1}{(1+|y|)^{6+\sigma}} \tilde \chi \| \varphi \|_{*,o}
\\
&
\leq C \frac{1}{t^{\nu + \frac{1-\sigma}{2}}  (\log t)^{\beta - \frac{\sigma}{2}} }  \frac{1}{(1+|y|)^{6+\sigma}} \tilde \chi \| \varphi \|_{*,o}.
\end{align*}
Therefore
\begin{align*}
\| \lambda^2 U \varphi \ch{\tilde \chi} \|_{0,\nu,6+\sigma,\epsilon}
\leq
C \frac{1}{t_0^{\frac{1-\sigma}{2}}(\log t_0)^{\beta-\frac{\sigma}{2}}}
\ch{ \| \varphi \|_{*,o}  } .
\end{align*}

Regarding the function $\varphi^*$ (c.f. \eqref{eq-cond-in}) we note that it has the estimate
\begin{align}
\label{est-tilde-phi-o0}
|\varphi^*(x,t)|  \leq
\ch{t_0^{a-1}  (\log t_0)^\beta
\| \varphi_0^* \|_{*,b}
\frac 1{t^{a-1} (\log t)^\beta }  \frac 1 { 1+ |\zeta |^{b}} , \quad \zeta = \frac{x-\xi}{\sqrt{t}} }
\end{align}
by Proposition~\ref{thmOuter-ci}, provided \eqref{cond2b} \ch{holds}, and therefore
\begin{align*}
\| \lambda^2 U \varphi^* \tilde \chi
\|_{0,\nu,6+\sigma,\epsilon}
\leq
\ch{
C t_0^{\nu+1+\frac\sigma2}(\log t_0)^{\frac{\sigma}{2}}
\| \varphi_0^* \|_{*,b} .}
\end{align*}

Let us analyze some of the terms in  $F_3(\phi_1+\phi_2,\varphi,\textbf{p}_1,\ch{\varphi_0^*}) $ that depend on the inner solutions $\phi_1$ and $\phi_2$.
For instance
\begin{align*}
(\alpha-1) \nabla_y \cdot ( \phi_j \nabla_y \Gamma_0)
=
(\alpha-1) \nabla_y  \phi_j \cdot \nabla_y \Gamma_0
-  (\alpha-1) \phi_j U .
\end{align*}
We have the estimate
\begin{align*}
 |(\alpha-1) \nabla_y  \phi_j \cdot \nabla_y \Gamma_0 \tilde \chi|
&\leq \frac{C}{t \log t} \frac{1}{t^{\nu-\frac{1}{2}} (\log t)^{\frac{q-1}{2}}} \frac{1}{(1+|y|)^6}
\| \phi_j \|_{1,\nu-\frac{1}{2},\frac{q-1}{2},4,2+\sigma+\epsilon}
\\
&
\leq C \frac{1}{t^{\nu+\frac{1}{2}-\frac{\sigma}{2}} (\log t)^{1+\frac{q-1}{2}-\frac{\sigma}{2}}} \frac{1}{(1+|y|)^{6+\sigma}}
\| \phi_j \|_{1,\nu-\frac{1}{2},\frac{q-1}{2},4,2+\sigma+\epsilon} ,
\end{align*}
and we get
\begin{align*}
\|(\alpha-1) \nabla_y  \phi_j \cdot \nabla_y \Gamma_0 \tilde \chi\|_{0,\nu,6+\sigma,\epsilon}
\leq \frac{C}{t_0^\vartheta}\| \phi_j \|_{1,\nu-\frac{1}{2},\frac{q-1}{2},4,2+\sigma+\epsilon} ,
\end{align*}
for some $\vartheta>0$.

We also have, writing $\phi = \phi_1+\phi_2$,
\[
\| \lambda \dot \xi_1 \nabla \phi \tilde \chi\|_{0,\nu,6+\sigma,\epsilon}
\leq \frac{C}{t_0^\vartheta} \| \phi\|_{1,\nu-\frac{1}{2},\frac{q-1}{2},4,2+\sigma+\epsilon}
\]
for some $\vartheta>0$, if
\begin{align}
\label{gamma0}
\gamma > \frac{\sigma}{2}.
\end{align}

\ch{Let us estimate the term
$ \nabla_y \cdot ( U \nabla_y( \hat \psi - \psi)) \tilde \chi $ appearing in \eqref{def-F}, where $ \hat\psi = (-\Delta)^{-1} (\lambda^{-2} \phi^i \chi)$,  $ \psi = (-\Delta)^{-1} (\lambda^{-2} \phi^i )$. We recall that $\phi^i = \phi^i_0 + \phi $ , c.f. \eqref{decompPhii}, and therefore we can decompose $\hat \psi = \hat\psi^i_0 + \hat \psi_1$ where $\hat \psi^i_0 = ( -\Delta)^{-1}( \lambda^{-2} \phi^i_0 \chi)$ and $\hat \psi_1 = (-\Delta)^{-1} ( \lambda^{-2} \phi \chi)$. Similarly, we can decompose $\psi = \psi^i_0 + \psi_1$ where $\psi^i_0 = ( -\Delta)^{-1}( \lambda^{-2} \phi^i_0 )$ and $\psi_1 = (-\Delta)^{-2} ( \lambda^{-1} \phi)$.
By linearity we need to estimate separately
$
\nabla_y \cdot ( U \nabla_y( \hat \psi^i_0 - \psi^i_0)) $ and
$\nabla_y \cdot ( U \nabla_y( \hat \psi_1 - \psi_1)) $.
Let us consider the latter one.
Note that
\[
\hat \psi_1 - \psi_1 = (-\Delta)^{-1}[\lambda^{-2} \phi (1-\chi)].
\]
From the definition of the norm $\|\phi\|_{1,\nu-\frac{1}{2},\frac{q-1}{2},4,2+\sigma+\epsilon}$
\begin{align}
\label{decay-phi}
|\phi(y,t)|\leq
\|\phi\|_{1,\nu-\frac{1}{2},\frac{q-1}{2},4,2+\sigma+\epsilon}
\frac{1}{t^{\nu-\frac{1}{2}} (\log t)^{\frac{q-1}{2}}}
\frac{  1  }{(1+|y|)^{4}}
\end{align}
and so
\[
|\nabla_y( \hat\psi_1-\psi_1)(y,t)|
\leq
C \|\phi\|_{1,\nu-\frac{1}{2},\frac{q-1}{2},4,2+\sigma+\epsilon}
\frac{1}{t^{\nu-\frac{1}{2}} (\log t)^{\frac{q-1}{2}}}
\frac{1}{(t\log t)^{\frac{3}{2}}}, \quad
\text{for } |y|\leq 2 \frac{\sqrt t}{\lambda}.
\]
Then
\begin{align*}
| \nabla_y  U \cdot \nabla_y( \hat \psi_1 - \psi_1)) (y,t)|
&\leq
C \|\phi\|_{1,\nu-\frac{1}{2},\frac{q-1}{2},4,2+\sigma+\epsilon}
\frac{1}{t^{\nu + \frac{1-\sigma}{2}} (\log t)^{\frac{q+1-\sigma}{2}}}
\frac{1}{(1+|y|)^{6+\sigma}}
, \quad
\text{for } |y|\leq 2 \frac{\sqrt t}{\lambda}.
\end{align*}
This and a similar estimate for $U \phi( 1-\chi) $ give
\begin{align*}
\| \nabla_y \cdot ( U \nabla_y(\hat\psi_1 - \psi_1)) \tilde \chi \|_{0,\nu,6+\sigma,\epsilon}
\leq \frac{C}{t_0^{\vartheta}} \|\phi\|_{1,\nu-\frac{1}{2},\frac{q-1}{2},4,2+\sigma+\epsilon}
\end{align*}
for some $\vartheta>0$.
A similar estimate is obtained for $\| \nabla_y \cdot ( U \nabla_y(\hat\psi^i_0 - \psi^i_0)) \tilde \chi \|_{0,\nu,6+\sigma,\epsilon}$ using \eqref{est-Phii0}.
}

\ch{Let us estimate next the term $\lambda^2 \nabla_y \cdot (\varphi_\lambda \nabla_y \psi) \tilde \chi$, where we recall, $\psi = (-\Delta)^{-1} (\lambda^{-2} \phi)$. To do this we use that $\phi = \phi_1+\phi_2$ has zero mass and center of mass, that is,
\[
\int_{\R^2} \phi(y,t)\,dy = \int_{\R^2} \phi(y,t)y_j\,dy= 0 , \quad t>t_0.
\]
This and the estimate \eqref{decay-phi} imply
\begin{align*}
|\psi(y,t)| + (1+|y|) |\nabla_y\psi(y,t)| \leq
C \|\phi\|_{1,\nu-\frac{1}{2},\frac{q-1}{2},4,2+\sigma+\epsilon}
\frac{1}{t^{\nu-\frac{1}{2}} (\log t)^{\frac{q-1}{2}}}
\frac{  \log(2+|y|) }{(1+|y|)^2} ,
\end{align*}
by an argument similar to Remark~\ref{rem-newtonian}.
On the other hand, from \eqref{est-varphi-lambda1}
\begin{align*}
|\nabla_y \varphi_\lambda(y,t)|\leq
\frac{C }{t \log t} \frac{1}{(1+|y|)^3}, \quad
|y|\leq 2 \frac{\sqrt t}{\lambda}.
\end{align*}
Therefore
\begin{align*}
|\lambda^2 (\nabla_y \varphi_\lambda \cdot \nabla_y \psi)(y,t)|
& \leq C\|\phi\|_{1,\nu-\frac{1}{2},\frac{q-1}{2},4,2+\sigma+\epsilon}
\frac{\lambda^2}{t^{\nu+\frac{1}{2}} (\log t)^{\frac{q+1}{2}}}
\frac{  \log(2+|y|) }{(1+|y|)^6}
\\
& \leq C \|\phi\|_{1,\nu-\frac{1}{2},\frac{q-1}{2},4,2+\sigma+\epsilon}
\frac{1}{t^{\nu+\frac{1-\sigma}{2}} (\log t)^{\frac{q+1-\sigma}{2}}}
\frac{  1 }{(1+|y|)^{6+\sigma}} , \quad
|y|\leq 2 \frac{\sqrt t}{\lambda}.
\end{align*}
From this coupled with a similar estimate for $\lambda^2 \varphi_\lambda \phi$ we get
\begin{align*}
\| \lambda^2 \nabla_y \cdot (\varphi_\lambda \nabla_y \psi) \tilde \chi \|_{0,\nu,6+\sigma,\epsilon}
\leq \frac{C}{t_0^{\vartheta}} \|\phi\|_{1,\nu-\frac{1}{2},\frac{q-1}{2},4,2+\sigma+\epsilon}
\end{align*}
for some $\vartheta>0$.}

The remaining terms in $F_3(\phi_1+\phi_2,\varphi,\textbf{p}_1,\ch{\varphi_0^*}) $ are estimated in a similar way and we get the validity \eqref{est-Ai1}.

\medskip

Proceeding in the same way we get a Lipschitz bound.
\ch{
Assuming
$ t_0^{a-1}  (\log t_0)^\beta  \| \varphi_0^* \|_{*,b}\leq 1 $,}
for $\|\vec \phi_1\|_X \leq 1$ and $\|\vec \phi_2\|_X\leq 1$ we have
\begin{align}
\nonumber
\| \mathcal A_{i1}[\vec\phi_1,\ch{\varphi_0^*}]-\mathcal A_{i1}[\vec\phi_2,\ch{\varphi_0^*}] \|_{1,\nu-\frac{1}{2},\frac{q-1}{2},4,2+\sigma+\epsilon}
\leq \frac{C}{t_0^\vartheta}
\|\vec \phi_1 - \vec\phi_2\|_{X},
\end{align}
for some $\vartheta>0$ small, a constant $C$ independent of $t_0$, and $t_0$ sufficiently large.
Indeed, the Lipschitz estimate with respect to $\phi_1$, $\phi_2$, and $\varphi$ is direct from the explicit dependence of $F_3(\phi_1+\phi_2,\varphi,\textbf{p}_1,\ch{\varphi_0^*}) $ on these variables, which is either linear or quadratic. The Lipschitz dependence on $\xi_1$ (where $\textbf{p}_1 = ( \alpha_1,\xi_1))$ is also direct from the explicit form of  $F_3(\phi_1+\phi_2,\varphi,\textbf{p}_1,\ch{\varphi_0^*}) $. The Lipschitz condition with respect to $\alpha_1$ appears as an explicit dependence on this variable in $F_3(\phi_1+\phi_2,\varphi,\textbf{p}_1,\ch{\varphi_0^*}) $.

\medskip

Let us estimate the operator $\mathcal A_{i2}$.
We claim that \ch{ if $ t_0^{a-1}  (\log t_0)^\beta  \| \varphi_0^* \|_{*,b}\leq 1 $  and  $\|\vec\phi\|_X\leq 1$, then}
\begin{align}
\label{est-Ai2}
\| \mathcal A_{i2}[\phi_1,\phi_2,\varphi,\textbf{p}_1,\ch{\varphi_0^*}]
\|_{1,\nu-\frac{1}{2},\frac{q-1}{2},4,2+\sigma+\epsilon}
\leq
C ( \log t_0)^{-\frac{1-q}{2}-\Upsilon}
+C t_0^{a-1} (\log t_0)^{\frac{1-q}{2}}
\| \varphi_0^*\|_{*,b}.
\end{align}

Indeed, we apply Proposition~\ref{prop-linear-without-second-moment-1} to get
\begin{align*}
\| \mathcal A_{i2}[\phi_1,\phi_2,\varphi,\textbf{p}_1,\ch{\varphi_0^*}]
\|_{1,\nu-\frac{1}{2},\frac{q-1}{2},4,2+\sigma+\epsilon}
\leq
\frac{C}{(\log t_0)^{1-q}}
 \left\|
 m_2[F_3(\phi_1+\phi_2,\varphi,\textbf{p}_1,\ch{\varphi_0^*})] W_2
\right\|_{0,\nu+\frac{1}{2},\frac{1-q}{2},6+\sigma,\epsilon}
\end{align*}
and since $W_2$ has compact support,
\begin{align*}
\| & \mathcal A_{i2}[\phi_1,\phi_2,\varphi,\textbf{p}_1,\ch{\varphi_0^*}]
\|_{1,\nu-\frac{1}{2},\frac{q-1}{2},4,2+\sigma+\epsilon}
\\
&
\leq
\frac{C}{(\log t_0)^{1-q}}
\sup_{t>t_0} t^{\nu+\frac{1}{2}} (\log t)^{\frac{1-q}{2}}
| m_2[F_3(\phi_1+\phi_2,\varphi,\textbf{p}_1,\ch{\varphi_0^*})] (t)|.
\end{align*}
Using the definition of $F_3$ \eqref{def-F3}
\begin{align*}
m_2[ F_3(\phi,\varphi,\textbf{p}_1 ,\ch{\varphi_0^*}) ]
= m_2[ E_2 \tilde \chi_2] + m_2[F_2(\phi,\varphi,\textbf{p}_1 ,\ch{\varphi_0^*}) \tilde \chi ]
\end{align*}
We have by \eqref{est-E2} (assuming $\sigma<\frac{1}{2}$),
\begin{align*}
|m_2[ E_2 \tilde \chi_2] (t) | \leq \frac{C}{t^{\frac{3+\sigma}{2}}}.
\end{align*}
Therefore, asking that
\begin{align}
\label{nu3}
\nu + \frac{1}{2}<\frac{3+\sigma}{2} \Leftrightarrow \nu < 1+ \frac{\sigma}{2}
\end{align}
we get
\begin{align*}
\sup_{t>t_0} t^{\nu+\frac{1}{2}} (\log t)^{\frac{1-q}{2}}
|m_2[ E_2 \tilde \chi_2] (t) | \leq \frac{C}{t_0^\vartheta},
\end{align*}
for some $\vartheta>0$.

By \eqref{def-F2}
\begin{align*}
m_2[F_2 (\phi,\varphi,\textbf{p}_1,\ch{\varphi^*_0}) \tilde \chi ]
&= \lambda^4 m_2[S_0( u_1( \textbf{p}_0 + \textbf{p}_1 ) ) - S_0( u_1( \textbf{p}_0) )  ]
+ m_2[ F(\phi^i_0 + \phi ,\ch{ \varphi^*} + \varphi ,\textbf{p}_0+\textbf{p}_1) \tilde \chi]
\\
& \quad
+ \lambda^4 m_2[ ( S_0( u_1( \textbf{p}_0 + \textbf{p}_1 ) ) - S_0( u_1( \textbf{p}_0) )   ) (\tilde \chi-1) ].
\end{align*}
Of these terms, the largest is the first one.
By Lemma~\ref{lemma-second-m}, and since $\lambda = \lambda_0$, we get
\begin{align}
\nonumber
\lambda^4 m_2 & [S_0( u_1( \textbf{p}_0 + \textbf{p}_1 ) ) - S_0( u_1( \textbf{p}_0) )  ]
\\
\nonumber
& = - 32\pi \alpha_1
- \frac{\dot\alpha_1}{\lambda_0^2}
\int_{\R^2} U(\frac{x-\xi}{\lambda_0}) \chi_0(\frac{x-\xi}{\lambda_0}) |x-\xi|^2\,dx
\\
\label{second-m2}
& \quad
+ \alpha_1 \int_{\R^2} E(x-\xi,t,\lambda_0)|x-\xi|^2\, dx
- |\xi|^2  \int_{\R^2} S(u_1(\mathbf{p}_0))dx .
\end{align}
But
\begin{align}
\label{ax1}
\sup_{t>t_0} t^{\nu+\frac{1}{2}} (\log t)^{\frac{1-q}{2}}
|\alpha_1(t)|\leq C (\log t_0)^{\frac{1-q}{2}-\Upsilon} \|\alpha_1\|_{C^1,\nu+\frac{1}{2},\Upsilon} ,
\end{align}
under the assumption
\begin{align}
\label{Upsilon1}
\Upsilon> \frac{1-q}{2}.
\end{align}
The second  term in \eqref{second-m2} ia much smaller.
For the last term in \eqref{second-m2} we use Lemma~\ref{lemma-mass-Su1}
and \eqref{bound-dt-mass-varphilambda0}, \eqref{bound-dt-alpha0} to get
\begin{align}
\label{ax-xi1}
 \left| \int_{\R^2} S(u_1(\mathbf{p}_0))dx
\right| \leq \frac{C}{t^2}
\end{align}
and therefore
\begin{align*}
|\xi(t)|^2 \left| \int_{\R^2} S(u_1(\mathbf{p}_0))dx
\right|
\leq \frac{C}{t^{2+2\gamma}} \|\xi_1\|_{C^1,\gamma,0}^2 .
\end{align*}
Combining \eqref{second-m2}, \eqref{ax1} and \eqref{ax-xi1} we get
\begin{align*}
\frac{C}{(\log t_0)^{1-q}}
\sup_{t>t_0} t^{\nu+\frac{1}{2}} (\log t)^{\frac{1-q}{2}}
\lambda^4 |m_2[S_0( u_1( \textbf{p}_0 + \textbf{p}_1 ) ) - S_0( u_1( \textbf{p}_0) )  ] (t)|
\leq C ( \log t_0)^{-\frac{1-q}{2}-\Upsilon} \|\mathbf{p}_1\|_{X_p}.
\end{align*}

Let's estimate the remaining terms in $m_2[ F_3(\phi,\varphi,\textbf{p}_1 ,\ch{\varphi_0^*}) ]$. Consider
\begin{align*}
A(t):=
\int_{\R^2 }
\nabla_y \cdot ( \lambda^2 \varphi_\lambda \nabla_y (-\Delta_y)^{-1} \phi ) \tilde \chi |y|^2 dy
+
\int_{\R^2 }
\nabla_y \cdot (\phi \nabla_y \psi_\lambda) \tilde \chi |y|^2 dy
\end{align*}
which appears in the definition of $F$, where $\phi = \phi_1 + \phi_2$.
It is convenient to write
\begin{align*}
A(t) =
A_1(t) - A_2(t)- A_3(t)
\end{align*}
where
\begin{align*}
A_1(t)
&= \int_{\R^2 }
\nabla_y \cdot [ ( \lambda^2 \varphi_\lambda +\phi) \nabla_y (\psi_\lambda + (-\Delta_y)^{-1} \phi) ] \tilde \chi |y|^2 dy
\\
A_2(t) &=
\int_{\R^2 }
\nabla_y \cdot [ \lambda^2 \varphi_\lambda \nabla_y \psi_\lambda] \tilde \chi |y|^2 dy
\\
A_3(t) &=
\int_{\R^2 }
\nabla_y \cdot [ \phi \nabla_y  \left( (-\Delta_y)^{-1} \phi\right) ] \tilde \chi |y|^2 dy .
\end{align*}
We have, writing $\psi = (-\Delta_y)^{-1} \phi $,
\begin{align*}
A_2(t)
&= \int_{\R^2} \Delta_y \psi \nabla_y \psi \cdot ( |y|^2 \nabla \tilde \chi + 2\tilde \chi y)dy
\\
&= \int_{\R^2} \Delta_y \psi \nabla_y \psi \cdot y
\Bigl[  \frac{\lambda |y|}{2 \sqrt t} \chi_0'\Bigl(\frac{\lambda y}{2 \sqrt t}\Bigr) + 2\tilde \chi_0 \Bigl(\frac{\lambda y}{2 \sqrt t}\Bigr) \Bigr] dy .
\end{align*}
Using Pohozaev's identity
\begin{align*}
|A_2(t) |
&=
\left|
\int_{\R^2}
\nabla_y
\Bigl[
\nabla_y \psi ( y\cdot \nabla \psi) -y \frac{|\nabla_y \psi|^2}{2}
\Bigr]
\Bigl[  \frac{\lambda |y|}{2 \sqrt t} \chi_0'\Bigl(\frac{\lambda y}{2 \sqrt t}\Bigr) + 2\tilde \chi_0 \Bigl(\frac{\lambda y}{2 \sqrt t}\Bigr) \Bigr] dy
\right|
\\
&=
\left|
\int_{\R^2}
\Bigl[
\nabla_y \psi ( y\cdot \nabla \psi) -y \frac{|\nabla_y \psi|^2}{2}
\Bigr]
\cdot
\nabla_y
\Bigl[  \frac{\lambda |y|}{2 \sqrt t} \chi_0'\Bigl(\frac{\lambda y}{2 \sqrt t}\Bigr) + 2\tilde \chi_0 \Bigl(\frac{\lambda y}{2 \sqrt t}\Bigr) \Bigr] dy
\right|
\\
& \leq
C
\int_{2 \sqrt{t}/\lambda \leq |y| \leq 4 \sqrt{t}/\lambda}
|\nabla_y \psi|^2 dy .
\end{align*}
Using that $\psi = (-\Delta)^{-1} \phi$, and
\[
\int_{\R^2} \phi(y,t)dy=0,\quad
\int_{\R^2} \phi(y,t)ydy=0,
\]
we have (see Remark~\ref{rem-newtonian}) for any $\varrho>0$ small,
\begin{align*}
|\nabla \psi(y,t)| \leq \frac{C}{1+|y|^{2-\varrho}}
\frac{1}{t^{\nu-\frac{1}{2}}(\log t)^{\frac{q-1}{2}}}
\|\phi\|_{1,\nu-\frac{1}{2},\frac{q-1}{2},4,2+\sigma+\epsilon}.
\end{align*}
Then
\begin{align*}
\int_{2 \sqrt{t}/\lambda \leq |y| \leq 4 \sqrt{t}/\lambda}
|\nabla_y \psi|^2 dy
& \leq
\frac{C}{t^{2\nu-\varrho} (\log t)^{q-\varrho}}
\|\phi\|_{1,\nu-\frac{1}{2},\frac{q-1}{2},4,2+\sigma+\epsilon}^2
\end{align*}
and so
\begin{align*}
|A_2(t) |
& \leq  \frac{C}{t^{\nu+\frac{1}{2}+\vartheta}} .
\end{align*}
The estimates of the other terms, $A_1$ and $A_2$ is similar, except that $\varphi_\lambda$ doesn't have mass equal to zero. Instead, we use that $\varphi_\lambda$, and $\psi_\lambda$ are radial and
\[
\left|\int_{\R^2} \varphi_\lambda dx \right| \leq
\frac{C}{t \log t},
\]
by Lemmas~\ref{mass-varphi1-lambda1} and \ref{mass-varphi1-lambda2}, to get
\[
|\psi_\lambda(y,t)|\,\le\,  \frac{C}{t \sqrt{ \log t}} \frac{1}{1+|y|}
\]


Let us consider the contribution of the term $\lambda^2 U \varphi^*$. Thanks to \eqref{est-tilde-phi-o0}
\begin{align*}
\| m_2[\lambda^2 U  \varphi^* \tilde \chi] W_2 \|_{0,\nu+\frac{1}{2},\frac{1-q}{2},6+\sigma,\epsilon}
\leq C t_0^{a-1} (\log t_0)^{\frac{1-q}{2}}
\| \varphi_0^*\|_{*,b},
\end{align*}
under the condition
\begin{align}
\label{beta3}
\beta>\frac{1-q}{2}.
\end{align}

The other terms in $m_2$ are estimated in a similar way and we get \eqref{est-Ai2}.

\medskip

Similarly we get that
\ch{ if  $ t_0^{a-1}  (\log t_0)^\beta  \| \varphi_0^* \|_{*,b}\leq 1 $, then}
for $\|\vec \phi_1\|_X \leq 1$ and $\|\vec \phi_2\|_X\leq 1$ we have
\begin{align}
\nonumber
\| \mathcal A_{i2}[\vec\phi_1]- \mathcal A_{i2}[\vec\phi_2] \|_{0,\nu-\frac{1}{2},\frac{q-1}{2},4,2+\sigma+\epsilon}
\leq C ( \log t_0)^{-\frac{1-q}{2}-\Upsilon}
\|\vec \phi_1 - \vec\phi_2\|_{X},
\end{align}
for a constant $C$ independent of $t_0$, where $t_0$ sufficiently large.

\medskip

Let us estimate the operator $
\ch{\mathcal A_o[\phi_1,\phi_2,\varphi,\textbf{p}_1,\varphi_0^*]}$.
We claim that \ch{ if $ t_0^{a-1}  (\log t_0)^\beta  \| \varphi_0^* \|_{*,b}\leq 1 $, then} for $\|\vec \phi\|_X\leq 1$,
\begin{align}
\label{est-Ao}
\| \ch{\mathcal A_o[\vec \phi,\varphi_0^*]} \|_{*,o}
\leq  \frac{C}{(\log t_0)^{\frac{q+1}{2}-\beta}} + C t_0^{a-2}  (\log t_0)^{\beta-1} \|\varphi_0^*\|_{*,b},
\end{align}
and for $\|\vec \phi_1\|_X\leq 1$, $\|\vec \phi_2\|_X\leq 1$
and \ch{$ t_0^{a-1}  (\log t_0)^\beta  \| \varphi_0^* \|_{*,b}\leq 1 $},
\begin{align*}
\| \ch{ \mathcal A_o[\vec \phi_1,\varphi_0^*]
- \mathcal A_o[\vec \phi_2,\varphi_0^*] } \|_{*,o}
\leq   \frac{C}{(\log t_0)^{\frac{q+1}{2}-\beta}}\|\vec \phi_1 - \vec \phi_2\|_X.
\end{align*}
Note that $\frac{q+1}{2}-\beta>0$ by \eqref{cond-nu-a1}.

Indeed, by Proposition~\ref{thmOuter}
\begin{align*}
\| \mathcal A_o[\phi_1,\phi_2,\varphi,\textbf{p}_1,
\ch{\varphi_0^*} ] \|_{*,o}
\leq  C
\|  G_2(\phi_1+\phi_2,\varphi,\textbf{p}_1, \ch{\varphi_0^*} ) \|_{ **,o} ,
\end{align*}
where we recall $G_2$ defined in \eqref{def-G2}.

We start with the term $ \lambda^{-4} E_2 (1-\tilde \chi_2)\chi$. Using the estimate \eqref{est-E2} we get
\begin{align*}
\| \lambda^{-4} E_2 (1-\tilde \chi_2)\chi \|_{**,o}
\leq \frac{C}{t_0^\vartheta}
\end{align*}
for some $\vartheta>0$ provided
\[
a< 4 (1-\delta).
\]
We also directly get from \eqref{est-Su1-outer}
\[
\| S(u_1) (1-\chi) \|_{**,o} \leq \frac{C}{t_0^\vartheta}
\]
for some $\vartheta>0$ if $a<4$.

Regarding the \ch{terms} in $G$ (c.f. \eqref{def-G}) that the depend linearly on $\phi^i  = \phi^i_0 + \phi$, we have for $\|\phi\|_{1,\nu-\frac{1}{2},\frac{q-1}{2},4,2+\sigma+\epsilon} < \infty$
\begin{align}
\nonumber
\Bigl| \frac{1}{\lambda^2} \phi  \Delta \chi \Bigr|(x,t)
& \leq \frac{C}{\lambda^2} \frac{1}{t^{\nu-\frac{1}{2}} (\log t)^{\frac{q-1}{2}}}
\frac{1}{( |x-\xi|/\lambda|) ^4} \frac{1}{t} \Bigl|\Delta_z \chi_0( \frac{x-\xi}{\sqrt t}) \Bigr|
\|\phi\|_{1,\nu-\frac{1}{2} ,\frac{q-1}{2},4,2+\sigma+\epsilon}
\\
\label{computation-o1}
& \leq C \frac{1}{t^{\nu+\frac{5}{2}} (\log t)^{\frac{q+1}{2}}}
\frac{1}{(1+|x-\xi|/\sqrt{t})^b}
\|\phi\|_{1,\nu-\frac{1}{2} ,\frac{q-1}{2},4,2+\sigma+\epsilon}
\end{align}
which implies
\begin{align*}
\Bigl\|
\frac{1}{\lambda^2} \phi  \Delta \chi
\Bigr\|_{**,o} \leq \frac{C}{(\log t_0)^{\frac{q+1}{2}-\beta}} \|\phi\|_{1,\nu-\frac{1}{2} ,\frac{q-1}{2},4,2+\sigma+\epsilon},
\end{align*}
since $\beta<\frac{q+1}{2}$, which is one of the conditions in \eqref{cond-nu-a1}.

We also have, using \eqref{est-Phii0},
\begin{align*}
\Bigl\|  \frac{1}{\lambda^2} \phi_0^i  \Delta \chi \Bigr\|_{**,o} \leq
\frac{C}{t_0^\vartheta}
\end{align*}
for some $\vartheta>0$ if
\begin{align*}
a<4.
\end{align*}
A similar estimate holds for the other terms depending on $\phi^i$.

Some of the terms in $G$ that depend on $\varphi^o = \varphi^* + \varphi $ are
\begin{align*}
\Bigl|
\frac{1}{\lambda^2} U \varphi^o (1-\chi)
\Bigr|
& \leq C \frac{\lambda^2}{|x-\xi|^4} \frac{1}{t^{a-1} (\log t)^\beta}
\frac{1}{(1+|x-\xi|/\sqrt t)^b}
\ch{ (1-\chi)}
\|\varphi^o\|_{*,o}
\\
& \leq \frac{C}{t_0 \log t_0}
\frac{1}{t^a (\log t)^\beta }
\frac{1}{(1+|x-\xi|/\sqrt t)^b}
( \| \varphi^*\|_{*,o} + \| \varphi\|_{*,o} )
\end{align*}
which implies that
\begin{align*}
\Bigl\| \frac{1}{\lambda^2} U \varphi^o (1-\chi)
\ch{\Bigr\|_{**,o}}
\leq \frac{C}{t_0 \log t_0} \|\varphi\|_{*,o}
+ C t_0^{a-2} (\log t_0)^{\beta-1} \|\varphi_0^*\|_{*,b},
\end{align*}
by Proposition~\ref{thmOuter-ci}.
Other terms are estimated in a similar way.

\medskip

Let us estimate the operator $\mathcal A_p$, which is defined by the equations \eqref{def-Ap}.
We claim that if
\[
(0,\tilde \alpha_1 , \tilde \xi_1)  = \mathcal A_p [\phi_1,\phi_2,\varphi,\textbf{p}_1]
\]
and \ch{$ t_0^{a-1}  (\log t_0)^\beta \| \varphi_0^* \|_{*,b}\leq 1$}, $\|\vec \phi\|_X \leq 1$, $\vec \phi = (\phi_1,\phi_2,\varphi,\mathbf{p}_1)$, then
\begin{align}
\nonumber
\| \tilde \alpha_1 \|_{C^1,\nu+\frac{1}{2},\Upsilon}
&\leq C ( \log t_0)^{\Upsilon-\beta}
+ C t_0^{a-1} (\log t_0)^{ \Upsilon} \| \varphi_0^*\|_{*,b}
\\
\label{est-tildexi1}
\| \tilde \xi_1 \|_{*,\gamma,0}&\leq \frac{C}{t_0^\vartheta} +  C t_0^{1+\gamma} (\log t_0)^{\frac{1}{2}}\| \varphi_0^*\|_{*,b},
\end{align}
for some $\vartheta>0$.
\ch{Similarly, we have the following Lipschitz estimate.  If $ t_0^{a-1}  (\log t_0)^\beta  \| \varphi_0^* \|_{*,b}\leq 1$, then for some $\vartheta>0$, and for $\|\vec \phi_1\|_X\leq 1$, $\|\vec \phi_2\|_X\leq 1$,
\begin{align}
\label{est-Ap}
\|  \mathcal A_p[\vec \phi_1,\varphi_0^*]
- \mathcal A_p[\vec \phi_2,\varphi_0^*]  \|_{X_p}
\leq   C ( \log t_0)^{\Upsilon-\beta} \|\vec \phi_1 - \vec \phi_2\|_X ,
\end{align}
for some $\vartheta>0$.
}

Indeed, by \eqref{def-Aalpha}
\begin{align}
\nonumber
|\mathcal A_{p,\alpha_1} [\phi_1,\phi_2,\varphi,\textbf{p}_1,\ch{\varphi_0^*}](t)|
\leq |I_1(t)| +   |I_2(t)| +   |I_3(t)|
\end{align}
where
\begin{align*}
I_1(t) &= \int_t^\infty
\frac{1}{\lambda_0^2}
m_0[ E_2 \tilde \chi_2] (s)ds
\\
I_2(t) &= \int_t^\infty
\frac{1}{\lambda_0^2}
 m_0[ F(\phi^i_0 + \phi ,  \varphi^* + \varphi ,\textbf{p}_0+\textbf{p}_1) \tilde \chi](s)ds
\\
I_3(t) &= \int_t^\infty
\lambda_0^2
m_0[ ( S_0( u_1( \textbf{p}_0 + \textbf{p}_1 ) ) - S_0( u_1( \textbf{p}_0) )   ) (\tilde \chi-1) ] (s) ds.
\end{align*}
Using \eqref{est-E2} and $\int_{\R^2} E_2 dy=0$ we get
\begin{align*}
\left| \frac{1}{\lambda_0^2 } m_0[E_2\tilde \chi_2](t) \right|\leq C\frac{1}{t^{3-2\delta}}.
\end{align*}
This gives
\begin{align}
\label{Aa1}
\| I_1 \|_{C^1,\nu+\frac{1}{2},\Upsilon} \leq C t_0^{\nu-\frac{3}{2}+2\delta},
\end{align}
under the assumption
\begin{align*}
\nu<\frac{3}{2}-2\delta.
\end{align*}
The largest contribution in $I_2$ comes from the term $\lambda^2 U \varphi^o$ in $F(\phi^i_0 + \phi ,  \varphi^* + \varphi ,\textbf{p}_0+\textbf{p}_1) $ (c.f. \eqref{def-F}).
The estimate of this term is
\begin{align*}
\left|
\frac{1}{\lambda_0^2(t)}
\int_{\R^2} \lambda_0(t)^2 U(y) \varphi^o (y,t) dy
\right|
& \leq
C \frac{1}{t^{\nu+\frac{3}{2}} (\log t)^\beta} \|\varphi^o\|_{*,o}
\end{align*}
and so
\begin{align*}
\Bigl\| \int_{\R^2}  U(y) \varphi^o (y,t) dy\Bigr\|_{C^1,\nu+\frac{1}{2},\Upsilon} \leq C ( \log t_0)^{\Upsilon-\beta} \|\varphi^o\|_{*,o}  ,
\end{align*}
under the assumption
\begin{align}
\label{Upsilon2}
\Upsilon<\beta.
\end{align}
Similar estimates for the remaining terms give
\begin{align}
\label{Aa2}
\| I_2 \|_{C^1,\nu+\frac{1}{2},\Upsilon} \leq C ( \log t_0)^{\Upsilon-\beta} \|\vec \phi\|_{X}
+ C t_0^{a-1} (\log t_0)^{ \Upsilon} \| \varphi_0^*\|_{*,b}.
\end{align}
Regarding $I_3$, using \eqref{est-Su1-inner} we have
\begin{align}
\label{Aa3}
\lambda_0^2 m_0[S_0(u_1(\mathbf{p}) ) (\tilde \chi-1)]
&\leq \frac{C}{t^3 \ch{ \log t }}.
\end{align}
Putting together \eqref{Aa1}, \eqref{Aa2}, and \eqref{Aa3} we get
\begin{align*}
\| \mathcal A_{p,\alpha_1} [\phi_1,\phi_2,\varphi,\textbf{p}_1,\ch{\varphi_0^*} ] \|_{C^1,\nu+\frac{1}{2},\Upsilon} \leq  C ( \log t_0)^{\Upsilon-\beta} \|\vec \phi\|_{X}
+ C t_0^{a-1} (\log t_0)^{ \Upsilon} \| \varphi_0^*\|_{*,b}
\end{align*}
assuming also that
\begin{align*}
\nu<\frac{3}{2}.
\end{align*}

The computations leading to \eqref{est-tildexi1} are very similar, under the assumption
\begin{align}
\label{gamma1}
\gamma < \nu-\frac{1}{2}.
\end{align}
This restriction arises when considering the largest term in the expression \eqref{Apxi1}, namely comes from estimating the term $\lambda_0^2 m_{1,j}[\varphi_{\lambda_0} \phi \tilde \chi] $ ($\lambda_0^2 \varphi_{\lambda_0} \phi $ is one of the terms in \eqref{def-F})
\begin{align*}
\frac{1}{\lambda_0} \lambda_0^2 |m_{1,j}[\varphi_{\lambda_0} \phi \tilde \chi](t)|
& \leq C \lambda_0 \int_{\R^2} |\varphi_{\lambda_0} \phi y_j| dy
\\
& \leq C \lambda_0 \frac{1}{t (\log t)^2}  \frac{1}{t^{\nu-\frac{1}{2}} (\log t)^{\frac{q-1}{2}}} \|\phi\|_{1,\nu-\frac{1}{2},\frac{q-1}{2},4,2+\sigma+\epsilon}
\end{align*}

\medskip

Let us summarize the restrictions on the parameters.
We let $0<q<1$ be fixed. We take
\begin{align*}
0<\delta<\sigma<\min(1,4\delta),
\end{align*}
and
\begin{align*}
1<\nu< \min\Bigl( 1+2\delta - \frac{\sigma}{2}  , \frac{3}{2} , 1+\gamma , 1+\frac{\sigma}{2}\Bigr).
\end{align*}
because of  \eqref{firstdelta-sigma}, \eqref{nu2}, \eqref{nu3}.
We also need
\begin{align}
\nonumber
\frac{1-q}{2} < \Upsilon < \beta < \frac{1+q}{2}
\end{align}
by \eqref{Upsilon1}, \eqref{Upsilon2} and  by \eqref{beta3} and \eqref{cond-nu-a1}.
We take
\begin{align*}
\frac{\sigma}{2}<\gamma < \nu-\frac{1}{2}
\end{align*}
by \eqref{gamma0} and \eqref{gamma1}.

Together with the above inequalities we want also the relations
$ \sigma+\epsilon<2$, $\nu+\frac{1}{2}<\frac{7}{4}$ for Proposition~\ref{prop-linear-without-second-moment-1} and $\sigma+\epsilon<\frac{3}{2}$, $\nu<\min( 1  + \frac{\epsilon}{2}, 3-\frac{\sigma}{2},\frac{5}{4}) $ for Proposition~\ref{prop-linear-with-second-moment-1}.  The condition  \eqref{cond2b} for Propositions~\ref{thmOuter} and \ref{thmOuter-ci} hold  by \eqref{cond-nu-a1}. We see that all these restrictions are satisfied by choosing first $\delta$, $\sigma>0$ small so that $2\delta - \frac{\sigma}{2}>0$. Then we take $\nu>1$ close to 1, then let $a=\nu+\frac{5}{2}$ and $b$ satisfying \eqref{cond-nu-a1}.
Then $\Upsilon$, $\beta$ and $\gamma$ can be selected.
Note that with the above procedure we are getting the restriction $b>5$.

\medskip
We already have all elements to solve the fixed point \ch{problem \eqref{fixed1}}, which we recall
\begin{align*}
\vec \phi  = \mathcal A [ \vec \phi ], \quad \vec\phi\in \mathcal B ,
\end{align*}
where  $\mathcal B$ is the closed
unit ball in the Banach space of
functions $\vec \phi $ with
$\|\vec \phi\|_X < +\infty $ and the norm defined in \equ{norma}. Thus
$$
\mathcal B = \{ \, \vec \phi \in X \, | \,  \|\vec \phi\|_X \le 1  \, \}.
$$
%
\ch{Let $\varphi_0^*$ be such that  $t_0^{a-1} (\log t_0)^{\beta}  \|\varphi_0^*\|_{*,b}\leq 1$.
Estimates \eqref{est-Ai1}, \eqref{est-Ai2}, \eqref{est-Ao} and \eqref{est-Ap}, imply that, enlarging the parameter $t_0$ if necessary,
$\mathcal A$ maps  $\mathcal B$ into itself.
We also get that $\mathcal A$ is a contraction mapping on $\mathcal B$.}
The contraction mapping principle yields
the existence of a unique fixed point in $\mathcal B$, which then yields the required existence result.

\begin{remark}
\em
The computations above provide the estimates necessary to show that $\mathcal{A}$ has a fixed point on $\mathcal{B}$. We show here why the scheme works restricting our attention to the most delicate terms only.
We adopt a strategy slightly different to that of finding a fixed point of $\mathcal{A}$:
we consider the inner equations \eqref{inner4a} and \eqref{inner4b} as a system where $\varphi$ is an operator of $\phi = \phi_1+\phi_2$, which is found by solving the outer problem \eqref{outer4}.
In the system  \eqref{inner4a} and \eqref{inner4b} we regard $\alpha_1$ and $\xi_1$ as operators of $\phi$, $\varphi[\phi]$ given by the formulas $\alpha_1 =  \mathcal A_{p,\alpha_1} $ defined in \eqref{def-Aalpha} and $\xi_1 =  \mathcal A_{p,\xi_1} $ defined in \eqref{Apxi1}.

It is natural so use the same norm for $\phi_1$ and $\phi_2$, since it is $\phi = \phi_1 +\phi_2$ that appears in the right hand side of the outer problem \eqref{outer4}. Then estimate \eqref{estimate-E2-tildechi} and Proposition~\ref{prop-linear-with-second-moment-1} suggest that
\begin{align*}
\| \phi_j\|_{1,\nu-\frac{1}{2},\frac{q-1}{2},4,2+\sigma+\varepsilon}<\infty ,
\end{align*}
where we have ignored all other terms in \eqref{inner4a}.
The term $\frac{1}{\lambda^2} \phi \Delta \chi$ in \eqref{outer4}, the computation in \eqref{computation-o1} and Proposition~\ref{thmOuter} give that
\begin{align}
\label{bd-varphi-s}
| \varphi(x,t)|  \leq C \frac{1}{t^{\nu+\frac{3}{2}} (\log t)^{\frac{q+1}{2}}}
\frac{1}{(1+|x-\xi|/\sqrt t)^b} [ \| \phi \|_{1,\nu-\frac{1}{2},\frac{q-1}{2},4,2+\sigma+\varepsilon} + C(\varphi_0^*) ],
\end{align}
where  $C(\varphi_0^*)$ denotes a constant that depends on the initial condition $\varphi_0^*$, and whose exact form we don't need now.

Considering $\varphi $ as an operator of $\phi$ we examine the effect of the therm $\lambda^2 U \varphi$. This term appears in the right hand side of \eqref{inner4a}, where the effect is less important, and in the computation of $\alpha_1$. Ignoring all but this term, we find from \eqref{bd-varphi-s} the estimate
\begin{align*}
|\alpha_1[\phi](t)|
& \leq C \int_t^\infty \int_{\R^2} U(y) |\varphi( \xi + \lambda y) | dy
\\
& \leq C \frac{1}{t^{\nu+\frac{1}{2}} (\log t)^{\frac{q+1}{2}}}
[ \| \phi \|_{1,\nu-\frac{1}{2},\frac{q-1}{2},4,2+\sigma+\varepsilon} + C(\varphi_0^*) ].
\end{align*}
We consider now the effect of $|\alpha_1[\phi](t)|$ in the right hand side of \eqref{inner4b}, where thanks to Lemma~\ref{lemma-second-m} appears mainly as $ \alpha_1(t) W_2(y)$, where $W_2$ is radial with compact support. Then Proposition~\ref{prop-linear-without-second-moment-1} gives
\begin{align*}
| \phi_2(y,t) |
& \leq C \frac{1}{(\log t_0)^{1-q}}\frac{1}{t^{\nu-\frac{1}{2}} (\log t)^{\frac{q+1}{2}+q-1} }
\frac{1}{(1+|y|)^4} \min\Bigl( 1 , \frac{(t \log t)^{1/2}}{|y|} \Bigr)^{2+\sigma+\epsilon}
\\
& \quad \quad \cdot [ \| \phi \|_{1,\nu-\frac{1}{2},\frac{q-1}{2},4,2+\sigma+\varepsilon} + C(\varphi_0^*) ].
\end{align*}
Because we want to use the norm $\| \ \|_{1,\nu-\frac{1}{2},\frac{q-1}{2},4,2+\sigma+\varepsilon}$ we get
\begin{align*}
\| \phi_2 \|_{1,\nu-\frac{1}{2},\frac{q-1}{2},4,2+\sigma+\varepsilon}
\leq C \frac{1}{\log t_0}  [ \epsilon_e + \| \phi \|_{1,\nu-\frac{1}{2},\frac{q-1}{2},4,2+\sigma+\varepsilon} + C(\varphi_0^*) ].
\end{align*}
The $\epsilon_e>0$ on the right hand is there because of the term $E_2 \tilde \chi_2$ in equation \eqref{inner3}. It can be chosen to be a small constant taking $t_0$ large.
The estimate for $\phi_1$ is actually better, and therefore
\begin{align*}
&
\| \phi_1\|_{1,\nu-\frac{1}{2},\frac{q-1}{2},4,2+\sigma+\varepsilon}
+\| \phi_2\|_{1,\nu-\frac{1}{2},\frac{q-1}{2},4,2+\sigma+\varepsilon}
\\
& \qquad \qquad
\leq C \frac{1}{\log t_0}  [ \| \phi_1 \|_{1,\nu-\frac{1}{2},\frac{q-1}{2},4,2+\sigma+\varepsilon} +  \| \phi_2 \|_{1,\nu-\frac{1}{2},\frac{q-1}{2},4,2+\sigma+\varepsilon} + C(\varphi_0^*) ].
\end{align*}
This indicates that the problem for $\phi_1$, $\phi_2$ may be solved by the contraction mapping principle.

\end{remark}

\subsection*{Stability}


If we solve system \eqref{inner4a}, \eqref{inner4b}, \eqref{outer4}, \eqref{eq-param} for a given $\varphi_0^*$ and write $\mathbf{p}=\mathbf{p}(\varphi_0^*)$, we have a solution of problem  \equ{ks1}, which blows up in infinite time as described in Theorem~\ref{teo1}, with initial condition
\begin{align*}
u^*(x,;\varphi_0^*)
&= \frac{\alpha(t_0;\varphi_0^*)}{\lambda_0(t_0)^2}
\Big[ U\Bigl( \frac{x-\xi(t_0;\varphi_0^*)}{\lambda_0(t_0)} \Bigr)
+ \phi_0^i \Bigl( \frac{x-\xi(t_0;\varphi_0^*)}{\lambda_0(t_0)}  \Bigr)
+ c_1(\varphi_0^*) \tilde Z_0\Bigl( \frac{x-\xi(t_0)}{\lambda_0(t_0)}  \Bigr)
\Big]
\\
& \quad \cdot \chi_0 \Bigl( \frac{x-\xi(t_0;\varphi_0^*)}{\sqrt{t_0}}\Bigr)
+ \tilde \varphi_{\lambda_0}(x-\xi(t_0;\varphi_0^*),t_0)  + \varphi_0^*(x).
\end{align*}
where we recall that $\tilde \varphi_\lambda$ was defined in \eqref{deftildephilambda} and $\varphi_\lambda(x,t) = \tilde \varphi_\lambda(x-\xi(t),t) $.
The function $\tilde \varphi$ doesn't depend on $\xi$ and is radial about the origin.

We let $u_0^*(x) = u^*(x;0)$ and
\begin{align*}
I_0 = \int_{\R^2} u_0^*(x)|x|^2 dx.
\end{align*}
Note that $u_0^*$ is radial and so it has center of mass at the origin.

To prove stability we first show the following claim:
if $v:\R^2 \to \R$ satisfies $\|v\|_{*,b}<t_0^{1-a} (\log t_0)^{-\beta}$, has mass zero, and
\begin{align*}
\int_{\R^2} v(x)x_j dx = 0 , \quad \int_{\R^2} v(x)|x|^2 dx = 0 , \quad
\end{align*}
then $u_0^*+v = u^*(\varphi_0^*) $ for some $\varphi_0^*$ with mass zero and $\|\varphi_0^*\|_{*,b}<t_0^{1-a} (\log t_0)^{-\beta}$. Indeed, the equation for $\varphi_0^*$ has the form
\begin{align*}
& \frac{\alpha(t_0;\varphi_0^*)}{\lambda_0(t_0)^2}
\Big[ U\Bigl( \frac{x-\xi(t_0;\varphi_0^*)}{\lambda_0(t_0)} \Bigr)
+ \phi_0^i \Bigl( \frac{x-\xi(t_0;\varphi_0^*)}{\lambda_0(t_0)}  \Bigr)
+ c_1(\varphi_0^*) \tilde Z_0\Bigl( \frac{x-\xi(t_0)}{\lambda_0(t_0)}  \Bigr)
\Big]
\\
& \quad \cdot \chi_0 \Bigl( \frac{x-\xi(t_0;\varphi_0^*)}{\sqrt{t_0}}\Bigr)
+  \tilde \varphi_{\lambda_0}(x-\xi(t_0;\varphi_0^*),t_0)   + \varphi_0^*(x)
\\
&=
 \frac{\alpha(t_0;0)}{\lambda_0(t_0)^2}
\Big[ U\Bigl( \frac{x}{\lambda_0(t_0)} \Bigr)
+ \phi_0^i \Bigl( \frac{x}{\lambda_0(t_0)}  \Bigr)
+ c_1(0) \tilde Z_0\Bigl( \frac{x}{\lambda_0(t_0)}  \Bigr)
\Big]
\\
& \quad \cdot \chi_0 \Bigl( \frac{x}{\sqrt{t_0}}\Bigr)
+ \varphi_{\lambda_0}(x,t_0)  + v .
\end{align*}
A solution to this equation is $\varphi_0^*=v$. Indeed, with this choice,  computing the center of mass we find that $\xi(t_0;\varphi_0^*)=0$, computing the mass we find that $\alpha(t_0;\varphi_0^*)=\alpha(t_0;0)$ and computing the second moment we obtain $ c_1(\varphi_0^*) =c_1(0)$.

Now consider a general $v$ with  $\|v\|_{*,b}< \frac{1}{M}	t_0^{1-a} (\log t_0)^{-\beta}$ and mass zero (where $M>0$ is a constant to be chosen).
We want to show that the initial condition $u_0^*+v$ produces a solution to \eqref{ks1} with infinite time blow as described in Theorem~\ref{teo1}.
To prove this, consider
\begin{align*}
u_{\Lambda,p}(x) = \frac{1}{\Lambda^2} \Bigl[ u_0^*\Bigl(\frac{x-p}{\Lambda}\Bigr) + v\Bigl(\frac{x-p}{\Lambda}\Bigr) \Bigr] ,
\end{align*}
where $p\in \R^2$ and $\Lambda>0$.
Note that $u_{\Lambda,p}$ has mass $8\pi$.
Then we select $\Lambda$ and $p$ such that
\begin{align*}
\int_{\R^2} u_{\Lambda,p}(x) x_j dx =  0 , \quad \int_{\R^2} u_{\Lambda,p}(x) |x|^2 d x = I_0.
\end{align*}
Note that $|\Lambda-1| \leq \frac{C}{M} t_0^{1-a} (\log t_0)^{-\beta}$ and $|p|\leq \frac{C}{M} t_0^{1-a} (\log t_0)^{-\beta}$.
Then we expand
\begin{align*}
u_{\Lambda,p} (x) = u_0^* + w
\end{align*}
and $w$ satisfies $\|w\|_{*,b}\leq \frac{C}{M} t_0^{1-a} (\log t_0)^{-\beta} $, has mass zero, center of mass zero and second moment equal to $I_0$.
Choosing $M= C$, by the previous claim, there is $\varphi_0^*$ with $\|\varphi_0^*\|_{*,b}<t_0^{1-a} (\log t_0)^{-\beta}$ such that $u_{\Lambda_p} = u^*(\varphi_0^*)$. The initial condition $u^*(\varphi_0^*)$ is such that the solution to \eqref{ks1} blows up  as in Theorem~\ref{teo1}. Then the same is true for the initial condition $u_0^*+v$ after a scaling and translation in space.

\section{The mass of \texorpdfstring{$\varphi_\lambda$}{varphi lambda}}
\label{sect-mass-varphilambda}

We devote this section to prove  Proposition~\ref{prop-lambda0}. To that purpose,
a basic step is to derive a formula for the mass of $\varphi_\lambda$ defined in \eqref{defphilambda}.

\medskip
Let us write
\begin{align}
\label{decomp-varphilambda}
\varphi_\lambda =
\varphi_\lambda^{(1)}
+\varphi_\lambda^{(2)}
\end{align}
where $\varphi_\lambda^{(1)}$ and $\varphi_\lambda^{(2)}$ are the solutions, given by Duhamel's formula, of the following problems
\begin{align}
\left\{
\begin{aligned}
\label{eq-phi1}
\partial_t \varphi_\lambda^{(1)}
&= \Delta_6 \varphi_\lambda^{(1)}
+
\frac{\dot \lambda}{\lambda^3} Z_0(\frac{x}{\lambda}) \chi_0(z)
\quad
\text{in }\R^2 \times ({\textstyle\frac{t_0}{2}},\infty)
\\
\varphi_\lambda^{(1)}(\cdot,{\textstyle \frac{t_0}{2}}) &= 0
\end{aligned}
\right.
\end{align}
\begin{align}
\label{eq-phi2}
\left\{
\begin{aligned}
\partial_t \varphi_\lambda^{(2)}
&= \Delta_6 \varphi_\lambda^{(2)}
\ch{+}\frac{1}{2 \lambda^2 t} U \nabla_z \chi_0(z)\cdot z
+ \tilde E ,
\quad
\text{in }\R^2 \times ({\textstyle\frac{t_0}{2}},\infty) ,
\quad z = \frac{x}{\sqrt t} ,
\\
\varphi_\lambda^{(2)}(\cdot,{\textstyle \frac{t_0}{2}}) &= 0
\end{aligned}
\right.
\end{align}
where the operator  $\Delta_6$ is defined in \eqref{def-laplacian6} and $\tilde E$ in \eqref{defTildeE}.
We let $\varphi[p,\lambda](r,t)$ be the solution of the problem
\begin{align}
\left\{
\label{def-varphip}
\begin{aligned}
\partial_t \varphi[p,\lambda]  &= \Delta_6 \ch{ \varphi}[p,\lambda] + \frac{p}{\lambda^4} Z_0\Bigl(\frac{r}{\lambda}\Bigr) \chi\Bigr( \frac{r}{\sqrt t}\Bigl)
\quad \text{in } \R^2 \times ({\textstyle\frac{t_0}{2}},\infty),
\\
\varphi[p,\lambda] (\cdot,{\textstyle\frac{t_0}{2}}) &= 0 \quad \text{in }\R^2 ,
\end{aligned}
\right.
\end{align}
%
%
given by Duhamel's formula.
By definition, we have
\begin{align}
\nonumber
\varphi_{\lambda}^{(1)} = \varphi[\lambda\dot\lambda,\lambda] .
\end{align}
In definitions \eqref{eq-phi1}, \eqref{eq-phi2}, \eqref{def-varphip}, the parameter function $\lambda(t)$ is assumed to be defined for $t>\frac{t_0}{2}$.
In the rest of this section we also assume the validity of the condition stated for $\lambda$ in \eqref{conditions}, namely
\begin{align}
\label{cond-lambda}
|\lambda(t)| + t \log (t) |\dot \lambda(t)|  \leq \frac{C}{\sqrt{\log (t)}}, \quad
t >\frac{t_0}{2},
\end{align}
for some fixed constant $C$.
Let us define
\begin{align}
\label{norm1}
\|p\|_{\gamma,m} = \sup_{t\geq t_0/2} \,  t^\gamma (\log t)^m |p(t)|.
\end{align}
In what follows we shall only deal with radial functions on $\R^2$ and sometimes we will consider them as radial functions on $\R^6$.
For a fixed constant $c_0>0$
we let
\begin{align}
\label{def-lambda-star}
\lambda^*(t) = \frac{c_0}{\sqrt{\log t} } .
\end{align}
The following expansion holds.

\begin{lemma}
\label{mass-varphi1-lambda1}
Assume that $\lambda $ satisfies \eqref{cond-lambda}.
Let $0<\gamma<2$, $m\in \R$ and suppose that  $\|p\|_{\gamma,m} <\infty$.
Then
\begin{align*}
\int_{\R^2} \varphi[p,\lambda](x)dx
= - 4 \pi \int_{t/2}^{t-\ch{\lambda(t)}^2} \frac{p(s)}{t-s}ds + R[p,\lambda]
\end{align*}
where $R[p,\lambda]$ satisfies
\[
\| R[p,\lambda] \|_{\gamma,m}
\leq C \|p\|_{\gamma,m} .
\]
If $\lambda_1,\lambda_2$ satisfy
\[
\Bigl\|
\frac{\lambda_j}{\lambda^*}
\Bigl\|_{L^\infty(t_0/2,\infty) } <\frac{1}{2}, \quad j=1,2,
\]
then we also have
\begin{align}
\label{R}
\| R[p,\lambda^*+\lambda_1] -
R[p,\lambda^*+\lambda_2] \|_{\gamma,m}
\leq C \|p\|_{\gamma,m} \Bigl\|
\frac{\lambda_1-\lambda_2}{\lambda^*}
\Bigl\|_{L^\infty(t_0/2,\infty) }.
\end{align}
\end{lemma}
For the proof of the above result we will need the following calculation.
\begin{lemma}
Let
\begin{align}
\nonumber
f(w)  = \frac{1}{(4\pi)^3}
\int_{\R^6}
e^{-\frac{|z|^2}{4}} \frac{1}{|w-z|^4}dz , \quad
w \in \R^6.
\end{align}
Then
\begin{align}
\label{formula-b}
f(w) = \frac{1}{|w|^4}\Bigl[
1- e^{ \frac{-|w|^2}{4} }\Bigl(
1+\frac{|w|^{\ch{2}}}{4}\Bigr) \Bigr] .
\end{align}
\end{lemma}
\begin{proof}
Let $\varphi_0$ be given by
\[
\varphi_0(x,t) = \frac{1}{(4\pi)^3} \frac{1}{t^3}
\int_{\R^6} e^{-\frac{|y|^2}{4t}} \frac{1}{|x-y|^4}dy ,
\quad x \in \R^6, \ t>0,
\]
which solves
\begin{align*}
\partial_t \varphi_0 & = \Delta_{\R^6} \varphi_0 \quad \text{in }\R^6\times (0,\infty)
\\
\varphi_0(x,0) &= \frac{1}{|x|^4}.
\end{align*}
Then
\[
f(w) = \varphi_0(w,1).
\]
Write
\begin{align*}
\varphi_0(x,t) = \frac{1}{t^2}q\Bigl(\frac{|x|}{\sqrt t}\Bigr) .
\end{align*}
Then
\[
q''(s) + \frac{5}{s}q'(s)+ \frac{s}{2} q'(s) +2 q(s) = 0
\]
and we want $q(s) $ bounded for $s\to 0$, $q(s) = s^{-4}(1+o(1))$ as $s\to \infty$.
A calculation using the explicit element in the kernel of the linear operator, $s^{-4}$, gives
\[
q(s) = \frac{1}{s^4}\Bigl[ 1-e^{-\frac{s^2}{4}}\Bigl( 1+\frac{s^2}{4}\Bigr)\Bigr] , \quad s>0,
\]
and then \eqref{formula-b} follows.

\end{proof}

\begin{proof}[Proof of Lemma~\ref{mass-varphi1-lambda1}]

The solution $\varphi[p,\lambda]$ of \eqref{def-varphip} has the formula
\begin{align*}
\varphi[p,\lambda](x,t)
=
\frac{1}{(4\pi)^3}
\int_{t_0/2}^t
\frac{p(s)}{\lambda^4(s)} \frac{1}{(t-s)^3}
\int_{\R^6}
e^{-\frac{|x-y|^2}{4(t-s)}}
Z_0\Bigl(\frac{y}{\ch{\lambda(s)}}\Bigr) \chi\Bigr( \frac{y}{\sqrt s}\Bigl) dy ds , \quad x \in \R^6.
\end{align*}

Writing
\begin{align*}
\varphi = \varphi[p,\lambda]
\end{align*}
we have
\begin{align*}
\int_{\R^2} \ch{\varphi (x,t)}\,dx
&= \frac{2}{\pi^2}
\int_{\R^6} \varphi(x,t)|x|^{-4}dx
\\
&=
\frac{2}{\pi^2}
\frac{1}{(4\pi)^3}
\int_{t_0/2}^t
\frac{p(s)}{\lambda^4(s)} \frac{1}{(t-s)^3}
\int_{\R^6}
\int_{\R^6}
e^{-\frac{|x-y|^2}{4(t-s)}}
|x|^{-4}dx
Z_0\Bigl(\frac{y}{ \ch{\lambda(s)} }\Bigr) \chi\Bigr( \frac{y}{\sqrt s}\Bigl) dy ds
\\
&=\frac{2}{\pi^2}
\frac{1}{(4\pi)^3}
\int_{t_0/2}^t
\frac{p(s)}{\lambda(s)^4}
\int_{\R^6}
\int_{\R^6}
e^{-\frac{|z|^2}{4}}
\frac{1}{|y-\sqrt{t-s}z|^4}
dz
Z_0\Bigl(\frac{y}{ \ch{\lambda(s)} }\Bigr) \chi\Bigr( \frac{y}{\sqrt s}\Bigl) dy ds
\end{align*}
Using \ch{\eqref{formula-b}} we have
\begin{align*}
\int_{\R^2} \varphi(x,t)dx
&= \frac{2}{\pi^2 }
\int_{t_0/2}^t \frac{p(s)}{\lambda(s)^4}
\int_{\R^6} \ch{ \frac{1}{(t-s)^2}} f( (t-s)^{-1/2}|y|)
Z_0\Bigl(\frac{y}{ \ch{\lambda(s)} }\Bigr) \chi\Bigr( \frac{y}{\sqrt s}\Bigl) dy ds
\\
&= 2\pi
\int_{t_0/2}^t
\frac{p(s)}{\lambda(s)^4}
\int_0^\infty \Bigl[ 1-e^{-\frac{r^2}{4(t-s)}}
\Bigl( 1+\frac{r^2}{4(t-s)}\Bigr) \Bigr]
Z_0\Bigl(\frac{r}{ \ch{\lambda(s)} }\Bigr) \chi\Bigr( \frac{r}{\sqrt s}\Bigl) r dr ds  .
\end{align*}

Let us notice  that
\begin{align*}
&
\frac{1}{2\pi}
\int_{\R^2} \varphi(x,t)dx
\\
& \quad =
\int_{t_0/2}^t \frac{p(s)(t-s)}{\lambda(s)^4}
\int_0^\infty
 \Bigl[ 1-e^{-\frac{z^2}{4}}
\Bigl( 1+\frac{z^2}{4}\Bigr) \Bigr]
Z_0\Bigl(\frac{z\sqrt{t-s}}{ \ch{\lambda(s)} }\Bigr) \chi\Bigr( \frac{z \sqrt{t-s}}{\sqrt s}\Bigl) z dz ds .
\end{align*}
We decompose
\begin{align*}
\frac{1}{2\pi}
\int_{\R^2} \varphi(x,t)dx
&= I_1 + I_2 + I_3
\end{align*}
where
\begin{align*}
I_1 &=
\int_{t_0/2}^{t/2} ...
\\
I_2 &=
\int_{t/2}^{t-\ch{\lambda(t)^2}} ...
\\
I_3 &=
\int_{t-\ch{\lambda(t)^2}}^t ...
\end{align*}
and separately estimate each term.
To estimate  $I_1$ we note that for $s \leq t/2$ we have $\frac{s}{t-s}\leq 1$.
Assuming that $\chi(x) = 0 $ for $x\geq 2$ we obtain
\begin{align*}
\int_0^\infty
\Bigl[ 1-e^{-\frac{z^2}{4}}
\Bigl( 1+\frac{z^2}{4}\Bigr) \Bigr]
Z_0\Bigl(\frac{z\sqrt{t-s}}{ \ch{\lambda(s)} }\Bigr) \chi\Bigr( \frac{z \sqrt{t-s}}{\sqrt s}\Bigl) z dz
&=
\int_{ 0 } ^{2 \frac{\sqrt s}{\sqrt {t-s}}	}  ...
\end{align*}
We estimate for $s \leq t/2$,
\begin{align*}
& \Bigl|
\int_{0} ^{2 \frac{\sqrt s}{\sqrt {t-s}}	}
\Bigl[ 1-e^{-\frac{z^2}{4}}
\Bigl( 1+\frac{z^2}{4}\Bigr) \Bigr]
Z_0\Bigl(\frac{z\sqrt{t-s}}{ \ch{\lambda(s)} }\Bigr) \chi\Bigr( \frac{z \sqrt{t-s}}{\sqrt s}\Bigl) z dz
\Bigr|
\\
& \quad
\leq C
\int_{0} ^{2 \frac{\sqrt s}{\sqrt {t-s}}	}
z^4
\frac{\ch{\lambda(s)^4}}{(t-s)^2 z^4} z dz
\\
& \quad
\leq C
\frac{\lambda(s)^4 s}{(t-s)^3 } ,
\end{align*}
where we have used that
$Z_0(\rho) \leq C / \rho^4$ and
$1-e^{-\frac{z^2}{4}}
( 1+\frac{z^2}{4}) \leq C z^4$.
Therefore
\begin{align*}
|I_1 |
& \leq
\int_{t_0/2}^{t/2}
\frac{|p(s)| s }{(t-s)^2}ds \leq
\|p\|_{\gamma,m}
\int_0^{t/2}
\frac{ s^{1-\gamma} }{(t-s)^2 (\log s)^m}ds
\leq \frac{C}{t^\gamma (\log t)^m } \|p\|_{\gamma,m}.
\end{align*}

Let us analyze $I_2$.
We write
\begin{align*}
I_2
= I_{2,*}+ I_{2,a} + I_{2,b} + I_{2,c} + I_{2,d}
\end{align*}
where
\begin{align*}
I_{2,*}
&=
-16
\int_{t/2}^{t-\ch{\lambda(t)^2}} \frac{p(s)(t-s)}{\lambda(s)^4}
\int_0^\infty
\Bigl[ 1-e^{-\frac{z^2}{4}}
\Bigl( 1+\frac{z^2}{4}\Bigr) \Bigr]
\frac{\lambda(s)^4}{(t-s)^2 z^4}
z  dz ds
\end{align*}
and
\begin{align*}
I_{2,a}
&=
\int_{t/2}^{t-\ch{\lambda(t)^2}}
\frac{p(s)(t-s)}{\lambda(s)^4}
\int_0^{\frac{\ch{\lambda(s)}}{\sqrt{t-s}}}
\Bigl[ 1-e^{-\frac{z^2}{4}}
\Bigl( 1+\frac{z^2}{4}\Bigr) \Bigr]
Z_0\Bigl(\frac{z\sqrt{t-s}}{\ch{\lambda(s)}}\Bigr) \chi\Bigr( \frac{z \sqrt{t-s}}{\sqrt s}\Bigl) z dz ds
\\
I_{2,b}
&=
\ch{16}
\int_{t/2}^{t-\ch{\lambda(t)^2}}
\frac{p(s)(t-s)}{\lambda(s)^4}
\int_0^{\frac{\ch{\lambda(s)}}{\sqrt{t-s}}}
\Bigl[ 1-e^{-\frac{z^2}{4}}
\Bigl( 1+\frac{z^2}{4}\Bigr) \Bigr]
\frac{\lambda(s)^4}{(t-s)^2z^4}
\chi\Bigr( \frac{z \sqrt{t-s}}{\sqrt s}\Bigl) z dz ds
\\
I_{2,c}
&=
\int_{t/2}^{t-\ch{\lambda(t)^2}}
\frac{p(s)(t-s)}{\lambda(s)^4}
\int_{\frac{\ch{\lambda(s)}}{\sqrt{t-s}}} ^\infty
\Bigl[ 1-e^{-\frac{z^2}{4}}
\Bigl( 1+\frac{z^2}{4}\Bigr) \Bigr]
\Bigl[
Z_0\Bigl(\frac{z\sqrt{t-s}}{ \ch{\lambda(s)} }\Bigr)
+ 16\frac{\lambda(s)^4}{(t-s)z^4}
\Bigr]
z dz ds
\\
I_{2,d}
&=
\int_{t/2}^{t-\ch{\lambda(t)^2}}
\frac{p(s)(t-s)}{\lambda(s)^4}
\int_0^\infty
\Bigl[ 1-e^{-\frac{z^2}{4}}
\Bigl( 1+\frac{z^2}{4}\Bigr) \Bigr]
Z_0\Bigl(\frac{z\sqrt{t-s}}{ \ch{\lambda(s)} }\Bigr) \Bigl[
\chi\Bigr( \frac{z \sqrt{t-s}}{\sqrt s}\Bigl) -1
\Bigr] z dz ds
\end{align*}

\ch{A} calculation gives that
\begin{align}
I_{2,*}
\label{I2star}
&=
-2
\int_{t/2}^{t-\ch{\lambda(t)^2}}
\frac{p(s)}{t-s}
ds .
\end{align}

Next we find a bound for $I_{2,a}$.
Using that $Z_0$ is a bounded function and $|  1-e^{-\frac{z^2}{4}}
( 1+\frac{z^2}{4}) |\leq C z^4$, we get
\begin{align*}
& \Bigl|
\int_0^{\frac{\ch{\lambda(s)}}{\sqrt{t-s}}}
\Bigl[ 1-e^{-\frac{z^2}{4}}
\Bigl( 1+\frac{z^2}{4}\Bigr) \Bigr]
Z_0\Bigl(\frac{z\sqrt{t-s}}{\ch{\lambda(s)}}\Bigr) \chi\Bigr( \frac{z \sqrt{t-s}}{\sqrt s}\Bigl) z dz
\Bigr|
\\
& \quad
\leq C \int_0^{\frac{\ch{\lambda(s)}}{\sqrt{t-s}}} z^5 dz \leq C \frac{\ch{\lambda(s)^6}}{(t-s)^3} .
\end{align*}

It follows that
\begin{align*}
|I_{2,a}| &\leq
C \int_{t/2}^{t-\ch{\lambda(t)}^2}
\frac{|p(s)| \lambda(s)^2}{(t-s)^2}ds
\leq \frac{C}{t^\gamma (\log t)^m} \|p\|_{\gamma,m}
\int_{t/2}^{t-\ch{\lambda(t)}^2}
\frac{\lambda(s)^2}{(t-s)^2}ds
\\
& \leq
\frac{C}{ t^\gamma (\log t)^m}
\|p\|_{\gamma,m} .
\end{align*}

Using that  $|  1-e^{-\frac{z^2}{4}}
( 1+\frac{z^2}{4}) |\leq C z^4$, we get
\begin{align*}
\Bigl|
\int_0^{\frac{\ch{\lambda(s)}}{\sqrt{t-s}}}
\Bigl[ 1-e^{-\frac{z^2}{4}}
\Bigl( 1+\frac{z^2}{4}\Bigr) \Bigr]
\frac{\lambda(s)^4}{(t-s)^2z^4}
\chi\Bigr( \frac{z \sqrt{t-s}}{\sqrt s}\Bigl) z dz
\Bigr|
& C \leq \frac{\lambda(s)^4}{(t-s)^2}
\int_0^{\frac{\ch{\lambda(s)}}{\sqrt{t-s}}}  z dz
\\
&\leq C \frac{\ch{\lambda(s)^6}}{(t-s)^3} ,
\end{align*}
and similarly as before,
\begin{align*}
|I_{2,b}|
& \leq \frac{C}{t^\gamma (\log t)^m} \|p\|_{\gamma,m} .
\end{align*}

Using that
\[
Z_0\Bigl(\frac{z\sqrt{t-s}}{\ch{\lambda(s)}}\Bigr)
= - 16 \frac{\ch{\lambda(s)^4}}{(t-s)^2 z^4}
+ O\Bigl(  \frac{\ch{\lambda(s)^6}}{(t-s)^3 \ch{z^6}}\Bigr) , \quad \frac{z\sqrt{t-s}}{\ch{\lambda(s)}}\geq 1,
\]
we get
\begin{align*}
|I_{2,c}|
&
\leq C
\int_{t/2}^{t-\ch{\lambda(t)}^2}
\frac{p(s)(t-s)}{\lambda(s)^4}
\int_{\frac{\ch{\lambda(s)}}{\sqrt{t-s}}}^\infty
\Bigl[ 1-e^{-\frac{z^2}{4}}
\Bigl( 1+\frac{z^2}{4}\Bigr) \Bigr]
\frac{\lambda(s)^6}{(t-s)^3 z^6}
\chi\Bigr( \frac{z \sqrt{t-s}}{\sqrt s}\Bigl) z dz
\\
&
\leq C
\int_{t/2}^{t-\ch{\lambda(t)}^2}
\frac{p(s) \lambda(s)^2}{(t-s)^2}
\int_{\frac{ \ch{\lambda(s)}}{\sqrt{t-s}}}^\infty
\Bigl[ 1-e^{-\frac{z^2}{4}}
\Bigl( 1+\frac{z^2}{4}\Bigr) \Bigr]
\frac{1}{ z^5}
 dz .
\end{align*}
But $\frac{\ch{\lambda(s)}}{\sqrt{t-s}}\leq 2$ in the considered range of $s$, and then
\begin{align*}
|I_{2,c}| & \leq C
\int_{t/2}^{t-\ch{\lambda(t)}^2}
\frac{|p(s)| \lambda(s)^2 }{(t-s)^2}
\log\Bigl( \frac{ \ch{\lambda(s)^2}}{t-s} \Bigr)
ds
\\
& \leq
\frac{C}{t^\gamma (\log t)^m}
\|p\|_{\gamma,m}
\int_{t/2}^{t-\ch{\lambda(t)}^2}
\frac{\lambda(s)^2 }{(t-s)^2}
\log\Bigl( \frac{ \ch{\lambda(s)^2}}{t-s} \Bigr)
ds
\\
& \leq \frac{C}{t^\gamma (\log t)^m}\|p\|_{\gamma,m}.
\end{align*}

Finally, for $I_{2,d}$,
\begin{align*}
& \Bigl|
\int_0^\infty
\Bigl[ 1-e^{-\frac{z^2}{4}}
\Bigl( 1+\frac{z^2}{4}\Bigr) \Bigr]
Z_0\Bigl(\frac{z\sqrt{t-s}}{\ch{\lambda(s)}}\Bigr) \Bigl[
\chi\Bigr( \frac{z \sqrt{t-s}}{\sqrt s}\Bigl) -1
\Bigr] z dz
\Bigr|
\\
& \quad
\leq
\int_{2\sqrt s/\sqrt{t-s}}^\infty
\Bigl[ 1-e^{-\frac{z^2}{4}}
\Bigl( 1+\frac{z^2}{4}\Bigr) \Bigr]
\frac{\lambda(s)^4}{(t-s)^2 z^4}
z dz
\\
& \quad
\leq
\frac{\lambda(s)^4}{(t-s)^2}
\int_{2\sqrt s/\sqrt{t-s}}^\infty
z^{-3} dz
\\
& \quad
\leq
C \frac{\lambda(s)^4}{(t-s) s} .
\end{align*}
Then
\begin{align*}
|I_{2,d}|
&\leq
C \int_{t/2}^{t-\ch{\lambda(t)}^2}
\frac{|p(s)|(t-s)}{\lambda(s)^4}
\frac{\lambda(s)^4}{(t-s) s} ds
\leq
\frac{C}{t^\gamma (\log t)^m}\|p\|_{\gamma,m}.
\end{align*}

Finally we estimate
\begin{align*}
|I_3| &=
\Bigl|
\int_{t-\ch{\lambda(t)}^2} ^t
\frac{p(s)}{\lambda(s)^2}
\int_0^\infty
\Bigl[ 1-e^{-\frac{\rho^2 \lambda^2 }{4(t-s)}}
\Bigl( 1+\frac{\rho^2 \lambda^2 }{4(t-s)}\Bigr) \Bigr]
Z_0(\rho) \chi\Bigr( \frac{\lambda \rho}{\sqrt s}\Bigl) \rho d \rho ds
\Bigr|
\\
& \leq
C \int_{t-\ch{\lambda(t)}^2} ^t
\frac{|p(s)|}{\lambda(s)^2} ds
\\
&\leq \frac{C}{ t^\gamma (\log t)^m} \|p\|_{\gamma,m}.
\end{align*}

In summary, by \eqref{I2star} we have written
\begin{align*}
\frac{1}{2\pi}
\int_{\R^2} \varphi(x,t)dx
&= -2
\int_{t/2}^{t- \ch{\lambda(t)}^2}
\frac{p(s)}{t-s}
ds  + I_1 + I_{2,a} + I_{2,b} + I_{2,c} + I_{2,d} + I_3 ,
\end{align*}
and each of the expressions $I_1$, $ I_{2,a}$, $ I_{2,b} $, $ I_{2,c} $, $ I_{2,d} $, $ I_3$ are linear operators of $p$ with the estimate
\[
\| I_j[p] \|_{\gamma,m}
\leq C \|p\|_{\gamma,m}.
\]

The proof of \eqref{R} follows from the explicit expressions for the  terms $I_j$ in $R$, and similar estimates as before.

\end{proof}

\begin{lemma}
Suppose that  $\lambda$ satisfies \eqref{cond-lambda} and $\varphi_\lambda^{(2)}$ be given by
\eqref{eq-phi2}.
Then
\begin{align}
\label{vp1b}
\varphi_\lambda^{(2)}(0,t;\lambda)
&= -\frac{\lambda(t)^2}{4 t^2}
+ O\Bigl( \frac{1}{t^2 (\log t)^2} \Bigr) ,
\end{align}
as $t\to \infty$, where $ O( \frac{1}{t^2 (\log t)^2} ) $ is uniform in $t_0$.
With $\lambda^*$ given by \eqref{def-lambda-star},
if  $\lambda_1,\lambda_2$ satisfy
\[
\Bigl\|
\frac{\lambda_j}{\lambda^*}
\Bigl\|_{L^\infty(t_0/2,\infty) } <\frac{1}{2}, \quad j=1,2,
\]
then we also have
\begin{align}
\label{R2}
| \varphi_{\lambda^*+\lambda_1}^{(2)}(0,t)
-
\varphi_{\lambda^*+\lambda_2}^{(2)}(0,t)
| \leq \frac{C}{t^2 \log t }
\Bigl\| \frac{\lambda_1-\lambda_2}{\lambda^*} \Bigr\|_{L^\infty(t_0/2,\infty)}.
\end{align}
\end{lemma}

\begin{proof}
For simplicity of notation let us write
$\varphi(x,t;\lambda) = \varphi_\lambda^{(2)}(x,t)$.
Let us write the right hand side of equation \eqref{eq-phi2} in the following form
\begin{align*}
E_2(x,t;\lambda) &= - \frac{1}{2\lambda^2 t} U (y) \nabla_z z_0(z) \cdot z
+
\frac{2}{\lambda^3 t^{1/2}}  \nabla_z \chi_0(z) \cdot \nabla_y U (y)
+ \frac{1}{\lambda^2 t}
\Delta_z \chi_0(z) U (y)
\\
& \quad
- \frac{1}{\lambda^3 t^{1/2}} U(y) \nabla_z \chi_0(z) \cdot \nabla_y \Gamma_0(y) , \quad
y = \frac{x}{\lambda}, \ z = \frac{x}{\sqrt t}.
\end{align*}
To compute $\varphi(0,t;\lambda)$
let us define the following approximation of it
\begin{align*}
\hat \varphi(r,t) = \lambda^2 \tilde \varphi(r,t) ,
\end{align*}
where $\tilde \varphi(r,t)$ solves the radial heat equation in dimension 6:
\begin{align}
\label{eq-phi-o}
\left\{\begin{aligned}
\pp_t \tilde \varphi &=      \pp_r^2  \tilde \varphi+ \frac 5r \pp_r \tilde \varphi
+
\frac{1}{t^3} h\Bigl(\frac{r}{\sqrt t}\Bigr),
\\
\tilde \varphi(r,0)&= 0,
\end{aligned}
\right.
\end{align}
and
\begin{align}
\nonumber
h(\zeta) =
\frac{8}{\zeta^4}
\left[
\chi_0''
- \frac{3}{\zeta} \chi_0'(\zeta)+
\frac{\zeta}{2}\chi_0'(\zeta)
\right] .
\end{align}
The solution $\tilde \varphi(r,t)$ to problem  \eqref{eq-phi-o} can be expressed in self-similar form as
\[
\tilde \varphi(r,t) = \frac{1}{t^2} g(\zeta), \quad \zeta = \frac{r}{\sqrt t}.
\]
We find for $g$ the equation
\be\label{gg}
g'' + \frac{5}{\zeta}g' + \frac{\zeta}{2} g ' + 2 g + h(\zeta) = 0, \quad \zeta\in (0,\infty).
\ee
Using that the function  $\frac{1}{\zeta^4}$ is in the kernel of the homogeneous equation, we find the explicit solution of \equ{gg},
\[
g_0(\zeta) =
- \frac{1}{\zeta^4} \int_0^\zeta x^3 e^{-\frac{1}{4}x^2}
\int_0^x h(y) e^{\frac{1}{4}y^2}y\,dy dx.
\]
To find the solution $\tilde \varphi$ with suitable decay at infinity we let
\be\label{gpx}
g(\zeta) = g_0(\zeta) + \frac{1}{8} \bar z(\zeta) I ,
\ee
where
\[
\bar z(\zeta)  = \frac{1}{\zeta^4} \int_0^\zeta x^3 e^{-\frac{1}{4}x^2}\,dx
\]
is a second solution of the homogeneous equation, linearly independent of $\frac{1}{\zeta^4}$ and
\begin{align*}
I =  \int_0^\infty x^3 e^{-\frac{1}{4}x^2}
\int_0^x h(y) e^{\frac{1}{4}y^2}y\,dy dx.
\end{align*}
We observe that
\[
g(\zeta) =  O( e^{- \frac 14 \zeta^2} )\ass \zeta \to +\infty,
\]
which makes the solution \equ{gpx} the only one with decay faster than $O(\zeta^{-4})$ as $\zeta\to+\infty$.
An explicit calculation gives that
$
I = -8 ,
$
and therefore
\begin{align}
\label{vp1}
\hat \varphi (0,t) = -\frac{\lambda(t)^2}{4 t^2}.
\end{align}

Then, using a barrier for the equation satisfied by $\varphi(x,t;\lambda) - \hat \varphi(x,t)$ we get
\begin{align}
\label{cota-dvp1}
|\varphi(x,t;\lambda) - \hat \varphi(x,t)| \leq C \frac{1}{t^2 (\log t)^2} e^{-c\frac{|x|^2}{t}},
\end{align}
for $t\geq 2$, where $0<c<\frac{1}{4}$.
From \eqref{vp1} and \eqref{cota-dvp1} we obtain \eqref{vp1b}.

The proof of \eqref{R2} is similar.

\end{proof}

\begin{lemma}
\label{mass-varphi1-lambda2}
Suppose that  $\lambda$ satisfies \eqref{cond-lambda} and $\varphi_\lambda^{(2)}$ be given by  \eqref{eq-phi2}.
Then
\begin{align}
\label{int-phi2}
\int_{\R^2}
\varphi_{\lambda}^{(2)}
&= -2\pi \frac{\lambda^2}{t}
- 16 \pi \Upsilon \frac{\lambda^2}{t}
+ O \Bigl( \frac{1}{t^2 ( \log t)^2} \Bigr).
\end{align}
where $\Upsilon$ is defined in \eqref{defUpsilon}, that, is,
$\Upsilon = \int_0^\infty (\chi_0(s)-1) s^{-3}ds$.
\end{lemma}

\begin{proof}
Integrating  \eqref{eq-phi2}
\begin{align*}
\frac{d}{dt} \int_{\R^2}
\varphi_{\lambda}^{(2)}
&=
- 4 \varphi_{\lambda}^{(2)}(0,t)
-  \frac{1}{2 \lambda^2 t} \int_{\R^2}
U(y) \nabla_z \chi_0(z)\cdot z dx
+  \int_{\R^2} \tilde E dx .
\end{align*}

From \eqref{vp1b}
\begin{align*}
\varphi_{\lambda}^{(2)}(0,t)
&= -\frac{\lambda(t)^2}{4 t^2}
+ O\Big( \frac{1}{t^2 (\log t)^2}\Bigr)
\end{align*}
and we compute
\begin{align}
\nonumber
& -  \frac{1}{2 \lambda^2 t}
 U(y) \nabla_z \chi_0(z)\cdot z
+ \tilde E
\\
\nonumber
& \quad =
- \frac{1}{\lambda^2} U(y) \nabla_z \chi_0(z)\cdot z
+\frac{2}{\lambda^2}  \nabla_x \chi \cdot \nabla_x U
+ \frac{1}{\lambda^2}
\Delta_x \chi U
- \frac{1}{\lambda^2} U \nabla \chi \cdot \nabla \Gamma_0
\\
\nonumber
& \quad =
\Bigl[
4 \frac{\lambda^2}{t^3} \chi_0'(s)  \frac{1}{s^3}
- 64 \frac{\lambda^2}{t^3}\chi_0'(s) \frac{1}{s^5}
+ 8 \frac{\lambda^2}{t^3} (\chi_0''(s)+ \frac{1}{s}\chi_0'(s))  \frac{1}{s^4}
\\
\nonumber
& \quad  \qquad
+32 \frac{\lambda^2}{t^3}\chi_0'(s) \frac{1}{s^5}
\Bigr] + O\Bigl( \frac{\lambda^4}{t^4}\Bigr) \chi_{\{ 1\leq s \leq 2 \} }
\\
\nonumber
& \quad =
8 \frac{ \lambda^2}{t^3}
\frac{1}{s^4}
\Big[
\frac{s}{2} \chi_0'(s)
-  \frac{3}{s} \chi_0'(s)
+  \chi_0''(s)
\Bigr]  + O\Bigl( \frac{\lambda^4}{t^4}\Bigr) \chi_{\{ 1\leq s \leq 2 \} }
\end{align}
where $s = \frac{r}{\sqrt t}$.
Then
\begin{align*}
& -  \frac{1}{2 \lambda^2 t} \int_{\R^2}
U(y) \nabla_z \chi_0(z)\cdot z dx
+  \int_{\R^2} \tilde E dx
\\
& \quad =
2\pi\frac{8\lambda^2}{t^2}
\int_0^\infty
\frac{1}{s^4}
\Big[
\frac{s}{2} \chi_0'(s)
-  \frac{3}{s} \chi_0'(s)
+  \chi_0''(s)
\Bigr]sds  + O\Bigl( \frac{\lambda^4}{t^3}\Bigr)
\\
& \quad =
16 \pi \frac{\lambda^2}{t^2}
\Bigl[
\int_0^\infty (\chi_0(s)-1)s^{-3}ds
+ \int_0^\infty (s^{-3}\chi_0')' ds
\Bigl] + O\Bigl( \frac{\lambda^4}{t^3}\Bigr)
\\
& \quad =
16 \pi \frac{\lambda^2}{t^2} \Upsilon
+ O\Bigl( \frac{\lambda^4}{t^3}\Bigr).
\end{align*}

Therefore
\begin{align*}
\frac{d}{dt} \int_{\R^2}
\varphi_{\lambda}^{(2)}
&= 2\pi \frac{\lambda(t)^2}{t^2}
+ 16 \pi \Upsilon \frac{\lambda^2}{t^2}
+ O \Bigl( \frac{1}{t^3 ( \log t)^2} \Bigr)
\end{align*}
and integrating we get
\begin{align*}
\int_{\R^2}
\varphi_{\lambda}^{(2)}
&= -2\pi \frac{\lambda^2}{t}
- 16 \pi \Upsilon \frac{\lambda^2}{t}
+ O \Bigl( \frac{1}{t^2 ( \log t)^2} \Bigr).
\end{align*}
This is the desired expansion \eqref{int-phi2}.
\end{proof}

As a corollary from Lemma~\ref{mass-varphi1-lambda1} and Lemma~\ref{mass-varphi1-lambda2} we get:
\begin{corollary}
\label{coro-mass-vp-lambda}
Assume $\lambda$ satisfies \eqref{cond-lambda}.
Then
\begin{align*}
\int_{\R^2} \varphi_\lambda dx
= - 4 \pi \int_{t/2}^{t-\ch{\lambda(t)}^2} \frac{\lambda \dot \lambda (s)}{t-s}ds
-2\pi \frac{\lambda^2(t)}{t}
- 16 \pi \Upsilon \frac{\lambda^2(t)}{t}
+ O \Bigl( \frac{1}{t^2 ( \log t)^2} \Bigr)
+ R[\lambda\dot\lambda,\lambda],
\end{align*}
where $R$ is as in Lemma~\ref{mass-varphi1-lambda1}.
\end{corollary}

\begin{lemma}
\label{lemma-second-moment-tE}
Let $\tilde E$ be defined by \eqref{defTildeE}.
Assume that $\lambda$ satisfies \eqref{cond-lambda}.
Then
\begin{align}
\label{second-moment-tE}
\int_{\R^2}
\tilde E |x|^2 dx = - 64 \pi \Upsilon \frac{\lambda^2}{t} + O\Bigl( \frac{1}{t^2 (\log t)^2} \Bigr).
\end{align}
\end{lemma}

\begin{proof}
Similarly to the proof of Lemma~\ref{mass-varphi1-lambda2} we have
\begin{align}
\nonumber
\tilde E
&=
8 \frac{ \lambda^2}{t^3}
\frac{1}{s^4}
\Big[
-  \frac{3}{s} \chi_0'(s)
+  \chi_0''(s)
\Bigr] + O\Bigr( \frac{\lambda^4}{t^4} \Bigr) \chi_{ \{ 1 \leq s \leq 2 \} } ,
\end{align}
where $ r = |x|$, $s = \frac{r}{\sqrt t}$,
and so
\begin{align*}
\int_{\R^2}
\tilde E |x|^2 dx
&=
16 \pi  \frac{ \lambda^2}{t}
\int_0^\infty
\frac{1}{s^4}
\Big[
-  \frac{3}{s} \chi_0'(s)
+  \chi_0''(s)
\Bigr]s^3ds  + O\Bigr( \frac{\lambda^4}{t^2} \Bigr)
\\
&= -64 \pi   \frac{ \lambda^2}{t} \Upsilon  + O\Bigr( \frac{\lambda^4}{t^2} \Bigr).
\end{align*}
This is \eqref{second-moment-tE}.
\end{proof}

\begin{lemma}
Let $E$ be defined by \eqref{defE}.
Assume that $\lambda$ satisfies \eqref{cond-lambda}.
Then
\begin{align}
\nonumber
\left| \int_{\R^2}  E |x|^2 dx \right|\leq  \frac{C}{t\log(t)}.
\end{align}
\end{lemma}
\begin{proof}
We have from \eqref{defE}
\begin{align}
\nonumber
E( \zeta ,t;\lambda) =
\frac{\dot \lambda}{\lambda^3} Z_0\Bigl(\frac{\zeta}{\lambda}\Bigr)
\chi_0\Bigl( \frac{\zeta}{\sqrt t}\Bigr)
\ch{+} \frac{1}{2\lambda^2 t} U \Bigl(\frac{\zeta}{\lambda}\Bigr) \nabla_z \chi_0(z) \cdot z
+ \tilde E (x,t)  ,
\end{align}
and we have already computed $\int_{\R^2} \tilde E |x|^2 dx$.
We have
\begin{align*}
\int_{\R^2} Z_0\Bigl(\frac{\zeta}{\lambda}\Bigr)
\chi_0\Bigl( \frac{\zeta}{\sqrt t}\Bigr) |\zeta|^2\,d\zeta
&= 2\pi \lambda^4 \int_0^\infty Z_0(\rho)
\chi_0\Bigl( \frac{\lambda\rho}{\sqrt t}\Bigr)\rho^3 \, d\rho
\\
&= O ( \lambda^4 \log(t) ),
\end{align*}
and so
\begin{align*}
\left| \frac{\dot \lambda}{\lambda^3} \int_{\R^2} Z_0\Bigl(\frac{\zeta}{\lambda}\Bigr)  \chi_0\Bigl( \frac{\zeta}{\sqrt t}\Bigr)|\zeta|^2\,d\zeta\right| \leq  \frac{C}{t \log t}.
\end{align*}

\end{proof}

\subsection{Proof of Proposition~\ref{prop-lambda0}}
\label{subsect-prop-lambda0}

Let
\begin{align}
\nonumber
I[ \lambda ] = 4 \int_{\R^2} \varphi_{\lambda} dx
- \int_{\R^2} \tilde E(\lambda) |x|^2dx.
\end{align}

For the proof we proceed by linearization, that is we look for a function $\lambda_0$
satisfying
\[
|I[\lambda_0]\ch{(t)}|\leq C \frac{1}{t^{\frac{3}{2}+\sigma}}, \quad t>t_0
\]
with the expansion
\[
\lambda_0(t) = \lambda^*(t) + \tilde \lambda_0\ch{(t)}
\]
where $\lambda^*$ was defined in \eqref{def-lambda-star}, that is,
$\lambda^*(t) = \frac{c_0 }{\sqrt{\log t}}$
and $\tilde \lambda_0(t)$, $t>\frac{t_0}{2}$, is a correction. Here $c_0>0$ is a fixed constant.

We claim that
\begin{align}
\label{error-lambda0}
|I[\lambda^*](t)| \leq C \frac{\log (\log t)}{t (\log t)^2}, \quad t>\frac{t_0}{2} ,
\end{align}
with $C$ independent of $t_0$. In the rest of the proof $C$ will be a constant independent of $t_0$ (for $t_0$ large).

Indeed, using the decomposition \eqref{decomp-varphilambda} and the notation \eqref{def-varphip} we have
\begin{align*}
\int_{\R^2} \varphi_{\lambda^*} dx =
\int_{\R^2} \varphi_{\lambda^*}^{(1)} dx
+\int_{\R^2} \varphi_{\lambda^*}^{(2)} dx
\end{align*}
and
\[
\int_{\R^2} \varphi_{\lambda^*}^{(1)} dx
= \int_{\R^2} \varphi[p^*,\lambda^*] dx , \quad p^* = \lambda^*\dot\lambda^*.
\]
By  Lemma~\ref{mass-varphi1-lambda1} we have
\begin{align*}
\left|
\int_{\R^2} \varphi[p^*,\lambda^*] dx
+ 4\pi \int_{t/2}^{t-\lambda^*(t)^2}
\frac{p^*(s)}{t-s}ds
\right|
\leq C \frac{1}{t (\log t)^2} , \quad t>\frac{t_0}{2}.
\end{align*}
Therefore
\begin{align*}
\left|
\int_{\R^2} \varphi[p^*,\lambda^*] dx
+4\pi \log(t) p^*(t) \right|
\leq C \frac{\log(\log t)}
{t (\log t)^2}, \quad t>\frac{t_0}{2}.
\end{align*}
On the other hand, by Lemma~\ref{mass-varphi1-lambda2} we have
\begin{align}
\nonumber
\int_{\R^2} \varphi_{\lambda^*}^{(2)}dx
&=
-2\pi \frac{\lambda^*(t)^2}{t}
- 16 \pi \Upsilon \frac{\lambda^*(t)^2}{t}
+ O\Bigl( \frac{1}{t(\log t)^2} \Bigr),
\end{align}
and by Lemma~\ref{lemma-second-moment-tE}
\begin{align}
\nonumber
\int_{\R^2}
\tilde E(\lambda^*) |x|^2 dx = - 64 \pi \Upsilon \frac{\lambda^*(t)^2}{t} + O\Bigl( \frac{1}{t^2 (\log t)^2} \Bigr).
\end{align}
Using the explicit form of $\lambda^* $ and the previous formulas we deduce \eqref{error-lambda0}.

Next let us rewrite slightly the operator
$I [\lambda]$ as follows. We have
\begin{align}
\nonumber
I[\lambda] = 4
\int_{\R^2}
\varphi[\lambda \dot\lambda,\lambda]
dx
+ 4 \int_{\R^2} \varphi_{\lambda}^{(2)} dx
- \int_{\R^2} \tilde E(\lambda) |x|^2dx.
\end{align}
Let us define
\begin{align*}
R[p,\lambda]
= \int_{\R^2}
\varphi[p,\lambda]
dx
+  4 \pi \int_{t/2}^{t-\lambda^*(t)^2} \frac{p(s)}{t-s}ds  .
\end{align*}
This is similar to the decomposition given in Lemma~\ref{mass-varphi1-lambda1}, but we have changed the interval of integration to $[\frac{t}{2},t-\lambda^*(t)^2]$.
We decompose the integral
\begin{align*}
\int_{t/2}^{t-\lambda^*(t)^2}
\frac{p(s)}{t-s}ds
& = \int_{t/2}^{t-t^{1-\vartheta}} \frac{p(s)}{t-s}ds
+
\int_{t-t^{1-\vartheta}}^{t-\lambda^*(t)^2}  \frac{p(s)}{t-s}ds
\\
& = \int_{t/2}^{t-t^{1-\vartheta}} \frac{p(s)}{t-s}ds
+
p(t)
\int_{t-t^{1-\vartheta}}^{t-\lambda^*(t)^2}  \frac{1}{t-s}ds
\\
&\quad
- \int_{t-t^{1-\vartheta}}^{t-\lambda^*(t)^2}  \frac{p(t)-p(s)}{t-s}ds
\end{align*}
where $0<\vartheta<\frac{1}{2}$ is a fixed constant.

We change variables $\mu = \lambda^2$, so that
\begin{align*}
I[\lambda]
& =
 - 8 \pi \dot \mu (t)
 ( (1-\vartheta) \log(t)- 2\log(\lambda^*(t)) \ch{)}
-
8 \pi  \int_{t/2}^{t-t^{1-\vartheta}} \frac{\dot \mu(s)}{t-s}ds
\\
& \quad
+ 4 \int_{\R^2} \varphi_{\sqrt \mu }^{(2)}dx
+ 2 R[ \dot \mu ,\sqrt \mu]
- \int_{\R^2} \tilde E(\sqrt \mu) |x|^2 dx
\\
& \quad
+ 8 \pi \int_{t-t^{1-\vartheta}}^{t-\lambda^*(t)^2}
\frac{\dot\mu(t)-\dot\mu(s)}{t-s}ds.
\end{align*}
Let $\eta$ be a smooth cut-off such that $\eta(t) = 0 $ for $t<\frac{3}{4}t_0$, $\eta(t) =1 $ for $t>t_0$.
We define
\begin{align*}
\tilde I [\mu]
& =
 - 8 \pi \dot \mu (t)
 ( (1-\vartheta) \log(t)- 2\log(\lambda^*(t)) \ch{)}
-
8 \pi \eta(t) \int_{t/2}^{t-t^{1-\vartheta}} \frac{\dot \mu(s)}{t-s}ds
\\
& \quad
+ 4 \eta(t) \int_{\R^2} \varphi_{\sqrt \mu }^{(2)}dx
+ 2 \eta (t) R[ \dot \mu ,\sqrt \mu]
- \eta(t) \int_{\R^2} \tilde E(\sqrt \mu) |x|^2 dx
\\
& \quad
+ 8 \pi \eta(t) \int_{t-t^{1-\vartheta}}^{t-\lambda^*(t)^2}
\frac{\dot\mu(t)-\dot\mu(s)}{t-s}ds
\end{align*}
which we write
\begin{align*}
\tilde I[\mu] = \ell[\mu] + N[\mu]+ R[\mu],
\end{align*}
where
\begin{align*}
\ell[\mu](t) &=
 - 8 \pi \dot \mu (t)
 ( (1-\vartheta) \log(t)- 2\log(\lambda^*(t)) )
-
8 \pi \eta(t) \int_{t/2}^{t-t^{1-\vartheta}} \frac{\dot \mu(s)}{t-s}ds
\\
N[\mu](t) &=
 4 \eta(t) \int_{\R^2} \varphi_{\sqrt \mu }^{(2)}dx
+ 2 \eta (t) R[ \dot \mu ,\sqrt \mu]
- \eta(t) \int_{\R^2} \tilde E(\sqrt \mu) |x|^2 dx
\\
R[\mu](t) &=
8 \pi \eta(t) \int_{t-t^{1-\vartheta}}^{t-\lambda^*(t)^2}
\frac{\dot\mu(t)-\dot\mu(s)}{t-s} ds.
\end{align*}

Note that $I[\lambda](t) = \tilde I[\lambda^2](t)$ for $t\geq t_0$.

Instead of finding $\lambda$ such that  $I[\lambda]=0$ for $t>t_0$ we are going to construct $\mu$ such that
\[
|\tilde I[\mu](t) |\leq \frac{C}{t^{\frac{3}{2}+\sigma}}, \quad t> \frac{t_0}{2} ,
\]
for some $\sigma>0$.

Let $\mu^* = (\lambda^*)^2$ where $\lambda^*$ is defined in \eqref{def-lambda-star}.
In a first step we will find $\mu_1$ so that
\begin{align}
\label{eq-mu1-a}
\ell[\ch{\mu^*}+    \mu_1] + N[\ch{\mu^*}+\mu_1] + R[\ch{\mu^*}]= 0 , \quad  t> \frac{t_0}{2} .
\end{align}
We will look for $\mu_1$
with $\|\mu_1\|_{*,\gamma,m}<\infty$ where, for a function  $\mu_1 \in C^1([\frac {t_0}2 , \infty))$  with  $\lim_{t\to \infty} \mu_1(t)=0$ we define
\begin{align}
\nonumber
\| \mu_1 \|_{*,\gamma,m}
= \sup_{t\geq t_0/2} t^\gamma ( \log t)^m   |\dot \mu_1(t)| = \|\dot \mu_1\|_{\gamma,m}.
\end{align}

Equation \eqref{eq-mu1-a} takes the form
\begin{align}
\nonumber
0&=- 8 \pi \dot \mu_1
 ( (1-\vartheta) \log(t)- 2\log(\lambda^*(t)) \ch{)}
-
8 \pi \eta(t) \int_{t/2}^{t-t^{1-\vartheta}} \frac{\dot \mu_1(s)}{t-s}ds
\\
\label{f2-b}
& \quad
+ \eta(t)  e_1(t) + \eta(t) F_1[\mu_1](t)  ,
\quad t>\frac{t_0}{2},
\end{align}
where
\[
e_1(t) = \tilde I[\mu^*]
\]
and $F_1$ is an operator with the following properties:
\begin{align}
\label{F1a}
\| F_1[\tilde \mu_1]  \|_{\gamma,m} &\leq C \| \tilde \mu_1 \|_{*,\gamma,m} ,
\\
\label{F1b}
\| F_1[\tilde \mu_1] -F_1[\tilde \mu_2] \|_{\gamma,m} &\leq C \| \tilde \mu_1 -\tilde \mu_2\|_{*,\gamma,m} ,
\end{align}
for $\tilde \mu_j$ satisfying $\|\tilde \mu_j\|_{*,\gamma,m} \leq 1$, with $0<\gamma<2$, $m\in \R$, where $\| \ \|_{\gamma,m}$ is defined in \eqref{norm1}.
From \eqref{error-lambda0} we find
\[
|e_1(t) |\leq C \frac{\log( \log t)}{t (\log t)^2} , \quad t > \frac{t_0}{2} .
\]

Now we apply the contraction mapping principle to the equation \eqref{f2-b} written in the form
\begin{align}
\label{eq-mu1}
\dot \mu_1 &=
-
\eta(t) I_r[\dot\mu_1]
+ \frac{1}{8\pi( (1-\vartheta) \log(t)- 2\log(\lambda^*(t)))}
\eta(t) \bigl[  e_1(t) +  F[\mu_1](t) \bigr] , \  t>\frac{t_0}{2},
\end{align}
where
\begin{align*}
I_r[\dot\mu_1] =
\frac{1}{( (1-\vartheta) \log(t)- 2\log(\lambda^*(t))}  \int_{t/2}^{t-t^{1-\vartheta}} \frac{\dot \mu_1(s)}{t-s}ds .
\end{align*}
We directly check that
\begin{align*}
\|I_r[\dot\mu] \|_{\gamma,m}
\leq \frac{\vartheta}{1-\vartheta}
\| \dot \mu_1\|_{\gamma,m} .
\end{align*}
Let $X$ be the space  $X = \{ \mu_1 \in  C^1([ \frac{t_0}{2},\infty) ) \, | \, \lim_{t\to \infty} \mu_1(t) = 0 \}$ with the norm $\| \mu_1 \|_X = \| \mu_1 \|_{*,1,3-\varepsilon}$, where $0<\varepsilon<1$.
It follows that if $\vartheta<\frac{1}{2}$ the equation \eqref{eq-mu1} has a unique solution $\mu_1 $ in the ball $\overline B_1(0)$ of $X$.

Therefore we have found $\mu_1$ with
$\| \mu_1 \|_{*,1,3-\varepsilon} \leq 1$
so that $\mu = \mu^* + \mu_1$ satisfies
\begin{align}
\label{remainder}
\tilde I[\mu] = - 8 \pi \eta(t) \int_{t-t^{1-\vartheta}}^{t-\lambda^*(t)^2}
\frac{\dot\mu_1(t)-\dot\mu_1(s)}{t-s}ds.
\end{align}
To estimate this remainder we then need  a bound for $\ddot \mu$.
Differentiating with respect to $t$ in the decompositions used in Lemmas~\ref{mass-varphi1-lambda1}, \ref{mass-varphi1-lambda2}, \ref{lemma-second-moment-tE}
we obtain
\begin{align*}
|\dot e_1(t)|\leq C \frac{\log(\log t)}{t^2 (\log t)^2} , \quad t > \frac{t_0}{2}.
\end{align*}
Differentiating in $t$ equation \eqref{eq-mu1} and using the contraction mapping principle we get that for any $\varepsilon>0$ small
\begin{align*}
|\ddot \mu_1(t)| \leq \frac{C}{t^{2-\varepsilon}}.
\end{align*}
Using this  we find that the remainder \eqref{remainder} has the estimate
\begin{align*}
\left| \int_{t-t^{1-\vartheta}}^{t-\lambda^*(t)^2}
\frac{\dot \mu(t)-\dot \mu(s)}{t-s}ds
\right|
& \leq \frac{C}{t^{1+\vartheta-\varepsilon} }, \quad t>\frac{t_0}{2},
\end{align*}
where $\mu = \mu^* + \mu_1$.

Next we introduce another correction $\mu_2$ to improve the decay of the remainder.
We consider $\mu = \mu^* + \mu_1 + \mu_2$ and we consider the following equation for $\mu_2$:
\begin{align*}
\ell[\mu^*+\mu_1+\mu_2] + N[\mu^*+\mu_1+\mu_2] + R[\mu^*+\mu_1] = 0 , \quad t > \frac{t_0}{2}.
\end{align*}
Similarly as before, this equation can be written as
\begin{align}
\nonumber
0&=- 8 \pi \dot \mu_2
 ( (1-\vartheta) \log(t)- 2\log(\lambda^*(t)) )
-
8 \pi \eta(t) \int_{t/2}^{t-t^{1-\vartheta}} \frac{\dot \mu_2(s)}{t-s}ds
\\
\label{f2c}
& \quad
+ \eta(t)  e_2(t) + \eta(t) F_2[\mu_2](t)  ,
\quad t>\frac{t_0}{2},
\end{align}
where $F_2$ satisfies the same estimate \eqref{F1a}  \eqref{F1b}, and $e_2$ has the estimate
\begin{align*}
|e_2(t) |\leq \frac{C}{t^{1+\vartheta-\varepsilon}} , \quad t > \frac{t_0}{2}.
\end{align*}
Using again the contraction mapping principle we find a solution $\mu_2$ of \eqref{f2c} with $ \|\mu_2\|_{*,1+\vartheta-\varepsilon,1} \leq 1$.
Then for $\mu= \mu^* + \mu_1 + \mu_2$
\begin{align*}
\tilde I [\mu](t) = - 8 \pi \eta(t) \int_{t-t^{1-\vartheta}}^{t-\lambda^*(t)^2}
\frac{\dot\mu_2(t)-\dot\mu_2(s)}{t-s}ds.
\end{align*}
To estimate this remainder we need the following bound for $\ddot \mu_2$
\begin{align}
\label{ddotmu2}
|\ddot \mu_2(t) |\leq \frac{C}{t^{2+\vartheta-\varepsilon}}
\end{align}
which is obtained from an estimate for $\dot e_2$, differentiating with respect to $t$ equation \eqref{f2c}. The estimate for $\dot e_2$ is obtained from an analogous estimate for $\frac{d^3 \mu_1}{dt^3}$.

From \eqref{ddotmu2} we find
\begin{align*}
|\tilde I [\mu](t)|
\leq \frac{C}{t^{1+2\vartheta-\varepsilon}} \quad t>\frac{t_0}{2},
\end{align*}
where we recall that $0<\vartheta<\frac{1}{2}$ is arbitrary.

Thus letting $\lambda_0 = \sqrt{\mu}$, $\mu = \mu^* + \mu_1 + \mu_2$ we obtain
\begin{align*}
|I[\lambda_0] |\leq  \frac{C}{t^{1+2\vartheta-\varepsilon}} \quad t>t_0.
\end{align*}
Choosing $\vartheta>\frac{1}{4}$ and $\varepsilon>0$ small, we obtain the properties stated in Proposition~\ref{prop-lambda0}.

\qed

\section{Inner linear theory}
\label{sectLT1}

In this section we consider the problem
\begin{align}
\label{linear1-0}
\left\{
\begin{aligned}
\lambda^2 \partial_t \phi &= L[\phi] + B[\phi]+  h(y,t)  \quad \text{in }\R^2 \times (t_0,\infty)
\\
\phi(\cdot,t_0) &= 0 \quad \text{in }\R^2 .
\end{aligned}
\right.
\end{align}
that appears in the inner equations \eqref{inner4a} and  \eqref{inner4b},
where, we recall
\begin{align}
\nonumber
L[\phi] &= \nabla \cdot \Bigl[ U \nabla \Bigl( \frac{\phi}{U}- (-\Delta)^{-1}\phi \Bigr) \Bigr] ,
\\
\label{newtonian}
(-\Delta)^{-1}\phi(y, t)
&= \frac{1}{2\pi} \int_{\R^2} \log\Bigl(\frac{1}{|y-z|}\Bigr) \phi(z,t )dz.
\end{align}

Slightly more general than the operator $B$ defined in \eqref{defB} we will consider
\begin{align*}
B[\phi] =  \zeta_1(t) [\phi]_{rad} + \zeta_2(t) y \cdot \nabla [\phi]_{rad}
+ ( \zeta_1(t) \phi_1 + \zeta_2(t) y \cdot \nabla \phi_1)  \chi_0\Bigl( \frac{\lambda y}{5 \sqrt t} \Bigr)
\end{align*}
where $[\phi]_{rad}$ is the radial part of $\phi$ (defined in \eqref{def-radial-part}) and $\phi_1 = \phi - [\phi]_{rad}$, and
where $\chi_0$ is the smooth cut-off function \ch{defined in \eqref{chi0}.}
In the sequel we will keep the same notation for $B$.

In what follows we will analyze the linear initial value problem   \eqref{linear1-0}
where we assume that the functions $\lambda(t)$, $\zeta_i(t)$  are continuous, $t_0>1$ and  that for some positive numbers $c$, $C$  we have
\[
\frac c {\sqrt{\log t}}\, \leq \,   \lambda(t) \, \leq\,    \frac C {\sqrt{\log t}}\quad\text{for all } t>t_0,
\]
\begin{align}
\nonumber
|\zeta_i(t)| \le   \frac C {t\log^2 t  }\quad\text{for all } t>t_0.
\end{align}

Next we change the time variable into
\begin{align}
\nonumber
\tau = \tau_0 + \int_{t_0}^t \frac{1}{\lambda(s)^2}ds ,
\end{align}
where $\tau_0 = t_0 \log t_0$.
Then
\begin{align}
\nonumber
\tilde c_1 t \log t \, \leq\,
\tau \, \leq\,  \tilde c_2 t \log t
\end{align}
for some $\tilde c_1,\tilde c_2>0$.
Identifying $\phi(y,t)$ and $h(y,t)$ with  $\phi(y,\tau)$ and $h(y,\tau)$ we rewrite \eqref{linear1-0} as
\begin{align}
\label{linear1}
\left\{
\begin{aligned}
\partial_\tau \phi &= L[\phi] + B[\phi] + h  \quad \text{in }\R^2 \times \ch{(}\tau_0,\infty)
\\
\phi(\cdot,\tau_0) &= 0 \quad \text{in }\R^2 ,
\end{aligned}
\right.
\end{align}

We consider problem  \eqref{linear1} for functions  $h(y,\tau)$ that have fast decay in space. More precisely, we assume that for all $T>0$ there is $C_T$ such that
\begin{align*}
|h(y,\tau)| \leq  \frac{C_T}{1+|y|^6}
\quad
\text{for all }
(y,\tau) \in \R^2 \times \ch{(}\tau_0,T\ch{)}.
\end{align*}
In this case, by a solution $\phi(y,\tau)$ of \eqref{linear1}
we understand a continuous function $\phi(y,\tau)$, of class $C^1$ in $y$, such that for any $T>\tau_0$ there exists a $C_T>0$ with
\begin{align}
\label{gaussian}
|\phi(y,\tau)|+ (1+|y|)|\nabla_y\phi(y,\tau)|\, \leq\,
 \frac{C_T}{1+|y|^6}
\quad \text{for all }
(y,\tau) \in \R^2 \times \ch{(}\tau_0,T\ch{)},
\end{align}
and satisfies the integral equation
\begin{align}
\label{lin}
\phi(y,\tau)
=
\int_{\tau_0}^\tau \int_{\R^2} G(y-z,\tau-s)\,
[ -\nabla & \phi \nabla \Gamma_0 - \nabla U \nabla (-\Delta)^{-1} \phi
\\
\nonumber
& + 2 U \phi
+ B[\phi] + h ](z,s)\, dz ds   ,
\end{align}
where $(-\Delta)^{-1}\phi$ is defined in \eqref{newtonian} and  $G(y,\tau)$ designates the two-dimensional heat kernel,
\begin{align*}
G(y,\tau) = \frac{1}{4 \pi \tau} e^{-\frac{|y|^2}{4 \tau}}.
\end{align*}
From the formula
\begin{align*}
\nabla (-\Delta)^{-1} h (y)
= - \frac{1}{2\pi} \int_{\R^2} \frac{y-z}{|y-z|^2}h(z)dz
\end{align*}
we see that if $|\phi(y)|\leq \frac{C}{1+|y|^6}$ then
\begin{align*}
|\nabla (-\Delta)^{-1} \phi (y)|\leq \frac{C}{1+|y|}
\| (1+|y|^6) \phi\|_{L^\infty(\R^2)}.
\end{align*}
Using this estimate,
existence and uniqueness of a  solution of \eqref{lin} satisfying \eqref{gaussian} are standard. For a short time $T>\tau_0$ this is established by a contraction mapping argument in an appropriate
$L^\infty$-weighted space.  Then a direct linear continuation procedure applies.

\medskip
A first natural condition to impose on $h$ in \eqref{linear1} is that
\begin{align}
\nonumber
\int_{\R^2} h(y,\tau)dy=0\quad \text{for all }\tau>\tau_0,
\end{align}
in order to achieve that the solution has also zero mass at all times.

\medskip

We want to find solutions to \eqref{linear1} that have fast decay in space and time.  For this we need to assume fast space-time decay of the right hand side, which we do by working with the following class of norms.

Given positive numbers $\nu$, \ch{$p$}, $\epsilon$ and $m\in \R$, we let
$\|h\|_{\nu,m,p,\epsilon}$ denote the least $K\ge 0$ such that  for all $\tau>\tau_0$  and for all $y\in \R^2$
\begin{align}
\label{normh0b}
|h(y,\tau)| \ \leq
\ \,
\frac{K}{\tau^{\nu} (\log \tau)^{m}}
\ch{\frac{  1  }{(1+|y|)^p} }
\begin{cases}
\displaystyle
1
& |y|\leq \sqrt { \tau },
\vspace{1mm}
\\
\displaystyle
\frac{\ch{\tau ^{\epsilon/2}}}{|y|^\epsilon}
& |y|\geq  \sqrt {\tau } .
\end{cases}
\end{align}
This is similar to the norm introduced in \eqref{normh0} but defined using $\tau$ instead of $t$.
We will give the results in Sections~\ref{sect-prelim-linear}--\ref{sect-linear-nonradial} using the norm \eqref{normh0b}.

Still, fast decay of the right hand side doesn't imply fast decay of the solution.
For example, consider equation \eqref{linear1-0} without the operator $B$, that is,
\begin{align}
\label{linear1-1}
\left\{
\begin{aligned}
\partial_\tau \phi &= L[\phi] +  h(y,t)  \quad \text{in }\R^2 \times (\tau_0,\infty)
\\
\phi(\cdot,\tau_0) &= 0 \quad \text{in }\R^2 ,
\end{aligned}
\right.
\end{align}
and suppose that $h$ has compact support in space and time, and that $\phi$ has sufficient space-time decay. Then, multiplying \eqref{linear1-1} by $|y|^2$ and integrating in $\R^2 \times (\tau_0,\infty) $ gives
\begin{align*}
\int_{\tau_0}^\infty \int_{\R^2} h (y,\tau) |y|^2 dy d\tau = 0,
\end{align*}
because if $\phi$ is a regular function with fast decay, then
\begin{align*}
\int_{\R^2} L[\phi]|y|^2 dy = 0,
\end{align*}
see Remark~\ref{remark-secondm-Lphi} below.
It is then necessary to impose a condition on $h$, or to adjust a parameter in the problem in order to get a fast decay of the solution.
We develop here the theory by adjusting the parameter $c_1$ in the equation below
\begin{align}
\label{linear-000b}
\left\{
\begin{aligned}
\partial_\tau \phi &= L[\phi] + B[\phi]+  h(y,t)  \quad \text{in }\R^2 \times (\tau_0,\infty),
\\
\phi(\cdot,t_0) &= c_1 \tilde Z_0 \quad \text{in }\R^2 ,
\end{aligned}
\right.
\end{align}
where $\tilde Z_0$ is defined as
\begin{align}
\nonumber
\tilde Z_0(\rho ) &= ( Z_0(\rho) - m_{Z_0} U ) \chi_0\Bigl( \frac{\rho}{\sqrt{\tau_0}}\Bigr),
\end{align}
where $m_{Z_0}$ is such that
\begin{align*}
\int_{\R^2} \tilde Z_0 = 0.
\end{align*}

\begin{prop}
\label{prop-linear-without-second-moment}
Let $\sigma>0$, $\epsilon>0$ with $\sigma+\epsilon<2$ and  $1<\nu<\frac{7}{4}$. Let $0<q<1$.  Then there exists a number $C>0$ such that for $t_0$ sufficiently large and all radially symmetric $h=h(|y|,\tau)$ with $\|h\|_{\nu,m,6+\sigma,\epsilon}<\infty$ and
\begin{align*}
\int_{\R^2} h(y,\tau)dy &= 0
,\quad \text{for all } \tau>\tau_0 ,
\end{align*}
there exists  $c_1 \in \R$ and  solution $\phi(y,\tau) = \mathcal T^{i,2}_{\textbf{p}} [h]$ of problem
\ch{\eqref{linear-000b}}
that defines a linear operator of $h$ and
satisfies the estimate
\begin{align*}
\|  \phi \|_{\nu-1,m+q,4,2+\sigma+\epsilon}  \leq \frac{C}{(\log\tau_0)^{1-q}}  \|h\|_{\nu,m,6+\sigma,\epsilon}.
\end{align*}
Moreover $c_1$ is a linear operator of $h$ and
\begin{align*}
|c_1| & \leq  C \frac{1 }{ \tau_0^{\nu-1} (\log\tau_0)^{m+1}}  \|h\|_{\nu,m,6+\sigma,\epsilon}.
\end{align*}
\end{prop}

We have stated this result only in the radial setting, because this is what is needed, but there is a version of it in the non-radial case.

The next result is for the problem
\begin{align}
\label{linear-002}
\left\{
\begin{aligned}
\partial_\tau \phi &= L[\phi] + B[\phi]+  h(y,\tau)  \quad \text{in }\R^2 \times (\tau_0,\infty),
\\
\phi(\cdot,\tau_0) &= 0 \quad \text{in }\R^2 ,
\end{aligned}
\right.
\end{align}
and holds without the radial symmetry assumption but assuming orthogonality of the right hand side with respect to all elements in the kernel and for all times.

\begin{prop}
\label{prop-linear-with-second-moment}
Let $0<\sigma<1$, $\epsilon>0$ with $\sigma+\epsilon<\frac{3}{2}$
and  $1<\nu< \min( 1+\frac{\epsilon}{2},3-\frac{\sigma}{2}, \frac{5}{4})$.
Let $0<q<1$.
Then there is $C$ such that for \ch{$\tau_0$} large the following holds.
Suppose that  $h$ satisfies  $\|h\|_{\nu,m,6+\sigma,\epsilon}<\infty$ and
\begin{align}
\nonumber
\int_{\R^2} h(y,\tau)dy&=0 , \quad
\int_{\R^2} h(y,\tau)|y|^2dy=0,
\\
\nonumber
\int_{\R^2} h(y,\tau) y_j dy&=0 , \quad j=1,2 ,\quad \text{for all } \tau>\tau_0 .
\end{align}
Then there exists a solution $\phi(y,\tau)= \mathcal T^{i,1}_{\textbf{p}} [h]$ of problem
\ch{\eqref{linear-002}}
that defines a linear operator of $h$ and
satisfies
\begin{align}
\nonumber
\|  \phi \|_{\nu-\frac 12 ,m+\frac {q}{2},4,2+\sigma+\epsilon} \leq C \|h\|_{\nu,m,6+\sigma,\epsilon}.
\end{align}
\end{prop}

Propositions~\ref{prop-linear-without-second-moment-1} and \ref{prop-linear-with-second-moment-1} given in Section~\ref{sect-proof-existence} are direct corollaries of Propositions~\ref{prop-linear-without-second-moment} and  \ref{prop-linear-with-second-moment}. The only changes are due to the change in the time variable, because $\tau \sim t \log t$, and the fact that the norms for the solutions in Propositions~\ref{prop-linear-without-second-moment-1} and \ref{prop-linear-with-second-moment-1} include a gradient term. The estimate for the gradient follows from  the weighted $L^\infty$ estimate, scaling and standard parabolic estimates.

The proofs of Propositions~\ref{prop-linear-without-second-moment} and  \ref{prop-linear-with-second-moment} are contained in Sections~\ref{sect-prelim-linear}--\ref{sect-linear-nonradial}. They are based on an energy inequality obtained by multiplying the equation by a suitable test function, and using an inequality for a quadratic form.
Section~\ref{sect-prelim-linear} contains some preliminaries on this quadratic form.

In Proposition~\ref{prop-linear-with-energy}, we obtain an additive decomposition of the solution $\phi(y,\tau)$ of \eqref{linear-000b} into a part with a relatively slow space decay that loses $\tau^{1/2}$ with respect to the time decay of the right hand side, and a term along $Z_0(y)$ that loses an entire power of $\tau$.
This is the key element for the proof of Proposition~\ref{prop-linear-without-second-moment} in Section~\ref{sect-linear-decomp} (p.\pageref{proof-linear-without-second-moment}).

Then the proof of Proposition~\ref{prop-linear-with-second-moment}
in the radial case uses Proposition~\ref{prop-linear-with-energy} after formally applying the operator $L^{-1}$ to the original equation and performing a {\em concentration procedure} that improves the space decay of the resulting error. This is done on Section~\ref{sect-theorem-linear-with-second-moment}, and we give there a proof of  Proposition~\ref{prop-linear-with-second-moment} in the case of radial functions.

The proof of Proposition~\ref{prop-linear-with-second-moment} in the general case is in Section~\ref{sect-linear-nonradial} (p.\pageref{proof-linear-with-second-moment}).. The idea is that the decomposition obtained in  Proposition~\ref{prop-linear-with-energy} for solutions with no radial mode does not contain the term along $Z_0$, which allows us to obtain a much better estimate.

\section{Preliminaries for the linear theory}
\label{sect-prelim-linear}

A central ingredient in obtaining good estimates for the linearized parabolic operator associated to the inner problem is the analysis
of the quadratic form
\begin{align}
\label{q-form}
\phi\mapsto  \int_{\R^2} g\phi ,\quad  g =  \frac {\phi} U  - (-\Delta)^{-1} \phi.
\end{align}
\ch{This quadratic form arises when considering the linearized Keller-Segel problem \eqref{linear1-0}.
Indeed, $L[\phi] = \nabla \cdot ( U \nabla g) $ and it is natural to test the equation \eqref{linear1-0} with $g$, since
\[
\int_{\R^2} L[\phi] g = \int_{\R^2}  \nabla \cdot ( U \nabla g) = - \int_{\R^2} U |\nabla g|^2.
\]
But from the time derivative we get $\lambda^2 \int_{\R^2} \partial_t \phi g $, which leads to \eqref{q-form}.}

We observe that $g$ has degeneracy directions.
Indeed,  if $\psi = (-\Delta)^{-1} \phi$ then
$$
\Delta \psi + U(y)\psi = -Ug \inn \R^2 .
$$
The operator $\Delta \psi + U(y) \psi $ is classical. It   corresponds to linearizing the Liouville equation
\begin{align*}
\Delta v + e^{v} =0\quad \text{in }\R^2,
\end{align*}
around the solution $\Gamma_0 = \log U$. It is well known that the bounded kernel of this linearization is spanned by the generators of rigid motions, namely dilation and translations of the equation, which are precisely the functions $z_0,z_1,z_2$ defined by
\begin{align}
\label{defZLiouville}
\left\{
\begin{aligned}
z_0(y) &= \nabla \Gamma_0(y) \cdot y + 2 \\
z_j(y) & = \partial_{y_j} \Gamma_0(y), \quad j=1,2 .
\end{aligned}
\right.
\end{align}
\ch{Note that} $g$ is precisely annihilated at the linear combinations of these functions. In the rest of this section we will state and prove several estimates that take into account this issue, which will be crucial later on.

%
\ch{The quadratic form \eqref{q-form} can be naturally transformed into a similar one in $S^2$ by stereographic projection $\Pi:S^2\setminus\{(0,0,1)\}\to \R^2$}
\begin{align}
\nonumber
\Pi(y_1,y_2,y_3) = \Bigl( \frac{y_1}{1-y_3},\frac{y_2}{1-y_3}\Bigr).
\end{align}
For $\varphi:\R^2 \to \R$ we write
\[
\tilde \varphi = \varphi \circ \Pi , \quad \tilde \varphi : S^2 \setminus \{(0,0,1)\}\to \R.
\]
Then we have the following formulas
\begin{align*}
\int_{S^2} \tilde \varphi &= \frac{1}{2}\int_{\R^2} \varphi	U
\\
\int_{S^2} \tilde U |\nabla_{S^2} \tilde \varphi|^2 &=
\int_{\R^2} U |\nabla_{\R^2} \varphi|^2
\\
\frac{1}{2} \tilde U \Delta_{S^2} \tilde \varphi
&= ( \Delta_{\R^2} \varphi ) \circ \Pi.
\end{align*}

\subsection{The Liouville equation}

Here we consider the linearized Liouville equation
\begin{align}
\label{Liouville}
\Delta \psi + U \psi + h=0 \quad \text{in }\R^2 .
\end{align}
The stereographic projection transforms
the linearized Liouville equation \eqref{Liouville}
into
\begin{align}
\label{liouvilleSphere}
\Delta_{S^2} \tilde \psi +2 \tilde \psi + 2 \tilde h = 0
\end{align}
in $S^2\setminus\{P\}$, $P = (0,0,1)$,
where $\tilde \psi = \psi \circ \Pi$, $\tilde h = (U^{-1} h) \circ \Pi$.

The functions in \eqref{defZLiouville} are transformed through the stereographic projection into constant multiples of the coordinate functions
\[
\tilde z_j(\omega) = c_j \omega_j ,\quad j=1,2, \quad \tilde z_0(\omega) = c_0 \omega_3, \quad \omega = (\omega_1,\omega_2,\omega_3) \in S^2.
\]
By standard elliptic theory, if $\tilde h \in L^p(S^2)$, $p>2$, then exists a solution $\tilde \psi_0 \in W^{2,p}(S^2)$ to \eqref{liouvilleSphere} in $S^2$ if and only if $\tilde h$ satisfies
\begin{align}
\nonumber
\int_{S^2} \tilde h \tilde z_j = 0 , \quad j=1,2,3.
\end{align}
This solution is unique if we normalize it such that
\[
\int_{S^2} \tilde \psi_0 \tilde z_j = 0 , \quad j=1,2,3,
\]
and then satisfies the estimate
\[
\| \tilde \psi_0 \|_{C^{1,\alpha}(S^2)} \leq C \|\tilde h\|_{L^p(S^2)}
\]
where $\alpha =  1-  \frac{2}{p}$.
By subtracting off a suitable linear combination of the functions $\tilde z_j$, $j=0,1,2$ we obtain the unique solution $\tilde \psi_1$ to \eqref{liouvilleSphere} in $S^2$ satisfying
\begin{align}
\label{tildePsi1}
\tilde \psi_1(P) = 0, \quad \nabla_{S^2} \tilde \psi_1(P) = 0 .
\end{align}
For this solution we also have the estimate
\begin{align}
\label{LpH}
\| \tilde \psi_1 \|_{C^{1,\alpha}(S^2)} \leq C \|\tilde h\|_{L^p(S^2)}.
\end{align}

\begin{lemma}
\label{lemma-liouville}
Let  $0<\sigma<1$. Then  there is $C$ such that if
$\psi $ satisfies \eqref{Liouville} and $\psi(y)\to 0$ as $|y|\to \infty$ with $h$ satisfying
$\|(1+|y|)^{3+\sigma}h\|_{L^\infty(\R^2)}<+\infty$ and
\begin{align}
\label{conditions-h}
\int_{\R^2} (U \psi + h(y)) dy= 0, \quad
\int_{\R^2} (U\psi + h(y)) y_j \, dy = 0, \quad \ch{j=1,2 },
\end{align}
then
\begin{align*}
\|(1+|y|)^{1+\sigma}\psi\|_{L^\infty(\R^2)}\, \leq\,
C\|(1+|y|)^{3+\sigma}h\|_{L^\infty(\R^2)}.
\end{align*}
\end{lemma}

\begin{remark}
\label{rem-newtonian}
Let $h:\R^2\to \R$ satisfy $\|(1+|y|)^{2+\sigma}h\|_{L^\infty(\R^2)}<+\infty$ where $0<\sigma<1$.
If
\[
\int_{\R^2} h(y) dy = 0
\]
then
\begin{align*}
|(-\Delta)^{-1} h(y)| \leq
\frac{C}{(1+|y|)^\sigma} \|(1+|y|)^{2+\sigma}h\|_{L^\infty(\R^2)}.
\end{align*}

If $h:\R^2\to \R$ satisfy $\|(1+|y|)^{3+\sigma}h\|_{L^\infty(\R^2)}<+\infty$ where $0<\sigma<1$ and in addition to mass zero we have
\begin{align*}
\int_{\R^2} h(y)y_jdy=0, \quad j=1,2,
\end{align*}
then
\begin{align*}
|(-\Delta)^{-1} h(y)| \leq
\frac{C}{(1+|y|)^{1+\sigma}} \|(1+|y|)^{3+\sigma}h\|_{L^\infty(\R^2)}.
\end{align*}
The first claim is standard.
For the second, write
$$
(-\Delta)^{-1} h(x) = \frac 1{2\pi }  \int_{\R^2}
(\log |x| - \log|x-y|  + \frac {y\cdot x}{|x|^2} ) h(y)dy
$$
and estimate the integral after splitting it into the regions $|y|<\frac{|x|}{2}$ and its complement.

\end{remark}

\begin{proof}[Proof of Lemma~\ref{lemma-liouville}]
We claim that $\psi = (-\Delta)^{-1} (U\psi +h)$. Indeed the function $\psi - (-\Delta)^{-1} (U\psi +h)$ is harmonic in $\R^2$ and decays to 0 at infinity, and therefore it is equal to 0.
The assumptions \eqref{conditions-h} and Remark~\ref{rem-newtonian} imply that
\begin{align}
\label{estPsi2}
\|(1+|y|)^{1+\sigma}\psi\|_{L^\infty(\R^2)}\, \leq\,
C\|(1+|y|)^{3+\sigma}h\|_{L^\infty(\R^2)}
+
C\| \psi \|_{L^\infty(\R^2)}.
\end{align}

Let $\tilde \psi = \psi \circ \Pi$, so that it satisfies \eqref{liouvilleSphere} in $S^2\setminus\{P\}$ with $\tilde h = (U^{-1} h)\circ \Pi$. Note that $\tilde h \in L^p(S^2)$ for some $p>2$. More precisely
\begin{align}
\label{lp2}
\|\tilde h \|_{L^p(S^2)} \leq C \|(1+|y|)^{3+\sigma} h\|_{L^\infty( \R^2)} ,
\end{align}
with $p<\frac{2}{1-\sigma}$.
The singularity at $P$ is removable and thus $\tilde \psi$ satisfies \eqref{liouvilleSphere} in $S^2$.
By elliptic regularity $\tilde \psi \in C^{1,\alpha}(S^2)$ for some $\alpha>0$.
Since $\psi$ decays at infinity, $\tilde \psi(P)=0$.
By \eqref{estPsi2} we have also $\nabla_{S^2} \tilde \psi(P)= 0$.

We let $\tilde \psi_1$ denote the solution to \eqref{liouvilleSphere} satisfying \eqref{tildePsi1}. The solution to \eqref{liouvilleSphere} in $S^2$ satisfying \eqref{tildePsi1} is unique, so that we have $\tilde \psi = \tilde \psi_1$ and by estimate \eqref{LpH}, \eqref{lp2} and \eqref{estPsi2} we obtain
\begin{align*}
\|(1+|y|)^{1+\sigma}\psi\|_{L^\infty(\R^2)}\, \leq\,
C\|(1+|y|)^{3+\sigma}h\|_{L^\infty(\R^2)}.
\end{align*}

\end{proof}

\subsection{\ch{A quadratic form}}
\label{subsect-q-form}

Here we discuss properties of the quadratic form \eqref{q-form}.
For this we consider a function $\phi:\R^2 \to \R$ with sufficient decay, in the form,
\begin{align}
\label{decayPhi}
|\phi(y)|\leq \frac{1}{(1+|y|)^{2+\sigma}} ,
\end{align}
with $0<\sigma<1$, and zero mass:
\begin{align}
\label{intPhi0}
\int_{\R^2} \phi\,dy &=0 .
\end{align}
\ch{We recall $g$ defined in \eqref{q-form} $g = \frac{\phi}{U} - (-\Delta)^{-1}\phi $, and
use the notation
\begin{align*}
\psi = (-\Delta)^{-1}\phi
\end{align*}
so that
\begin{align}
\nonumber
-\Delta \psi - U \psi = U g \quad\text{in }\R^2.
\end{align}}
\ch{We next introduce a normalized version of $g$, namely $g^\perp$ defined by}
\begin{align}
\nonumber
g^\perp = g + a,
\end{align}
where $a \in \R$ is chosen so that
\begin{align}
\nonumber
\int_{\R^2} g^\perp U d y = 0 .
\end{align}
\ch{As shown in Lemma~\ref{lemma-q} below, the quadratic form $\int_{\R^2} \phi g$ is equivalent to $\int_{\R^2} U(g^\perp)^2  $.}

\ch{It will be convenient to work with functions $\phi^\perp$, $\psi^\perp $, which are  analogues of $\phi$, $\psi$ but associated to $g^\perp$.
In particular, we want a choice of $\psi^\perp$ such that
\begin{align}
\label{eqPsiPerp}
-\Delta \psi^\perp - U \psi^\perp = U g^\perp, \quad \psi^\perp(y) \to 0\quad \text{as }|y|\to \infty.
\end{align}
Let  $\psi_0=1+\frac{1}{2}z_0$, where $z_0$ is defined in \eqref{defZLiouville}, and observe that
\[
-\Delta \psi_0 - U \psi_0 = -U, \quad \psi_0(y) \to 0\quad \text{as }|y|\to \infty.
\]
Then $\psi^\perp$ defined by
\[
\psi^\perp = \psi -a \Bigl(1+\frac{1}{2}z_0\Bigl) = \psi - a \psi_0,
\]
indeed satisfies \eqref{eqPsiPerp}.}

Define
\[
\phi^\perp = U( g^\perp + \psi^\perp ),
\]
and obtain the relations
\begin{align}
\nonumber
\phi = \phi^\perp  + \frac{a}{2} U z_0  ,
\quad
\ch{-\Delta \psi^\perp = \phi^\perp},
\quad
\ch{\int_{\R^2} \phi^\perp =0}.
\end{align}
\ch{We note that $\phi - \phi^\perp = \frac{a}{2} U z_0$ is a constant times $Z_0 = U z_0$, which is in the kernel of the operator $L$.}

\begin{lemma}
\label{lemma-orthG}
If $\phi:\R^2 \to \R$ satisfies \eqref{decayPhi} and \eqref{intPhi0}, then
\begin{align*}
\int_{\R^2} g U z_j = \int_{\R^2} g^\perp U z_j = 0, \quad j=0,1,2,
\end{align*}
where $z_j$ are the functions defined in \eqref{defZLiouville}.
\end{lemma}

\begin{proof}
By the definition of $\psi$ and from \eqref{decayPhi}, \eqref{intPhi0} we have
\begin{align}
\nonumber
|\psi(y)| + (1+|y|) |\nabla \psi(y)| \leq  \frac{C}{(1+|y|)^{\sigma}} ,
\end{align}
and hence also
\begin{align}
\label{estPsiPerp}
|\psi^\perp(y)| + (1+|y|) |\nabla \psi^\perp(y)| \leq  \frac{C}{(1+|y|)^{\sigma}}.
\end{align}

We multiply \eqref{eqPsiPerp} by $z_j$, integrate in the ball $B_R(0)$ and let $R\to \infty$.
Since $z_j$ is in the kernel of $\Delta+U$ we just have to check that
\begin{align*}
\int_{\partial B_R} \Bigl( \frac{\partial \psi^\perp}{\partial \nu} z_j
- \psi^\perp \frac{\partial z_j}{\partial \nu}\Bigr) \to 0 , \quad \text{as }R\to \infty,
\end{align*}
where $\nu$ is the exterior normal vector to $\partial B_R$.
This follows from \eqref{estPsiPerp}, and the explicit bounds
\begin{align*}
|z_0(y)| & \leq C, \quad
|z_j(y)| \leq \frac{C}{(1+|y|)}, \quad j=1,2,
\\
|\nabla z_j(y)| &\leq \frac{C}{(1+|y|)^{2}}.
\end{align*}
\end{proof}

A consequence of the previous lemma is the following.
\begin{remark}
\label{remark-secondm-Lphi}
Suppose that $\phi:\R^2 \to \R$ satisfies \eqref{decayPhi} and \eqref{intPhi0}.
Then
\begin{align*}
\int_{\R^2} L[\phi]|y|^2dy=0.
\end{align*}
Indeed, integrating on $B_R$, with the notation $g = \frac{\phi}{U}-(-\Delta)^{-1}\phi$,
\begin{align*}
\int_{B_R} L[\phi]|y|^2dy
&=
\int_{B_R} \nabla \cdot ( U \nabla g ) |y|^2dy
\\
&= -2 \int_{B_R} U \nabla g \cdot y dy + R^2 \int_{\partial B_R} U \nabla g \cdot \nu dS(y)
\\
&= 2 \int_{B_R} g Z_0 dy
- 2 \int_{\partial B_R} U g  y \cdot \nu  dy
+ R^2 \int_{\partial B_R} U \nabla g \cdot \nu dS(y) .
\end{align*}
By \eqref{decayPhi} and \eqref{intPhi0}, $g(y) = O ( |y|^{2-\sigma} )$, $\nabla g(y) = O ( |y|^{1-\sigma} )$ as $|y|\to\infty$. Therefore the boundary terms tend to 0 as $R\to \infty$, and we get
\begin{align*}
\int_{\R^2}  L[\phi]|y|^2dy = 2 \int_{\R^2} g Z_0 dy =0,
\end{align*}
by Lemma~\ref{lemma-orthG}.
\end{remark}

\begin{lemma}
\label{lemma-q}
There are constants $c_1>0$, $c_2>0$ such that if
$\phi:\R^2 \to \R$ satisfies
\begin{align}
\nonumber
|\phi(y)|\leq \frac{1}{(1+|y|)^{3+\sigma}} , \quad 0<\sigma<1
\end{align}
and
\eqref{intPhi0}, then
\begin{align}
\label{q1}
c_1 \int_{\R^2} U (g^\perp)^2
\leq
\int_{\R^2} \phi g^\perp \leq c_2 \int_{\R^2} U (g^\perp)^2 .
\end{align}
\end{lemma}

\begin{proof}


By Lemma~\ref{lemma-orthG}
\begin{align*}
\int_{\R^2} \phi g
&= \int_{\R^2} ( \phi^\perp + \frac{a}{2} U z_0) g
= \int_{\R^2} \phi^\perp (g^\perp+a) =  \int_{\R^2} \phi^\perp g^\perp
\\
&=  \int_{\R^2} U( g^\perp  + \psi^\perp) g^\perp.
\end{align*}
We transform $\tilde g^\perp = g^\perp \circ \Pi$, $\tilde \psi^\perp = \psi^\perp\circ\Pi$ and write \eqref{eqPsiPerp} as
\begin{align}
\label{eqTildePsiP}
-\Delta_{S^2} \tilde \psi^\perp - 2 \tilde \psi^\perp = 2 \tilde g^\perp , \quad
\text{in }S^2.
\end{align}
We also get
\begin{align*}
\frac{1}{2}\int_{\R^2} \phi g
&=  \int_{S^2} [ (\tilde g^\perp)^2  + \ch{ \tilde \psi^\perp \tilde g^\perp} ].
\end{align*}

Multiplying \eqref{eqTildePsiP} by $\tilde \psi^\perp$ we find that
\begin{align*}
\int_{S^2} \tilde g^\perp \tilde \psi^\perp =
\frac{1}{2} \int_{S^2}|\nabla_{S^2} \tilde \psi^\perp |^2 - \int_{S^2}(\tilde \psi^\perp)^2
\end{align*}
and hence
\begin{align*}
\frac{1}{2}\int_{\R^2} \phi g
=
\int_{S^2} (\tilde g^\perp)^2
+\frac{1}{2} \int_{S^2} |\nabla_{S^2} \tilde \psi^\perp |^2
- \int_{S^2}(\tilde \psi^\perp)^2 .
\end{align*}
We recall that the eigenvalues of $-\Delta $ on $S^2$ are given by
$
\{ k ( k+1) \ | \ k \geq 0 \}
$.
The eigenvalue 0 has a constant eigenfunction and the eigenvalue $2$ has eigenspace spanned by the coordinate functions $\pi_i(x_1,x_2,x_3) = x_i$, for $(x_1,x_2,x_3) \in S^2$ and $i=1,2,3$.
Let $(\lambda_j)_{j\geq 0}$ denote all eigenvalues, repeated according to multiplicity, with $\lambda_0 = 0$, $\lambda_1=\lambda_2=\lambda_3=2$, and let $(e_j)_{j\geq 0}$ denote the corresponding eigenfunctions so that they form an orthonormal system in $L^2(S^2)$, and $e_1,e_2,e_3$ are multiples of the coordinate functions $\pi_1,\pi_2,\pi_3$.
We decompose $\tilde \psi$ and $\tilde g$:
\begin{align}
\label{decomp1}
\tilde \psi^\perp = \sum_{j=0}^\infty \tilde \psi^\perp_j e_j, \quad
\tilde g^\perp= \sum_{j=0}^\infty \tilde g^\perp_j e_j,
\end{align}
where
\begin{align*}
\tilde \psi^\perp_j = \langle \tilde \psi^\perp,e_j\rangle_{L^2(S^2)},
\quad
\tilde g^\perp_j = \langle \tilde g^\perp,e_j\rangle_{L^2(S^2)}.
\end{align*}

Then
\begin{align*}
\frac{1}{2}\int_{\R^2} \phi g
&= \sum_{j=0}^\infty ( \tilde g^\perp_j)^2
+\frac{1}{2} \sum_{j=0}^\infty (\lambda_j -2)(\tilde \psi^\perp_j)^2
\\
&=  \sum_{j=0}^\infty ( \tilde g^\perp_j)^2
- (\tilde \psi^\perp_0)^2
+\frac{1}{2} \sum_{j=4}^\infty (\lambda_j -2)(\tilde \psi^\perp_j)^2 .
\end{align*}

Equation \eqref{eqTildePsiP} gives us that
\begin{align}
\label{liouvilleQeD}
(\lambda_j-2)\tilde \psi^\perp_j = 2\tilde g^\perp_j,
\end{align}
and then
\begin{align*}
\frac{1}{2}\int_{\R^2} \phi g
&=  \sum_{j=1}^\infty ( \tilde g^\perp_j)^2
+ \sum_{j=4}^\infty \frac{2}{\lambda_j-2} (\tilde g^\perp_j)^2 .
\end{align*}
By Lemma~\ref{lemma-orthG} $\tilde g^\perp_1 = \tilde g^\perp_2 = \tilde g^\perp_3 = 0$. Therefore
\begin{align}
\label{qx2}
\frac{1}{2}\int_{\R^2} \phi g
&=  \sum_{j=4}^\infty \frac{\lambda_j}{\lambda_j-2} (\tilde g^\perp_j)^2
\end{align}
and
\[
\frac{1}{2}\int_{\R^2} (g^\perp)^2 U = \sum_{j=4}^\infty (\tilde g^\perp_j)^2.
\]
This proves \eqref{q1}.

\end{proof}

\begin{lemma}
\label{lemma-q2}
There exist positive constants $c_1$, $c_2$ such that if
$\phi:\R^2 \to \R$ is radially symmetric and satisfies $(1+|y|)^{3+\sigma} \phi \in L^\infty(\R^2)$ with $0<\sigma<1$, and
\begin{align}
\nonumber
\int_{\R^2} \phi(y)dy=0,\quad
\end{align}
then
\begin{align}
\label{q2}
c_1 \int_{\R^2} U (g^\perp)^2
\, \leq\,
\int_{\R^2} U^{-1} (\phi^\perp)^2\, \leq\,  c_2 \int_{\R^2} U (g^\perp)^2 ,
\end{align}
\begin{align}
\label{q3}
\int_{\R^2} U (\psi^\perp)^2\, \leq\,  c_2 \int_{\R^2} U (g^\perp)^2 .
\end{align}
\end{lemma}
\begin{proof}
Using the same notation as in the proof of Lemma~\ref{lemma-q}, we have
\begin{align*}
\frac{1}{2}\int_{\R^2} U^{-1}(\phi^\perp)^2
&=\frac{1}{2}\int_{\R^2} U[(\psi^\perp)^2+2\psi^\perp g^\perp + (g^\perp)^2]
\\
&=\int_{S^2}[ (\tilde \psi^\perp)^2
+ 2 \tilde \psi^\perp \tilde g^\perp
+ (\tilde g^\perp)^2]
\\
&=
\sum_{j=0}^\infty
[(\tilde \psi^\perp_j)^2
+2\tilde \psi^\perp_j \tilde g^\perp_j
+(\tilde g^\perp_j)^2] .
\end{align*}
As in the previous proof, $\tilde g^\perp_j=0$ for $j=0,1,2,3$. Using \eqref{liouvilleQeD} we get
\begin{align*}
\frac{1}{2}\int_{\R^2} U^{-1}(\phi^\perp)^2
&=
\sum_{j=0}^3
(\tilde \psi^\perp_j)^2
+\sum_{j=4}^\infty
\frac{\ch{\lambda_j}^2}{(\lambda_j-2)^2}
(\tilde g^\perp_j)^2 .
\end{align*}
This formula already gives
\begin{align*}
\int_{\R^2} U(g^\perp)^2 \leq C
\int_{\R^2} U^{-1}(\phi^\perp)^2.
\end{align*}

We observe that $\tilde \psi^\perp_1=\tilde \psi^\perp_2=0$ by radial symmetry.
We also have $\tilde \psi^\perp_0=0$, by \eqref{liouvilleQeD}.
Let
\begin{align*}
\hat \psi = \sum_{j=4}^\infty \tilde \psi^\perp_j e_j
\end{align*}
and note that it satisfies
\begin{align*}
-\Delta_{S^2} \hat \psi -2 \hat \psi = 2 \tilde g^\perp
\quad \text{in }  S^2.
\end{align*}
By \eqref{liouvilleQeD},
\begin{align*}
\| \hat \psi\|_{L^2(S^2)} \leq C \| \tilde g^\perp \|_{L^2 (S^2)},
\end{align*}
and from elliptic estimates
\begin{align}
\label{holderPsiHat}
\| \hat \psi\|_{C^\alpha(S^2)} \leq C \| \tilde g^\perp \|_{L^2 (S^2)},
\end{align}
for any $0<\alpha<1$.
Since $(1+|y|)^{3+\sigma}\phi\in L^\infty( \R^2 )$
and $\phi$ has total mass 0, we have $(1+|y|)^{1+\sigma}\psi\in L^\infty( \R^2 )$
(here the functions are radial) and also
$(1+|y|)^{1+\sigma}\psi^\perp\in L^\infty( \R^2 )$.
It follows that $\tilde \psi^\perp(P)$ = 0 where $P= (0,0,1)$. Since $\tilde \psi^\perp$ and $\hat \psi$ differ by a constant times $\ch{\pi_3}$ we have
\[
\tilde \psi^\perp = \hat \psi - \frac{\hat \psi(P)}{\ch{\pi_3}(P)} \ch{\pi_3} ,
\]
\ch{where $\pi_3(x_1,x_2,x_3)=x_3$.}
This implies, by \eqref{holderPsiHat},
\begin{align}
\nonumber
\| \tilde \psi^\perp \|_{L^2(S^2)}
& \leq C \| \hat \psi \|_{L^2(S^2)}
+ C|\hat \psi(P)|
\leq C \|\tilde g^\perp \|_{L^2(S^2)}.
\end{align}
This proves the other inequality in \eqref{q2} and \eqref{q3}.

\end{proof}

\begin{lemma}
\label{lemma-derPhig}
Suppose that $\phi = \phi(y,t)$, $y\in \R^2$, $t>0$ is a function satisfying
\[
|\phi(y,t)|\leq \frac{1}{(1+|y|)^{2+\sigma}} ,
\]
with $0<\sigma<1$,
\begin{align*}
\int_{\R^2} \phi(y,t)\,dy &=0 ,\quad \forall t>0,
\end{align*}
and that $\phi $ is differentiable with respect to $t$ and $\phi_t$ satisfies also
\begin{align*}
|\phi_t(y,t)|\leq \frac{1}{(1+|y|)^{2+\sigma}} .
\end{align*}
Then
\begin{align*}
\int_{\R^2} \phi_t g = \frac{1}{2} \partial_t \int_{\R^2} \phi g
\end{align*}
where for each $t$, $g(y,t)$ is defined as
\begin{align*}
g = \frac{\phi}{U}-(-\Delta^{-1})\phi + c (t)
\end{align*}
and $c(t)\in \R$ is chosen so that
\begin{align*}
\int_{\R^2} g(y,t) U (y)\,dy= 0.
\end{align*}
\end{lemma}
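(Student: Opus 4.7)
The plan is to view $\int_{\R^2}\phi\,g\,dy$ as a symmetric quadratic form in $\phi$, so that the factor $\tfrac12$ in the identity arises from the standard rule $\partial_t Q(\phi,\phi)=2Q(\phi_t,\phi)$ for symmetric bilinear forms.

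First I would introduce the bilinear pairing, for mean-zero test functions $\phi_1,\phi_2$ with the prescribed decay,
\begin{align*}
B(\phi_1,\phi_2) := \int_{\R^2} \frac{\phi_1\phi_2}{U_0}\,dy - \int_{\R^2}\phi_1\,(-\Delta)^{-1}\phi_2\,dy.
\end{align*}
Writing $g_i = \phi_i/U_0 - (-\Delta)^{-1}\phi_i + c_i$, with $c_i$ chosen so that $\int g_i U_0\,dy=0$, the condition $\int \phi_j\,dy=0$ makes the constant $c_i$ irrelevant in the pairing, so
\begin{align*}
\int_{\R^2}\phi_j\,g_i\,dy = B(\phi_j,\phi_i).
\end{align*}
The form $B$ is symmetric because the Newtonian potential $(-\Delta)^{-1}$ is self-adjoint on mean-zero functions with enough decay; the required integration by parts on balls $B_R$ is justified by the bound $|\psi_i(y)|=O(|y|^{-\sigma})$ from \eqref{estPsi0}, which kills boundary terms as $R\to\infty$.

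Since $\int_{\R^2}\phi_t(\cdot,t)\,dy = \partial_t \int_{\R^2}\phi(\cdot,t)\,dy = 0$, the function $\phi_t$ is an admissible input, so $\int_{\R^2}\phi_t\,g\,dy = B(\phi_t,\phi)$. Using the symmetry of $B$, the chain rule would then yield
\begin{align*}
\partial_t \int_{\R^2}\phi\,g\,dy = \partial_t B(\phi,\phi) = B(\phi_t,\phi) + B(\phi,\phi_t) = 2\int_{\R^2}\phi_t\,g\,dy,
\end{align*}
which is the desired identity.

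The main technical obstacle is the justification of differentiation under the integral sign: since $\phi g$ only decays like $|y|^{-2\sigma}$ at infinity, the integral is not absolutely convergent on $\R^2$ when $\sigma<1$. As in the proof of Lemma \ref{lemma-q}, I would carry out the interchange in the stereographic representation
\begin{align*}
\int_{\R^2}\phi\,g\,dy = \tfrac12 \int_{S^2}\tilde g^2 + \tfrac14 \int_{S^2}|\nabla_{S^2}\tilde\psi|^2 - \tfrac12 \int_{S^2}\tilde\psi^2,
\end{align*}
together with the analogous formula for the mixed pairing $B(\phi_t,\phi)$. Each $S^2$-integral on the right is absolutely convergent, and the uniform-in-$t$ spatial decay assumed on $\phi$ and $\phi_t$ furnishes dominating functions on $S^2$ that legitimize passing $\partial_t$ inside each integral by dominated convergence. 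Reassembling the three $t$-derivatives reproduces $2B(\phi_t,\phi)$, closing the argument.
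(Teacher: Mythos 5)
Your proposal is correct, and it takes a genuinely different path from the paper's. You identify $\int_{\R^2}\phi\,g\,dy=B(\phi,\phi)$ as the diagonal of a symmetric bilinear form (the symmetry coming from self-adjointness of $(-\Delta)^{-1}$ on mean-zero data with the stated decay), and the identity $\int\phi_t\,g=\tfrac12\partial_t\int\phi\,g$ then falls out of the product rule $\partial_t B(\phi,\phi)=2B(\phi_t,\phi)$, with the time-dependent constant $c(t)$ dropping out because $\int\phi=\int\phi_t=0$. The paper instead works entirely on $S^2$: it writes $\int\phi_t g=\tfrac12\int_{S^2}(\tilde g_t\tilde g+\tilde\psi_t\tilde g)$, substitutes the relation $-\Delta_{S^2}\tilde\psi_t-2\tilde\psi_t=2\tilde g_t$, expands in spherical harmonics, and then must invoke Lemma~\ref{lemma-orthG} to kill the modes $j=1,2,3$ where $\lambda_j=2$ and the algebraic relation $(\lambda_j-2)(\tilde\psi_j)_t=2(\tilde g_j)_t$ degenerates; the final comparison with formula \eqref{q2} from Lemma~\ref{lemma-q} closes the argument. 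Your polarization argument sidesteps the eigenfunction expansion and the orthogonality Lemma~\ref{lemma-orthG} altogether, which is a real conceptual simplification. What the paper's route buys in exchange is that it proceeds through the same representation $\int\phi g=\tfrac12\sum_{j\ge4}\frac{\lambda_j}{\lambda_j-2}\tilde g_j^2$ already established in Lemma~\ref{lemma-q}, so the two lemmas share bookkeeping and the formula is reused to read off positivity. One caution on the technical side: the decay $|\phi|\le(1+|y|)^{-2-\sigma}$ with $\sigma<1$ by itself does not make $\int\phi^2/U_0$ (equivalently $\int_{S^2}\tilde g^2$) absolutely convergent, so the convergence and interchange steps you flag need the same implicit a priori finiteness of $\|g\|_{L^2(U_0)}$ that the paper uses when it applies these lemmas in the proof of Proposition~\ref{thmInner}; this is not a defect specific to your argument, since the paper's own proof carries the same implicit assumption.
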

\begin{proof}
Using the notation of the previous lemma, we have
\begin{align*}
\int_{\R^2} \phi_t g
&=
\int_{\R^2} U( g_t + \psi_t ) g
=
\ch{2}
\int_{S^2} ( \tilde g_t \tilde g + \tilde \psi_t \tilde g) .
\end{align*}
We have
\begin{align*}
-\Delta_{S^2} \tilde \psi - 2 \tilde \psi = 2 \tilde g , \quad
\text{in }S^2.
\end{align*}
And differentiating in $t$ we get
\begin{align}
\label{eqTildePsit}
-\Delta_{S^2} \tilde \psi_t - 2 \tilde \psi_t = 2 \tilde g_t , \quad
\text{in }S^2.
\end{align}
Multiplying by $\tilde g$ and integrating we find that
\begin{align*}
\int_{S^2} \tilde \psi_t \tilde g =
- \frac{1}{2}\int_{S^2} \Delta \tilde \psi_t \tilde g
-\int_{S^2} \tilde g_t \tilde g .
\end{align*}
Thus
\begin{align*}
\int_{\R^2} \phi_t g
= \ch{-} \int_{S^2} \Delta \tilde \psi_t \tilde g
\end{align*}
Decompose as in \eqref{decomp1} and find that
\begin{align*}
\int_{\R^2} \phi_t g
=
\sum_{j=0}^\infty \lambda_j (\tilde \psi_j)_t \tilde g_j
\end{align*}
But from \eqref{eqTildePsit}
\begin{align*}
(\lambda_j-2) (\tilde \psi_j)_t = 2(\tilde g_j)_t.
\end{align*}

We note that $\tilde g_j=0$ for $j=0,1,2,3$.
Indeed, this is true for $j=0$ by the assumption
$\int_{\R^2} g U=0$.
By Lemma~\ref{lemma-orthG} this is true also for $j=1,2,3$.
Then
\begin{align*}
\frac{1}{2}
\int_{\R^2} \phi_t g
=
\sum_{j=4}^\infty \frac{\lambda_j}{\lambda_j-2} (\tilde g_j)_t \tilde g_j
\end{align*}
and the desired conclusion follows from \eqref{qx2}.
\end{proof}

\subsection{A Poincar\'e inequality}

\begin{lemma}
\label{lemmaHardy}
Let $B_R(0)\subset \R^2$ be the open ball centered at 0 of radius $R$.
There exists $C>0$ such that, for any $R >0$ large and any $g\in H^1(B_R)$ with $\int_{B_R} g\,U\,dx =0$ we have
\begin{equation}
\nonumber
\frac{C}{ R^2} \int_{B_R} g^2 U\leq
\int_{B_R} |\nabla g|^2 U .
\end{equation}
\end{lemma}

\begin{proof}
Using a Fourier decomposition we only need to consider the radial case, that is, we claim that
if $g(r)$  satisfies
\begin{align}
\label{zeroavg}
\int_0^R g(r) \frac{r}{(1+r^2)^2}dr = 0,
\end{align}
then there is $C$ such that for all $R$ large
\begin{align*}
\int_0^R g(r)^2 \frac{r}{(1+r^2)^2}dr
\leq CR^2 \int_0^R g'(r)^2 \frac{r}{(1+r^2)^2}dr .
\end{align*}

Let $0<\delta<1$ to be fixed later on.
From \eqref{zeroavg} we have
\begin{align*}
\int_\delta^R g(r) \frac{r}{(1+r^2)^2}dr = - \int_0^\delta g(r) \frac{r}{(1+r^2)^2}dr .
\end{align*}
But
\begin{align*}
\int_\delta^R g(r) \frac{r}{(1+r^2)^2}dr
&= -\frac{1}{2} \int_\delta^R g(r) \frac{d}{dr}\Bigl( \frac{1}{1+r^2}\Bigr)dr
\\
&= - \frac{1}{2}\frac{g(R)}{1+R^2}
+ \frac{1}{2}\frac{g(\delta)}{1+\delta^2}
+ \frac{1}{2} \int_\delta^R g'(r) \frac{1}{1+r^2} dr .
\end{align*}
Therefore
\begin{align*}
\frac{1}{2}\frac{|g(\delta)|}{1+\delta^2}
& \leq  \frac{1}{2}\frac{|g(R)|}{1+R^2}
+ \frac{1}{2} \int_\delta^R |g'(r)| \frac{1}{1+r^2} dr
+  \int_0^\delta |g(r)| \frac{r}{(1+r^2)^2}dr .
\end{align*}
By the Cauchy-Schwarz inequality
\begin{align*}
\int_\delta^R |g'(r)| \frac{1}{1+r^2} dr
\leq \Bigl( \int_\delta^R g'(r)^2  \frac{r}{(1+r^2)^2}dr\Bigr)^{1/2}
( \log R - \log \delta)^{1/2}
\end{align*}
\begin{align*}
\int_0^\delta |g(r)| \frac{r}{(1+r^2)^2}dr
\leq \delta \Bigl(  \int_0^\delta g(r)^2 \frac{r}{(1+r^2)^2}dr \Bigr)^{1/2} .
\end{align*}
Hence
\begin{align}
\label{bd}
g(\delta)^2
& \leq
2 \frac{g(R)^2}{R^4}
+ 2 ( \log R - \log \delta)  \int_\delta^R g'(r)^2  \frac{r}{(1+r^2)^2}dr
+ 4 \delta^2   \int_0^\delta g(r)^2 \frac{r}{(1+r^2)^2}dr .
\end{align}

We compute now
\begin{align*}
\int_\delta^R g(r)^2 \frac{r}{(1+r^2)^2}dr
&= -\frac{1}{2} \int_\delta^R g(r)^2 \frac{d}{dr}\Bigl( \frac{1}{1+r^2}\Bigr)dr
\\
&= - \frac{1}{2}\frac{g(R)^2}{1+R^2}
+ \frac{1}{2}\frac{g(\delta)^2}{1+\delta^2}
+ \int_\delta^R g(r) g'(r) \frac{1}{1+r^2} dr .
\end{align*}
Using \eqref{bd} and the Cauchy-Schwartz inequality we get
\begin{align*}
\int_\delta^R g(r)^2 \frac{r}{(1+r^2)^2}dr
& \leq  - \frac{1}{2}\frac{g(R)^2}{1+R^2}
+ \frac{g(R)^2}{R^4}
+  ( \log R - \log \delta)  \int_\delta^R g'(r)^2  \frac{r}{(1+r^2)^2}dr
\\
& \quad
+  2 \delta^2   \int_0^\delta g(r)^2 \frac{r}{(1+r^2)^2}dr
+ A R^2 \int_\delta^R  g'(r)^2 \frac{r}{(1+r^2)^2} dr
\\
&\quad
+ \frac{1}{A R^2} \int_\delta^R  g(r)^2 \frac{1}{r} dr .
\end{align*}
But $\frac{1}{AR^2 r} \leq \frac{1}{2}\frac{r}{(1+r^2)^2}$ for $r\in [\delta,R]$ if $A = 4 (1+\frac{1}{\delta^2})$ and $R\geq 1$. Choosing $A = 4 (1+\frac{1}{\delta^2})$ and $R\geq 2$ we have
\begin{align}
\nonumber
\int_\delta^R g(r)^2 \frac{r}{(1+r^2)^2}dr
& \leq
[ 2AR^2 + 2 ( \log R - \log \delta) ] \int_\delta^R g'(r)^2  \frac{r}{(1+r^2)^2}dr
\\
\label{x1}
& \quad
+  4 \delta^2   \int_0^\delta g(r)^2 \frac{r}{(1+r^2)^2}dr
\end{align}

With $\delta>0$ still to be chosen we get from \eqref{bd} for $0<x<\delta$
\begin{align}
\nonumber
g(x)^2
& \leq
2 \frac{g(R)^2}{R^4}
+ 2 ( \log R - \log x)  \int_0^R g'(r)^2  \frac{r}{(1+r^2)^2}dr
+ 4 x^2   \int_0^\delta g(r)^2 \frac{r}{(1+r^2)^2}dr .
\end{align}
Integrating we get
\begin{align}
\label{x2}
\int_0^\delta
g(r)^2
\frac{r}{(1+r^2)^2}dr
& \leq
\delta^2 \frac{g(R)^2}{R^4}
+ 2 \log R   \int_0^R g'(r)^2  \frac{r}{(1+r^2)^2}dr
+ \delta^4   \int_0^\delta g(r)^2 \frac{r}{(1+r^2)^2}dr .
\end{align}
Using the condition \eqref{zeroavg} we obtain
\begin{align*}
\int_0^Rg(r) \frac{r}{(1+r^2)^2}dr
&= \frac{1}{2} \int_0^R g(r) \frac{d}{dr}\Bigl(
\frac{r^2}{1+r^2}\Bigr)dr
\\
&=
\frac{1}{2} g(R)\frac{R^2}{1+R^2}
- \frac{1}{2} \int_0^R g'(r)
\frac{r^2}{1+r^2}dr .
\end{align*}
Then
\begin{align*}
g(R)^2 \leq 4 R^4  \int_0^R g'(r)^2
\frac{r}{(1+r^2)^2}dr .
\end{align*}
Using this combined with \eqref{x2} we get
\begin{align*}
\int_0^\delta
g(r)^2
\frac{r}{(1+r^2)^2}dr
& \leq
\delta^2  4 \int_0^R g'(r)^2
\frac{r}{(1+r^2)^2}dr
+ 2 \log R   \int_0^R g'(r)^2  \frac{r}{(1+r^2)^2}dr
\\
& \quad
+ \delta^4   \int_0^\delta g(r)^2 \frac{r}{(1+r^2)^2}dr .
\end{align*}
Taking $\delta =\frac{1}{2}$ (this fixes $A$) gives
\begin{align*}
\int_0^\delta
g(r)^2
\frac{r}{(1+r^2)^2}dr
& \leq
4 (\log R +1)  \int_0^R g'(r)^2  \frac{r}{(1+r^2)^2}dr.
\end{align*}
Combining this with \eqref{x1} we get
\begin{align*}
\int_0^R g(r)^2 \frac{r}{(1+r^2)^2}dr
\leq C R^2  \int_0^R g'(r)^2  \frac{r}{(1+r^2)^2}dr.
\end{align*}

\end{proof}

\section{Linear theory: a decomposition}

\label{sect-linear-decomp}

Here we consider
\begin{align}
\label{eq-linear-radial12}
\left\{
\begin{aligned}
\partial_\tau \phi &=
L [\phi] + B[\phi]+ h, \quad \text{in }\R^2 \times (\tau_0,\infty),
\\
\phi(\cdot,\tau_0) &= \phi_0 \quad \text{in }\R^2 .
\end{aligned}
\right.
\end{align}
The results of this section are going to be used later only in the case of radial functions, so we make this assumption here.
We write in the rest of this section
$\phi = \phi ( y,\tau) = \phi(\rho,\tau)$, where $y\in \R^2$,
$\rho = |y|$.


The operator $B$ is assumed to be one of the following two:
\begin{align}
\label{B1}
B[\phi] = \zeta(\tau) ( 2 \phi + y \cdot \nabla \phi) = \zeta(\tau) \nabla \cdot ( y \phi)  ,
\end{align}
or
\begin{align}
\label{B2}
B[\phi] = \zeta(\tau)  y \cdot \nabla \phi ,
\end{align}
where
\begin{align}
\nonumber
\zeta(\tau) = - \frac{\zeta_0 }{\tau \log \tau}+O\Big(\frac{1}{\tau (\log \tau)^{1+\sigma_0}} \Bigr),\quad\text{as }\tau\to\infty,
\end{align}
for some constants $\zeta_0>0$, $0<\sigma_0<1$.

We assume that $\|h\|_{**}<\infty$ where
\begin{align}
\nonumber
& \|h\|_{**} = \inf K , \quad \text{such that }
\\
\nonumber
&   |h(y,\tau)|  \leq
K
\frac{ 1 } {  \tau^\nu (\log\tau)^m }
\frac{  1  }{(1+|y|)^{6+\sigma} }
\min \Bigl( 1, \frac{ \tau^{\epsilon/2}}{|y|^\epsilon} \Bigr) ,
\quad \tau>\tau_0, \ y\in \R^2,
\end{align}
where $\nu>1$, $\epsilon>0$, $\sigma>0$, $m\in\R$.
This is the same norm as in \eqref{normh0b}.

We also assume that $h$ has zero mass
\begin{align}
\label{h-zero-mass-3}
\int_{\R^2} h(y,\tau)dy=0\quad \text{for all }\tau>\tau_0,
\end{align}
and the same for the initial condition
\begin{align}
\label{initial-zero-mass}
\int_{\R^2} \phi_0 dy = 0.
\end{align}
It follows from the equation \eqref{eq-linear-radial12}, \eqref{h-zero-mass-3}, and \eqref{initial-zero-mass} that the solution $\phi$ to \eqref{eq-linear-radial12} defined in \S\ref{sectLT1} satisfies
\begin{align}
\nonumber
\int_{\R^2} \phi(y,\tau)dy=0\quad \text{for all }\tau>\tau_0 .
\end{align}

We recall the decomposition of $\phi$ introduced in \S\ref{subsect-q-form}. Given $\phi:\R^2\to \R$ with sufficient decay and mass zero,
we let $g = \frac{\phi}{U}- (-\Delta^{-1})\phi$,
and define $a$ so that $\int_{\R^2} ( g + a ) U dy= 0$.
Then define $g^\perp = g + a $, $\psi^\perp = \psi - a( 1+\frac{1}{2}z_0)$, and
\begin{align}
\label{decomp-phi-perp}
\phi^\perp = \phi  - \frac{a}{2}Z_0 .
\end{align}
Actually $a$ is directly computed by
\begin{align}
\label{formula-a}
a = - \frac{1}{8 \pi}\int_{\R^2} U g
=  \frac{1}{8 \pi}\int_{\R^2} U (-\Delta)^{-1} \phi
=  \frac{1}{8 \pi}\int_{\R^2}\Gamma_0\phi .
\end{align}
In the time dependent situation $a = a(\tau)$ and all functions depend on $y\in \R^2$  and $\tau$.

A difficulty to obtain estimates is the presence of a kernel in the linear operator if $B=0$, since $Z_0$ satisfies $L[Z_0]=0$.
It can be proved that the solution $\phi$ of \eqref{eq-linear-radial12} with zero initial condition and $\|h\|_{**}<\infty$ has the bound
\begin{align*}
\sup_{y} |\phi(y,\tau)|\leq C \Bigl( \frac{\log \tau_0}{\log \tau} \Bigr)^{2\zeta_0-\sigma_0} \|h\|_{**} ,
\end{align*}
and probably this estimate cannot be improved much. Also $\phi$ has a some decay at spatial infinity and in particular it has finite second moment
\begin{align*}
\int_{\R^2} |\phi(y,\tau)| \, |y|^2\,dy<\infty, \quad \tau>\tau_0.
\end{align*}
Therefore $Z_0$ doesn't describe well the class of solution we want to consider, even for the case $B=0$, in which $\zeta(\tau) \equiv0$.

A better candidate to describe the solutions $\phi$ of \eqref{eq-linear-radial12} with zero initial condition and $\|h\|_{**}<\infty$ is obtained by considering the initial value problem
\begin{align}
\label{eq-ZB}
\left\{
\begin{aligned}
\partial_\tau Z_B &=
L [Z_B] + B[Z_B], \quad \text{in }\R^2 \times (\tau_0,\infty),
\\
Z_B(\cdot,\tau_0) &= \tilde Z_0 \quad \text{in }\R^2 .
\end{aligned}
\right.
\end{align}
where $ \tilde Z_0$ is defined as
\begin{align}
\label{def-tilde-Z0}
\tilde Z_0(\rho ) &= ( Z_0(\rho) - m_{Z_0} U ) \chi_0\Bigl( \frac{\rho}{\sqrt{\tau_0}}\Bigr),
\end{align}
and $m_{Z_0}$ is selected so that
\begin{align*}
\int_{\R^2} \tilde Z_0 dy = 0.
\end{align*}
Note that since $Z_0$ has mass zero and decays like $1/\rho^4$ we have
$ m_{Z_0} = O ( \frac{1}{\tau_0} ) $.

\medskip

We will then consider the problem
\begin{align}
\label{eq-linear-radial20}
\left\{
\begin{aligned}
\partial_\tau \phi &=
L [\phi] + B[\phi]+ h, \quad \text{in }\R^2 \times (\tau_0,\infty),
\\
\phi(\cdot,\tau_0) &= c_1 \tilde Z_0 \quad \text{in }\R^2 ,
\end{aligned}
\right.
\end{align}
for radial functions $\phi$, $h$, $\phi_0$, where $c_1\in \R $ is a parameter.
We assume that $\|h\|_{**}<\infty$.

\begin{prop}
\label{prop-linear-with-energy}
Let us assume that $1<\nu<\frac{7}{4}$.
Then there is $C>0$ such that for any $\tau_0$ sufficiently large the following holds. Suppose that   $\|h\|_{**}<\infty$ is radially symmetric and satisfies the zero mass condition \eqref{h-zero-mass-3}.
Then there exists $c_1$ such that the solution $\phi = \phi^\perp + \frac{a}{2}Z_0$ of  \eqref{eq-linear-radial20} satisfies
\begin{align}
\nonumber
|a(\tau) | &\leq
C \frac{f(\tau) R(\tau)^2}{(\log\tau_0)^{1-q}}\|h\|_{**},
\end{align}
\begin{align}
\label{estPhiPerp}
|\phi^\perp(\rho,\tau)|\leq
C f(\tau) R(\tau)
\frac{1}{1+|y|^2}  \|h\|_{**}.
\end{align}
where $R(\tau)>0$ is defined by
\begin{align}
\label{def-R}
R(\tau)^2 =
\frac{ \tau}{(\log\tau)^{q}},
\end{align}
where $0 < q<1$,
and
\begin{align}
\label{notation-f}
f(\tau) = \frac{1}{\tau^\nu (\log \tau)^m} .
\end{align}
Moreover $c_1$ is a linear function of $h$ and satisfies
\begin{align*}
|c_1| & \leq  C \frac{f(\tau_0)R(\tau_0)^2 }{(\log\tau_0)^{1-q}}  \|h\|_{**}.
\end{align*}
\end{prop}

We always decompose $\phi$ as in \eqref{decomp-phi-perp}:
\begin{align*}
\phi = \phi^\perp + \frac{a(\tau)}{2} Z_0
\end{align*}
and write
\begin{align*}
g = \frac{\phi}{U}- (-\Delta)^{-1} \phi , \quad
g^\perp = \frac{\phi^\perp}{U}- (-\Delta)^{-1} \phi^\perp  .
\end{align*}
Let us denote
\begin{align}
\label{notation-omega}
\omega(\tau) = \Bigl( \int_{\R^2 \setminus B_{R(\tau)}(0)} U g(\tau)^2 \Bigr)^{1/2}.
\end{align}

The strategy for the proof of Proposition~\ref{prop-linear-with-energy} is contained in the following lemmas.
The first one is an a-priori estimate for the solution, assuming that $a(T_2)=0$ for some $T_2$.

\begin{lemma}
\label{lemma.apriori}
There is $C$ such that for $\tau_0$ large the following holds.
Suppose that   $\|h\|_{**}<\infty$ is radially symmetric and satisfies the zero mass condition \eqref{h-zero-mass-3} and
consider \eqref{eq-linear-radial20}.
Let $\phi^\perp$, $a$ be the decomposition \eqref{decomp-phi-perp}.
Suppose that for some $c_1\in \R$ there is $T_2>\tau_0$ is such that
\begin{align*}
a(T_2)=0.
\end{align*}
Then
\begin{align}
\label{est-a1}
|a(\tau)| &\leq C \frac{f(\tau) R(\tau)^2}{(\log\tau_0)^{1-q}}\|h\|_{**},
\quad \tau \in [\tau_0,T_2]
\\
\label{est-omega1}
|\omega(\tau)| &\leq C \frac{f(\tau) R(\tau)}{(\log\tau_0)^{1-q}}\|h\|_{**},
\quad \tau \in [\tau_0,T_2]
\\
\label{esta-c1-1}
|c_1| & \leq  C \frac{f(\tau_0)R(\tau_0)^2 }{(\log\tau_0)^{1-q}} \|h\|_{**} .
\end{align}

The constant $C$ is independent of $T_2$ and $c_1$.
\end{lemma}

There is a variant of the previous lemma, where the hypothesis $a(T_2)=0$ is replaced by an assumption about its time decay.

\begin{lemma}
\label{lemma.apriori2}
There is $C$ such that for $\tau_0$ large the following holds.
Suppose that   $\|h\|_{**}<\infty$ is radially symmetric and satisfies the zero mass condition \eqref{h-zero-mass-3} and
consider \eqref{eq-linear-radial20}.
Let $\phi^\perp$, $a$ be the decomposition \eqref{decomp-phi-perp}.
Suppose that for some  $c_1\in \R$,
\begin{align*}
\frac{a}{f R^2} \in L^\infty(\tau_0,\infty).
\end{align*}
Then
\begin{align}
\label{a11}
|a(\tau)| &\leq C \frac{f(\tau) R(\tau)^2}{(\log\tau_0)^{1-q}}\|h\|_{**},
\quad \tau>\tau_0
\\
\label{o11}
|\omega(\tau)| &\leq C \frac{f(\tau) R(\tau)}{(\log\tau_0)^{1-q}}\|h\|_{**},
\quad \tau>\tau_0,
\\
\label{c11}
|c_1| & \leq  C \frac{f(\tau_0)R(\tau_0)^2 }{(\log\tau_0)^{1-q}} \|h\|_{**} .
\end{align}
\end{lemma}

\begin{lemma}
\label{lemma.Z.nondegen}
Let $Z_B$ be the solution to \eqref{eq-ZB} and write it as $Z_B = Z_B^\perp + \frac{a_Z}{2}Z_0$ according to the decomposition \eqref{decomp-phi-perp}. Then $a_Z(\tau)\not=0$ for all $\tau \geq \tau_0$.
\end{lemma}

\begin{lemma}
\label{lemma.existence}
There is $C$ such that for $\tau_0$ large the following holds.
Suppose that   $\|h\|_{**}<\infty$ is radially symmetric and satisfies the zero mass condition \eqref{h-zero-mass-3}.
Then there is a unique $c_1 \in \R$ such that
the solution $\phi = \phi^\perp + \frac{a}{2}Z_0$ of \eqref{eq-linear-radial20} (as in \eqref{decomp-phi-perp}) satisfies \eqref{a11}, \eqref{o11} and \eqref{c11}.
\end{lemma}

\medskip

In the first results we do some computations and obtain some estimates, which are used as technical steps in the main argument.

\medskip

The next lemma is a calculation to help us deal with the term $B$ when we multiply the equation by a suitable test function. It holds for operators more general than $B$ as in \eqref{B1} and \eqref{B2}. Let
\begin{align*}
\tilde B[\phi] = \zeta_1(\tau) \phi + \zeta_2(\tau) y\cdot \nabla \phi ,
\end{align*}
with $\zeta_1(\tau)$, $\zeta_2(\tau)$ satisfying
\begin{align}
\label{zetai}
|\zeta_i(\tau)| \le   \frac C {\tau \log \tau  }\quad\text{for all } \tau>\tau_0.
\end{align}

\begin{lemma}
\label{lemma-B1}
We have
\begin{align}
\label{estimateB0}
\left|\int_{\R^2} \tilde B[\phi] g^\perp \right|
\leq
\frac{C}{\tau \log \tau } \int_{\R^2} U (g^\perp)^2 dy
+C \frac{|a(\tau)|}{\tau \log \tau}  \| \nabla g^\perp U^{\frac{1}{2}}\|_{L^2} .
\end{align}
\end{lemma}
\begin{proof}
We have
\begin{align*}
\int_{\R^2} \tilde B[\phi] g^\perp  dy
= \int_{\R^2} [ \zeta_1(\tau) \phi + \zeta_2(\tau) y\cdot \nabla \phi  ] g^\perp dy.
\end{align*}
By Lemma~\ref{lemma-q} and the hypothesis \eqref{zetai} we have
\begin{align}
\label{1.33}
\left|
\zeta_1(\tau)
\int_{\R^2}  \phi  g^\perp dy
\right|
& \leq \frac{C}{\tau \log \tau } \int_{\R^2} U (g^\perp)^2 dy.
\end{align}

%

Let us write
\begin{align}
\nonumber
\int_{\R^2} y \cdot \nabla \phi(y)  g^\perp(y)dy
&= \int_{\R^2} y \cdot \nabla \phi^\perp(y)  g^\perp(y)dy
+ \frac{a(\tau)}{2} \int_{\R^2} y \cdot \nabla Z_0(y)  g^\perp(y)dy.
\end{align}
We claim that
\begin{align}
\label{6.33-0}
\left|
\int_{\R^2} y \cdot \nabla \phi^\perp(y)  g^\perp(y)dy\right|\leq
C \int_{\R^2} (g^\perp)^2 Udy.
\end{align}
Indeed, we write
\begin{align}
\label{6.29}
\int_{\R^2} y \cdot \nabla \phi^\perp(y)  g^\perp(y)dy
&= \int_{\R^2} y \cdot \nabla ( U g^\perp ) g^\perp(y)dy
+  \int_{\R^2} y \cdot \nabla (  U \psi^\perp )  g^\perp(y)dy .
\end{align}
But
\begin{align*}
\int_{\R^2} y \cdot \nabla ( U g^\perp ) g^\perp(y)dy
&= \int_{\R^2} y \cdot \nabla  U (g^\perp)^2(y)dy
+ \int_{\R^2} U y \cdot \nabla g^\perp g^\perp(y)dy
\\
&= \int_{\R^2} y \cdot \nabla  U (g^\perp)^2(y)dy
+ \frac{1}{2} \int_{\R^2} U y \cdot \nabla[ (g^\perp)^2 ](y)dy
\\
&= \frac{1}{2}\int_{\R^2} y \cdot \nabla  U (g^\perp)^2(y)dy
- \int_{\R^2} U   (g^\perp)^2 (y)dy ,
\end{align*}
and so
\begin{align}
\label{6.30}
\left|
\int_{\R^2} y \cdot \nabla ( U g^\perp ) g^\perp(y)dy
\right|
\leq C \int_{\R^2} (g^\perp)^2 U dy.
\end{align}

The second term in \eqref{6.29} is:
\begin{align*}
\int_{\R^2} y \cdot \nabla (  U \psi^\perp )  g^\perp(y)dy
& = \int_{\R^2} (y \cdot \nabla  U)  \psi^\perp   g^\perp(y)dy
+ \int_{\R^2} U (y \cdot \nabla  \psi^\perp)   g^\perp(y)dy  .
\end{align*}
We estimate the first term above
\begin{align}
\nonumber
\left|
\int_{\R^2} (y \cdot \nabla  U)  \psi^\perp   g^\perp(y)dy
\right|
&\leq C
\Bigl( \int_{\R^2} (\psi^\perp )^2 Udy\Bigr)^{1/2}
\Bigl( \int_{\R^2} (g^\perp )^2 Udy\Bigr)^{1/2}
\\
\label{6.31}
&\leq C \int_{\R^2} (g^\perp )^2 Udy,
\end{align}
by \eqref{q3}.
To estimate $\int_{\R^2} U (y \cdot \nabla  \psi^\perp) g^\perp(y)dy  $ we write it using radial symmetry:
\begin{align*}
\int_{\R^2} U (y \cdot \nabla  \psi^\perp)   g^\perp(y)dy
= 2 \pi \int_0^\infty U(\rho) (\psi^\perp)'(\rho)
g^\perp (\rho) \rho^2 d\rho.
\end{align*}
We use  that
 $\psi^\perp $ satisfies
\[
-\Delta \psi^\perp - U \psi^\perp = U g^\perp \quad \text{in }\R^2, \quad
\psi^\perp(\rho,\tau)\to 0 \quad \text{as }\rho \to \infty .
\]
Then, by the variations of parameters formula, since that $\int_{\R^2} U g^\perp z_0 dy=0$, we have
\begin{align}
\nonumber
(\psi^\perp)'(\rho) = z_0'(\rho) \int_\rho^\infty U(r) g^\perp (r) \bar z_0(r) r \, d r
+ \bar z_0'(\rho) \int_0^\rho U(r) g^\perp (r)  z_0(r) r \, d r ,
\end{align}
where $\bar z_0$ is a second linear independent function in the kernel of $\Delta+U$ satisfying
\begin{align*}
|\bar z_0(\rho) | \leq C(  |\log \rho| + 1) .
\end{align*}
We then compute
\begin{align*}
\int_0^\infty U(\rho) (\psi^\perp)'(\rho)
g^\perp (\rho) \rho^2 d\rho
=
I_1 + I_2
\end{align*}
where
\begin{align*}
I_1 &=
\int_0^\infty
\int_\rho^\infty
U(\rho)   U(r)
z_0'(\rho) \bar z_0(r) g^\perp (r)
g^\perp (\rho) \rho^2  r  d r  d\rho
\\
I_2 &=
- \int_0^\infty
\int_\rho^\infty
U(\rho)U(r)
\bar z_0'(\rho) z_0(r)
g^\perp (r)
g^\perp (\rho) \rho^2  r  d r  d\rho.
\end{align*}
We directly check that
\begin{align*}
|I_1|+|I_2| \leq C \int_{\R^2} (g^\perp)^2 Udy.
\end{align*}
From this we get that
\begin{align}
\label{6.32}
\left|\int_{\R^2} U (y \cdot \nabla  \psi^\perp)   g^\perp(y)dy \right|
\leq  C \int_{\R^2} (g^\perp)^2 Udy.
\end{align}
Combining \eqref{6.29}, \eqref{6.30}, \eqref{6.31}, \eqref{6.32} we obtain \eqref{6.33-0}.

Next we claim that
\begin{align}
\label{1.39}
\left|  \int_{\R^2} y \cdot \nabla Z_0(y)  g^\perp(y)dy  \right|
&\leq  C  \| \nabla g^\perp U^{\frac{1}{2}}\|_{L^2}.
\end{align}
Indeed, write
\begin{align*}
y \cdot \nabla Z_0 &= \nabla \cdot ( y Z_0) - 2 Z_0
= \nabla \cdot ( y Z_0 - 2 \nabla z_0) - 4 Z_0
\end{align*}
where  $z_0$ is defined in \eqref{defZLiouville} and satisfies the linearized Liouville equation $\Delta z_0 + U z_0=0$. We have used here that  $Z_0 = U z_0$.
So
\begin{align*}
\int_{\R^2} y \cdot \nabla Z_0(y)  g^\perp(y)dy
= - \int_{\R^2} ( y Z_0 - 2 \nabla z_0) \nabla g^\perp dy - 4 \int_{\R^2} g^\perp Z_0 dy .
\end{align*}
But $\int_{\R^2} Z_0 g^\perp dy=\int_{\R^2} U z_0 g^\perp dy=0$ by Lemma~\ref{lemma-orthG}, and $|y Z_0 - 2 \nabla z_0| \leq \frac{C}{|y|^4}$, so
\begin{align*}
\left|
\int_{\R^2} y \cdot \nabla Z_0(y)	 g^\perp(y)dy
\right|
& \leq
C \Bigl( \int_{\R^2} \frac{1}{(1+|y|)^4}|\nabla g^\perp|^2 dy\Bigr)^{\frac{1}{2}}
\leq
C \| \nabla g^\perp U^{\frac{1}{2}}\|_{L^2}.
\end{align*}
This proves \eqref{1.39}.

From \eqref{1.33}, \eqref{6.33-0} and \eqref{1.39} we conclude the validity of \eqref{estimateB0}.

\end{proof}

%

In the next lemma we get an estimate for $\int_{\R^2} \phi g^\perp$, but with right hand side that depends on the solution.

\begin{lemma}
\label{lemma-energy}
We make the same assumptions of Proposition~\ref{prop-linear-with-energy}.
Let $f$ be given by \eqref{notation-f},
$\omega$ be defined in \eqref{notation-omega}
and let $R:[\tau_0,\infty) \to (0,\infty)$ be continuous.
There is $c>0$ , $\varepsilon>0$ and $C>0$ such that for $\tau_0$ sufficiently large, if
\begin{align}
\label{hyR0}
\sup_{\tau\geq \tau_0}\frac{R^2(\tau)}{\tau \log \tau} \leq \varepsilon
\end{align}
then
\begin{align}
\nonumber
\partial_\tau \int_{\R^2} \phi g^\perp +
\frac{c}{R^2} \int_{\R^2} \phi g^\perp
\leq
C f(\tau)^2  \|h\|_{**}^2
+ C \frac{a(\tau)^2}{R^4} + C \frac{\omega(\tau)^2}{R^2},
\end{align}
for some constant $c>0$.
\end{lemma}
\begin{proof}
Equation \eqref{eq-linear-radial20} can be written in the form
\begin{align}
\nonumber
\partial_\tau \phi = \nabla \cdot ( U \nabla g^\perp ) + B[\phi] + h , \quad \text{in }\R^2 \times (\tau_0,\infty) .
\end{align}
We multiply this equation by $g^\perp$ and integrate on $\R^2$, using Lemma~\ref{lemma-derPhig}:
\begin{align}
\label{eq-aa}
\frac{1}{2}
\partial_\tau \int_{\R^2} \phi g^\perp
+ \int_{\R^2 } U |\nabla g^\perp|^2 =
\int_{\R^2} B[\phi] g^\perp
+ \int_{\R^2} h g^\perp .
\end{align}

Let $H = (-\Delta)^{-1}h$, and observe that, since $h$ is radial and $\int_{\R^2} h dy = 0$,
\begin{align*}
|\nabla H ( \rho,\tau)| &= \left|\frac{1}{\rho}\int_{\rho}^\infty h(s,\tau) s ds \right|
\leq C  f(\tau) \|h\|_{**} \frac{1}{ (1+\rho)^{5+\sigma}} .
\end{align*}
It follows that
\begin{align}
\nonumber
\left|\int_{\R^2} h g^\perp \right|
&=
\left| \int_{\R^2} \nabla \cdot \nabla H g^\perp \right|
=
\left| \int_{\R^2} \nabla H \cdot \nabla g^\perp  \right|
\\
\nonumber
& \leq
\frac{1}{2}\int_{\R^2} U |\nabla g^\perp |^2
+ C \int_{\R^2} |\nabla H |^2 U^{-1}
\\
\nonumber
& \leq
\frac{1}{2}\int_{\R^2} U |\nabla g^\perp |^2
+ C  f(\tau)^2  \|h\|_{**}^2 .
\end{align}
This combined with \eqref{eq-aa} gives
\begin{align}
\label{eq-aa2}
\frac{1}{2}
\partial_\tau \int_{\R^2} \phi g^\perp
+ \frac{1}{2}\int_{\R^2 } U |\nabla g^\perp|^2 \leq
\left| \int_{\R^2} B[\phi] g^\perp \right|
+ C f(\tau)^2  \|h\|_{**}^2.
\end{align}

We use the inequality in Lemma~\ref{lemmaHardy} to get
\begin{align}
\label{abc4}
\frac{c}{R^2} \int_{B_{R}} (g^\perp - \bar g^\perp_{R})^2 U \leq \int_{\R^2 } U |\nabla g^\perp|^2 ,
\end{align}
for some $c>0$,
where
\begin{align*}
\bar g^\perp_{R} = \frac{1}{\int_{B_{R}} U }\int_{B_{R}} g^\perp U .
\end{align*}
From
\begin{align*}
\int_{B_R} (g^\perp)^2 U =
\int_{B_R} (g^\perp - \bar g^\perp_R)^2 U
+ 2 \int_{B_R} g^\perp \bar g^\perp_R U
-\int_{B_R} ( \bar g^\perp_R)^2 U
\end{align*}
we get
\begin{align}
\nonumber
\int_{B_R} (g^\perp)^2 U \leq
2 \int_{B_R} (g^\perp - \bar g^\perp_R)^2 U
+ C ( \bar g^\perp_R)^2 .
\end{align}
so, using \eqref{abc4},
\begin{align}
\nonumber
\frac{c}{R^2}\int_{B_R} (g^\perp)^2 U
&\leq
\int_{\R^2 } U |\nabla g^\perp|^2
+C \frac{1}{R^2} ( \bar g^\perp_R)^2 ,
\end{align}
for a new $c>0$.
This implies
\begin{align}
\nonumber
\frac{c}{R^2}\int_{\R^2} (g^\perp)^2 U
&\leq
\int_{\R^2 } U |\nabla g^\perp|^2
+C \frac{1}{R^2} ( \bar g^\perp_R)^2
+ C \frac{1}{R^2} \int_{\R^2\setminus B_R} U(g^\perp)^2.
\end{align}
Using that $g^\perp = g + a$ we get
\begin{align}
\label{abc3a}
\frac{c}{R^2}\int_{\R^2} (g^\perp)^2 U
&\leq
\int_{\R^2 } U |\nabla g^\perp|^2
+C \frac{1}{R^2} ( \bar g^\perp_R)^2
+ C \frac{\omega^2}{R^2}
+ C \frac{a^2}{R^4}.
\end{align}
But
\begin{align}
\nonumber
\int_{\R^2} g^\perp U d y = 0
\end{align}
and this implies
\begin{align*}
\bar g^\perp_{R} &= -
\frac{1}{\int_{B_{R}} U }
\int_{\R^2\setminus B_{R}} g^\perp U
\end{align*}
so
\begin{align*}
(\bar g_{R}^\perp)^2
&\leq \frac{C}{R^2}\int_{\R^2\setminus B_{R}} (g^\perp)^2 U
\leq
\frac{C a^2}{R^4} + \frac{C}{R^2 }\int_{\R^2\setminus B_{R}} g^2 U  .
\end{align*}
This combined with \eqref{abc3a} gives
\begin{align*}
\frac{c}{R^2}
\int_{\R^2} (g^\perp)^2 U
&\leq \int_{\R^2 } U |\nabla g^\perp|^2
+C \frac{a^2}{R^4} + C \frac{\omega^2}{R^2}.
\end{align*}
We use this together with \eqref{eq-aa2} to obtain  (for a new $c>0$)
\begin{align}
\nonumber
\frac{1}{2}
\partial_\tau \int_{\R^2} \phi g^\perp
& + \frac{c}{R^2}\int_{\R^2} (g^\perp)^2 U
+ \frac{1}{4}\int_{\R^2 } U |\nabla g^\perp|^2
\\
\label{eq-aa3}
& \quad
\leq
\left| \int_{\R^2} B[\phi] g^\perp \right|
+ C f(\tau)^2 \|h\|_{**}^2
+ C \frac{a^2}{R^4} + C \frac{\omega^2}{R^2 }.
\end{align}

We obtain from Lemma~\ref{lemma-B1} and the assumption \eqref{hyR0} that
\begin{align}
\nonumber
\left|\int_{\R^2} B[\phi] g^\perp \right|
& \leq
\frac{C}{\tau \log \tau} \int_{\R^2} (g^\perp)^2 Udy
+C \frac{|a(\tau)|}{\tau \log \tau}  \| \nabla g^\perp U^{\frac{1}{2}}\|_{L^2}
\\
\nonumber
& \leq
\frac{C}{\tau \log \tau} \int_{\R^2} (g^\perp)^2 Udy
+ \frac{|a(\tau)|^2}{R^4}
+ C\frac{R^4}{\tau^2 (\log \tau)^2}\| \nabla g^\perp U^{\frac{1}{2}}\|_{L^2}^2.
\\
\label{estimateB}
& =
C \frac{\varepsilon  }{R^2  } \int_{\R^2} (g^\perp)^2 Udy
+ \frac{|a(\tau)|^2}{R^4}
+ C \varepsilon^2 \| \nabla g^\perp U^{\frac{1}{2}}\|_{L^2}^2.
\end{align}
Taking  $\varepsilon>0$ small, and combining \eqref{eq-aa3} and \eqref{estimateB} we get
\begin{align*}
\partial_\tau \int_{\R^2} \phi g^\perp +
\frac{1}{R^2} \int_{\R^2} (g^\perp)^2 U
\leq
C f(\tau)^2 \|h\|_{**}^2
+\frac{C a^2}{R^4} + C \frac{\omega^2}{R^2}.
\end{align*}

By Lemma~\ref{lemma-q} we obtain
\begin{align*}
\partial_\tau \int_{\R^2} \phi g^\perp +
\frac{c}{R^2} \int_{\R^2} \phi g^\perp
\leq
C f(\tau)^2 \|h\|_{**}^2
+ C \frac{a^2}{R^4} + C \frac{\omega^2}{R^2},
\end{align*}
for some constant $c>0$,
which is the desired conclusion.
\end{proof}

The next lemma provides a pointwise estimate for $g = \frac{\phi}{U}- (-\Delta^{-1})\phi$
assuming a certain bound for $\| U^{1/2} g \|_{L^2}$.

\begin{lemma}
\label{lemma-pointwise1}
Assume $\nu>0$. Let $\phi$ be the solution to  \eqref{eq-linear-radial20} as in  \S\ref{sectLT1}. Suppose that $\tau_1\geq \tau_0$ and
\begin{align}
\label{16-0b}
\| g (\tau) U^{\frac{1}{2}} \|_{L^2(\R^2)}
\leq  K_1 f_1(\tau)
,\quad \tau \in [\tau_0,\tau_1] ,
\end{align}
where  $K_1 \geq 0 $ and
\begin{align*}
f_1(\tau) = \frac{(\log \tau)^\mu}{\tau^{\nu-1}},
\end{align*}
where $\mu \in \R$.
Then
\begin{align}
\nonumber
|U(y) g (y,\tau)|
\leq C   \Bigl( K_1 +\frac{\|h\|_{**}}{R(\tau_0)}+ \frac{|c_1|}{f_1(\tau_0)} \Bigr) f_1(\tau)
\frac{1}{ (1+|y|)^{2}}
,\quad \tau \in [\tau_0,\tau_1].
\end{align}
\end{lemma}

\begin{proof}
We define
\[
g_0 = U g ,
\]
and obtain from \eqref{eq-linear-radial12} the equation
\begin{align}
\nonumber
\partial_\tau
g_0 =
U \partial_\tau g
&= \partial_\tau \phi - U (-\Delta^{-1}) \partial_\tau \phi
\\
\nonumber
&=
\nabla \cdot \Bigl[ U \nabla \Bigl( \frac{g_0}{U} \Bigr) \Bigr]  - U  (-\Delta)^{-1}
\left[
\nabla \cdot ( U \nabla g)
\right]
+ h-U(-\Delta)^{-1} h
\\
& \quad
+ B[g_0]
+ B[ U \psi[g_0] ]
- U (-\Delta)^{-1} ( B [ g_0 + U \psi[g_0]] ) ,
\label{638}
\end{align}
where we regard  $\psi[g_0]$ as the operator that maps $g_0$ to the unique radial solution to
\begin{align}
\label{eqLiouville}
-\Delta \psi - U \psi = g_0 \quad \text{in }\R^2,
\quad
\psi(\rho,\tau) \to 0 \quad \text{as }\rho\to \infty.
\end{align}
We note that this problem has indeed a solution since $\int_{\R^2} g_0 z_0 dy = 0$ by Lemma~\ref{lemma-orthG}, which is unique by imposing $\psi(\rho,\tau) \to 0$ as $ \rho \to \infty$ in the radial setting. This solution is given by the variations of parameters formula
\begin{align}
\nonumber
\psi(\rho,\tau ) = z_0(\rho) \int_\rho^\infty g_0(r,\tau) \bar z_0(r) r \, d r
+ \bar z_0(\rho) \int_0^\rho  g_0(r,\tau)  z_0(r) r \, d r ,
\end{align}
where $\bar z_0$ is a second linear independent function in the kernel of $\Delta+U$ satisfying $ |\bar z_0(\rho) | \leq C(  |\log \rho| + 1) $.


We compute
\begin{align*}
\nabla \cdot (U\nabla g)
&=\Delta g U + \nabla U \cdot \nabla g
=\Delta (g U) - \nabla U \cdot \nabla g - g \Delta U,
\end{align*}
and hence
\begin{align*}
(-\Delta)^{-1}[ \nabla\cdot(U\nabla g)]
&= - g U -
(-\Delta)^{-1}
\left[
\nabla U \cdot \nabla g + g \Delta U
\right]
\\
&= - g U - v
\end{align*}
where
\begin{align}
\label{defV}
v:=  (-\Delta)^{-1}
( \nabla \cdot ( g_0 \nabla\Gamma_0  ) ).
\end{align}
We write \eqref{638} as
\begin{align}
\label{eqTildeG2}
\partial_\tau g_0
&=
\Delta g_0 - \nabla g_0 \cdot \nabla \Gamma_0
+ 2 U g_0
+ B[g_0]
+ \tilde h
\end{align}
where
\begin{align}
\label{tilde-h}
\tilde h =
 U v + B[ U \psi[g_0] ] - U (-\Delta)^{-1} ( B [ g_0 + U \psi[g_0]] )+ h-U(-\Delta)^{-1} h .
\end{align}

Note that since we are working with radial functions, we can integrate \eqref{defV} explicitly and obtain
\begin{align}
\label{integral-v}
v(\rho,\tau) = \int_\rho^\infty g_0(s,\tau) \Gamma_0'(s) ds .
\end{align}

We claim that for any $y \in \R^2$:
\begin{align}
\label{estTildeh1}
\| \tilde h \|_{ L^p (B_1(y))}
\leq C \Bigl( K_1 +\frac{\|h\|_{**}}{R(\tau_0)} \Bigr) f_1(\tau)  \frac{1}{(1+|y|)^{4-\frac{4}{p}}}
,\quad \tau \in [\tau_0,\tau_1].
\end{align}

Indeed, let us start with
\begin{align}
\nonumber
\int_0^\infty |v(\rho)|^p U(\rho) \rho d \rho
& \leq  \int_0^\infty \Bigl( \int_\rho^\infty U(s) g(s)^2 s ds\Bigr)^{p/2}
\Bigl( \int_\rho^\infty U(s) \frac{\Gamma_0'(s)^2}{s} ds\Bigr)^{p/2}
U(\rho) \rho d\rho
\\
\label{estV2Lp}
& \leq C \|g U^{\frac{1}{2}}\|_{L^2(\R^2)}^p,
\end{align}
which follows from  \eqref{integral-v} and H\"older's inequality

%
%
%
%
%

Let us write $\psi=\psi[g_0]$ and $\tilde \psi = \psi \circ \Pi$, where $\Pi$ is the stereographic projection.
Writing \eqref{eqLiouville} in $S^2$ and using standard $L^p$ theory we find that for any $p>2$
\[
\|  \tilde \psi \|_{L^\infty(S^2)}+\| \nabla_{S^2} \tilde \psi \|_{L^p(S^2)} \leq C
\|g U^{\frac{1}{2}}\|_{L^2(\R^2)},
\]
which implies
\begin{align}
\label{LpPsi}
\|\psi\|_{L^\infty(\R^2) }
+
\left( \int_{\R^2} |\nabla \psi|^p U^{1-\frac{p}{2}}
\right)^{\frac{1}{p}} \leq C
\|g U^{\frac{1}{2}}\|_{L^2(\R^2)}.
\end{align}

Let $y\in \R^2 $.
From \eqref{16-0b}
we see that
\begin{align}
\nonumber
\| g_0(\cdot,\tau) \|_{L^2(B_1(y))}
\leq C  K_1 f_1(\tau)   \frac{1 }{(1+|y|)^{2}}
,\quad \tau \in [\tau_0,\tau_1],
\end{align}
and from \eqref{16-0b} and \eqref{estV2Lp} we have
\begin{align}
\label{UvLp}
\|U v (\cdot,\tau) \|_{L^p(B_1(y))}\leq C   K_1 f_1(\tau)
\frac{1}{ (1+|y|)^{4-\frac{4}{p}}}
,\quad \tau \in [\tau_0,\tau_1].
\end{align}
Similarly,  inequalities \eqref{LpPsi} and \eqref{16-0b} imply
\begin{align}
\label{BLp}
\| B[U \psi[g_0]] \|_{L^p(B_1(y))}
\leq
C    K_1 f_1(\tau)
\frac{1}{\tau \log \tau}
\frac{1}{ (1+|y|)^{4}}
,\quad \tau \in [\tau_0,\tau_1].
\end{align}

Let's estimate
\begin{align*}
(-\Delta)^{-1} ( B [ g_0 + U \psi[g_0]] )
=
\zeta_1(\tau) (-\Delta)^{-1}( y\cdot \nabla (g_0 + U \psi[g_0]) ) + \zeta_2(\tau) (-\Delta)^{-1}(g_0 + U \psi[g_0]).
\end{align*}
Note that $\psi = (-\Delta)^{-1}\phi = (-\Delta)^{-1}(g_0 + U \psi) $.
But we can estimate $\psi$ from
\begin{align}
\label{psi-var-par3}
\psi(\rho) = z_0(\rho)\int_\rho^\infty \frac{1}{z_0(r)^2 r} \int_r^\infty g_0(s) z_0(s) s ds ,\quad \rho>1 .
\end{align}
Then \eqref{16-0b} yields
\begin{align}
\label{estPsi00a}
|\psi(\rho,\tau) |
& \leq  \frac{C}{1+\rho}
\| g U^{\frac{1}{2}}\|_{L^2( \R^2) }
 \leq
C   K_1 f_1(\tau)    \frac{1 }{1+\rho}
,\quad \tau \in [\tau_0,\tau_1],
\end{align}
and so
\begin{align}
\label{ultimo}
| U (-\Delta)^{-1} ( g_0 + U \psi[g_0] ) |
\leq C   K_1 f_1(\tau)    \frac{1 }{1+|y|^5}
,\quad \tau \in [\tau_0,\tau_1].
\end{align}
Concerning the term $(-\Delta)^{-1} ( y \cdot \nabla ( g_0 + U \psi ))$,
we notice that if we let $w = g_0 + U \psi$, then $\int_{\R^2} y \cdot \nabla w = 0$, and
\begin{align*}
(-\Delta)^{-1} (y \cdot \nabla w)(\rho) =
 \int_\rho^\infty r w(r,\tau)dr - 2 \psi(\rho,\tau).
\end{align*}
Using  \eqref{16-0b} and \eqref{estPsi00a} we get
\begin{align*}
\left|
((-\Delta)^{-1} (y \cdot \nabla (g_0 + U \psi)))(\rho,\tau)
\right|
\leq
C   K_1 f_1(\tau)   \frac{1 }{1+\rho}
,\quad \tau \in [\tau_0,\tau_1].
\end{align*}
From this and \eqref{ultimo} we find that
\begin{align}
\label{ultimo2}
|U (-\Delta)^{-1} ( B [ g_0 + U \psi[g_0]] )(y,\tau)|
\leq C  K_1 f_1(\tau)    \frac{1 }{ \tau \log \tau (1+|y|)^5}
,\quad \tau \in [\tau_0,\tau_1].
\end{align}
Finally the estimates
\begin{align}
\label{ultimo3}
\|h\|_{L^p(B_1(y))}
+ \|U (-\Delta)^{-1} h\|_{L^p(B_1(y))}
\leq C \frac{\|h\|_{**}}{R(\tau_0)} f_1(\tau)  \frac{1}{(1+|y|)^{4-\frac{4}{p}}}
\end{align}
are directly obtained.

Combining \eqref{UvLp}, \eqref{BLp}, \eqref{ultimo2}, and \eqref{ultimo3} we deduce  \eqref{estTildeh1}.

From equation  \eqref{eqTildeG2}, the estimate \eqref{estTildeh1}, standard parabolic $L^p$  estimates restricted to $B_1(y)\times ( \max(\tau-1,\tau_0),\tau)$  and embedding into H\"older spaces, we deduce that
\begin{align}
\label{decayTildeG3}
|g_0 (y,\tau)|
\leq C  \Bigl( K_1 +\frac{\|h\|_{**}}{R(\tau_0)}+ \frac{|c_1|}{f_1(\tau_0)} \Bigr)
f_1(\tau) \frac{1}{ (1+|y|)^{2}}
,\quad \tau \in [\tau_0,\tau_1].
\end{align}
This is the desired conclusion.
We also get from  \eqref{decayTildeG3}:
\begin{align}
\label{estV2}
|v (y,\tau)|
\leq C  \Bigl( K_1 +\frac{\|h\|_{**}}{R(\tau_0)}+ \frac{|c_1|}{f_1(\tau_0)} \Bigr)  f_1(\tau)  \frac{1}{ (1+|y|)^{2}}
,\quad \tau \in [\tau_0,\tau_1].
\end{align}
\end{proof}


In some of the proofs below the following barrier will be useful.
Consider the equation
\begin{align}
\label{heatEq6d}
\begin{aligned}
\partial_\tau \phi &= \Delta_{\R^6} \phi + h \quad \text{in } (\tau_0,\infty)\times \R^6
\\
\phi(\tau_0,\cdot) & = 0
\end{aligned}
\end{align}
where $\Delta_{\R^6}$ is the laplacian in $\R^6$.
Suppose that $h$ has the estimate
\[
|h (y,\tau) | \leq \frac{1}{\tau^{\gamma+1} } \frac{1}{(1+|y|/\sqrt{\tau})^b}
\]
for some $\gamma,b\in\R$.

If $\gamma<3$ and $\gamma<\frac{b}{2}$ then there is a barrier satisfying
\[
C_1 \frac{1}{\tau^{\gamma}  } \frac{1}{ (1+|y|/\sqrt \tau)^b} \leq
\phi (y,\tau)  \leq C_2 \frac{1}{\tau^{\gamma}  } \frac{1}{ (1+|y|/\sqrt \tau)^b}.
\]
Indeed, we can consider all functions to be radial and write $\rho = |y|$, $y\in \R^6$.
Let
\begin{align}
\label{barphi}
\bar\phi(\rho,\tau ) = \frac{1}{\tau^{\gamma}  } g\Bigl( \frac{\rho}{\sqrt \tau}\Bigr) , \quad \zeta = \frac{\rho}{\sqrt\tau}.
\end{align}
Then
\begin{align*}
\partial_\tau\bar\phi - \Bigl(\partial_{\rho\rho} + \frac{5}{\rho} \partial_\rho \Bigr) \bar\phi
= -
\frac{1}{\tau^{\gamma+1}}
\Bigl[
g''(\zeta) + \frac{5}{\zeta} g'(\zeta) +\frac{\zeta}{2}g'(\zeta) + \gamma g(\zeta)
\Bigr].
\end{align*}
Let $g_1(\zeta) = \frac{1}{(1+\zeta^2)^{b/2}}$.
Since $\gamma<\frac{b}{2}$ we have
\[
-\Bigl[
g_1''(\zeta) + \frac{5}{\zeta} g_1'(\zeta) +\frac{\zeta}{2}g_1'(\zeta) + \gamma  g_1(\zeta)
\Bigr]
\geq \frac{c}{\zeta^b}, \quad \zeta \geq M,
\]
for some $c, M>0$.
Let $g_0(\zeta) = e^{-\frac{\zeta^2}{4}}$ be the Gaussian kernel, which satisfies
\[
g_0''(\zeta) + \frac{5}{\zeta} g_0'(\zeta) +\frac{\zeta}{2}g_0'(\zeta) + 3  g_0(\zeta) = 0.
\]
Let $g = C_1 g_0 + g_1$. Since $\gamma<3$, we can find  $C_1$ large so that
\[
-\Bigl[
g''(\zeta) + \frac{5}{\zeta} g'(\zeta) +\frac{\zeta}{2}g'(\zeta) + \gamma  g(\zeta)
\Bigr]
\geq \frac{c}{1+\zeta^b}, \quad \zeta >0.
\]
Then $\bar\phi$ defined by \eqref{barphi} with $g = C_1 g_0 + g_1$ is a supersolution to \eqref{heatEq6d}.

\medskip

In the next lemma we improve the spatial decay of  $g = \frac{\phi}{U}- (-\Delta^{-1})\phi$.

\begin{lemma}
\label{lemma-pointwise2}
Assume $1<\nu<\frac{7}{4}$. Let $\phi$ be the solution to  \eqref{eq-linear-radial20} as in  \S\ref{sectLT1}. Suppose that $\tau_1\geq \tau_0$ and
\begin{align}
\nonumber
\| g (\tau) U^{\frac{1}{2}} \|_{L^2(\R^2)}
\leq K_1 f_1(\tau)
,\quad \tau \in [\tau_0,\tau_1] ,
\end{align}
where $K_1\geq 0$ and
\begin{align*}
f_1(\tau) = \frac{(\log \tau)^\mu}{\tau^{\nu-1}},
\end{align*}
where $\mu\in\R$.
Then
\begin{align}
\label{17-0-b1}
|U (\rho) g(\rho,\tau) |\leq C  \Bigl( K_1 +\frac{\|h\|_{**}}{R(\tau_0)}+ \frac{|c_1|}{f_1(\tau_0)} \Bigr)  f_1(\tau) \frac{1}{(1+\rho)^{4}}
, \quad \tau \in [\tau_0,\tau_1] .
\end{align}
\end{lemma}
\begin{proof}
We us the same notation as in Lemma~\ref{lemma-pointwise1} and consider \eqref{eqTildeG2} for $g_0 = U g$ with $\tilde h$ defined in \eqref{tilde-h}.
We are going to use barriers to estimate $g_0$.

We claim that $\tilde h$ satisfies
\begin{align}
\label{estTildeH1}
|\tilde h(y,\tau)|
\leq
C \Bigl( K_1 +\frac{\|h\|_{**}}{R(\tau_0)}  \Bigr)  f_1(\tau)
\Bigl( \frac{1}{(1+|y|)^6} +\frac{1}{\tau \log \tau(1+|y|)^5} \Bigr)
,\quad \tau \in [\tau_0,\tau_1].
\end{align}
Indeed, from  \eqref{decayTildeG3} and  \eqref{estV2}  we find that
\begin{align}
\label{estPartTildeh}
| -  U v + h-U(-\Delta)^{-1} h |
\leq
C  \Bigl( K_1 +\frac{\|h\|_{**}}{R(\tau_0)}\Bigr)  f_1(\tau)  \frac{1 }{ (1+|y|)^{6}}
,\quad \tau \in [\tau_0,\tau_1].
\end{align}
To estimate $B[U \psi[g_0]]$ we use \eqref{estPsi00a}  a similar estimate for $\partial_\rho \psi$, and the assumptions on $\zeta_1$, $\zeta_2$ in \eqref{zetai}, to obtain
\begin{align*}
|B[U\psi[g_0]|\leq
C K_1  f_1(\tau)   \frac{1  }{\tau \log \tau}  \frac{1}{1+|y|^5}
,\quad \tau \in [\tau_0,\tau_1].
\end{align*}
This, \eqref{estPartTildeh} and \eqref{ultimo2} prove \eqref{estTildeH1}.

To get better spatial decay  we construct a barrier and apply the maximum principle to equation \eqref{eqTildeG2} in $(\R^2 \setminus B_{R_0} (0) ) \times (\tau_0,\tau_1)$, where $R_0$ is a fixed large constant.
Several of constants $C$ below depend on $R_0$ but we will not keep track of the explicit dependence.

The linear operator for $g_0$ in \eqref{eqTildeG2}, acting on radial functions with $\rho = |y|$, is given by:
\begin{align*}
\partial_\tau
g_0
-[  \Delta g_0 - \nabla g_0 \cdot \nabla \Gamma_0
+ B[g_0]  + 2 U g_0]
&= \partial_\tau g_0 - \partial_{\rho\rho} g_0 - \frac{1}{\rho} \partial_\rho g_0
- \frac{4 \rho}{1+\rho^2} \partial_\rho g_0
+ O\Bigl( \frac{1}{1+\rho^4} \Bigr) g_0
\\
& \quad + O\Bigl( \frac{1}{\tau \log \tau} \Bigr) g_0
+ O\Bigl( \frac{1}{\tau \log \tau} \Bigr) \rho \partial_\rho g_0 .
\end{align*}
The main part outside of a ball $B_{R_0}(0)$ with $R_0$ big is given by  $\partial_\tau- \partial_{\rho\rho} - \frac{5}{\rho}\partial_\rho $.

By \eqref{estTildeH1} we need to construct $\bar g_1$  such that
\begin{align*}
\partial_\tau
\bar g_1
-[  \Delta \bar g_1 - \nabla \bar g_1 \cdot \nabla \Gamma_0
+ B[\bar g_1]  +2 U \bar g_1 ] \geq h_1
\end{align*}
where
\begin{align*}
h_1(\rho,\tau) &=  f_1(\tau) \Bigl( \frac{1}{(1+\rho)^6} +\frac{1}{\tau \log \tau(1+\rho)^5}   \Bigr) .
\end{align*}

To construct $\bar g_1$, let $0<\vartheta<1$, and let $\tilde g_1(\rho)$ be radial and solve
\[
-\Delta_6 \tilde g_1 = \frac{1}{1+\rho^{6-\vartheta}} \quad \text{in }\R^6,
\]
such that $ \tilde g_1(\rho) (1+\rho^{4-\vartheta})$ is bounded below and above by positive constants.
Let
\begin{align*}
\bar g_1(\rho,\tau)
&= f_1(\tau) \tilde g_1(\rho) \chi_0\Bigl(\frac{\rho}{\delta\sqrt{\tau}}\Bigr)
+ C_1 \frac{ f_1(\tau) }{\tau^{2-\vartheta/2} (1+\rho/\sqrt{\tau})^{5}}
+ C_2  \frac{ f_1(\tau) }{\tau^{2-\vartheta/2}} e^{-\frac{\rho^2}{4\tau} } ,
\end{align*}
For appropriate $\delta>0$, $C_1$,  and $C_2$, the function $ \bar g_1(\rho,\tau) $ is a supersolution in $(\R^2 \setminus B_{R_0} (0) ) \times (\tau_0,\tau_1)$ for the right hand side $h_1$. More precisely, writing $M = R_0^{2-\vartheta} ( K_1 +\frac{\|h\|_{**}}{R(\tau_0)} + \frac{1}{f_1(\tau_0) } |c_1| )$, we have
\begin{align*}
\left( \partial_\tau
-[  \Delta  - \nabla (\cdot ) \cdot \nabla \Gamma_0
+ B   ] \right)  M  \bar g_1
\geq  |\tilde h|,
\quad \text{in }(\R^2 \setminus B_{R_0} (0) ) \times (\tau_0,\tau_1),
\end{align*}
\begin{align*}
M \bar g_1 \geq |g_0| ,
\quad \text{on } \rho = R_0, \ \tau  \in (\tau_0,\tau_1),
\end{align*}
because of Lemma~\ref{lemma-pointwise1}, and
\begin{align*}
M \bar g_1 (\tau_0) \geq \left| c_1 U g_{\tilde Z_0} \right| , \quad \text{in }\R^2,
\end{align*}
where
\begin{align*}
g_{\tilde Z_0} = \frac{\tilde Z_0}{U} - (-\Delta)^{-1} \tilde Z_0,
\end{align*}
is the function $g$ associated to $\tilde Z_0$ defined in \eqref{def-tilde-Z0}.
We note that $|U g_{\tilde Z_0}(\rho)|\leq C \frac{1}{1+\rho^4}$ and is supported on $\rho \leq 2\sqrt{\tau_0}$.
Here we are using that $\nu<\frac{3}{2}+\frac{\vartheta}{2}$.

Using the maximum principle we get
\begin{align}
\nonumber
|g_0 (y,\tau)|
\leq C    \Bigl( K_1  +\frac{\|h\|_{**}}{R(\tau_0)}+ \frac{|c_1|}{f_1(\tau_0)} \Bigr) f_1(\tau)
\frac{1}{(1+\rho)^{4-\vartheta}}
,\quad \tau \in [\tau_0,\tau_1].
\end{align}
The constant $C$ here depends on $R_0$, but $R_0$ is fixed and we will not keep track of the dependence of $C$ on $R_0$.

By \eqref{integral-v} and \eqref{psi-var-par3} we have
\begin{align*}
|\tilde h (y,\tau)|
\leq C \Bigl( K_1 +\frac{\|h\|_{**}}{R(\tau_0)}+ \frac{|c_1|}{f_1(\tau_0)} \Bigr)
f_1(\tau)\Bigl(
\frac{1}{(1+\rho)^{6+\sigma}}
+\frac{1}{\tau \log \tau ( 1+\rho)^{6-\vartheta}}
\Bigr) .
\end{align*}

We can now repeat the argument with a new barrier.
Consider  $\tilde g_2(\rho)$ the radial solution to
\begin{align}
\label{def-tildeg2}
\begin{aligned}
&-\Delta_6 \tilde g_2 = \frac{1}{1+\rho^{6+\sigma}} \quad \text{in }\R^6,
\quad  c_1 \frac{1}{1+\rho^4} \leq \tilde g_2(\rho) \leq c_2 \frac{1}{1+\rho^4} ,
\end{aligned}
\end{align}
where $c_1$, $c_2>0$.
Let
\begin{align*}
\bar g_2(\rho,\tau)
&= f_1(\tau) \tilde g_2(\rho) \chi_0\Bigl(\frac{\rho}{\delta\sqrt{\tau}}\Bigr)
+ C_1 \frac{f_1(\tau)}{\tau^{2} (1+\rho/\sqrt{\tau})^{6-\vartheta}}
+ C_2  \frac{ f_1(\tau)}{\tau^{2}} e^{-\frac{\rho^2}{4\tau} } .
\end{align*}
For appropriate constants $\delta$, $C_1$, $C_2$, and assuming that $\nu<2-\frac{\vartheta}{2}$ we get a suitable supersolution and we obtain
\begin{align}
\nonumber
|g_0 (y,\tau)|
\leq  C \Bigl( K_1 +\frac{\|h\|_{**}}{R(\tau_0)}+ \frac{|c_1|}{f_1(\tau_0)} \Bigr)
f_1(\tau) \frac{1}{(1+\rho)^4}.
\end{align}
This proves \eqref{17-0-b1}.

The restriction on $\nu$ were $\nu < \frac{3}{2}+\frac{\vartheta}{2}$ and  $\nu<2-\frac{\vartheta}{2}$. Choosing $\vartheta=\frac{1}{4}$ we find that for $\nu<\frac{7}{4}$ both barriers work.
\end{proof}

The next result is a technical step used in several places.
\begin{lemma}
\label{lemma-from-g-to-phi}
Let $\phi:\R^2 \to \R$ be radial such that $\int_{\R^2} \phi = 0$ and $|\phi(y)|\leq \frac{C}{(1+|y|)^{2+\sigma}}$ for some $\sigma>0$. Let $g = \frac{\phi}{U}-(-\Delta)^{-1} \phi$ and assume that $\|g\|_{L^\infty} <\infty$. Then
\begin{align}
\label{bound-phi}
|\phi(y) |\leq C \frac{\|g\|_{L^\infty}}{(1+|y|)^4}.
\end{align}
\end{lemma}
\begin{proof}
Let $\psi = (-\Delta)^{-1}\phi$.
Since $\psi$ satisfies
\[
-\Delta \psi - U \psi = U g \quad \text{in }\R^2, \quad
\psi(\rho)\to 0 \quad \text{as }\rho \to \infty,
\]
we have necessarily
\begin{align}
\nonumber
\int_{\R^2} U g z_0 dy=0 .
\end{align}
We have the variations of parameters formula
\begin{align}
\label{ax-psi-var-par}
\psi (\rho) = z_0(\rho)
\int_\rho^\infty \frac{1}{z_0(r)^2 r}
\int_r^\infty U g (s,\tau) z_0(s) s \, ds \, dr , \quad \rho >1 .
\end{align}
From \eqref{ax-psi-var-par} we find
\begin{align*}
|\psi(\rho,\tau) |
\leq  C \|g\|_{L^\infty}.
\end{align*}
This and the formula $\phi = U g + U \psi$ gives \eqref{bound-phi}.
\end{proof}

Next we give a proof of Proof of Lemma~\ref{lemma.apriori}, but first we point some estimates of $\tilde Z_0$ defined in \eqref{def-tilde-Z0}. Using the general decomposition \eqref{decomp-phi-perp}, we write
\begin{align*}
\tilde Z_0 = \tilde Z_0^\perp + \frac{\tilde a}{2} Z_0 .
\end{align*}
By \eqref{formula-a}
\begin{align*}
\tilde a =\frac{1}{8\pi}\int_{\R^2} \Gamma_0 \tilde Z_0
= 2 + O(\frac{\log \tau_0}{\tau_0}) .
\end{align*}
Hence $\tilde Z_0^\perp $ satisfies
\begin{align}
\nonumber
\tilde Z_0^\perp(\rho)
&= \tilde Z_0(\rho) - \frac{a(\tau_0)}{2}Z_0(\rho)\\
\nonumber
& =  ( Z_0(\rho) - m_{Z_0} U(\rho) ) \chi_0\Bigl( \frac{\rho}{\sqrt{\tau_0}}\Bigr)
- \Bigl( 1 + O (\frac{\log\tau_0}{\tau_0})\Bigr)	Z_0(\rho)\\
\label{best-tildeZ0-perp}
& = O\Bigl( \frac{\log\tau_0}{\tau_0} \frac{1}{1+\rho^4} \Bigr).
\end{align}

\begin{proof}[Proof of Lemma~\ref{lemma.apriori}]

We let $R$ be defined by \eqref{def-R}.
We multiply equation \eqref{eq-linear-radial20} by $g^\perp$ and integrate in $\R^2$. Using Lemmas~\ref{lemma-energy} and \ref{lemma-q} we get
\begin{align}
\label{interm1}
\partial_\tau \int_{\R^2} \phi g^\perp +
\frac{c}{R^2} \int_{\R^2} \phi g^\perp
\leq
C f(\tau)^2 \|h\|_{**}^2 +\frac{C a^2}{R^4} + \frac{C}{R^2 } \omega(\tau)^2 ,
\end{align}
for some $c>0$, where
\begin{align*}
\omega(\tau) = \Bigl( \int_{\R^2\setminus B_{R(\tau)}} g^2 U \Bigr)^{1/2}.
\end{align*}

Let us write
\begin{align*}
\| \varphi \|_{\infty,T_2} = \| \varphi \|_{L^\infty(\tau_0,T_2)} ,
\end{align*}
and note that
\begin{align*}
\Bigl\| \frac{a}{R^2 f} \Bigr\|_{\infty,T_2}^2 <\infty, \quad  \Bigl\| \frac{\omega}{Rf}\Bigr\|_{\infty,T_2}<\infty.
\end{align*}

The following inequalities are valid for $\tau_0<\tau<T_2$.
From \eqref{interm1} we get
\begin{align*}
\partial_\tau \int_{\R^2} \phi g^\perp +
\frac{c}{R^2} \int_{\R^2} \phi g^\perp
\leq
C f(\tau)^2
\Bigl(  \|h\|_{**}^2
+ \Bigl\| \frac{a}{R^2 f} \Bigr\|_{\infty,T_2}^2 + \Bigl\| \frac{\omega}{Rf}\Bigr\|_{\infty,T_2}^2
\Bigr) .
\end{align*}
By Gronwall's inequality and Lemma~\ref{lemma-q} we get
\begin{align}
\label{u-11-0}
\int_{\R^2} (g^\perp)^2 U
& \leq
C  f(\tau)^2 R(\tau)^2
\Bigl(  \|h\|_{**}^2
+  \Bigl\| \frac{a}{R^2 f} \Bigr\|_{\infty,T_2}^2
+ \Bigl\| \frac{\omega}{Rf}\Bigr\|_{\infty,T_2}^2
+ c_1^2 D(\tau_0)^2 \Bigr)
\end{align}
where
\begin{align*}
D(\tau_0) = \frac{1}{f(\tau_0) R(\tau_0)} \frac{\log\tau_0}{\tau_0} ,
\end{align*}
and we have used \eqref{best-tildeZ0-perp}.

From \eqref{u-11-0} we find
\begin{align}
\label{u-16-0}
\int_{\R^2} g^2 U
\leq
C  f(\tau)^2 R(\tau)^4
\Bigl(
\frac{1}{R(\tau_0)^2}  \|h\|_{**}^2
+  \Bigl\| \frac{a}{R^2 f} \Bigr\|_{ \infty,T_2 }^2
+ \frac{1}{R(\tau_0)^2}\Bigl\| \frac{\omega}{R f}\Bigr\|_{ \infty,T_2 }^2
+ c_1^2 \frac{D(\tau_0)^2}{R(\tau_0)^2}
\Bigr)
\end{align}
Using Lemma~\ref{lemma-pointwise2} we get
\begin{align}
\label{u-17-0}
|U g|\leq C
f(\tau) R(\tau)^2
\Bigl(
\frac{1}{R(\tau_0)}  \|h\|_{**}
+  \Bigl\| \frac{a}{R^2 f} \Bigr\|_{\infty,T_2}
+ \frac{1}{R(\tau_0)} \Bigl\| \frac{\omega}{R f}\Bigr\|_{\infty,T_2}
+ |c_1| \frac{1}{f(\tau_0) R(\tau_0)^2}
\Bigr)
\frac{1}{(1+\rho)^{4}} ,
\end{align}
where we have used that for $\tau_0$ large, $ \frac{D(\tau_0)}{R(\tau_0)}< \frac{1}{f(\tau_0) R(\tau_0)^2} $.

We use this  to estimate
\begin{align}
\nonumber
\int_{\R^2\setminus B_{R}} g^2 U
\leq C
f(\tau)^2 R(\tau)^2
\Bigl(
\frac{1}{R(\tau_0)} \|h\|_{**}
+  \Bigl\| \frac{a}{R^2f } \Bigr\|_{ \infty,T_2 }
+ \frac{1}{R(\tau_0)}\Bigl\| \frac{\omega}{R f}\Bigr\|_{\infty,T_2}
+ |c_1| \frac{1}{f(\tau_0) R(\tau_0)^2}
\Bigr)^2,
\end{align}
which implies
\begin{align*}
\frac{\omega(\tau) }{R(\tau)f(\tau)}
\leq C \Bigl(
\frac{1}{R(\tau_0)} \|h\|_{**}
+  \Bigl\| \frac{a}{R^2f } \Bigr\|_{ \infty ,T_2}
+ \frac{1}{R(\tau_0)}\Bigl\| \frac{\omega}{R f}\Bigr\|_{\infty,T_2}
+ |c_1|  \frac{1}{f(\tau_0) R(\tau_0)^2}
\Bigr).
\end{align*}
We deduce that
\begin{align}
\label{ineq.omega}
\Bigl\| \frac{\omega}{Rf}\Bigr\|_{\infty,T_2}
\leq C \Bigl(
\frac{1}{R(\tau_0)} \|h\|_{**}
+  \Bigl\| \frac{a}{R^2f } \Bigr\|_{ \infty ,T_2}
+ |c_1|  \frac{1}{f(\tau_0) R(\tau_0)^2}
\Bigr).
\end{align}
Combining this inequality with \eqref{u-16-0}  we obtain
\begin{align}
\label{u-40}
\int_{\R^2} g^2 U
\leq C
f(\tau)^2 R(\tau)^4
\Bigl(
\frac{1}{R(\tau_0)} \|h\|_{**} +  \Bigl\| \frac{a}{R^2 f} \Bigr\|_{\infty,T_2}
+ |c_1| \frac{1}{f(\tau_0) R(\tau_0)^2}
\Bigr)^2 ,
\end{align}
and with \eqref{u-11-0} we get
\begin{align}
\label{u-11-0b}
\int_{\R^2} (g^\perp)^2 U
\leq
C  f(\tau)^2 R(\tau)^2
\Bigl(
\|h\|_{**}
+  \Bigl\| \frac{a}{R^2 f} \Bigr\|_{\infty,T_2}
+ |c_1|  \frac{1}{f(\tau_0) R(\tau_0)^2}
\Bigr)^2 .
\end{align}
Going back to \eqref{u-17-0} we find
\begin{align}
\label{u-18-0c}
|U g (\rho,\tau)|\leq
Cf(\tau) R(\tau)^2
\Bigl( \frac{1}{R(\tau_0)}  \|h\|_{**}
+  \Bigl\| \frac{a}{R^2 f} \Bigr\|_{\infty}
+ |c_1|   \frac{1}{f(\tau_0) R(\tau_0)^2}
\Bigr)
\frac{1}{1+\rho^4} .
\end{align}
Using Lemma~\ref{lemma-from-g-to-phi} we also obtain
\begin{align}
\label{u-18b-0}
|\phi(\rho,\tau)|\leq
C f(\tau) R(\tau)^2
\Bigl( \frac{1}{R(\tau_0)}  \|h\|_{**}
+ \Bigl\| \frac{a}{R^2 f} \Bigr\|_{\infty,T_2}
+ |c_1| \frac{1}{f(\tau_0) R(\tau_0)^2}
\Bigr)
\frac{1}{1+\rho^4} .
\end{align}

We multiply the equation satisfied by $\phi$ \eqref{eq-linear-radial20}  by $|y|^2 \chi_0(\frac{y}{R})$, and integrate on $\R^2$
\begin{align}
\nonumber
\partial_\tau \int_{\R^2}   \phi |y|^2  \chi_0\Bigl(\frac{y}{R}\Bigr) dy
& =\int_{\R^2} (L[\phi] + h) |y|^2  \chi_0(\frac{y}{R}) dy
+\int_{\R^2} B[\phi]  |y|^2  \chi_0(\frac{y}{R}) dy
\\
\label{xyz1}
& \quad
- \frac{R'(\tau)}{R}\int_{\R^2} \phi |y|^2  \nabla \chi_0(\frac{y}{R}) \cdot \frac{y}{R}	dy,
\end{align}
where $R' = \frac{d R}{d \tau}$.
Now integrate from $\tau$ to $T_2$ and use the decomposition \eqref{decomp-phi-perp}.

We integrate \eqref{xyz1} from $\tau$ to $T_2$, use the decomposition \eqref{decomp-phi-perp} and that $a(T_2)=0$ to get
\begin{align}
\nonumber
|a(\tau) | \log \tau
& \leq
\left| \int_\tau^{T_2} \int_{\R^2} (L[\phi(s)] + h) |y|^2  \chi_0(\frac{y}{R(s)}) dy ds\right|
+ \left| \int_\tau^{T_2} \int_{\R^2} B[\phi(s)]  |y|^2  \chi_0(\frac{y}{R(s)}) dy ds\right|
\\
\nonumber
& \quad
+ \left| \int_\tau^{T_2}  \frac{R'(s)}{R(s)} \int_{\R^2} \phi(s) |y|^2  \nabla \chi_0(\frac{y}{R(s)}) \cdot \frac{y}{R(s)}	dy ds\right|
\\
\label{u-42}
& \quad
+ \left| \int_{\R^2}   \phi^\perp(T_2) |y|^2  \chi_0\Bigl(\frac{y}{R(T_2)}\Bigr) dy\right|
+\left| \int_{\R^2}   \phi^\perp (\tau) |y|^2  \chi_0\Bigl(\frac{y}{R(\tau)}\Bigr) dy\right|.
\end{align}

By Lemma~\ref{lemma-q2} and \eqref{u-11-0b}
\begin{align}
\nonumber
\int_{B_{2R(\tau)}}  |\phi(\tau)^\perp| |y|^2 dy
& \leq C R(\tau) \Bigl( \int_{\R^2} (\phi^\perp(\tau) )^2 U^{-1} \Bigr)^{1/2}
\\
\nonumber
& \leq C R(\tau) \Bigl( \int_{\R^2} (g^\perp(\tau))^2 U \Bigr)^{1/2}
\\
\label{u-32}
&\leq C  f(\tau) R(\tau)^2
\Bigl(  \|h\|_{**}
+  \Bigl\| \frac{a}{R^2 f} \Bigr\|_{\infty,T_2}
+ |c_1| \frac{1}{f(\tau_0) R(\tau_0)^2}
\Bigr) .
\end{align}
Analogously,
\begin{align}
\nonumber
\left|
\int_{\R^2}   \phi^\perp (T_2) |y|^2  \chi_0\Bigl(\frac{y}{R(T_2)}\Bigr) dy
\right|
&
\leq C
f(T_2) R(T_2)^2
\Bigl(  \|h\|_{**}
+  \Bigl\| \frac{a}{R^2 f} \Bigr\|_{\infty,T_2}
+  |c_1|  \frac{1}{f(\tau_0) R(\tau_0)^2}
\Bigr)
\\
\label{u-32b}
&\leq C  f(\tau) R(\tau)^2
\Bigl(  \|h\|_{**}
+  \Bigl\| \frac{a}{R^2 f} \Bigr\|_{\infty,T_2}
+ |c_1| \frac{1}{f(\tau_0) R(\tau_0)^2}
\Bigr) .
\end{align}

Integrating by parts
\begin{align}
\nonumber
\left| \int_\tau^{T_2} \int_{\R^2} B[\phi(s)]  |y|^2  \chi_0(\frac{y}{R(s)}) dy ds\right|
& \leq
C \int_\tau^{T_2} \frac{1}{s \log s}   \int_{\R^2}  |\phi(y,s)|   |y|^2 \chi_0(\frac{y}{R(s)}) dy ds
\\
\label{u-39}
& \quad
+ C \int_\tau^{T_2} \frac{1}{s \log s }   \int_{\R^2}  |\phi(y,s)|   |y|^2 |\nabla \chi_0(\frac{y}{R(s)})| dy ds.
\end{align}
Let's estimate, using  \eqref{u-18b-0}
\begin{align}
\nonumber
& \int_\tau^{T_2} \frac{1}{s \log s}   \int_{\R^2}  |\phi(y,s)|   |y|^2 \chi_0(\frac{y}{R(s)}) dy ds
\\
\nonumber
&\quad  \leq
C\int_\tau^{T_2}
\frac{1}{s}     f(s) R(s)^2 ds
\Bigl( \frac{1}{R(\tau_0)}  \|h\|_{**}
+  \Bigl\| \frac{a}{R^2 f} \Bigr\|_{\infty,T_2}
+ |c_1|  \frac{1}{f(\tau_0) R(\tau_0)^2}
\Bigr)
\\
\nonumber
&\quad  \leq
C  f(\tau) R(\tau)^2
\Bigl( \frac{1}{R(\tau_0)}  \|h\|_{**}
+  \Bigl\| \frac{a}{R^2 f} \Bigr\|_{\infty,T_2}
+  |c_1|  \frac{1}{f(\tau_0) R(\tau_0)^2}
\Bigr).
\end{align}
The second term in \eqref{u-39} is even smaller, and we deduce that
\begin{align}
\nonumber
\left| \int_\tau^{T_2} \int_{\R^2} B[\phi(s)]  |y|^2  \chi_0(\frac{y}{R(s)}) dy ds\right|
&\leq C  f(\tau) R(\tau)^2
\Bigl( \frac{1}{R(\tau_0)}  \|h\|_{**}
+  \Bigl\| \frac{a}{R^2 f} \Bigr\|_{\infty,T_2}
\\
\label{u-eq38}
& \quad
+  |c_1|  \frac{1}{f(\tau_0) R(\tau_0)^2}
\Bigr).
\end{align}

From \eqref{u-18b-0} we also get
\begin{align}
\nonumber
& \left| \int_\tau^{T_2}  \frac{R'(s)}{R(s)} \int_{\R^2} \phi(s) |y|^2  \nabla \chi_0(\frac{y}{R(s)}) \cdot \frac{y}{R(s)}	dy ds\right|
\\
\nonumber
&\quad  \leq
C\int_\tau^{T_2}
\frac{R'(s)}{R(s)}    f(s) R(s)^2 ds
\Bigl( \frac{1}{R(\tau_0)}  \|h\|_{**}
+  \Bigl\| \frac{a}{R^2 f} \Bigr\|_{\infty,T_2}
+ |c_1|  \frac{1}{f(\tau_0) R(\tau_0)^2}
\Bigr)
\\
\label{u-43}
&\quad  \leq
C f(\tau) R(\tau)^2
\Bigl( \frac{1}{R(\tau_0)}  \|h\|_{**}
+ \Bigl\| \frac{a}{R^2 f} \Bigr\|_{\infty,T_2}
+  |c_1|  \frac{1}{f(\tau_0) R(\tau_0)^2}
\Bigr).
\end{align}

Next we look at
\begin{align}
\nonumber
\int_{\R^2} L[\phi ] |y|^2  \chi_0(\frac{y}{R}) dy
&=
- 2 \int_{\R^2} U \nabla g \cdot  y   \chi_0(\frac{y}{R}) dy
- \frac{1}{R}\int_{\R^2} U    |y|^2   \nabla g\cdot \nabla \chi_0(\frac{y}{R}) dy
\\
\nonumber
& =
2 \int_{\R^2} g Z_0  \chi_0(\frac{y}{R}) dy
+\frac{4}{R}\int_{\R^2} g U y \cdot\nabla  \chi_0(\frac{y}{R}) dy
\\
\label{abc20}
& \quad
+\frac{1}{R}\int_{\R^2} g  |y|^2 \nabla U  \cdot\nabla  \chi_0(\frac{y}{R}) dy
+\frac{1}{R^2}\int_{\R^2} g  |y|^2  U \Delta  \chi_0(\frac{y}{R}) dy .
\end{align}
We have $\int_{\R^2} g Z_0 = 0$ by Lemma~\ref{lemma-orthG} and therefore, using \eqref{u-18-0c}, we find that
\begin{align}
\nonumber
\left|  \int_{\R^2} g Z_0  \chi_0(\frac{y}{R(\tau)}) dy \right|
&\leq  \left| \int_{\R^2\setminus B_{R(\tau)}(0) } g Z_0  dy\right|
\\
\nonumber
&
\leq
f(\tau)
\Bigl( \frac{1}{R(\tau_0)}  \|h\|_{**}
+  \Bigl\| \frac{a}{R^2 f} \Bigr\|_{\infty}
+ |c_1|   \frac{1}{f(\tau_0) R(\tau_0)^2}
\Bigr) .
\end{align}
The remaining terms in \eqref{abc20} are estimated using  \eqref{u-40}  or \eqref{u-18-0c}
and we get
\begin{align*}
\left| \int_{\R^2} L[\phi ] |y|^2  \chi_0(\frac{y}{R}) dy \right|
\leq C f(\tau)
\Bigl(
\frac{1}{R(\tau_0)} \|h\|_{**}
+  \Bigl\| \frac{a}{R^2 f} \Bigr\|_{\infty,T_2}
+  |c_1| \frac{1}{f(\tau_0) R(\tau_0)^2}
\Bigr) .
\end{align*}
Therefore
\begin{align}
\nonumber
\left| \int_\tau^{T_2} \int_{\R^2} L[\phi(s)]  |y|^2  \chi_0(\frac{y}{R(s)}) dy ds\right|
& \leq C f(\tau) R(\tau)^2 (\log \tau)^q
\Bigl(
\frac{1}{R(\tau_0)} \|h\|_{**}
+  \Bigl\| \frac{a}{R^2 f} \Bigr\|_{\infty,T_2}
\\
\label{u.41}
&  \quad
+  |c_1| \frac{1}{f(\tau_0) R(\tau_0)^2}
\Bigr) .
\end{align}
Finally
\begin{align}
\label{u.42}
\left| \int_\tau^{T_2} \int_{\R^2} h |y|^2  \chi_0(\frac{y}{R(s)}) dy ds\right|
& \leq  C f(\tau) R(\tau)^2 (\log \tau)^q  \|h\|_{**} .
\end{align}

From \eqref{u-42}, \eqref{u-32}, \eqref{u-32b}, \eqref{u-eq38}, \eqref{u-43}, \eqref{u.41}, and \eqref{u.42} we get
\begin{align}
\label{xyz2}
|a(\tau) | \log \tau \leq C f(\tau) R(\tau)^2 (\log \tau)^q
\Bigl(
\|h\|_{**}
+ \Bigl\| \frac{a}{R^2 f} \Bigr\|_{\infty,T_2}
+  |c_1| \frac{1}{f(\tau_0) R(\tau_0)^2}
\Bigr) .
\end{align}
Assuming $\tau_0$ large, we deduce that
\begin{align}
\label{u-50}
\Bigl\| \frac{a}{R^2 f} \Bigr\|_{\infty,T_2} \leq \frac{C}{(\log \tau_0)^{1-q}}
\Bigl(
\|h\|_{**}
+ |c_1|  \frac{1}{f(\tau_0) R(\tau_0)^2}
\Bigr).
\end{align}
Note that $a(\tau_0)$ and $c_1$ are related. Indeed,
the initial condition is $\phi_0 = c_1 \tilde Z_0 = \phi_0^\perp + \frac{a(\tau_0)}{2} Z_0$ with
\begin{align*}
a(\tau_0) = \frac{c_1}{8\pi} \int_{\R^2} \tilde Z_0 \Gamma_0,
\end{align*}
by \eqref{formula-a}.
We note that $ \int_{\R^2} \tilde Z_0 \Gamma_0 = 16 \pi + O(\frac{\log\tau_0}{\tau_0})$.
So by \eqref{u-50}
\begin{align*}
|c_1|\leq C |a(\tau_0)| \leq C \frac{f(\tau_0)R(\tau_0)^2 }{(\log\tau_0)^{1-q}}\|h\|_{**}
+  C \frac{1 }{(\log\tau_0)^{1-q}}   |c_1|.
\end{align*}
For $\tau_0$ large, we deduce that
\begin{align}
\label{ineq-c1}
|c_1|\leq C |a(\tau_0)| \leq C \frac{f(\tau_0)R(\tau_0)^2 }{(\log\tau_0)^{1-q}}  \|h\|_{**}.
\end{align}
This proves \eqref{esta-c1-1}.
Replacing this in \eqref{u-50} we get
\begin{align}
\label{ineq-a}
\Bigl\| \frac{a}{R^2 f} \Bigr\|_{\infty,T_2} \leq \frac{C}{(\log \tau_0)^{1-q}}
\|h\|_{**} ,
\end{align}
which proves \eqref{est-a1}.
Combining \eqref{ineq.omega}, \eqref{ineq-c1} and \eqref{ineq-a} we obtain \eqref{est-omega1}.

Finally, we also obtain from \eqref{u-18b-0}
\begin{align}
\label{pointwise-phi}
|\phi(\rho,\tau)|\leq
C
\frac{f(\tau) R(\tau)^2}{(\log \tau_0)^{1-q}}
\frac{1}{1+\rho^4} \|h\|_{**}.
\end{align}
\end{proof}

\begin{proof}[Proof of Lemma~\ref{lemma.apriori2}]
The proof is a slight modification of the one of Lemma~\ref{lemma.apriori}.
Using the same notation as in that proof, integrating \eqref{xyz1} from $\tau$ to $T_2>\tau$ yields
\begin{align}
\nonumber
\int_{\R^2} \phi(T_2) |y|^2  \chi_0\Bigl(\frac{y}{R(T_2)}\Bigr) dy
& -
\int_{\R^2} \phi(\tau) |y|^2  \chi_0\Bigl(\frac{y}{R(\tau)}\Bigr) dy
\\
\nonumber
& = \int_\tau^{T_2} \int_{\R^2} (L[\phi(s)] + h) |y|^2  \chi_0(\frac{y}{R(s)}) dy ds
\\
\nonumber
& \quad
+ \int_\tau^{T_2} \int_{\R^2} B[\phi(s)]  |y|^2  \chi_0(\frac{y}{R}) dy ds
\\
\nonumber
& \quad
- \int_\tau^{T_2} \frac{R'(s)}{R(s)}\int_{\R^2} \phi(s) |y|^2  \nabla \chi_0(\frac{y}{R(s)}) \cdot \frac{y}{R(s)}	dy ds,
\end{align}
Similarly to \eqref{xyz2} we obtain
\begin{align}
\nonumber
|a(\tau) | \log \tau
&\leq C f(\tau) R(\tau)^2 (\log \tau)^q
\Bigl(
\|h\|_{**}
+ \Bigl\| \frac{a}{R^2 f} \Bigr\|_{\infty,T_2}
+  |c_1| \frac{1}{f(\tau_0) R(\tau_0)^2}
\Bigr)
\\
\label{u.51}
& \quad
+ C |a(T_2)| \log(T_2).
\end{align}
The assumption  $\frac{a}{f R^2} \in L^\infty(\tau_0,\infty)$ implies that
\begin{align*}
\lim_{\tau\to\infty} a(\tau) \log\tau=0.
\end{align*}
Letting $T_2\to\infty$ in \eqref{u.51} we obtain
\begin{align*}
|a(\tau) | \log \tau \leq C f(\tau) R(\tau)^2 (\log \tau)^q
\Bigl(
\|h\|_{**}
+ \Bigl\| \frac{a}{R^2 f} \Bigr\|_{L^\infty(\tau_0,\infty) }
+  |c_1| \frac{1}{f(\tau_0) R(\tau_0)^2}
\Bigr) .
\end{align*}
Then the same argument as in Lemma~\ref{lemma.apriori} gives the estimates for $a$, $\omega$ and $c_1$.
\end{proof}

\begin{proof}[Proof of Lemma~\ref{lemma.Z.nondegen}]
Assume to the contrary that there is some $T_2>\tau_0$ such that
\[
a_Z(T_2)=0 .
\]
Then by Lemma~\ref{lemma.apriori} $a_Z(\tau)=0$ for $\tau\in [\tau_0,T_2$]. But by \eqref{formula-a}
\begin{align*}
a(\tau_0) = \frac{1}{8\pi}\int_{\R^2}\Gamma_0 \tilde Z_0 = 2 + O(\frac{\log \tau_0}{\tau_0}) \not=0 ,
\end{align*}
which is a contradiction.
\end{proof}

\begin{proof}[Proof of Lemma~\ref{lemma.existence}.]

We let $T_n $ be a sequence such that $T_n\to \infty$ as $n\to \infty$.
Let $\bar\phi$ be the solution to \eqref{eq-linear-radial12} with initial condition equal to $0$. This solution exists but for the moment we don't have any control of its asymptotic behavior as $\tau\to\infty$.
Let $\bar\phi^\perp $, $\bar a(\tau)$ be the decomposition \eqref{decomp-phi-perp} of $\bar\phi$. Let $Z_B^\perp $, $a_Z(\tau)$ be the decomposition \eqref{decomp-phi-perp} of $Z_B$.
Using Lemma~\ref{lemma.Z.nondegen} there is $c_n \in \R$ such that
\begin{align*}
\bar a(T_n) + c_n a_Z(T_n) = 0.
\end{align*}
Let us define
\begin{align*}
\phi_n = \bar\phi + c_n Z_B,
\end{align*}
and let
\begin{align*}
\phi_n  = \phi_n^\perp  + \frac{a_n}{2}Z_0
\end{align*}
be the decomposition \eqref{decomp-phi-perp} of $\phi_n$.
Then by Lemma~\ref{lemma.apriori} we have
\begin{align}
\nonumber
|a_n(\tau)| &\leq C \frac{f(\tau) R(\tau)^2}{(\log\tau_0)^{1-q}}\|h\|_{**},
\quad \tau \in [\tau_0,T_n]
\\
\nonumber
|\omega_n(\tau)| &\leq C \frac{f(\tau) R(\tau)}{(\log\tau_0)^{1-q}}\|h\|_{**},
\quad \tau \in [\tau_0,T_n]
\\
\nonumber
|c_n| & \leq  C \frac{f(\tau_0)R(\tau_0)^2 }{(\log\tau_0)^{1-q}}  \|h\|_{**}.
\end{align}
Moreover, we also have the uniform estimate
\begin{align}
\nonumber
|\phi_n(\rho,\tau)|\leq
C
\frac{f(\tau) R(\tau)^2}{(\log \tau_0)^{1-q}}
\frac{1}{1+\rho^4} \|h\|_{**}
\end{align}
for $\tau \in [\tau_0,T_n]$ from \eqref{pointwise-phi}.

By using standard parabolic estimates,  passing to a subsequence we may assume that $c_n\to c_1$ and $\phi_n\to \phi$ locally uniformly in space-time, and that $\phi$ is a solution of \eqref{eq-linear-radial20} for some $c_1$ such that
\begin{align*}
|c_1| & \leq  C \frac{f(\tau_0)R(\tau_0)^2 }{(\log\tau_0)^{1-q}}  \|h\|_{**}.
\end{align*}
Moreover $\phi$ satisfies
\begin{align}
\nonumber
|\phi(\rho,\tau)|\leq
C
\frac{f(\tau) R(\tau)^2}{(\log \tau_0)^{1-q}}
\frac{1}{1+\rho^4} \|h\|_{**}
\end{align}
and writing the decomposition \eqref{decomp-phi-perp} as $\phi = \phi^\perp + \frac{a}{2}Z_0$ we have
\begin{align*}
|a(\tau)| &\leq C \frac{f(\tau) R(\tau)^2}{(\log\tau_0)^{1-q}}\|h\|_{**}.
\end{align*}
We also get
\begin{align*}
|\omega_n(\tau)| &\leq C \frac{f(\tau) R(\tau)}{(\log\tau_0)^{1-q}}\|h\|_{**},
\end{align*}
where $\omega$ is defined in \eqref{notation-omega}.

The uniqueness of $c_1$ is a consequence of Lemma~\ref{lemma.apriori2}.
\end{proof}

\begin{proof}[Proof of Proposition~\ref{prop-linear-with-energy}]
We have already constructed $\phi$ and $c_1$  in Lemma~\ref{lemma.existence},
we have the uniqueness of $\phi$ and the estimates for $a$ and $c_1$ in Lemma~\ref{lemma.apriori2}.

We only need to prove the estimate for $\phi^\perp$ stated in \eqref{estPhiPerp}.
By the construction of $\phi$ in Lemma~\ref{lemma.existence} and
\eqref{u-11-0b}, \eqref{ineq-c1} and \eqref{ineq-a}, we get
\begin{align}
\label{u-GperpL2}
\int_{\R^2} (g^\perp)^2 U
\leq
C  f(\tau)^2 R(\tau)^2
\|h\|_{**}^2 , \quad \tau >\tau_0.
\end{align}

We claim that from this inequality we have
\begin{align}
\nonumber
U |g^\perp (y,\tau)|
& \leq
C f(\tau) R(\tau)  \frac{1}{(1+|y|)^{2}} \|h\|_{**}.
 , \quad \tau >\tau_0.
\end{align}
The proof of this estimate is similar to that of \eqref{17-0-b1} in Lemma~\ref{lemma-pointwise2}.

Indeed, we define
\begin{align*}
g_0^\perp = U g^\perp
\end{align*}
and obtain the equation
\begin{align}
\nonumber
\partial_\tau g_0^\perp
&= \nabla \cdot \Bigl( U \nabla\Bigl(\frac{g_0^\perp}{U}\Bigr)\Bigr)
- U ( -\Delta)^{-1} \nabla \cdot \Bigl( U \nabla\Bigl(\frac{g_0^\perp}{U}\Bigr)\Bigr)
\\
\nonumber
& \quad + h - U(-\Delta)^{-1} h
\\
\nonumber
& \quad + B[g_0^\perp]-U(-\Delta)^{-1} B[g_0^\perp] + B[U \psi[g_0^\perp]-U(-\Delta)^{-1}  B[U \psi[g_0^\perp]
\\
\label{eqTildeGPerp}
& \quad + a'(\tau) U + \frac{a}{2}B[Z_0]-\frac{a}{2}U(-\Delta)^{-1} B[Z_0] .
\end{align}
Here the notation $\psi[g_0^\perp]$ is the one introduced in the proof of Lemma~\ref{lemma-pointwise1} in \eqref{eqLiouville}.

To get an estimate for the solution we need an estimate for $a'(\tau)$.
Since $g^\perp = g  + a$ and $\int_{\R^2} U g^\perp =0$ we have
\begin{align*}
a(\tau) =-\frac{1}{8\pi}   \int_{\R^2} U g(\tau) dy=-\frac{1}{8\pi}  \int_{\R^2} g_0(\tau) dy.
\end{align*}
But integrating \eqref{638} we find
\begin{align*}
\partial_\tau   \int_{\R^2} g_0(\tau) dy
&=
-  \int_{\R^2} U  (-\Delta)^{-1} \Bigl(
\nabla \cdot ( U \nabla \frac{g_0}{U}) \Bigr) d y
-\int_{\R^2} U(-\Delta)^{-1} h dy
\\
& \quad
- \int_{\R^2} U (-\Delta)^{-1} ( B [ g_0 + U \psi[g_0]] ) dy,
\end{align*}
which gives the expression
\begin{align*}
a'(\tau) &=
\frac{1}{8\pi} \int_{\R^2} U  (-\Delta)^{-1} \Bigl(
\nabla \cdot ( U \nabla \frac{g_0}{U}) \Bigr) dy
+ \frac{1}{8\pi}\int_{\R^2} U(-\Delta)^{-1} h dy
\\
& \quad
+ \frac{1}{8\pi}\int_{\R^2} U (-\Delta)^{-1} ( B [ g_0 + U \psi[g_0]] ) dy.
\end{align*}

We claim that
\begin{align}
\label{est-aprime}
|a'(\tau) |\leq  C    f(\tau) R(\tau)  \|h\|_{**}.
\end{align}
Indeed, we have
\begin{align*}
 \int_{\R^2} U  (-\Delta)^{-1} \Bigl(
\nabla \cdot ( U \nabla \frac{g_0}{U}) \Bigr) dy
&= \int_{\R^2} \Gamma_0 \nabla \cdot ( U \nabla g^\perp )  dy
=-\int_{\R^2} \nabla U  \cdot  \nabla g^\perp   dy
\\
&= \int_{\R^2} \Delta U g^\perp .
\end{align*}
Then, by \eqref{u-GperpL2}
\begin{align}
\nonumber
\left|
\int_{\R^2} U  (-\Delta)^{-1} \Bigl( \nabla \cdot ( U \nabla \frac{g_0}{U}) \Bigr) dy
\right|
& \leq C \Bigl(
\int_{\R^2} (g^\perp)^2 U \Bigr)^{1/2}
\\
\nonumber
& \leq  C  f(\tau) R(\tau)  \|h\|_{**}.
\end{align}
We also have, for the case of the operator \eqref{B1},
\begin{align*}
\int_{\R^2}
U (-\Delta)^{-1}( B[g_0 ]) \, dy
&= \int_{\R^2} \Gamma_0 B[g_0 ]
=  \zeta(\tau) \int_{\R^2} \Gamma_0 \nabla \cdot (y g_0) dy
\\
& = - \zeta(\tau) \int_{\R^2} \nabla \Gamma_0 \cdot y U g  dy
\end{align*}
But by construction and   \eqref{u-40}, \eqref{ineq-c1} and \eqref{ineq-a}, we get
\begin{align}
\label{u.50}
\Bigl( \int_{\R^2} g^2 U  \Bigr)^{1/2}
\leq  \frac{C}{(\log \tau_0)^{1-q}}
f(\tau) R(\tau)^2 \|h\|_{**}.
\end{align}
so, using \eqref{u.50}
\begin{align*}
\left| \int_{\R^2}
U (-\Delta)^{-1}( B[g_0 ]) \, dy \right|
& \leq \frac{C}{\tau \log \tau} \Bigr( \int_{\R^2} U g^2\Bigl)^{1/2}
\\
& \leq  \frac{C}{\tau \log \tau}  \frac{1}{(\log \tau_0)^{1-q}}
f(\tau) R(\tau)^2 \|h\|_{**}
\\
& \leq C f(\tau)  \|h\|_{**}
\\
& \leq C f(\tau) R(\tau) \|h\|_{**}.
\end{align*}
The last term is estimated similarly and we get \eqref{est-aprime}.

Repeating the argument in of Lemma~\ref{lemma-pointwise1} we obtain from \eqref{u-GperpL2}
\begin{align}
\nonumber
|g_0^\perp  (y,\tau)|
\leq C
f(\tau) R(\tau)\|h\|_{**} \frac{1}{(1+|y|)^{2}} .
\end{align}

An argument similar to Lemma~\ref{lemma-from-g-to-phi} gives
\begin{align*}
|\phi^\perp(\rho,\tau)|\leq
C f(\tau) R(\tau)
\frac{1}{(1+|y|)^2}  \|h\|_{**}.
\end{align*}
\end{proof}

We have an estimate for $\phi^\perp$ stronger than \eqref{estPhiPerp} under a stricter assumption on $\nu$.

\begin{lemma}
\label{lemma-estPhiPerpv2}
Let us assume that $1<\nu<\frac{3}{2}$.
Under the same assumption of Proposition~\ref{prop-linear-with-energy}
let  $\phi = \phi^\perp + \frac{a}{2}Z_0$ be the solution of  \eqref{eq-linear-radial20}.
Then
\begin{align}
\nonumber
|\phi^\perp(y,\tau) |
&\leq
C R(\tau)f(\tau)\|h\|_{**}
\begin{cases}
\frac{1}{(1+|y|)^{2}} & |y|\leq \sqrt {\tau}
\medskip
\\
\frac{\tau }{|y|^4} & |y|\geq \sqrt {\tau},
\end{cases}
\end{align}
\end{lemma}
\begin{proof}
We write \eqref{eqTildeGPerp} as
\begin{align}
\label{eqTildeGPerp2}
\partial_\tau
g_0^\perp
&=
\Delta g_0 ^\perp - \nabla g_0^\perp  \cdot \nabla \Gamma_0
+ 2 U g_0^\perp+ B[g_0^\perp ]  + \tilde h_1
\end{align}
where
\begin{align*}
\tilde h_1 &=
- U (-\Delta)^{-1}( \nabla \cdot ( g_0^\perp \nabla\Gamma_0  ) )
\\
& \quad-U(-\Delta)^{-1} B[g_0^\perp] + B[U \psi[g_0^\perp]]-U(-\Delta)^{-1}  B[U \psi[g_0^\perp]]
\\
& \quad + a'(\tau) U + \frac{a}{2}B[Z_0]-\frac{a}{2}U(-\Delta)^{-1} B[Z_0]
\\
& \quad + h - U (-\Delta)^{-1} h.
\end{align*}
Then, similarly to \eqref{estTildeH1}, we have
\begin{align}
\nonumber
|\tilde h_1(y,\tau)|
\leq
C   f(\tau) R(\tau) \|h\|_{**} \frac{1}{(1+|y|)^4}  .
\end{align}
Let
\begin{align*}
\bar g^\perp(\rho,\tau)
&=
f(\tau) R(\tau)
\tilde g_3(\rho)
\chi_0\Bigl(\frac{\rho}{\delta\sqrt \tau}\Bigr)
+A_1 \frac{f(\tau) R(\tau) }{\tau }
\frac{1}{(1+\rho/\sqrt \tau)^{4}}
+A_2 \frac{f(\tau) R(\tau) }{\tau }e^{-\frac{\rho^2}{4\tau} }
\end{align*}
where $-\Delta_6 \tilde g_3 = \frac{1}{1+\rho^4}$ with $\tilde g_2(\rho)\to0$ as $\rho\to \infty$.
If $\nu<\frac{3}{2}$, for appropriate positive constants $\delta$, $A_1$, $A_2$, and $C$, the function $C \|h\|_{**} \bar g^\perp$  is supersolution to \eqref{eqTildeGPerp2} in $\{ (y,\tau) | \tau > \tau_0, \ |y|> R_0 \}$.
We deduce that
\begin{align}
\nonumber
|g_0^\perp (y,\tau)|
\leq
C   f(\tau) R(\tau) \|h\|_{**}
\frac{\min(1,\frac{\tau}{|y|^2})}{(1+|y|)^{2}} .
\end{align}

An argument similar to Lemma~\ref{lemma-from-g-to-phi} gives
\begin{align*}
|\phi^\perp(\rho,\tau)|\leq
C f(\tau) R(\tau) \|h\|_{**}
\begin{cases}
\frac{1}{1+|y|^2} & |y |\leq \sqrt\tau\\
\frac{\tau}{|y|^4}& |y |\geq  \sqrt\tau .
\end{cases}
\end{align*}
\end{proof}

\begin{proof}[Proof of Proposition~\ref{prop-linear-without-second-moment}]
\label{proof-linear-without-second-moment}
By Proposition~\ref{prop-linear-with-energy} there is $c_1$ such that the solution $\phi$ to
\eqref{eq-linear-radial20} has the properties stated in  Proposition~\ref{prop-linear-with-energy}.
We recall that by \eqref{pointwise-phi} $\phi$ satisfies
\begin{align}
\label{est-phix}
|\phi(\rho,\tau)|\leq
C
\frac{f(\tau) R(\tau)^2}{(\log \tau_0)^{1-q}}
\frac{1}{1+\rho^4} \|h\|_{**} .
\end{align}

We will construct a barrier to estimate $\phi$ for $|y|\geq R_0$, where $R_0$ is a large constant.
We consider the equation \eqref{eq-linear-radial20} in $\R^2 \setminus B_{R_0}(0)$ written in the form
\begin{align}
\label{eqLinear-000}
\partial_\tau \phi = \Delta \phi - 4 \nabla \Gamma_0 \nabla \phi  + 2 U \phi + B[\phi] + \bar h ,
\end{align}
where
\[
\bar h = - \nabla U \nabla \psi + h.
\]

Since $\psi = ( -\Delta)^{-1}\phi$, from \eqref{est-phix} we get
\begin{align}
\nonumber
| \nabla \psi(\rho,\tau)|\leq
C
\frac{f(\tau) R(\tau)^2}{(\log \tau_0)^{1-q}}
\frac{1}{1+\rho^3} \|h\|_{**} .
\end{align}
This gives
\begin{align}
\label{estGradPsi2}
|\nabla U \cdot \nabla \psi |
\leq
C  \frac{f(\tau) R(\tau)^2}{(\log \tau_0)^{1-q}}
\frac{1}{1+\rho^8} \|h\|_{**} .
\end{align}
By \eqref{estGradPsi2} and the definition of the norm $ \|h\|_{**}$,
\[
|\bar h(y,\tau) |\leq
C \frac{f(\tau) R(\tau)^2}{(\log\tau_0)^{1-q}} \frac{1}{(1+\rho)^{6+ \sigma}} \min\Bigl( 1 , \frac{\tau^{\epsilon/2}}{\rho^\epsilon}\Bigr)\|h\|_{**} ,
\]
where we have used that $\sigma+\epsilon<2$.
Let $\tilde g_2$ be defined by \eqref{def-tildeg2} and let
\begin{align*}
\bar\phi (\rho,\tau)
&=
f(\tau) R(\tau)^2
\tilde g_2(\rho)
\chi_0\Bigl(\frac{\rho}{\delta\sqrt \tau}\Bigr)
+A_1
\frac{f(\tau) R(\tau)^2 }{\tau^2 }
\frac{1}{(1+\rho/\sqrt \tau)^{6+\sigma+\epsilon}}
\\
& \quad
+A_2
\frac{f(\tau) R(\tau)^2 }{\tau^2 }e^{-\frac{\rho^2}{4\tau} } .
\end{align*}
Then for suitable positive constants $\delta$, $A_1$, $A_2$, and $C$, the function $C (\log\tau_0)^{q-1} \|h\|_{**} \bar \phi $ is a supersolution to \eqref{eqLinear-000} in $\{ (y,\tau) | \tau > \tau_0, \ |y|> R_0 \}$.
For this we need $\nu<2$.
Moreover
$|\phi(\rho,\tau)| \leq C \bar \phi(\rho,\tau) (\log\tau_0)^{q-1} \|h\|_{**}$ at $\rho = R_0$ by \eqref{est-phix}.
By the maximum principle
\begin{align*}
|\phi(y,\tau) |\leq C \bar \phi(y,\tau) (\log\tau_0)^{q-1} \|h\|_{**}, \quad |y|>R_0.
\end{align*}
This gives the explicit bound
\begin{align*}
|\phi(\rho,\tau) | \leq C \frac{f(\tau) R(\tau)^2}{(\log\tau_0)^{1-q}}
\frac{1}{(1+\rho^4) } \min\Bigl( 1 , \frac{\tau^{1/2}}{\rho}\Bigr)^{2+\sigma+\epsilon}
\|h\|_{**}
\end{align*}
\end{proof}

We include here some results  that will be useful later.
Let
\begin{align}
\nonumber
\hat Z_0 = L[ \tilde Z_0].
\end{align}

\begin{lemma}
\label{lemma-hatZ0}
The function $\hat Z_0$ satisfies
\begin{align}
\label{est-hZ0}
|\hat Z_0(\rho)| \leq C \frac{1}{\tau_0 ( 1 + \rho)^4}
\end{align}
and is supported on $\rho \leq 2 \tau_0$.
\end{lemma}
\begin{proof}
Let $ \psi = (-\Delta)^{-1} \tilde Z_0$ and  $g = \frac{\tilde Z_0}{U} - \psi$. By \eqref{def-tilde-Z0} and using that $Z_0 = U z_0$, $z_0$ defined in \eqref{defZLiouville},
\begin{align*}
g &= \frac{(Z_0-m_{Z_0} U ) \chi}{U} - \psi
= z_0 \chi - m_{Z_0} \chi - \psi ,
\end{align*}
where $\chi(\rho) = \chi_0(\frac{\rho}{\sqrt{\tau_0}})$.
Note that $\tilde Z_0$ has mass zero and support in $B_{2\sqrt{\tau_0}}$. It follows that $\psi$ has also support contained in $B_{2\sqrt{\tau_0}}$ and then $g$ has support contained in $B_{2\sqrt{\tau_0}}$.
Therefore $\hat Z_0 = L[\tilde Z_0] = \nabla\cdot( U \nabla g)$ has also support contained in $B_{2\sqrt{\tau_0}}$.

To get an estimate for $\hat Z_0$ let us write
\begin{align*}
\psi &= (-\Delta)^{-1} (Z_0-m_{Z_0} U ) \chi) = (-\Delta)^{-1} Z_0 + \psi_1,
\end{align*}
where
\begin{align*}
\psi_1 =  (-\Delta)^{-1} (Z_0(\chi-1)-m_{Z_0} U  \chi).
\end{align*}
Since $\Delta z_0 + U z_0 = 0$ and $\lim_{\rho\to\infty} z_0(\rho) = -2$ we have
$(-\Delta)^{-1} Z_0 = z_0+2$. So
\begin{align*}
\psi = z_0 + 2 + \psi_1
\end{align*}
Hence
\begin{align*}
g = z_0 ( \chi -1 ) - 2 - m_{Z_0} \chi - \psi_1
\end{align*}
and so
\begin{align}
\nonumber
\hat Z_0 &= L[\tilde Z_0] = \nabla\cdot( U \nabla g)
\\
\label{formula-hZ0}
&= \nabla\cdot( U ( \nabla z_0(\chi-1) + z_0 \nabla \chi - m_{Z_0} \nabla \chi - \nabla \psi_1)).
\end{align}
Using radial symmetry and $m_{Z_0} = O ( \frac{1}{\tau_0})$ we get
\begin{align*}
|\nabla \psi_1(\rho) | \leq C \frac{1}{\tau_0 (1+\rho)}.
\end{align*}
From this and \eqref{formula-hZ0} we get \eqref{est-hZ0}.
\end{proof}

Consider the initial value problem
\begin{align}
\label{linear-004}
\left\{
\begin{aligned}
\partial_\tau \phi_1 &= L[\phi_1] + B[\phi_1]  \quad \text{in }\R^2 \times (\tau_0,\infty),
\\
\phi_1(\cdot,\tau_0) &= \hat Z_0 \quad \text{in }\R^2 .
\end{aligned}
\right.
\end{align}

\begin{lemma}
\label{lemma-est-phi1}
Let $0<\gamma<2$.
Let $1<\nu_0<\frac{7}{4}$
\begin{align}
\nonumber
f_0(\tau) = \frac{1}{\tau^{\nu_0}} .
\end{align}
and let $R(\tau) $ be as in \eqref{def-R}.
Then the solution $\phi_1$ of \eqref{linear-004} satisfies
\begin{align}
\nonumber
| \phi_1(\rho,\tau) | \leq C  \frac{f_0(\tau) R(\tau)^2}{\tau_0 f_0(\tau_0) R(\tau_0)^2}
\frac{1}{(1+\rho^4) } \min\Bigl( 1 , \frac{\tau^{1/2}}{\rho}\Bigr)^{2+\gamma} .
\end{align}
\end{lemma}
\begin{proof}
A suitable modification in the proof of Proposition~\ref{prop-linear-with-energy} gives the following result. Consider
\begin{align}
\label{linear-005}
\left\{
\begin{aligned}
\partial_\tau \phi &= L[\phi] + B[\phi]  \quad \text{in }\R^2 \times (\tau_0,\infty),
\\
\phi(\cdot,t_0) &= \phi_0 + c_1 \tilde Z_0 \quad \text{in }\R^2 ,
\end{aligned}
\right.
\end{align}

Then there is $C>0$ such that for any $\tau_0$ sufficiently large the following holds. Suppose that $\phi_0$ is a radial function with zero mass in $\R^2$, supported in $B_{2\sqrt{\tau_0}}(0)$, and such that
\begin{align*}
|\phi_0(\rho)| \leq M \frac{1}{1+\rho^4}.
\end{align*}
Then there exists $c_1$ such that the solution $\phi$ of \eqref{linear-005} satisfies
\begin{align*}
|\phi(\rho,\tau) | \leq C M \frac{f_0(\tau) R(\tau)^2}{f_0(\tau_0) R(\tau_0)^2}
\frac{1}{(1+\rho^4) } \min\Bigl( 1 , \frac{\tau^{1/2}}{\rho}\Bigr)^{2+\gamma} .
\end{align*}
Moreover $c_1$ is a linear function of $\phi_0$ and satisfies
\begin{align*}
|c_1| & \leq  C M \frac{1 }{(\log\tau_0)^{1-q}}  .
\end{align*}

Let us apply this statement to $\phi_0 = L[\tilde Z_0]$, which is radial, with mass zero, support in $B_{2\sqrt{\tau_0}}(0)$, and satisfies
\begin{align*}
|\phi_0(\rho)|\leq \frac{1}{\tau_0} \frac{1}{1+\rho^4} ,
\end{align*}
by Lemma~\ref{lemma-hatZ0}.
Then there exists $c_1$ such that the solution $\tilde \phi$ to \eqref{linear-005} with $\phi_0 = L[\tilde Z_0]$ satisfies
\begin{align}
\label{bound-phi2}
|\tilde\phi(\rho,\tau) | \leq C  \frac{f_0(\tau) R(\tau)^2}{\tau_0 f_0(\tau_0) R(\tau_0)^2}
\frac{1}{(1+\rho^4) } \min\Bigl( 1 , \frac{\tau^{1/2}}{\rho}\Bigr)^{2+\gamma} .
\end{align}

We claim that $c_1=0$. To prove this, we multiply \eqref{linear-005} by $|y|^2$ and integrate on $\R^2 \times (\tau_0,\infty)$.
Let's work with
\begin{align*}
B[\phi] = \zeta(\tau) \nabla \cdot (y \phi) .
\end{align*}
The case of the operator \eqref{B2} is similar.
Then we get
\begin{align*}
\partial_\tau \int_{\R^2} \tilde\phi(y,\tau)|y|^2d y
= -2 \zeta(\tau)\int_{\R^2} \tilde\phi(y,\tau)|y|^2d y  ,
\end{align*}
because $\int_{\R^2} L[\phi]|y|^2dy=0$, see Remark~\ref{remark-secondm-Lphi}.
Integrating
\begin{align*}
\int_{\R^2} \tilde\phi(y,\tau)|y|^2d y
& = e^{-2\int_{\tau_0}^\tau \zeta }
\int_{\R^2} \tilde\phi(y,\tau_0)|y|^2d y
= c_1 e^{-2\int_{\tau_0}^\tau \zeta }
\int_{\R^2} \tilde Z_0 (y)|y|^2 dy,
\end{align*}
because $\int_{\R^2} L[\tilde Z_0] |y|^2 dy = 0$.
Using the asymptotic expansion of $\zeta$ one gets
\begin{align*}
e^{-2\int_{\tau_0}^\tau \zeta }\to \infty, \quad \text{as }\tau\to \infty .
\end{align*}
But the bound \eqref{bound-phi2} implies that
\begin{align*}
\lim_{\tau\to\infty} \int_{\R^2} \tilde\phi(y,\tau)|y|^2d y = 0.
\end{align*}
This only can happen if $c_1=0$.

We deduce that $\phi_1$ defined in \eqref{linear-004} coincides with $\tilde \phi$, and then \eqref{bound-phi2} holds for $\phi_1$.
\end{proof}


\section{Linear estimate with second moment (radial)}
\label{sect-theorem-linear-with-second-moment}

We will prove in this section Proposition~\ref{prop-linear-with-second-moment} in the radial case $h(\rho,\tau)$.


\begin{prop}
\label{prop-radial-linear-with-second-moment}
Let $0<\sigma<1$, $\epsilon>0$ with $\sigma+\epsilon<2$ and  $1<\nu< \min( 1+\frac{\epsilon}{2},3-\frac{\sigma}{2}, \frac{5}{4})$.
Let $0<q<1$.
Then there is $C$ such that for \ch{$\tau_0$} large the following holds.
Suppose that  $h$ satisfies  $\|h\|_{\nu,m,6+\sigma,\epsilon}<\infty$ and
\begin{align}
\nonumber
\int_{\R^2} h(y,\tau)dy&=0 , \quad
\int_{\R^2} h(y,\tau)|y|^2dy=0.
\end{align}
Then the solution $\phi(y,\tau)$ of problem  \ch{\eqref{linear-002}}
satisfies
\begin{align}
\nonumber
\|  \phi \|_{\nu-\frac 12 ,m+\frac {q}{2},4,2+\sigma+\epsilon} \leq C \|h\|_{\nu,m,6+\sigma,\epsilon}.
\end{align}
\end{prop}

To describe the idea of the proof more easily let us consider for a moment the equation \eqref{linear-002} without $B$:
\begin{align}
\label{linear1-noB}
\left\{
\begin{aligned}
\partial_\tau \phi &= L[\phi] +  h(y,t)  \quad \text{in }\R^2 \times (\tau_0,\infty)
\\
\phi(\cdot,\tau_0) &= 0 \quad \text{in }\R^2 ,
\end{aligned}
\right.
\end{align}
The idea is to formally apply a suitable left inverse $L^{-1}$ of $L$ to \eqref{linear1-noB} (to be defined later on in Lemma~\ref{lemmaLinv}).
If we call $\Phi = L^{-1} \phi$, $H = L^{-1}h$, then we would like to solve
\begin{align}
\label{linear-Phi-noB}
\left\{
\begin{aligned}
\partial_\tau \Phi &= L[\Phi] +  H(y,t)  \quad \text{in }\R^2 \times (\tau_0,\infty)
\\
\Phi(\cdot,\tau_0) &= 0 \quad \text{in }\R^2 .
\end{aligned}
\right.
\end{align}
In order to get good properties of $H$, in this step we have already used that $h$ satisfies the second moment condition. At this point we would like to apply Proposition~\ref{prop-linear-with-energy}, which gives a decomposition
\begin{align*}
\Phi = \Phi^\perp + \frac{a(\tau)}{2}Z_0.
\end{align*}
Note that $\Phi^\perp$ decays in time like $1/\tau^{\nu-1/2}$ and so $\phi = L \Phi$ also decays in time like $1/\tau^{\nu-1/2}$, which is better than the estimate provided by Proposition~\ref{prop-linear-without-second-moment}.
It turns out that $H$ decays in space like $1/\rho^{4+\sigma}$ so we can't apply directly Proposition~\ref{prop-linear-with-energy} to \eqref{linear-Phi-noB}. What we do is \emph{concentrate} $H$ by solving first a nicer problem. We write $\Phi = \Phi_1 + \Phi_2$ where $\Phi_1$ is asked to solve
\begin{align}
\nonumber
\left\{
\begin{aligned}
\partial_\tau \Phi_1 &= L_0[\Phi_1] +  H(y,t)  \quad \text{in }\R^2 \times (\tau_0,\infty)
\\
\Phi_1(\cdot,\tau_0) &= 0 \quad \text{in }\R^2 .
\end{aligned}
\right.
\end{align}
where
\begin{align}
\label{def-L0}
L_0[\phi] = \nabla \cdot \Bigl( U \nabla \Bigl(\frac{\phi}{U} \Bigr) \Bigr)
= \Delta \phi - \nabla \phi \cdot \nabla \Gamma_0 + U \phi  .
\end{align}
Lemma~\ref{lemma-error-concentration} below deals with $\Phi_1$.
Then the problem for $\Phi_2$ becomes
\begin{align}
\nonumber
\left\{
\begin{aligned}
\partial_\tau \Phi_2 &= L_0[\Phi_2] +  L[\Phi_1]-L_0[\Phi_1]  \quad \text{in }\R^2 \times (\tau_0,\infty)
\\
\Phi_1(\cdot,\tau_0) &= 0 \quad \text{in }\R^2 .
\end{aligned}
\right.
\end{align}
It turns out that the right hand side in this equation has better spatial decay and we can apply Proposition~\ref{prop-linear-with-energy}.

\medskip

In the next lemmas we give some preliminary results, and the proof of Proposition~\ref{prop-radial-linear-with-second-moment} is given at the end of this section.

We define the inverse of $L$ that we use.
For $h:\R^2\to \R$ define $\|h\|_{\tau,6+\sigma,\epsilon}$
\ch{as the smallest $K$ such that}
\begin{align}
\nonumber
|h(y)|  \leq
\frac{  K  }{(1+|y|)^{6+\sigma}}
\begin{cases}
1
& |y|\leq \sqrt{\tau}
\\
\frac{\tau^{\epsilon/2}}{|y|^\epsilon}
& |y|\geq \sqrt{\tau}  .
\end{cases}
\end{align}
which depends on $\tau$, treated as parameter here, $\sigma$, and $\varepsilon$.

\begin{lemma}
\label{lemmaLinv}
Let $\sigma,\epsilon>0$.
Let $h=h(\rho)$ be radial and satisfy
$\|h\|_{\tau,6+\sigma,\epsilon}<\infty$ and
\begin{align*}
\int_{\R^2} h dy  = \int_{\R^2} h|y|^2 dy = 0.
\end{align*}
Then there exists $H$ radially symmetric such that $L[H] = h$ in $\R^2$ and satisfies
\begin{align}
\label{bound-H2}
\| H \|_{\tau,4+\sigma,\epsilon}
\leq
C
\|h\|_{\tau,6+\sigma,\epsilon}
\end{align}
Moreover, $H$ defines a linear operator of $h$ and satisfies
\begin{align}
\label{mass0H}
\int_{\R^2} H d y = 0.
\end{align}
\end{lemma}
\begin{proof}
Write the equation $L[H] = h$ as
\[
\nabla \cdot ( U \nabla g ) = h
\]
where $g = \frac{H}{U}- (-\Delta)^{-1} H$.
We choose $g$ as
\[
g(\rho) = -\int_\rho^\infty \frac{1}{r U(r)} \int_0^r h(s) s ds dr.
\]
Using that $\int_{\R^2} h = 0$ we check that
\[
|g(\rho) | \leq
C\|h\|_{\tau,6+\sigma,\epsilon}
\begin{cases}
\frac{1}{(1+\rho)^\sigma}  & \rho \leq \sqrt \tau
\\
\frac{ \tau^{\epsilon/2}}{\rho^{\sigma+\epsilon}}  & \rho \geq \sqrt \tau
\end{cases}
\]
Now we solve Liouville's equation
\[
-\Delta \psi - U \psi = U g \quad \text{in }\R^2, \quad\psi(\rho)\to 0 \quad \text{as }\rho\to\infty,
\]
Since $\int_{\R^2} h |y|^2 dy=0$ we check that
\[
\int_{\R^2} \ch{ g Z_0} dy = 0.
\]
Then we can use the variations of parameter formula, and get
\[
|\psi(\rho)|\leq
C\|h\|_{\tau,6+\sigma,\epsilon}
\begin{cases}
\frac{1}{(1+\rho)^{2+\sigma}} & \rho \leq \sqrt \tau
\\
\frac{\tau^{\epsilon/2}}{\rho^{2+\sigma+\varepsilon}} & \rho \geq \sqrt \tau
\end{cases}
\]
Then define $H = U ( g + \psi)$, which is the desired solution, and note that it satisfies \eqref{bound-H2}.
Property \eqref{mass0H} follows from $H = - \Delta \psi$ and the decay of $\psi$.
\end{proof}


To take into account the operator $B$ we
define
\begin{align*}
\Lambda [ \phi ] = y \cdot \nabla \phi,
\end{align*}
and compute
\begin{align}
\label{commutator1}
\Lambda \circ L [\Phi] - L \circ \Lambda [\Phi]
&=
\nabla \cdot ( \Phi \ch{U} y )
-2 L[\Phi]
- \nabla\cdot( (y \cdot\nabla \ch{U}+  2 \ch{U}) \nabla (-\Delta)^{-1}\Phi).
\end{align}
Indeed, write $\Psi = (-\Delta)^{-1} \Phi$. Then
\begin{align}
\label{opL}
L \Phi = \Delta \Phi - \nabla \Gamma_0 \cdot \nabla \Phi - \nabla U \cdot \nabla \Psi + 2 U \Phi.
\end{align}
By direct computation
\begin{align}
\label{LDelta}
\Lambda \Delta \Phi
&= \Delta \Lambda \Phi - 2 \Delta \Phi
\\
\label{LambdaNGNPhi}
\Lambda(  \nabla \Gamma_0 \cdot \nabla \Phi  )
&=
\nabla (\Lambda \Gamma_0) \cdot \nabla \Phi +  \nabla \Gamma_0 \cdot \nabla ( \Lambda \Phi ) - 2   \nabla \Gamma_0 \cdot \nabla \Phi
\\
\label{LambdaNUNPsi}
\Lambda(  \nabla U \cdot \nabla \Psi )
&=
\nabla (\Lambda U) \cdot \nabla \Psi +  \nabla U \cdot \nabla ( \Lambda \Psi ) - 2   \nabla U \cdot \nabla \Psi .
\end{align}
But $-\Delta \Psi = \Phi$ and therefore
\begin{align*}
- \Delta( \Lambda \Psi) + 2 \Delta \Psi = \Lambda \Phi.
\end{align*}
Applying $(-\Delta)^{-1}$ gives
\begin{align*}
\Lambda \Psi = ( -\Delta)^{-1}( \Lambda \Phi) + 2 \Psi.
\end{align*}
Substituting this into \eqref{LambdaNUNPsi} we obtain
\begin{align}
\nonumber
\Lambda(  \nabla U \cdot \nabla \Psi )
&=
\nabla (\Lambda U) \cdot \nabla \Psi +  \nabla U \cdot \nabla [ ( -\Delta)^{-1}( \Lambda \Phi) + 2 \Psi] - 2   \nabla U \cdot \nabla \Psi
\\
\label{LambdaNUNPsi2}
&=
\nabla (\Lambda U) \cdot \nabla \Psi +  \nabla U \cdot \nabla [ ( -\Delta)^{-1}( \Lambda \Phi) ].
\end{align}
Combining \eqref{opL}, \eqref{LDelta}, \eqref{LambdaNGNPhi}, \eqref{LambdaNUNPsi2} we find that
\begin{align*}
\Lambda L \Phi = L \Lambda \Phi - 2 L \Phi + 4 U \Phi - 2\nabla U  \cdot\nabla \Psi - \nabla (\Lambda \Gamma_0)\cdot \nabla \Phi - \nabla (\Lambda U) \cdot \nabla \Psi + 2 \Lambda(U) \Phi.
\end{align*}
But
\begin{align*}
- 2\nabla U  \cdot\nabla \Psi- \nabla (\Lambda U) \cdot \nabla \Psi
&= - \nabla Z_0 \cdot \nabla \Psi
\\
&= - \nabla \cdot (Z_0 \nabla \Psi) - Z_0 \Phi ,
\end{align*}
so that
\begin{align*}
\Lambda L \Phi = L \Lambda \Phi - 2 L \Phi + 4 U \Phi  - \nabla (\Lambda \Gamma_0)\cdot \nabla \Phi  + 2 \Lambda(U) \Phi  - \nabla \cdot (Z_0 \nabla \Psi) - Z_0 \Phi .
\end{align*}
Using that
\begin{align*}
2 \Lambda(U) \Phi - Z_0 \Phi
&= -2 U \Phi + \Lambda(U) \Phi
\end{align*}
we then obtain
\begin{align*}
\Lambda L \Phi = L \Lambda \Phi - 2 L \Phi + 2 U \Phi  - \nabla (\Lambda \Gamma_0)\cdot \nabla \Phi  +  \Lambda(U) \Phi  - \nabla \cdot (Z_0 \nabla \Psi)
\end{align*}
Let's consider the terms $2 U \Phi  - \nabla (\Lambda \Gamma_0)\cdot \nabla \Phi  +  \Lambda(U) \Phi$.
Noting that $ \nabla (\Lambda \Gamma_0) = \nabla ( y \cdot \nabla \Gamma+2) = \nabla z_0$ and that $Z_0 = 2 U + \Lambda(U)$,
we can write
\begin{align*}
2 U \Phi  - \nabla (\Lambda \Gamma_0)\cdot \nabla \Phi  +  \Lambda(U) \Phi
&= 2 U \Phi  - \nabla z_0 \cdot \nabla \Phi  +  \Lambda(U) \Phi
\\
&= Z_0  \Phi  - \nabla \cdot ( \nabla z_0  \Phi )  + \Delta z_0 \Phi .
\end{align*}
But $\Delta z_0 + Z_0=0$, so
\begin{align*}
\Lambda L \Phi = L \Lambda \Phi - 2 L \Phi  - \nabla \cdot ( \nabla z_0  \Phi )  - \nabla \cdot (Z_0 \nabla \Psi) .
\end{align*}
We can again write $\nabla z_0 = \nabla ( y \cdot \nabla \Gamma_0)$ and using the radial symmetry of the functions $\Gamma_0$, $z_0$ and the notation $\rho = |y|$
\begin{align*}
\nabla z_0 &= \frac{y}{\rho} \partial_\rho z_0
= \frac{y}{\rho} \partial_\rho ( \rho \partial_\rho \Gamma_0)
= y \Delta \Gamma_0 = - y U.
\end{align*}
Then
\begin{align*}
\Lambda L \Phi = L \Lambda \Phi - 2 L \Phi  + \nabla \cdot ( y U  \Phi )  - \nabla \cdot (Z_0 \nabla \Psi) .
\end{align*}
This proves \eqref{commutator1}.

Formula \eqref{commutator1} leads us to consider the following equation for $\Phi = L^{-1}[\phi]$:
\begin{align}
\label{H4plusNO0}
\left\{
\begin{aligned}
\partial_\tau \Phi
&= L [\Phi ]
+ \tilde B[\Phi]
+ \zeta_1(\tau) A[\Phi]+ H
\quad \text{in }\R^2 \times \ch{(}\tau_0,\infty)
\\
\Phi(\cdot,\tau_0)&=0
\end{aligned}
\right.
\end{align}
where
\begin{align}
\nonumber
A[\Phi] =
L^{-1}[ \nabla \cdot ( \Phi \ch{U} y )  - \nabla\cdot( Z_0 \nabla (-\Delta)^{-1}\Phi) ]  ,
\end{align}
$\ch{Z_0(y)=  2 U(y) + y \cdot \nabla_y U(y)}$,
and $\tilde B$ has the same form as $B$:
\[
\tilde B[\Phi] = \tilde \zeta_1(\tau) y \cdot \nabla \Phi + \tilde \zeta_2(\tau) \Phi
\]
with $\tilde \zeta_1(\tau)$, $\tilde \zeta_2(\tau)$ satisfying
\begin{align}
\label{tildeZetai}
|\tilde \zeta_i(\tau)| \le   \frac C {\tau \log \tau  }\quad\text{for all } \tau>\tau_0.
\end{align}
and $\zeta_1$ satisfies the same restriction, that is, \eqref{zetai}.

The next lemma allows us to reduce to an equation like \eqref{H4plusNO0} but with a right hand side with more spatial decay.

\begin{lemma}
\label{lemma-error-concentration}
Let $\sigma>0$, $\epsilon>0$ and  $1<\nu< \min( 1+\frac{\epsilon}{2},3-\frac{\sigma}{2})$.
Let $H(y,\tau)$ be radial in $y$ and satisfy
\begin{align}
\label{H1}
\int_{\R^2} H(\cdot,\tau)=0
\end{align}
and $\|H\|_{\nu,m,4+\sigma,\epsilon}<\infty$.
Then there exists $H_1$ and $\Phi_1$ such that
\begin{align}
\nonumber
\left\{
\begin{aligned}
\partial_\tau \Phi_1 &=
L [\Phi_1]+ \tilde B[\Phi_1] + H - H_1 , \quad \text{in } \R^2 \times \ch{(}\tau_0,\infty)
\\
\Phi_1(\cdot,\tau_0) &= 0\quad \text{in }\R^2 .
\end{aligned}
\right.
\end{align}
Moreover $\Phi_1$ and $H_1$ are linear operators of $H$ and satisfy
\begin{align}
\label{Phi1-sigma-positive}
|\Phi_1(\rho,\tau)|
&\leq  \frac{C}{\tau^\nu (\log \tau)^m }
\|H\|_{\nu,m,4+\sigma,\epsilon}
\begin{cases}
\frac{1}{ (1+\rho)^{2+\sigma}}  & \rho \leq \sqrt \tau
\\
\frac{\tau^{1+\epsilon/2}}{ (1+\rho)^{4+\sigma+\epsilon} }  &  \rho \geq \sqrt \tau .
\end{cases}
\\
\label{H1-sigma-positive}
|H_1(\rho,\tau)|
&\leq  \frac{C}{\tau^\nu (\log \tau)^m }
\|H\|_{\nu,m,4+\sigma,\epsilon}
\begin{cases}
\frac{1}{ (1+\rho)^{6+\sigma}}  & \rho \leq \sqrt \tau
\\
\frac{\tau^{\epsilon/2}}{ (1+\rho)^{6+\sigma+\epsilon} }  &  \rho \geq \sqrt \tau .
\end{cases},
\end{align}
\begin{align}
\label{Phi1zeroMass}
\int_{\R^2}  \Phi_1 dy &=0
\\
\label{H1-zero-mass}
\int_{\R^2} H_1(\cdot,\tau)&=0.
\end{align}

\end{lemma}

\begin{proof}

Write the operator $L$ as
\[
L[\phi] = L_0[\phi] - \nabla \cdot ( U \nabla (-\Delta)^{-1}  \phi )
\]
where $L_0$ is defined in \eqref{def-L0}.
Consider the problem
\begin{align}
\nonumber
\left\{
\begin{aligned}
\partial_\tau \Phi_1 &=
L_0 [\Phi_1] +\tilde B[\Phi_1] +H  , \quad \text{in }\R^2 \times \ch{(}\tau_0,\infty),
\\
\Phi_1(\cdot,\tau_0) &= 0\quad \text{in }\R^2  .
\end{aligned}
\right.
\end{align}

The idea is to formally apply $L_0^{-1}$ to this equation.
Similarly to the proof of \eqref{commutator1} we compute

\begin{align*}
\Lambda \circ L_0 [\Phi] - L_0 \circ \Lambda[\Phi]
&=
\nabla \cdot ( \Phi U y )
- 2 L_0 [ \Phi ].
\end{align*}

This leads us to consider the problem
\begin{align}
\label{eq-linear-radial6}
\left\{
\begin{aligned}
\partial_\tau \tilde \Phi &=
L_0 [\tilde \Phi ]
+ B_1[\tilde \Phi]
+\tilde H , \quad \text{in }\R^2 \times \ch{(}\tau_0,\infty),
\\
\tilde \Phi(\cdot,\tau_0) &= 0\quad \text{in }\R^2 ,
\end{aligned}
\right.
\end{align}
where $\tilde H$ is a radial function satisfying
\[
L_0[\tilde H] = H \quad \text{in }\R^2
\]
and
\[
B_1[\tilde \Phi] = \hat \zeta_1(\tau) y \cdot \nabla \tilde \Phi
+\hat \zeta_2(\tau)  \tilde \Phi
\]
with
\begin{align}
\label{hatZetai}
\hat \zeta_1(\tau)  = \tilde \zeta_1(\tau) = O\Bigl( \frac{1}{\tau \log \tau}\Bigr) , \quad
\hat \zeta_2(\tau) = \tilde \zeta_2(\tau)-2\tilde \zeta_1 (\tau) = O\Bigl( \frac{1}{\tau \log \tau}\Bigr) ,
\end{align}
by \eqref{tildeZetai}.

We claim that there is a choice of $\tilde H$, which defines a linear operator of $H$, and satisfies
\begin{align}
\label{bound-tildeH}
|\tilde H|
+ (1+\rho)|\nabla \tilde H|
\leq
C\frac{1}{\tau^\nu (\log \tau)^m}
\|H\|_{\nu,m,4+\sigma,\epsilon}
\begin{cases}
\frac{1}{(1+\rho)^{2+\sigma}}   & \rho \leq \sqrt \tau
\\
\frac{\tau^{\epsilon/2}}{(1+\rho)^{2+\sigma+\epsilon}}    & \rho \geq \sqrt \tau.
\end{cases}
\end{align}
Indeed, the equation $L_0[\tilde H] = H$ for radial functions has the form
\[
\partial_\rho \Bigl( \rho U \partial_\rho \Bigl( \frac{\tilde H}{U} \Bigr) \Bigr)  = \rho H .
\]
We select the solution
\begin{align*}
\tilde H(\rho,\tau)
=  U(\rho) \int_0^\rho \frac{1}{r U(r)} \int_0^r
H(s,\tau) s ds dr .
\end{align*}
Using \eqref{H1} we get \eqref{bound-tildeH}.

Instead of \eqref{eq-linear-radial6} we consider
\begin{align}
\label{eq-linear-radial7}
\left\{
\begin{aligned}
\partial_\tau \tilde \Phi_1 &=
\Delta_{\R^2} \tilde \Phi_1
-  \nabla \Gamma_0 \cdot \nabla \tilde \Phi_1
+ B_1[\tilde \Phi_1]
+\tilde H , \quad \text{in }\R^2 \times \ch{(}\tau_0,\infty),
\\
\tilde \Phi_1(\cdot,\tau_0) &= 0\quad \text{in }\R^2 ,
\end{aligned}
\right.
\end{align}

We then have the following estimate for $\tilde \Phi_1$:
\begin{align}
\label{decayPhi1}
|\tilde\Phi_1|\leq  \frac{C}{\tau^\nu (\log \tau)^m }
\|H\|_{\nu,m,4+\sigma,\epsilon}
\begin{cases}
\frac{1}{ (1+\rho)^\sigma}  & \rho \leq \sqrt \tau
\\
\frac{\tau^{1+\epsilon/2}}{ (1+\rho)^{2+\sigma+\epsilon} }  &  \rho \geq \sqrt \tau .
\end{cases}
\end{align}
For the proof of this we construct a barrier. First we find a solution to
\begin{align*}
\Delta_{\R^2}  \phi_1  - \nabla \Gamma_0\cdot \nabla \phi_1 + \frac{1}{(1+\rho)^{2+\sigma}} &= 0 \quad \text{in } \R^2,
\\
\phi_1  (\rho) &\to 0 \quad \text{as } \rho \to \infty.
\end{align*}
The equation may be integrated explicitly, noting that
\[
\Delta_{\R^2} \phi - \nabla \Gamma_0 \cdot \nabla \phi
= \phi_{\rho\rho} + \Bigl( \frac{1}{\rho}+ \frac{4\rho}{1+\rho^2}\Bigr) \phi_\rho
\]
and that the constants are in the kernel of this operator.
We then have
\begin{align*}
\phi_1 (\rho) = \int_\rho^\infty \frac{1}{r(1+r^2)^2}\int_0^r \frac{1}{(1+s)^{2+\sigma}} s (1+s^2)^2 ds dr
\end{align*}
and this implies
\begin{align*}
| \phi_1 (\rho) | + (1+\rho) | \phi_1   '(\rho) |  \leq \frac{C}{(1+\rho)^\sigma}
\end{align*}

Let
\begin{align}
\nonumber
\chi(\rho,\tau) = \chi_0(\frac{\rho}{\delta\sqrt{\tau}}) ,
\end{align}
where $\chi_0\in C^\infty(\R)$, $\chi_0(s)=1$ for $s\leq 1$ and $\chi_0(s)=0$ for $s\geq 2$.
Define $\tilde \phi_1 = \frac{1}{\tau^\nu (\log \tau)^m} \phi_1\chi$. We have
\begin{align}
\nonumber
& (\partial_\tau - \Delta_{\R^2} + \nabla \Gamma_0\cdot \nabla ) \tilde \phi_1
\\
\nonumber
&\quad \geq  \frac{1}{\tau^\nu (\log \tau)^m(1+\rho)^{2+\sigma}}  \chi
- \frac{C_1}{\tau^{\nu+\sigma/2+1}(\log \tau)^m} \chi_{\{\delta\sqrt \tau \leq \rho \leq 2 \delta\sqrt \tau\}} ,
\end{align}
for some $C_1>0$, $\delta>0$ (assuming $\tau_0$  large).
Now consider
\begin{align*}
\phi_2(\rho,\tau) = \frac{1}{\tau^{\nu+\sigma/2}(\log\tau)^m}\frac{1}{(1+\rho/\sqrt \tau )^{2+\sigma+\epsilon}} , \qquad
\phi_3(\rho,\tau) = \frac{1}{\tau^{\nu+\sigma/2}(\log\tau)^m} e^{-\frac{\rho^2}{4 \tau}}.
\end{align*}
A computation, using \eqref{hatZetai}, shows that
\begin{align*}
\bar \phi
=A_1 \tilde \phi_1 + A_2 \phi_2 + A_3 \phi_3
\end{align*}
satisfies
\begin{align*}
(\partial_\tau - \Delta_{\R^2} + \nabla \Gamma_0\cdot \nabla+ B_1 ) \bar \phi \geq
\frac{c}{\tau^\nu (\log\tau)^m}
\begin{cases}
\frac{1}{(1+\rho)^{2+\sigma}}   & \rho \leq \sqrt \tau
\\
\frac{\tau^{\epsilon/2}}{(1+\rho)^{2+\sigma+\epsilon}}    & \rho \geq \sqrt \tau.
\end{cases}
\end{align*}
for some $c>0$.
This step needs $\nu-1<\frac{\epsilon}{2}$ and $\nu+\frac{\sigma}{2}<3$.
By comparison, we find that $\tilde \Phi_1$ satisfies \eqref{decayPhi1}.

The solution  $\tilde \Phi_1$ of \eqref{eq-linear-radial7} satisfies
\begin{align}
\nonumber
\partial_\tau \tilde \Phi_1 &=
L_0[ \tilde \Phi_1 ] - U \tilde \Phi_1
\ch{+ B_1[\tilde \Phi_1]}
+\tilde H 
\end{align}
Applying $L_0$ to this equation we find that
\[
\Phi_1 = L_0[ \tilde \Phi_1]
\]
satisfies
\begin{align*}
\partial_\tau \Phi_1 &=
L [\Phi_1]+ \tilde B[\Phi_1]+ H - H_1
\end{align*}
with
\begin{align}
\label{defH1}
H_1 = - \nabla \cdot ( U \nabla \Psi_1)+L_0[ U \tilde \Phi_1]
+ \tilde \zeta_1 \nabla \cdot ( \tilde \Phi_1 U y )
, \quad
\Psi_1 = (-\Delta)^{-1} \Phi_1.
\end{align}

Let us verify that $\Phi_1$ and $H_1$   satisfy the conditions stated in
\eqref{Phi1-sigma-positive}, \eqref{H1-sigma-positive}, \eqref{H1-zero-mass}.
Indeed, from standard parabolic estimates and \eqref{decayPhi1} we have
\begin{align}
\label{decayNablaPhi1}
|\nabla \tilde\Phi_1|\leq  \frac{C}{\tau^\nu (\log \tau)^m }
\|H\|_{\nu,m,4+\sigma,\epsilon}
\begin{cases}
\frac{1}{ (1+\rho)^{1+\sigma}}  & \rho \leq \sqrt \tau
\\
\frac{\tau^{1+\epsilon/2}}{ (1+\rho)^{3+\sigma+\epsilon} }  &  \rho \geq \sqrt \tau .
\end{cases}
\end{align}
Differentiating in $y_j$, $j=1,2$ the equation \eqref{eq-linear-radial7} and using standard parabolic estimates, together with \eqref{bound-tildeH}, \eqref{decayNablaPhi1} we obtain
\begin{align}
\label{decayD2Phi1}
| D^2 \tilde\Phi_1|
\leq  \frac{C}{\tau^\nu (\log \tau)^m }
\|H\|_{\nu,m,4+\sigma,\epsilon}
\begin{cases}
\frac{1}{ (1+\rho)^{2+\sigma}}  & \rho \leq \sqrt \tau
\\
\frac{\tau^{1+\epsilon/2}}{ (1+\rho)^{4+\sigma+\epsilon} }  &  \rho \geq \sqrt \tau .
\end{cases}
\end{align}

The definition  $\Phi_1 =L_0[\tilde \Phi_1]$ and the estimates \eqref{decayPhi1}, \eqref{decayNablaPhi1}, \eqref{decayD2Phi1} give the estimate \eqref{Phi1-sigma-positive}.

We compute
\begin{align*}
H_1 = -\nabla U \cdot \nabla \Psi_1 + U \Phi_1
+ \nabla U \cdot \nabla \tilde \Phi_1 + U \Delta \tilde \Phi_1
+\tilde \zeta_1 \nabla \cdot ( \tilde \Phi_1 U y ) .
\end{align*}
Note that $\int_{\R^2} \Phi_1(\cdot,\tau)=0$. So,
by a direct radial computation of $\Psi_1 = (-\Delta)^{-1} \Phi_1$ and \eqref{Phi1-sigma-positive} we obtain
\begin{align}
\nonumber
|\nabla \Psi_1(\rho,\tau)|
\leq  \frac{C}{\tau^\nu (\log \tau)^m }
\|H\|_{\nu,m,4+\sigma,\epsilon}
\begin{cases}
\frac{1}{ (1+\rho)^{1+\sigma}}  & \rho \leq \sqrt \tau
\\
\frac{\tau^{1+\epsilon/2}}{ (1+\rho)^{3+\sigma+\epsilon} }  &  \rho \geq \sqrt \tau .
\end{cases}
\end{align}
This estimate and the ones already obtained for $\tilde \Phi_1$ \eqref{decayNablaPhi1}, \eqref{decayD2Phi1} and for $\Phi_1$ \eqref{Phi1-sigma-positive} yield
\begin{align}
\nonumber
|H_1(\rho,\tau)|
\leq  \frac{C}{\tau^\nu (\log \tau)^m }
\|H\|_{\nu,m,4+\sigma,\epsilon}
\begin{cases}
\frac{1}{ (1+\rho)^{6+\sigma}}  & \rho \leq \sqrt \tau
\\
\frac{\tau^{\epsilon/2}}{ (1+\rho)^{6+\sigma+\epsilon} }  &  \rho \geq \sqrt \tau .
\end{cases},
\end{align}
which is the desired estimate \eqref{H1-sigma-positive}.

Finally, the zero mass condition \eqref{H1-zero-mass} follows from the form of $H_1$ \eqref{defH1} and its decay. The mass condition for $\Phi_1$ \eqref{Phi1zeroMass} follows from $\Phi_1 =L_0[\tilde \Phi_1]$ and the decay of $\tilde \Phi_1$ \eqref{decayPhi1} and \eqref{decayNablaPhi1}.
\end{proof}

Next we would like to obtain a result similar to Proposition~\ref{prop-linear-with-energy} for the problem \eqref{H4plusNO0}. In order to simplify this step, we will modify this equation by allowing a parameter in the initial condition. This technical obstruction will be removed in the proof of Proposition~\ref{prop-radial-linear-with-second-moment}.
Thus we consider
\begin{align}
\label{H4plusNO0-modified}
\left\{
\begin{aligned}
\partial_\tau \Phi
&= L [\Phi ]
+ \tilde B[\Phi]
+ \zeta_1(\tau) A[\Phi]+ H
\quad \text{in }\R^2 \times \ch{(}\tau_0,\infty)
\\
\Phi(\cdot,\tau_0)&= c_1 \tilde Z_0 ,
\end{aligned}
\right.
\end{align}
where $\tilde Z_0$ is defined in \eqref{def-tilde-Z0}.

The next result allows us to say that if in equation  \eqref{H4plusNO0-modified} the right hand side has fast decay, then we can decompose the solution similarly as in  Proposition~\ref{prop-linear-with-energy}. This result is an extension of that proposition to an equation that has the extra operator $A$ in it, which is treated as a perturbation.

\begin{lemma}
\label{lemmaMainLinearRadial}
Let $0<\sigma<1$, $\epsilon>0$, $\sigma+\epsilon<2$,
$1<\nu<\min(1+\frac{\epsilon}{2},3-\frac{\sigma}{2},\frac{3}{2})$.
Let $0<q<1$.
Then there is $C>0$ such that for $\tau_0$ sufficiently large and for  $H$ radially symmetric with $\|H\|_{\nu,m,4+\sigma,\epsilon}<\infty$ and
\begin{align*}
\int_{\R^2} H(y,\tau)dy=0 \quad \text{for all } \tau>\tau_0
\end{align*}
the solution $\Phi$ to \eqref{H4plusNO0-modified} can be decomposed as $\Phi = \Phi_0 + \frac{a(\tau)}{2} Z_0$ with the estimates
\begin{align*}
|\Phi_0(\rho,\tau) |&\leq C \|H\|_{\nu,m,4+\sigma,\epsilon} \frac{1}{\tau^{\nu-\frac{1}{2}} (\log\tau)^{m+\frac{q}{2}} }
\min\Bigl( \frac{1}{(1+|y|)^{2}}  , \frac{\tau}{|y|^4} \Bigr)
\\
|a(\tau)| & \leq
 C \|H\|_{\nu,m,4+\sigma,\epsilon}
 \frac{1}{\tau^{\nu-1} (\log \tau)^{m+q} } .
\end{align*}
Moreover $\Phi_0$ and $a$ are linear operators of $H$.

\end{lemma}

\begin{proof}[Proof of Lemma~\ref{lemmaMainLinearRadial}]

We will treat the operator $A$ as a perturbation and therefore consider
\begin{align}
\label{H4plusNO}
\left\{
\begin{aligned}
\partial_\tau \Phi
&= L [\Phi ]
+ \tilde B[\Phi]+ H
\quad \text{in }\R^2 \times \ch{(}\tau_0,\infty)
\\
\Phi(\cdot,\tau_0)&=c_1 \tilde Z_0 .
\end{aligned}
\right.
\end{align}

Let $\Phi_1$, $H_1$ be the functions constructed in Lemma~\ref{lemma-error-concentration}.
Setting $\Phi = \Phi_1 + \Phi_2$, \eqref{H4plusNO} is equivalent to the following equation for
$\Phi_2$
\begin{align}
\label{eq-linear-radial4}
\left\{
\begin{aligned}
\partial_\tau \Phi_2 &=
L [\Phi_2] + \tilde B [\Phi_2] + H_1, \quad \text{in }\R^2 \times \ch{(}\tau_0,\infty),
\\
\Phi_2(\cdot,\tau_0) &= c_1 \tilde Z_0 \quad \text{in }\R^2 .
\end{aligned}
\right.
\end{align}

We now apply Proposition~\ref{prop-linear-with-energy} to \eqref{eq-linear-radial4}. We have that
$\|H_1\|_{\nu,m,6+\sigma,\epsilon}<\infty$ by \eqref{H1-sigma-positive}, $H_1$ is radial
and satisfies the zero mass condition \eqref{H1-zero-mass}.
By Proposition~\ref{prop-linear-with-energy}  and Lemma~\ref{lemma-estPhiPerpv2}
there exists $c_1$ such that the solution $\Phi_2 $ of  \eqref{eq-linear-radial4} satisfies
\begin{align*}
\Phi_2(y,\tau) = \Phi_2^\perp (y,\tau)  + \frac{a(\tau)}{2} Z_0(y) ,
\end{align*}
with the estimates
\begin{align}
\label{estPhiPerpb}
|\Phi_2^\perp(y,\tau) |
&\leq C
\frac{ \| H_1 \|_{\nu,m,6+\sigma,\epsilon}}{\tau^{\nu-\frac{1}{2}} (\log\tau)^{m+\frac{q}{2}} }
\begin{cases}
\displaystyle
\frac{1}{(1+|y|)^{2}} & |y|\leq \sqrt \tau
\\
\displaystyle
\frac{\tau}{|y|^4} & |y|\geq \sqrt \tau,
\end{cases}
\\
\label{40b}
|a(\tau) | &\leq
C
\frac{\|H_1\|_{\nu,m,6+\sigma,\epsilon}}{\tau^{\nu-1} (\log \tau)^{m+q}} .
\end{align}
(We are ignoring the factor $\frac{1}{(\log\tau_0)^{1-q}}$ in the estimate of $a(\tau)$.)
We also know that $c_1$ is a linear function of $H_1$ and satisfies
\begin{align*}
|c_1| & \leq
C \frac{\|H_1\|_{\nu,m,6+\sigma,\epsilon}}{\tau_0^{\nu-1} (\log \tau_0)^{m+1}} .
\end{align*}

Combining \eqref{Phi1-sigma-positive} and \eqref{estPhiPerpb} we conclude that $\Phi$, the solution to \eqref{H4plusNO},
can be decomposed as
\[
\Phi = \Phi_0 + \frac{a(\tau)}{2} Z_0
\]
where $\Phi_0(y,\tau) = \Phi_1 + \Phi_2^\perp$ is radial and satisfies
\begin{align*}
|\Phi_0(y,\tau)|
\leq C
\frac{\|H\|_{\nu,m,4+\sigma,\epsilon} }{\tau^{\nu-\frac{1}{2}} (\log\tau)^{m+\frac{q}{2}} }
\min\Bigl( \frac{1}{(1+|y|)^{2}}  , \frac{\tau}{|y|^4} \Bigr)
\end{align*}
and $a(\tau)$ satisfies, combining \eqref{H1-sigma-positive} and \eqref{40b},
\begin{align}
\nonumber
|a(\tau) | &\leq
C
\frac{1}{\tau^{\nu-1} (\log \tau)^{m+q}}
\|H\|_{\nu,m,4+\sigma,\epsilon} .
\end{align}
We summarize the previous finding as follows. Given $H$ radial satisfying $\int_{\R^2} H(\cdot,\tau)=0$ for $\tau>\tau_0$ and $\|H\|_{\nu,m,4+\sigma,\epsilon} <\infty$, let us denote $T_0(H) =\Phi_0 = \Phi_1 + \Phi_2^\perp$ and $T_a(H) = a(\tau) $ so that the solution $\Phi $ of  \eqref{H4plusNO}, is $\Phi =  \Phi_0 + \frac{a(\tau)}{2} Z_0 = T_0[H] + \frac{1}{2} T_a[H] Z_0$. Then $T_0$, $T_a$ are linear and have the estimates
\begin{align}
\label{estT0}
\| T_0[H] \|_0 &\leq  C\|H\|_{\nu,m,4+\sigma,\epsilon}
\\
\label{estTa}
\| T_a[H] \|_a &\leq C \|H\|_{\nu,m,4+\sigma,\epsilon} ,
\end{align}
where
\begin{align*}
\|\Phi_0\|_{0} & = \sup_{ \tau>\tau_0 , \, y \in \R^2}
\tau^{\nu-\frac{1}{2}} (\log\tau)^{m+\frac{q}{2}}
\frac{1}{\min\Bigl( \frac{1}{(1+|y|)^{2}}  , \frac{\tau}{|y|^4} \Bigr) }
|\Phi_0(y,\tau)|
\\
\|a\|_{a} &=  \sup_{ \tau>\tau_0 }
\tau^{\nu-1} (\log \tau)^{m+q} |a(\tau)| .
\end{align*}
Moreover $c_1$ is a linear function of $H$ and satisfies
\begin{align*}
|c_1| & \leq
C \frac{\|H\|_{\nu,m,4+\sigma,\epsilon}}{\tau_0^{\nu-1} (\log \tau_0)^{m+1}} .
\end{align*}

We will apply these estimates to treat problem \eqref{H4plusNO0-modified}, which can be written as the fixed point problem
\begin{align*}
\Phi_0 = T_0 [ H + \zeta_1 A[ \Phi_0 + a Z_0] ]
\\
a= T_a [ H + \zeta_1 A[ \Phi_0 + a Z_0] ]
\end{align*}

By \eqref{estT0} and \eqref{estTa}
\begin{align*}
\| T_0[ \zeta_1A[ \Phi_0 + a Z_0 ] ] \|_0
+\| T_a[ \zeta_1A[ \Phi_0 + a Z_0 ] ] \|_a
\leq
C  \|  \zeta_1 A[ \Phi_0 + a Z_0 ]  \|_{\nu,m,4+\sigma,\epsilon}.
\end{align*}
We claim that
\begin{align}
\label{estAPhi0}
\|  \zeta_1 A[ \Phi_0 ]  \|_{\nu,m,4+\sigma,\epsilon} \leq  C \tau_0^{-\vartheta}
 \|\Phi_0\|_0 ,
\end{align}
for some $\vartheta>0$,
where $C$ is independent of $\tau_0$, and
\begin{align}
\label{estAa}
\|  \zeta_1 A[ a Z_0 ]  \|_{\nu,m,4+\sigma,\epsilon} \leq \frac{C}{(\log\tau_0)^{1+q}}\|a\|_a .
\end{align}

Assume for the moment that \eqref{estAPhi0}, \eqref{estAa} hold. The we see that
\begin{align*}
\|\Phi_0\|_0 + \|a\|_a \leq \frac{C}{(\log \tau_0)^{1+q}} (\|\Phi_0\|_0 + \|a\|_a)
+ C \|H\|_{\nu,m,4+\sigma,\epsilon}.
\end{align*}
For $\tau_0$ large this gives
\[
\|\Phi_0\|_0 + \|a\|_a   \leq C \|H\|_{\nu,m,4+\sigma,\epsilon},
\]
which is the desired result.

For the proof of estimates \eqref{estAPhi0}, \eqref{estAa} we will need the following property.
If $\Phi$ satisfies $|\Phi(y)|\leq \frac{1}{(1+|y|)^{2+\kappa}}$ for some $\kappa>0$ and $\int_{\R^2}\Phi dy=0$, then
\begin{align}
\label{secondMargA}
\int_{\R^2}
\nabla \cdot [   \Phi U y  - Z_0 \nabla \Psi ]|y|^2 dy=0 , \quad \Psi = (-\Delta)^{-1}\Phi.
\end{align}
Indeed,
\begin{align*}
\int_{\R^2}  \nabla \cdot ( \Phi U y ) |y|^2dy
&=
- 2 \int_{\R^2}   \Phi U |y|^2dy
= 2 \int_{\R^2} \Delta \Psi U |y|^2dy
\\
&= -2 \int_{\R^2} \nabla \Psi \cdot \nabla (U |y|^2) dy
\\
&= -2 \int_{\R^2} \nabla \Psi \cdot y Z_0 dy
\end{align*}
and
\begin{align*}
\int_{\R^2} \nabla \cdot ( Z_0 \nabla \Psi ) |y|^2 dy
&=
-2 \int_{\R^2} Z_0 \nabla \Psi \cdot y dy.
\end{align*}

To prove \eqref{estAPhi0}, let us write  $\Psi_0 = (-\Delta)^{-1} \Phi_0$.
Then
\begin{align*}
A[\Phi_0]
= L^{-1}[ \nabla \cdot ( \Phi_0 \ch{U} y - Z_0 \nabla \Psi_0) ]  .
\end{align*}
Using the definition of $L^{-1}$ given in Lemma~\ref{lemmaLinv} we have
that
\[
L^{-1}[ \nabla \cdot ( \Phi \ch{U} y )  - \nabla\cdot( Z_0 \nabla (-\Delta)^{-1}\Phi) ]  = U g + U \psi
\]
where
\begin{align}
\label{g1}
g(\rho,\tau) =  -\int_\rho^\infty
\Bigl[ \Phi_0(s,\tau) s -\frac{Z_0(s)}{U(s)} \partial_\rho \Psi_0(s,\tau) \Bigr]ds ,
\end{align}
and $\psi$ is the decaying solution
 to the Liouville equation
\begin{align*}
-\Delta \psi - U \psi = U g .
\end{align*}

From the definition $\Psi_0 = (-\Delta)^{-1} \Phi_0$ and using that $\int_{\R^2} \Phi_0 dy=0$ we have
\begin{align*}
\partial_\rho \Psi_0(\rho,\tau)=\frac{1}{\rho}\int_\rho^\infty \Phi_0(s,\tau)sds
\end{align*}
which gives the estimate
\begin{align*}
| \partial_\rho \Psi_0(\rho,\tau)|\leq C \|\Phi_0\|_0
\frac{1}{\tau^{\nu-\frac{1}{2}} (\log \tau)^{m+\frac{q}{2}}}
\begin{cases}
\frac{\log( \frac{2\sqrt \tau}{1+\rho})}{1+\rho}  & \rho \leq \sqrt \tau ,
\\
\frac{\tau}{\rho^3} & \rho \geq \sqrt \tau.
\end{cases}
\end{align*}
Then formula \eqref{g1} gives
\begin{align*}
|g(\rho,\tau) |
& \leq C \|\Phi_0\|_0
\frac{1}{\tau^{\nu-\frac{1}{2}} (\log \tau)^{m+\frac{q}{2}}}
\begin{cases}
\log^2( \frac{2\sqrt \tau}{1+\rho})  & \rho \leq \sqrt \tau ,
\\
\frac{\tau}{\rho^2} & \rho \geq \sqrt \tau.
\end{cases}
\\
& \leq C \|\Phi_0\|_0
\frac{1}{\tau^{\nu-\frac{1}{2}} (\log \tau)^{m+\frac{q}{2}-2}}
\min \Bigl(1,\frac{\tau}{\rho^2}\Bigr).
\end{align*}
We note that by \eqref{secondMargA} we have $\int_{\R^2} U g z_0 dy = 0$.
Then,  $\psi$ has the estimate
\begin{align*}
|\psi(\rho,\tau) |\leq C \|\Phi_0\|_0
\frac{1}{\tau^{\nu-\frac{1}{2}} (\log \tau)^{m+\frac{q}{2}-2}}
\frac{1}{(1+\rho)^2}
\min \Bigl(1,\frac{\tau}{\rho^2}\Bigr).
\end{align*}
It follows that $A[ \Phi_0] = U g + U \psi$ satisfies
\begin{align*}
|A[ \Phi_0] (\rho,\tau)|
\leq
C \|\Phi_0\|_0
\frac{1}{\tau^{\nu-\frac{1}{2}} (\log \tau)^{m+\frac{q}{2}-2}}
\frac{1}{(1+\rho)^4}
\min \Bigl(1,\frac{\tau}{\rho^2}\Bigr).
\end{align*}
From this inequality we obtain \eqref{estAPhi0}.

The proof of \eqref{estAa} is similar.
This time $A[a Z_0] = U g_1 + U \psi_1$ where
\begin{align}
\nonumber
g_1(\rho,\tau) =  -a(\tau) \int_\rho^\infty
\Bigl[ Z_0(s) s -\frac{Z_0(s)}{U(s)}  z_0'(s) \Bigr]ds ,
\end{align}
and $\psi_1$ is the radial decaying solution   to
\begin{align*}
-\Delta \psi_1 - U \psi_1 = U g_1 .
\end{align*}
We then obtain that
\begin{align*}
| A[aZ_0](\rho,\tau) |\leq C \|a\|_a
\frac{1}{\tau^{\nu-1} (\log \tau)^{m+q}}
\frac{1}{(1+\rho)^6} .
\end{align*}
From this estimate we deduce \eqref{estAa}.
\end{proof}

Before proving Proposition~\ref{prop-radial-linear-with-second-moment} as stated, we obtain a version of it for the problem
\begin{align}
\label{linear-006}
\left\{
\begin{aligned}
\partial_\tau \phi &= L[\phi] + B[\phi]+  h(y,\tau)  \quad \text{in }\R^2 \times (\tau_0,\infty),
\\
\phi(\cdot,\tau_0) &= c_1 \hat Z_0 \quad \text{in }\R^2 ,
\end{aligned}
\right.
\end{align}
where 	
\begin{align}
\nonumber
\hat Z_0 = L[ \tilde Z_0].
\end{align}


\begin{lemma}
\label{lemma-linear-with-second-moment-radial}
Let $0<\sigma<1$, $\epsilon>0$, $\sigma+\epsilon<2$
and  $1<\nu< \min( 1+\frac{\epsilon}{2},3-\frac{\sigma}{2}, \frac{3}{2})$.
Let $0<q<1$.
Then there is $C$ such that for \ch{$\tau_0$} large the following holds.
Suppose that  $h$ is radially symmetric, satisfies  $\|h\|_{\nu,m,6+\sigma,\epsilon}<\infty$ and
\begin{align}
\nonumber
\int_{\R^2} h(y,\tau)dy&=0, \quad
\int_{\R^2} h(y,\tau)|y|^2dy=0,\quad \tau>\tau_0.
\end{align}
Then there exist $c_1\in \R$ and a solution $\phi(y,\tau)$ of problem
\ch{\eqref{linear-006}}
that define linear operators of $h$ and
satisfy
\begin{align}
\nonumber
\|  \phi \|_{\nu-\frac 12 ,m+\frac {q-1}{2},4,2+\sigma+\epsilon} \leq C \|h\|_{\nu,m,6+\sigma,\epsilon}.
\end{align}
\begin{align*}
|c_1| & \leq  C \frac{1 }{ \tau_0^{\nu-1} (\log\tau_0)^{m+1}}  \|h\|_{\nu,m,6+\sigma,\epsilon}.
\end{align*}
\end{lemma}
\begin{proof}
Consider equation \eqref{H4plusNO0-modified}, where $H$ is the function constructed in Lemma~\ref{lemmaLinv}.
By Lemma~\ref{lemmaMainLinearRadial}, there is $c_1$ such that the solution $\Phi$ of \eqref{H4plusNO0-modified}
can be decomposed as $\Phi = \Phi_0 + \frac{a(\tau)}{2} Z_0$,
where $\Phi_0$ and $a$ satisfy the estimates stated in that proposition. In combination with \eqref{bound-H2} we find
\begin{align}
\label{estPhi0}
|\Phi_0(\rho,\tau) |&\leq C  \|h\|_{\nu,m,6+\sigma,\epsilon} \frac{1}{\tau^{\nu-\frac{1}{2}} (\log\tau)^{m+\frac{q}{2}} }
\min\Bigl( \frac{1}{(1+|y|)^{2}}  , \frac{\tau}{|y|^4} \Bigr)
\\
\nonumber
|a(\tau)| & \leq
 C \|h\|_{\nu,m,6+\sigma,\epsilon}
 \frac{1}{\tau^{\nu-1} (\log \tau)^{m+q} } .
\end{align}
\begin{align}
\label{est-c1}
|c_1| & \leq  C \frac{1 }{ \tau_0^{\nu-1} (\log\tau_0)^{m+1}}  \|h\|_{\nu,m,6+\sigma,\epsilon}.
\end{align}
Moreover $\Phi_0$, $a$, $c_1$ are linear operators of $H$.

From standard parabolic estimates and \eqref{estPhi0} we obtain
\begin{align}
\label{estNablaPhi0}
|\nabla \Phi_0(\rho,\tau) |&\leq C  \|h\|_{\nu,m,6+\sigma,\epsilon}\frac{1}{\tau^{\nu-\frac{1}{2}} (\log\tau)^{m+\frac{q}{2}} }
\min\Bigl( \frac{1}{(1+|y|)^{3}}  , \frac{\tau}{|y|^5} \Bigr).
\end{align}
We consider the equation for $\Phi_0 = \Phi- \frac{a(\tau)}{2}Z_0$, obtained from \eqref{H4plusNO0-modified}, and
differentiate with respect to $y_j$, $j=1,2$. Using standard parabolic estimates, together with \eqref{estPhi0},  \eqref{estNablaPhi0}, and the bound for $a'(\tau)$ in \eqref{est-aprime},  we obtain
\begin{align}
\label{estD2Phi0}
| D^2 \Phi_0(\rho,\tau)|
\leq C  \|h\|_{\nu,m,6+\sigma,\epsilon} \frac{1}{\tau^{\nu-\frac{1}{2}} (\log\tau)^{m+\frac{q}{2}} }
\min\Bigl( \frac{1}{(1+|y|)^{4}}  , \frac{\tau}{|y|^6} \Bigr).
\end{align}

Let us define $\phi = L[\Phi]$. Then $\phi$ satisfies \eqref{linear-006} because $L[Z_0]=0$ and thanks to \eqref{estPhi0}, \eqref{estNablaPhi0}, \eqref{estD2Phi0} we find
\begin{align}
\label{est-phi}
|\phi(\rho,\tau)|
\leq C  \|h\|_{\nu,m,6+\sigma,\epsilon} \frac{1}{\tau^{\nu-\frac{1}{2}} (\log\tau)^{m+\frac{q}{2}} }
\min\Bigl( \frac{1}{(1+|y|)^{4}}  , \frac{\tau}{|y|^6} \Bigr).
\end{align}

In the rest of the proof we show that
\begin{align*}
|\phi(\rho,\tau)|
\leq C  \|h\|_{\nu,m,6+\sigma,\epsilon} \frac{1}{\tau^{\nu-\frac{1}{2}} (\log\tau)^{m+\frac{q}{2}} }
\frac{1}{(1+\rho)^4}
\begin{cases}
1 & \rho \leq \sqrt \tau
\\
\frac{\tau^{1+\sigma/2+\epsilon/2}}{\rho^{2+\sigma  + \epsilon}} & \rho \geq \sqrt \tau .
\end{cases}
\end{align*}
For this we consider the equation \eqref{linear-006} written in the form
\begin{align}
\label{eqLinear2}
\partial_\tau \phi = \Delta \phi -  \nabla \Gamma_0 \nabla \phi  + 2 U \phi + B[\phi] + \bar h ,
\end{align}
where
\[
\bar h = - \nabla U \nabla \psi + h.
\]
Using \eqref{est-phi} and the radial formula for $\psi = (-\Delta)^{-1} \phi$, we get
\begin{align}
\nonumber
|\nabla \psi(y,\tau)| \leq
C  \|h\|_{\nu,m,6+\sigma,\epsilon} \frac{1}{\tau^{\nu-\frac{1}{2}} (\log\tau)^{m+\frac{q}{2}} }
\begin{cases}
\frac{1}{(1+\rho)^3} & \rho \leq \sqrt \tau
\\
\frac{\tau}{\rho^{5}} & \rho \geq \sqrt \tau .
\end{cases}
\end{align}
This estimate and the definition of the norm $ \|h\|_{\nu,m,6+\sigma,\epsilon}$, give
\[
|\bar h(y,\tau) |\leq
C \frac{1}{\tau^{\nu-\frac{1}{2}} ( \log \tau)^{m+\frac{q}{2}} }
\|h\|_{\nu,m,6+\sigma,\epsilon}
\begin{cases}
\frac{1}{(1+|y|)^{6+\sigma}} & |y| \leq \sqrt \tau
\\
\frac{1}{\tau^{3+\sigma/2}(\frac{|y|}{\sqrt \tau})^{6+\sigma+\epsilon}} & |y| \geq \sqrt \tau.
\end{cases}
\]

We now construct a barrier very similar to the proof of Proposition~\ref{prop-linear-without-second-moment}
\begin{align*}
\bar\phi (\rho,\tau)
&=
A_1  \frac{1}{\tau^{\nu-\frac{1}{2}} ( \log \tau)^{m+\frac{q}{2}} }
\tilde g_2(\rho)
\chi_0\Bigl(\frac{\rho}{\sqrt \tau}\Bigr)
+A_2
\frac{1}{\tau^{\nu+\frac{3}{2}} (\log\tau)^{m+\frac{q}{2}} }
\frac{1}{(1+\rho/\sqrt \tau)^{6+\sigma+\epsilon}}
\\
& \quad
+A_3
\frac{1}{\tau^{\nu+\frac{3}{2}} (\log\tau)^{m+\frac{q}{2}} } e^{-\frac{\rho^2}{4\tau} } ,
\end{align*}
where $\tilde g_2$ is the function \eqref{def-tildeg2}.
We consider \eqref{eqLinear2} in $\{ \, (y,\tau) \ | \ \tau > \tau_0, \ |y|> R_0\, \}$ where $R_0>0$ is a large constant.
For suitable constants $A_1$, $A_2$, $A_3$, $C$ the function $C \|h\|_{\nu,m,6+\sigma,\epsilon} \bar \phi $ is a supersolution. This computation requires $\nu<\frac{3}{2}$.

Moreover  $\phi(y,\tau) \leq C  \|h\|_{\nu,m,6+\sigma,\epsilon} \bar \phi(y,\tau)$ at $|y| = R_0$. The initial conditions also compare well. Indeed, by Lemma~\ref{lemma-hatZ0} and  \eqref{est-c1}
\begin{align*}
|\phi(\rho,\tau_0)| &= c_1 |\hat Z_0(\rho)|
\leq C   \frac{1 }{ \tau_0^{\nu-1} (\log\tau_0)^{m+1}}  \|h\|_{\nu,m,6+\sigma,\epsilon} \frac{1}{\tau_0} \frac{1}{1+\rho^6} ,
\end{align*}
and this is supported on $\rho \leq 2 \sqrt \tau_0$, so
\begin{align*}
|\phi(\rho,\tau_0)| \leq C  \|h\|_{\nu,m,6+\sigma,\epsilon} \bar \phi(y,\tau).
\end{align*}

By the maximum principle
\begin{align*}
|\phi(y,\tau) |\leq C \bar \phi(y,\tau)  \|h\|_{\nu,m,6+\sigma,\epsilon} , \quad |y|> R_0.
\end{align*}
This finishes the proof.
\end{proof}

\begin{proof}[Proof of Proposition~\ref{prop-radial-linear-with-second-moment}]
Let $\hat\phi$, $c_1$ be the solution to \eqref{linear-006} constructed in Lemma~\ref{lemma-linear-with-second-moment-radial}.
Let $\phi_1$ be the solution to \eqref{linear-004}.
By Lemma~\ref{lemma-est-phi1} $\phi_1$ satisfies
\begin{align}
\label{est-phi1}
| \phi_1(\rho,\tau) | \leq C  \frac{\tau_0^{\nu_0-1} R(\tau)^2}{ \tau^{\nu_0} R(\tau_0)^2}
\frac{1}{(1+\rho^4) } \min\Bigl( 1 , \frac{\tau^{1/2}}{\rho}\Bigr)^{2+\sigma+\epsilon} ,
\end{align}
where $1<\nu_0<\frac{7}{4}$.
Then the solution $\phi$ to \eqref{linear-002} that we construct is given by
\begin{align*}
\phi  = \hat \phi - c_1 \phi_1 .
\end{align*}
To get the desired estimate on $\phi$ we need to estimate  $|c_1 \phi_1|$.
Let $f$ be given by \eqref{notation-f}.
By \eqref{est-c1} and \eqref{est-phi1}
\begin{align*}
|c_1 \phi_1(\rho,\tau)|
& \leq C \frac{1 }{ \tau_0^{\nu-1} (\log\tau_0)^{m+1}}
\frac{\tau_0^{\nu_0-1} R(\tau)^2}{ \tau^{\nu_0} R(\tau_0)^2}
\frac{1}{(1+\rho^4) } \min\Bigl( 1 , \frac{\tau^{1/2}}{\rho}\Bigr)^{2+\sigma+\epsilon}
\|h\|_{\nu,m,6+\sigma,\epsilon}
\\
\\
&  \leq C
\frac{1}{\log \tau_0 R(\tau_0)}
f(\tau) R(\tau)
\frac{1}{(1+\rho^4) } \min\Bigl( 1 , \frac{\tau^{1/2}}{\rho}\Bigr)^{2+\sigma+\epsilon}
\|h\|_{\nu,m,6+\sigma,\epsilon}
\\
&  \leq C
f(\tau) R(\tau)
\frac{1}{(1+\rho^4) } \min\Bigl( 1 , \frac{\tau^{1/2}}{\rho}\Bigr)^{2+\sigma+\epsilon}
\|h\|_{\nu,m,6+\sigma,\epsilon}
\end{align*}
provided $\frac{1}{2}+\nu-\nu_0<0$. But $\nu_0$ can be taken close to $\frac{7}{4}$, so we obtain the result by assuming $\nu < \frac{5}{4}$ in addition to the other constraints needed in Lemma~\ref{lemma-linear-with-second-moment-radial}, namely  $1<\nu< \min( 1+\frac{\epsilon}{2},3-\frac{\sigma}{2}, \frac{3}{2})$.
\end{proof}

\section{Linear estimate with second moment (general)}
\label{sect-linear-nonradial}

A convenient property of problem \equ{linear1} is that it can be split into Fourier modes. If we decompose
\begin{align}
\label{decomp-h}
h(y,\tau) &= h_0(|y|,\tau)  +  h_1(y,\tau)  , \quad h_0(\rho,\tau) = \frac 1{2\pi} \int_0^{2\pi}  h(\rho e^{i\theta}, \tau) d\theta
\\
\label{decomp-phi}
\phi(y,\tau) &= \phi_0(|y|,\tau)  +  \phi_1(y,\tau)  , \quad \phi_0(\rho,\tau) = \frac 1{2\pi} \int_0^{2\pi}  \phi(\rho e^{i\theta}, \tau) d\theta,
\end{align}
then $\phi$ solves \equ{linear1} if and only if $\phi_i$ solves \equ{linear1} where $h$ is replaced with $h_i$, for $i=0,1$.  If $h=h_1$ we say that $h$ {\em has no radial mode}.

For the proof Proposition~\ref{prop-linear-with-second-moment} in the general case we will consider in a first step the equation \eqref{linear1} but without the operator $B$, namely,
\begin{align}
\label{eq-linear-nonradial120}
\left\{
\begin{aligned}
\partial_\tau \phi &=
L [\phi] + h, \quad \text{in }\R^2 \times (\tau_0,\infty),
\\
\phi(\cdot,\tau_0) &= 0\quad \text{in }\R^2 ,
\end{aligned}
\right.
\end{align}
for functions with no radial mode, as explained at the beginning of Section~\ref{sect-theorem-linear-with-second-moment}.
Later on, we will consider equation \eqref{linear1} for functions with no radial mode, where we will treat the operator $B[\phi]$ as a perturbation term that can be assimilated to the right hand side.

\medskip
The main step in the proof is the following estimate, valid when the functions involved have no radial mode.

\begin{prop}
\label{prop-linear-nonradial}
Let $0<\sigma<1$, $0<\epsilon<2$,  $0<\nu<\min( 1 + \frac{\epsilon}{2}, \frac{3}{2}-\frac{\sigma}{2})$, $m\in \R$.
Then there is a $C>0$ such that for any $\tau_0$ sufficiently large the following holds.
Suppose that  $h(y,\tau)$ has no radial mode and satisfies $\|h\|_{\nu,m,5+\sigma,\epsilon}<\infty$,
\begin{align}
\label{center-mass-h}
\int_{\R^2} h(y,\tau) y_j dy=0\quad \text{for all }\tau>\tau_0, \quad j=1,2.
\end{align}
Then the solution $\phi(y,\tau)$ of \eqref{eq-linear-nonradial120} satisfies
\begin{align}
\label{est-non-radial1}
|\phi(y,\tau) |
\leq C
\frac{\|h\|_{\nu,m,5+\sigma,\epsilon}}{\tau^\nu (\log\tau)^{m} }
\begin{cases}
\frac{1}{(1+|y|)^{3+\sigma}} , & |y|\leq \sqrt{\tau} .
\\
\frac{\tau^{1+\frac{\epsilon}{2}}}{|y|^{5+\sigma+\epsilon}} , & |y|\geq \sqrt{\tau} .
\end{cases}
\end{align}
\end{prop}

\begin{proof}
Since $h(y,\tau )$ has no radial mode, all functions involved in the proof have also this property.
We use the notation from \S\ref{subsect-q-form}, particular $g = \frac{\phi}{U}-(-\Delta)^{-1}\phi$, $g^\perp = g - a$ with $a(\tau)\in \R$ such that
\[
\int_{\R^2} g^\perp (y,\tau) U dy =0 .
\]
But
\[
\int_{\R^2} g (y,\tau) U dy =0
\]
because $g$ has no radial mode, so that $a(\tau) = 0$, $g^\perp = g$, $\phi^\perp = \phi$.
Then the proof proceeds as the proof of Proposition~\ref{prop-linear-with-energy} with some simplifications, since there is no need to estimate $a$.

We write \eqref{eq-linear-nonradial120} as
\begin{align}
\nonumber
\partial_\tau \phi = \nabla \cdot ( U \nabla g^\perp ) +  h , \quad \text{in }\R^2 \times (\tau_0,\infty) .
\end{align}
We multiply this equation by $g$ and integrate in $\R^2$.

Let $R>0$ be a large fixed constant and let
\begin{align*}
f(\tau) = \frac{1}{\tau^\nu (\log\tau)^m}.
\end{align*}
Let $T_2>\tau_0$ and let
\begin{align*}
\|\varphi\|_{\infty,T_2}  = \sup_{\tau \in [\tau_0,T_2]} |\varphi(\tau)|.
\end{align*}
The following estimates are valid for $\tau \in [\tau_0,T_2]$.
As in the proof of Proposition~\ref{prop-linear-with-energy} we get
\begin{align}
\label{UgL2nonradial}
\int_{\R^2} g^2 U
\leq
Cf(\tau)^2 R^2
\Bigl( \|h\|_{\nu,m,5+\sigma,\epsilon}^2  + \Bigl\| \frac{\omega}{f R}\Bigr\|_{\infty,T_2}^2
\Bigr) ,
\end{align}
where
\[
\omega(\tau) = \Bigl( \int_{\R^2\setminus B_{R}} g(\tau)^2 U \Bigr)^{1/2} .
\]
Similarly as in Lemma~\ref{lemma-pointwise2}, from \eqref{UgL2nonradial} we get
\begin{align}
\label{estUdnonrad}
|U g(y,\tau)|
\leq
Cf(\tau) R
\Bigl( \|h\|_{\nu,m,5+\sigma,\epsilon}
+   \Bigl\| \frac{\omega}{f R}\Bigr\|_{\infty,T_2}
\Bigr)
\frac{  1  }{(1+|y|)^{3+\sigma}} .
\end{align}
The proof is presented below.
We use this  to estimate
\begin{align}
\nonumber
\omega(\tau) =
\Bigl(
\int_{\R^2\setminus B_{R}} g^2 U  \Bigr)^{1/2}
\leq C  f(\tau) R^{1-\sigma}
\Bigl(
\|h\|_{\nu,m,5+\sigma,\epsilon}
+
 \Bigl\| \frac{\omega}{f R}\Bigr\|_{\infty,T_2}
\Bigr),
\end{align}
which implies
\begin{align*}
\frac{\omega(\tau) }{f(\tau) R}
\leq C R^{-\sigma}\|h\|_{\nu,m,5+\sigma,\epsilon}
+  C R^{-\sigma} \Bigl\| \frac{\omega}{f R}\Bigr\|_{\infty,T_2}.
\end{align*}

We deduce that
\begin{align}
\nonumber
 \Bigl\| \frac{\omega}{f R}\Bigr\|_{\infty,T_2}
\leq C R^{-\sigma} \|h\|_{\nu,m,5+\sigma,\epsilon} ,
\end{align}
by choosing $R$ as a large constant.

Now we let $T_2\to\infty$ and find
\begin{align}
\label{15x-0}
\omega(\tau) \leq C f(\tau) R  \|h\|_{\nu,m,5+\sigma,\epsilon}^2  , \quad \tau>\tau_0.
\end{align}
The inequalities that follow hold for $\tau>\tau_0$.

Combining \eqref{15x-0} with \eqref{UgL2nonradial}  we obtain
\begin{align}
\nonumber
\int_{\R^2} g^2 U
\leq
Cf(\tau)^2 R^2
\|h\|_{\nu,m,5+\sigma,\epsilon}^2 , \quad \tau>\tau_0.
\end{align}
and using \eqref{estUdnonrad} we also get
\begin{align}
\nonumber
|U g(y,\tau)|
\leq
C f(\tau) R
\|h\|_{\nu,m,5+\sigma,\epsilon}
\frac{  1  }{(1+|y|)^{3+\sigma}}  .
\end{align}
Let $\psi = (-\Delta)^{-1}\phi$ so that $ \phi = U g + U \psi$. Using Lemma~\ref{lemma-liouville} and the previous estimate we obtain
\begin{align}
\label{estPsinonrad}
|\psi(y,\tau)|
+ (1+|y|)|\nabla \psi(y,\tau)|
\leq
C \frac{R }{\tau^{\nu} (\log \tau)^{m}}
\frac{  1  }{(1+|y|)^{1+\sigma}}
\|h\|_{\nu,m,5+\sigma,\epsilon} .
\end{align}

We consider the equation \eqref{eq-linear-nonradial120} in $\R^2 \setminus B_{R}(0)$ written in the form
\begin{align}
\nonumber
\partial_\tau \phi = \Delta \phi - \nabla \Gamma_0 \nabla \phi  + 2 U \phi  + \bar h ,
\end{align}
where
\[
\bar h = - \nabla U \nabla \psi + h.
\]
By \eqref{estPsinonrad} and the definition of the norm $ \|h\|_{\nu,m,5+\sigma,\epsilon}$,
\[
|\bar h(y,\tau) |\leq
C \|h\|_{\nu,m,5+\sigma,\epsilon}
\frac{1}{\tau^{\nu} ( \log \tau)^{m} }
\frac{1}{(1+|y|)^{5+\sigma}}
\begin{cases}
1 & |y| \leq \sqrt \tau
\\
\frac{\tau^{\epsilon/2}}{|y|^\epsilon} & |y| \geq \sqrt \tau.
\end{cases}
\]
Here we are using $\epsilon<2$.
Using barriers as in the proof of Lemma~\ref{lemma-pointwise2} we get
\begin{align*}
|\phi(y,\tau) |
\leq
C \|h\|_{\nu,m,5+\sigma,\epsilon}
\frac{1}{\tau^{\nu} ( \log \tau)^{m} }
\frac{1}{(1+|y|)^{3+\sigma}}
\begin{cases}
1 & |y| \leq \sqrt \tau
\\
\frac{\tau^{1+\epsilon/2}}{|y|^{2+\epsilon}}
& |y| \geq \sqrt \tau .
\end{cases}
\end{align*}
(For this we need $ \nu < 1+\frac{\epsilon}{2}$, $\nu + \frac{\sigma}{2} < \frac{3}{2}$.)
This proves \eqref{est-non-radial1}.

\end{proof}

\begin{proof}[Proof of \eqref{estUdnonrad}]
We define
\[
g_0 = U g ,
\]
which satisfies the equation
\begin{align}
\label{eqTildeGnonrad}
\partial_\tau
g_0
&=
\Delta g_0 - \nabla g_0 \cdot \nabla \Gamma_0 +2 U g_0
+ \tilde h
\end{align}
where
\begin{align*}
\tilde h =
 U v + h-U(-\Delta)^{-1} h
\end{align*}
and
\begin{align}
\nonumber
v:=  (-\Delta)^{-1}
( \nabla \cdot ( g_0 \nabla\Gamma_0  ) ).
\end{align}

%
As in the proof of Lemma~\ref{lemma-pointwise1} we obtain
\begin{align}
\label{decayTildeG3nonrad}
|g_0 (y,\tau)|
\leq C
\frac{R }{\tau^\nu (\log\tau)^m (1+|y|)^{2}} K,
\end{align}
where
\[
K = \|h\|_{\nu,m,5+\sigma,\epsilon}
+  \Bigl\| \frac{\omega}{f R}\Bigr\|_{\infty,T_2}.
\]
Applying parabolic estimates to \eqref{eqTildeGnonrad} and a scaling argument we find
\begin{align}
\label{estGradTildeg0nonrad}
| \nabla g_0 (y,\tau)|
\leq C
\frac{R K }{\tau^\nu (\log\tau)^m (1+|y|)^3} .
\end{align}

%

Using \eqref{decayTildeG3nonrad}, \eqref{estGradTildeg0nonrad} and $g_0 = g U$ we get that
\begin{align*}
| \nabla U \cdot \nabla g + g \Delta U |\leq C
\frac{R K}{\tau^\nu (\log \tau)^m (1+|y|)^4 } .
\end{align*}

We observe that for $i=1,2$
\begin{align}
\label{intGfirstM}
\int_{\R^2} \nabla (U \nabla g) y_i \,dy = 0 .
\end{align}
Indeed,
\begin{align*}
\int_{\R^2} \nabla (U \nabla g) y_i \,dy
&=
-\int_{\R^2} U \nabla g e_i
=\int_{\R^2} g \nabla U e_i .
\end{align*}
But from $g = \frac{\phi}{U} - \psi $, $ \psi = (-\Delta)^{-1}\phi$ we have
\begin{align*}
-\Delta \psi - U \psi = U g = g_0.
\end{align*}
Multiplying this equation by $z_i = \nabla \Gamma_0 e_i$ defined in \eqref{defZLiouville} and integrating we get
\begin{align*}
\int_{\R^2} g U \nabla \Gamma_0 e_i =0 ,
\end{align*}
which is the desired claim \eqref{intGfirstM}.
We note that
\begin{align}
\nonumber
\left\{
\begin{aligned}
-\Delta v &= \nabla U \cdot \nabla g + g \Delta U
= \nabla \cdot ( g \nabla U) \quad \text{in }\R^2.
\\
v(y) & \to 0 \quad \text{as }|y|\to \infty.
\end{aligned}
\right.
\end{align}
Now we can apply Remark~\ref{rem-newtonian} and deduce that for any $\vartheta\in (0,1)$ there is $C$ such that
\begin{align}
\label{estV2nonrad}
|v(y,\tau) | \leq C
\frac{R K}{\tau^\nu (\log \tau)^m (1+|y|)^{2-\vartheta} } .
\end{align}

We next estimate $\tilde h$. From Remark~\ref{rem-newtonian} and the assumptions on $h$, in particular \eqref{center-mass-h}, we have
\begin{align}
\label{estInvLhnonrad}
|((-\Delta)^{-1}h)(y,\tau)|
\leq C
\frac{\|h\| }{\tau^\nu (\log\tau)^m( 1+|y|)^{2-\vartheta}} ,
\end{align}
for any $\vartheta \in (0,1)$.
Also from \eqref{decayTildeG3nonrad} we have
\begin{align}
\nonumber
|U g_0 (y,\tau)|
\leq C
\frac{R }{\tau^\nu (\log\tau)^m (1+|y|)^{6}} K .
\end{align}

Therefore, from \eqref{estInvLhnonrad}, \eqref{decayTildeG3nonrad}, \eqref{estV2nonrad}  we find that for any $\vartheta>0$
\begin{align}
\nonumber
|\tilde h(y,\tau)|
& \leq
C \frac{R K}{\tau^{\nu} (\log \tau)^{m}}
\Bigl[
\frac{  1  }{(1+|y|)^{5+\sigma}}
\min\Bigl( 1 , \frac{\tau^{\epsilon/2}}{\rho^\epsilon} \Bigr)
+\frac{  1  }{(1+|y|)^{6-\vartheta}}
\Bigr] .
\end{align}

We now use a barrier as in the proof of Lemma~\ref{lemma-pointwise2}, in a domain of the form $(\R^2 \setminus B_{R_0} ) \times (\tau_0,\infty)$ where $R_0$ is a large constant.
We let $\tilde g(y)$ be the radial decaying solution to $-\Delta_6 \tilde g = \frac{1}{(1+|y|)^{5+\sigma}}$ and
\begin{align*}
\bar g(y,\tau) = \frac{1}{\tau^\nu (\log \tau)^m} \tilde g(y) \chi_0\Bigl( \frac{y}{\delta \sqrt{\tau}}\Bigr) +
C_1  \frac{1}{\tau^{\nu+\frac{3}{2}+\frac{\sigma}{2}} (\log \tau)^m}
\Bigl[ \frac{1}{(1+|y|/\sqrt{\tau})^{\mu}}
+ C_2 e^{-\frac{|y|^2}{4\tau}} \Bigr]
\end{align*}
where
\begin{align*}
\mu = \min( 5 + \sigma + \epsilon  , 6 -\vartheta ).
\end{align*}
We assume that  $\nu<\frac{3}{2}-\frac{\sigma}{2}-\frac{\vartheta}{2}$, $ \nu < 1+\frac{\epsilon}{2}$,  $\nu + \frac{\sigma}{2} < \frac{3}{2}$, and $\sigma+\vartheta<1$.
Since $\vartheta>0$ is arbitrary we only need $\nu<\frac{3}{2}-\frac{\sigma}{2}$,  $ \nu < 1+\frac{\epsilon}{2}$ and $\sigma<1$.
Then, for an appropriate choice of $C_1$, $C_2$, the function $R K \bar g(y,\tau)$ is a supersolution. By the maximum principle
\begin{align*}
|g_0(y,\tau)|
& \leq
C R K \bar g(y,\tau).
\end{align*}

This proves the desired estimate \eqref{estUdnonrad}.

\end{proof}

Next we consider equation \eqref{linear1}, which we recall,
\begin{align}
\label{eq-linear-nonrad2}
\left\{
\begin{aligned}
\partial_\tau \phi &= L[\phi] + B[\phi] + h \quad\text{in }\R^2 \times (\tau_0,\infty)
\\
\phi(\cdot,\tau_0)&= 0 \quad \text{in }\R^2 .
\end{aligned}
\right.
\end{align}
For $  \phi$ with no radial mode we can write
\[
B[\phi] =
  ( \zeta_1(t) \phi + \zeta_2(t) y \cdot \nabla \phi)  \chi_0\Bigl( \frac{\lambda y}{5 \sqrt t} \Bigr) .
\]
\begin{corollary}
\label{coro-linear-nonradial}
Let $0<\sigma<1$, $0<\epsilon<2$,  $1<\nu<\min( 1 + \frac{\epsilon}{2}, \frac{3}{2}-\frac{\sigma}{2})$, $m\in \R$.
Then there is a $C>0$ such that for any $\tau_0$ sufficiently large the following holds.
Suppose that  $h(y,\tau)$ has no radial mode and satisfies $\|h\|_{\nu,m,5+\sigma,\epsilon}<\infty$,
\begin{align}
\label{center-mass-h2}
\int_{\R^2} h(y,\tau) y_j dy=0\quad \text{for all }\tau>\tau_0, \quad j=1,2.
\end{align}
Then the solution $\phi(y,\tau)$ of \eqref{eq-linear-nonrad2} satisfies
\begin{align}
\label{non-radial2}
|\phi(y,\tau) |
\leq C
\frac{\|h\|_{\nu,m,5+\sigma,\epsilon}}{\tau^\nu (\log\tau)^{m} }
\begin{cases}
\frac{1}{(1+|y|)^{3+\sigma}} , &  |y|\leq \sqrt{\tau} .
\\
\frac{\tau^{1+\frac{\epsilon}{2}}}{|y|^{5+\sigma+\epsilon}} , & |y|\geq \sqrt{\tau} .
\end{cases}
\end{align}
\end{corollary}
\begin{proof}
Using Proposition~\ref{prop-linear-nonradial}, there is a linear operator $T$ so that given $h$ with  $ \|h\|_{\nu,m,5+\sigma,\epsilon}<\infty$, with no radial mode, and satisfying the condition \eqref{center-mass-h2} associates the solution $\phi$ of \eqref{eq-linear-nonradial120}. Then the solution $\phi$ of \eqref{eq-linear-nonrad2} can be written as
\[
\phi = T [B[\phi] + h].
\]
The estimate \eqref{est-non-radial1} implies
\[
\|\phi\|_{\nu,m,3+\sigma,2+\epsilon} \leq
\| B[\phi] + h \|_{\nu,m,5+\sigma,\epsilon}.
\]
Using standard parabolic estimates we also get
\begin{align*}
\|
|y| \nabla \phi
\|_{\nu,m,3+\sigma,2+\epsilon} \leq
\| B[\phi] + h \|_{\nu,m,5+\sigma,\epsilon}.
\end{align*}
Next we observe that
\begin{align*}
\| B[\phi]  \|_{\nu,m,5+\sigma,\epsilon}
\leq
\frac{C}{\log \tau_0}
\| |\phi| + |y||\nabla \phi|  \|_{\nu,m,3+\sigma,2+\epsilon}  .
\end{align*}
Then for $\tau_0$ large we deduce the estimate \eqref{non-radial2}.
\end{proof}


%

We are now in a position to prove Proposition~\ref{prop-linear-with-second-moment} in the general case.

\begin{proof}[Proof of Proposition~\ref{prop-linear-with-second-moment}]
\label{proof-linear-with-second-moment}
We decompose $h = h_0 + h_1$ and $\phi = \phi_0 + \phi_1$ as in \eqref{decomp-h}, \eqref{decomp-phi}.
We apply Proposition~\ref{prop-radial-linear-with-second-moment} to get
\begin{align}
\nonumber
\|  \phi_0 \|_{\nu-\frac 12 ,m+\frac q2,4,2+\sigma+\epsilon} \leq C \|h\|_{\nu,m,6+\sigma,\epsilon}.
\end{align}
To estimate $\phi_1$ we use Corollary~\ref{coro-linear-nonradial}.
First we select $0<\vartheta<1$. Then note that
\begin{align*}
 \|h_1\|_{\nu,m,6-\vartheta,\sigma+\epsilon+\vartheta}
\leq C \|h\|_{\nu,m,6+\sigma,\epsilon}.
\end{align*}
Then by Corollary~\ref{coro-linear-nonradial} we obtain a solution $\phi_1$ of
\eqref{eq-linear-nonrad2} such that
\begin{align*}
\|\phi_1\|_{\nu,m,4-\vartheta,2+\sigma+\epsilon+\vartheta}
\leq C \|h_1\|_{\nu,m,6-\vartheta,\sigma+\epsilon+\vartheta} .
\end{align*}
This implies
\begin{align*}
\|  \phi_1 \|_{\nu-\frac {1}{2} ,m+\frac{q}{2},4,2+\sigma+\epsilon}
& \leq
\|  \phi_1 \|_{\nu-\frac {\vartheta}{2} ,m+\frac{q}{2},4,2+\sigma+\epsilon}
\\
& \leq C \|h\|_{\nu,m,6+\sigma,\epsilon}.
\end{align*}
To apply Corollary~\ref{coro-linear-nonradial} we need $1<\nu<1+\frac{\epsilon}{2}$ and $\nu<1+\frac{\vartheta}{2}$. Given
$1<\nu< \min( 1+\frac{\epsilon}{2},3-\frac{\sigma}{2}, \frac{5}{4})$
we can select $\vartheta \in (0,\frac{1}{2})$ such that $\nu<1+\frac{\vartheta}{2}$ and then proceed.
This concludes the proof.

\end{proof}

\section{The outer problem}
\label{sect-thmOuter}

We consider the linear outer  problem:
\begin{align}
\label{outer1b2}
\left\{
\begin{aligned}
\partial_t \phi^o
&= L^o  [\phi^o] +  g(x,t), \quad \text{in }\R^2 \times(t_0,\infty)
\\
\phi^o(\cdot,t_0)&=0, \quad \text{in }\R^2 .
\end{aligned}
\right.
\end{align}
where
\begin{align}
\nonumber
L^o [\varphi ] :=\Delta_x \varphi- \nabla_x\Big[ \Gamma_0 \Big (\frac{x-\xi(t)}{\lambda(t)}\Big) \Big]\cdot \nabla_x \vp
= \Delta_x \varphi + 4 \frac{(x-\xi)}{|x-\xi|^2+\lambda^2}\cdot \nabla_x\varphi .
\end{align}

For $g:\R^2 \times (t_0,\infty)\to \R $ we consider the norm $\| g \|_{**,o}$ defined as the least $K$ such that for all
$(x,t)\in \R^2\times (t_0,\infty)$
\begin{align}
\nonumber
|g(x,t)|  \le   K\frac 1{(t-t_0+A)^a (\log t)^\beta}  \frac 1 { 1+ |\zeta |^b} , \quad \zeta = \frac{x-\xi(t)}{\sqrt{t-t_0+A}},
\end{align}
where $A>0$ is a constant.

We also define the norm  $\| \phi \|_{*,o}$  as the least $K$ such that
\begin{align}
\nonumber
|\phi^o(x,t)| +  (\la +|x-\xi|) |\nn_x \phi^o (x,t)| \leq    K\frac 1{(t-t_0+A)^{a-1} (\log t)^\beta}  \frac 1 { 1+ |\zeta |^{b}} , \quad \zeta = \frac{x-\xi}{\sqrt{t-t_0+A}}
\end{align}
for all $(x,t)\in \R^2\times (t_0,\infty)$.

We assume that  the parameters $a,b$  satisfy the constraints
\begin{align}
\label{cond2b1}
1<a < 4,\quad  2<b< 6, \quad  a< 1+ \frac b2.
\end{align}
There is no restriction on $\beta$.

We recall from \eqref{conditions} that we are assuming that
\begin{align}
\label{h-lambda-o}
| \dot\lambda(t)| \leq \frac{C}{t ( \log t)^{3/2}} , \quad t>t_0,
\end{align}
and
\begin{align}
\label{xi2}
|\dot \xi(t)|\leq \frac{C}{t^{\frac{3}{2}+\sigma}},\quad t>t_0,
\end{align}
where $0<\sigma<\frac{1}{2}$.

\begin{prop}
\label{thmOuter2}
Assume that  $a,b$ satisfy \eqref{cond2b1}, $\frac{A}{\lambda(t_0)^2}$ is sufficiently large, and  $\lambda, \xi$ satisfy \eqref{h-lambda-o}, \eqref{xi2}.
Then there is a constant $C$ so that for $t_0$ sufficiently large and for $\|g\|_{**,o}<\infty$ there exists a solution  $\phi^o= \mathcal T ^o_{\textbf{p}}[g]$  of \eqref{outer1b2}, which defines a linear operator of $g$ and satisfies
\begin{align*}
\|\phi^o\|_{*,o}
\leq  C
\|g\|_{ **,o} .
\end{align*}
\end{prop}

Proposition~\ref{thmOuter} in Section~\ref{sect-proof-existence} follows from Proposition~\ref{thmOuter2} with $A = t_0$.

\begin{lemma}
\label{lemma-elliptic-1}
Let $2<\beta<6$ and $h(r)$ satisfy
\begin{align}
\label{ah}
|h(r)|\leq \frac{\lambda^{-2}}{(r/\lambda+1)^\beta}
=\frac{\lambda^{\beta-2}}{(r+\lambda)^\beta} ,
\end{align}
where $\lambda>0$.
Then there is a unique bounded radial function $\varphi(r)$ satisfying
\begin{align}
\nonumber
L^o[\varphi] + h = 0 \quad \text{in }\R^2.
\end{align}
Moreover $\varphi$ satisfies
\begin{align}
\label{xc2b}
|\varphi (r) |
+
(\lambda+r) |\partial_r \varphi (r) |
\leq\frac{C}{(1+r/\lambda)^{\beta-2}}
= C \frac{\lambda^{\beta-2}}{(r+\lambda)^{\beta-2}}
\end{align}
\end{lemma}
\begin{proof}
The equation for $\varphi$ is given by
\begin{align}
\nonumber
\partial_{rr} \varphi(r)
+ \Bigl(\frac{1}{r}+ \frac{4r}{\lambda^2 + r^2}
\Bigr)  \partial_{r} \varphi(r)
+ h(r) = 0 , \quad r>0.
\end{align}
We change variables $\rho = \frac{r}{\lambda}$ and let $\varphi(r) = \bar \varphi(\frac{r}{\lambda})$. Then we need to solve
\[
\partial_{\rho\rho} \bar \varphi
+ \Bigl(\frac{1}{\rho}+ \frac{4\rho}{1 + \rho^2}
\Bigr)  \partial_{\rho} \bar \varphi
+ \bar h(\rho) = 0 , \quad \rho>0,
\]
where
\[
\bar h(\rho) = \lambda^2 h(\lambda \rho).
\]
By \eqref{ah}
\[
|\bar h(\rho)|\leq \frac{1}{(1+\rho)^\beta}.
\]
The bounded solution is given by
\[
 \bar \varphi(\rho)
=\int_\rho^\infty \frac{1}{v(1+v^2)^2}
\int_0^v \bar h(s) s (1+s^2)^2 \, d s \, d v.
\]
By direct computation we get
\[
|\bar \varphi (\rho)|
+ (1+\rho) |\partial_\rho \bar \varphi(\rho)|
\leq \frac{C}{(1+\rho)^{\beta-2}} ,
\]
and this implies \eqref{xc2b}.

\end{proof}

\begin{proof}[Proof of Proposition~\ref{thmOuter2}]

To find a pointwise estimate for the solution $\phi^o$ we construct a barrier.

Using polar coordinates $x-\xi(t) = r e^{i\theta}$,
$L^o$  can be written as:
\begin{align*}
L^o [\varphi ] = \partial_{rr} \varphi
+ \Bigl(\frac{1}{r}+ \frac{4r}{\lambda^2 + r^2}
\Bigr)  \partial_{r} \varphi + \frac{1}{r^2}\partial_{\theta\theta}\varphi.
\end{align*}
First we construct a function $\tilde\psi(r,t)$ such that
\[
\Bigl[ \partial_t - \partial_{rr}
- \Bigl(\frac{1}{r}+ \frac{4r}{\lambda^2 + r^2}
\Bigr)  \partial_{r} \bigr]\tilde \psi \geq  \frac{1}{(t-t_0+A)^a (\log t)^\beta } \frac{1}{(1+r/\sqrt{t-t_0+A})^b}.
\]
Let
\[
\psi_1(r,t) =
\frac{1}{(t-t_0+A)^{a-1} ( \log t)^\beta }
\Bigl[ \frac{1}{(1+\frac{r^2}{t-t_0+A})^{b/2}}
+ C_1 e^{-\frac{r^2}{4(t-t_0+A)}}\Bigr] .
\]
Choosing a large constant $C_1$, $\psi_1$ satisfies
\begin{align*}
\partial_t \psi_1
- \partial_{rr} \psi_1 - \frac{5}{r} \partial_r \psi_1
\geq c \frac{1}{(t-t_0+A)^a ( \log t)^\beta } \frac{1}{(1+ \frac{r}{\sqrt{t-t_0+A}})^b},\quad\text{for } r>0, \ t>t_0,
\end{align*}
where $c>0$.
Here we require $a<4$ and $ a  < 1+ \frac{b}{2}$, which are part of the conditions \eqref{cond2b1}.
Then
\begin{align}
\nonumber
\Bigl[ \partial_t - \partial_{rr}
- \Bigl(\frac{1}{r}+ \frac{4r}{\lambda^2 + r^2}
\Bigr)  \partial_{r} \bigr]\psi_1
&= \Bigl[ \partial_t
-\partial_{rr}  - \frac{5}{r} \partial_r \Bigr]\psi_1
+ 4\frac{\lambda^2}{r(r^2+\lambda^2)} \partial_r \psi_1
\\
\label{superPsi1}
& \geq
c \frac{1}{(t-t_0+A)^a ( \log t)^\beta } \frac{1}{(1+ \frac{r}{\sqrt{t-t_0+A}})^b}
 - 4 \frac{ \lambda^2}{r(r^2+\lambda^2)} |\partial_r \psi_1|.
\end{align}
But
\begin{align*}
\partial_r \psi_1 =  \frac{r}{(t-t_0+A)^a (\log t)^\beta} \Bigl[ -\frac{b}{(1+\frac{r^2}{t-t_0+A})^{b/2+1}} - \frac{C_1}{2}  e^{-\frac{r^2}{4(t-t_0+A)}}\Bigr]
\end{align*}
and so
\begin{align}
\label{estDPsi1}
\frac{\lambda^2}{r(r^2+\lambda^2)} |\partial_r \psi_1|
&\leq C \frac{ \lambda^2}{r^2+\lambda^2}  \frac{1}{(t-t_0+A)^a (\log t)^\beta} \frac{1}{(1+\frac{r^2}{t-t_0+A})^{b/2+1}} .
\end{align}

We note that for $r\leq\sqrt{t-t_0+A}$ we have
\begin{align}
\label{diff}
\frac{\lambda^2}{r(r^2+\lambda^2)} |\partial_r \psi_1|
\leq
\frac{ \lambda^2}{r^2+\lambda^2}  \frac{1}{(t-t_0+A)^a (\log t)^\beta}
\leq
C \frac{ \lambda^2}{(r^2+\lambda^2)^2}  \frac{1}{(t-t_0+A)^{a-1} (\log t)^\beta} ,
\end{align}
where we have used that $A\geq \lambda(t)^2$.

Let $\tilde\psi_2(r;\lambda)$ be the bounded solution of
\begin{align*}
- \Bigl[\partial_{rr}
+ \Bigl(\frac{1}{r}+ \frac{4r}{\lambda^2 + r^2}
\Bigr)  \partial_{r} \Bigr]\tilde\psi_2
= \frac{\lambda^2}{(r^2+\lambda^2)^2} , \quad r>0,
\end{align*}
given by Lemma~\ref{lemma-elliptic-1}.
Then $\tilde\psi_2$ can be written as
\begin{align*}
\tilde\psi_2(r;\lambda) = \bar\psi_2\Bigl(\frac{r}{\lambda}\Bigr) ,
\end{align*}
for a function $\bar\psi_2$ satisfying
\begin{align}
\label{bounds-bar-psi2}
|\bar\psi_2(\rho) | + (1+\rho) |\bar\psi_2'(\rho)|\leq \frac{C}{1+\rho^2}.
\end{align}

Let
\[
\psi_2(r,t) = \frac{1}{(t-t_0+A)t^{a-1} (\log t)^\beta} \tilde \psi_2(r;\lambda(t)) .
\]
Then, using \eqref{bounds-bar-psi2} and \eqref{h-lambda-o}, we get
\begin{align*}
& \Bigl[ \partial_t - \partial_{rr}
- \Bigl(\frac{1}{r}+ \frac{4r}{\lambda^2 + r^2}
\Bigr)  \partial_{r} \bigr]\psi_2
\\
& \quad = \frac{1}{(t-t_0+A)^{a-1}(\log t)^\beta}\frac{\lambda^2}{(r^2+\lambda^2)^2}
\Bigl[ 1 - \Bigl( \frac{a-1}{t-t_0+A} + \frac{\beta}{t\log t}\Bigr) \tilde \psi_2(r)\frac{(r^2+\lambda^2)^2}{\lambda^2}
\\
& \qquad\qquad\qquad - \frac{\dot\lambda}{\lambda} \bar \psi_2'\Bigl(\frac{r}{\lambda}\Bigr)\frac{r}{\lambda}\frac{(r^2+\lambda^2)^2}{\lambda^2}\Bigr]
\\
& \quad \geq \frac{1}{(t-t_0+A)^{a-1}(\log t)^\beta}\frac{\lambda^2}{(r^2+\lambda^2)^2}
\Bigl[ 1 - C \frac{r^2+\lambda^2}{t-t_0+A}\Bigr] .
\end{align*}
Therefore there is $\delta>0$ (fixed independent of $t_0$) such that for all $t_0$ large,
\begin{align}
\label{superPsi2}
\Bigl[ \partial_t - \partial_{rr}
- \Bigl(\frac{1}{r}+ \frac{4r}{\lambda^2 + r^2}
\Bigr)  \partial_{r} \bigr]\psi_2
& \geq \frac{1}{2} \frac{1}{(t-t_0+A)^{a-1}(\log t)^\beta}\frac{\lambda^2}{(r^2+\lambda^2)^2}, \quad \text{for } r\leq 2\delta \sqrt t.
\end{align}

Let $\chi_0 \in C^\infty(\R)$ be such that $\chi_0(s) = 1 $ if $s\leq 1$ and $\chi_0(s)=0$ if $s\geq 2 $ and define
\begin{align}
\nonumber
\chi_{\delta} (r,t) = \chi_0\Bigl(\frac{r}{\delta\sqrt{t-t_0+A}}\Bigr) .
\end{align}

We consider
\begin{align*}
\tilde \psi= \psi_1 + M \psi_2 \chi_{\delta} ,
\end{align*}
where $M>0$ is a constant to be fixed later.
We compute, using \eqref{superPsi1}
\begin{align}
\nonumber
\Bigl[ \partial_t - \partial_{rr}
- \Bigl(\frac{1}{r}+ \frac{4r}{\lambda^2 + r^2}
\Bigr)  \partial_{r} \bigr]\tilde \psi
& \geq
 c \frac{1}{(t-t_0+A)^a ( \log t)^\beta (1+\frac{r}{\sqrt{t-t_0+A}})^b}
 - 4 \frac{ \lambda^2}{r(r^2+\lambda^2)} |\partial_r \psi_1|
 \\
\label{superPsi-a}
 & \quad
 + M \chi_{\delta} \Bigl[ \partial_t - \partial_{rr}
- \Bigl(\frac{1}{r}+ \frac{4r}{\lambda^2 + r^2}
\Bigr)  \partial_{r} \Bigr]\tilde \psi_2
+ R(r,t),
\end{align}
where
\begin{align*}
R &= M \Bigl[ \psi_2 \partial_t \chi_{\delta} -2 \partial_r \tilde\psi_2 \partial_r \chi_{\delta} - \tilde\psi_2\Bigl(
\partial_{rr}\chi_{\delta} + \frac{1}{r}\partial_r \chi_{\delta}+4\frac{r}{r^2+\lambda^2}\partial_r \chi_{\delta}
\Bigr)\Bigr] .
\end{align*}
We have, by \eqref{bounds-bar-psi2},
\begin{align}
\label{boundR}
|R(r,t)|\leq C_2 M   \lambda^2 \frac{1}{(t-t_0+A)^{a+1} (\log t)^\beta} ,
\end{align}
where $C_2$ is independent of $M$ (although it depends on $\delta$),
and is supported on $\delta \sqrt{t-t_0+A} \leq r \leq 2 \delta \sqrt{t-t_0+A}$.

We claim that there is $M>0$ and  $\tilde c>0$ so that for all $t_0$ sufficiently large
\begin{align}
\label{super-x}
\Bigl[ \partial_t - \partial_{rr}
- \Bigl(\frac{1}{r}+ \frac{4r}{\lambda^2 + r^2}
\Bigr)  \partial_{r} \Bigr]\tilde \psi
& \geq \tilde c \frac{1}{t^a ( \log t)^\beta (1+r/\sqrt t)^b} ,
\end{align}
for all $r>0$, $t>t_0$.

Indeed, if  $r\leq \delta \sqrt{t-t_0+A}$, then from \eqref{superPsi-a}, \eqref{superPsi1}, \eqref{superPsi2} and \eqref{diff} we get
\begin{align}
\nonumber
\Bigl[ \partial_t - \partial_{rr}
- \Bigl(\frac{1}{r}+ \frac{4r}{\lambda^2 + r^2}
\Bigr)  \partial_{r} \bigr]\tilde \psi
& \geq
 c \frac{1}{(t-t_0+A)^a ( \log t)^\beta (1+\frac{r}{\sqrt{t-t_0+A}})^b}
\\
\nonumber
& \quad
 - C \frac{ \lambda^2}{(r^2+\lambda^2)}
\frac{1}{(t-t_0+A)^{a-1} (\log t)^\beta}
\\
\nonumber
 & \quad
 + M \chi_{\delta} \Bigl[ \partial_t - \partial_{rr}
- \Bigl(\frac{1}{r}+ \frac{4r}{\lambda^2 + r^2}
\Bigr)  \partial_{r} \Bigr]\tilde \psi_2
\\
\label{reg1}
&\geq  c \frac{1}{(t-t_0+A)^a ( \log t)^\beta (1+\frac{r}{\sqrt{t-t_0+A}})^b} ,
\end{align}
if $M\geq C$. Here we fix $M=C$.

If $ \delta \sqrt{t-t_0+A} \leq r\leq 2\delta \sqrt{t-t_0+A}$,  then by \eqref{superPsi-a}, \eqref{superPsi1}, \eqref{diff} and \eqref{boundR} we get
\begin{align*}
\Bigl[ \partial_t - \partial_{rr}
- \Bigl(\frac{1}{r}+ \frac{4r}{\lambda^2 + r^2}
\Bigr)  \partial_{r} \bigr]\tilde \psi
& \geq
c \frac{1}{(t-t_0+A)^a ( \log t)^\beta (1+\frac{r}{\sqrt{t-t_0+A}})^b}
- C_2 M \lambda^2 \frac{1}{(t-t_0+A)^{a+1} (\log t)^\beta}
\\
&=
\frac{1}{(t-t_0+A)^a ( \log t)^\beta }
\Bigl( \frac	{c}{3^ b} - \frac{C_2 M \lambda^2}{t-t_0+A} \Bigr)
\end{align*}
By taking $\frac{A}{\lambda(t_0)^2}$ large, we get
\begin{align}
\label{reg2}
\Bigl[ \partial_t - \partial_{rr}
- \Bigl(\frac{1}{r}+ \frac{4r}{\lambda^2 + r^2}
\Bigr)  \partial_{r} \bigr]\tilde \psi
& \geq
\frac{c}{2} \frac{1}{(t-t_0+A)^a ( \log t)^\beta (1+ \frac{r}{\sqrt{t-t_0+A}})^b} ,
\end{align}
for $ \delta \sqrt{t-t_0+A} \leq r\leq 2\delta \sqrt{t-t_0+A}$.

If $r\geq 2\delta \sqrt{t-t_0+A}$, by \eqref{superPsi-a} and \eqref{estDPsi1}
\begin{align}
\nonumber
\Bigl[ \partial_t - \partial_{rr}
- \Bigl(\frac{1}{r}+ \frac{4r}{\lambda^2 + r^2}
\Bigr)  \partial_{r} \bigr]\psi_1
& \geq
 c \frac{1}{(t-t_0+A)^a ( \log t)^\beta (1+\frac{r}{\sqrt{t-t_0+A}})^b}
 \\
 \nonumber
&\quad -C \frac{ \lambda^2}{r^2+\lambda^2}  \frac{1}{(t-t_0+A)^a (\log t)^\beta} \frac{1}{(1+\frac{r^2}{t-t_0+A})^{b/2+1}}
\\
\nonumber
& \geq
\frac{1}{(t-t_0+A)^a ( \log t)^\beta (1+\frac{r}{\sqrt{t-t_0+A}})^b}
\Bigl[c - C \frac{\lambda^2}{t-t_0+A}
\Bigr]
\\
\label{reg3}
& \geq
\frac{c}{2}
\frac{1}{(t-t_0+A)^a ( \log t)^\beta (1+\frac{r}{\sqrt{t-t_0+A}})^b}
\end{align}
if $\frac{A}{\lambda(t_0)^2}$ is sufficiently large.

Combining \eqref{reg1}, \eqref{reg2} and \eqref{reg3} we deduce the estimate \eqref{super-x}.

Let
\[
\psi(x,t) = \tilde \psi(|x-\xi|,t).
\]
Then by \eqref{super-x}
\begin{align*}
(\partial_t - L^ o)[\psi]
&=
\Bigl[ \partial_t - \partial_{rr}
- \Bigl(\frac{1}{r}+ \frac{4r}{\lambda^2 + r^2}
\Bigr)  \partial_{r} \Bigr]\tilde \psi
-\partial_r \tilde\psi \frac{(x-\xi)\cdot\dot\xi}{|x-\xi|}
\\
& \geq
\tilde c \frac{1}{(t-t_0+A)^a ( \log t)^\beta (1+\frac{r}{\sqrt{t-t_0+A}})^b}
- |\dot\xi| \, |\partial_r \tilde\psi|.
\end{align*}
But
\begin{align*}
|\partial_r \psi|
& \leq C\frac{1}{(t-t_0+A)^{a-1/2}(\log t)^\beta} \frac{1}{(1+ \frac{r}{\sqrt{t-t_0+A}})^{b+1}}
+ C\frac{1}{(t-t_0+A)^{a-1}(\log t)^\beta} \frac{1}{\lambda} \frac{1}{(1+r/\lambda)^3}
\chi_{\delta}(r,t)
\\
& \quad + C\frac{1}{\delta (t-t_0+A)^{a-1/2}(\log t)^\beta} \frac{1}{(1+r/\lambda)^2}
\chi_0'\Bigl( \frac{r}{\delta \sqrt{t-t_0+A}} \Bigr) .
\end{align*}
Using \eqref{xi2} we see that if $t_0$ is sufficiently large,
\begin{align*}
(\partial_t - L^ o)[\psi]
& \geq
\frac{\tilde c}{2} \frac{1}{(t-t_0+A)^a ( \log t)^\beta (1+\frac{r}{\sqrt{t-t_0+A}})^b} .
\end{align*}
\end{proof}

A direct consequence of the proof of Proposition~\ref{thmOuter2} (using the same barriers) is the following, for the initial value problem
\begin{align}
\label{outer1c}
\left\{
\begin{aligned}
\partial_t \phi^o
&= L^o  [\phi^o] , \quad \text{in }\R^2 \times(t_0,\infty)
\\
\phi^o(\cdot,t_0)&= \phi^o_0, \quad \text{in }\R^2 .
\end{aligned}
\right.
\end{align}
Consider the norm
\begin{align*}
\| \phi^o_0 \|_{*,b} =& \inf K \quad \text{such that }
\\
& |\phi^o_0(x)| \leq \frac{K}{( 1+ \frac{|x-\xi(0)|}{\sqrt{t-t_0+A}} )^b}
\end{align*}
where $b \in (2,6)$, $A>0$.

\begin{prop}
Assume that  $a,b$ satisfy \eqref{cond2b1}, $\frac{A}{\lambda(t_0)^2}$ is sufficiently large, and  $\lambda, \xi$ satisfy \eqref{h-lambda-o}, \eqref{xi2}.
Then there is a constant $C$ so that for $t_0$ sufficiently large and for $\| \phi^o_0 \|_{*,b}<\infty$ there exists a solution  $\phi^o$  of \eqref{outer1c}, which defines a linear operator of $ \phi^o_0$ and satisfies
\begin{align*}
\|\phi^o\|_{*,o}
\leq  C A^{a-1} (\log t_0)^\beta
\| \phi^o_0 \|_{*,b} .
\end{align*}

\end{prop}

\subsection*{Acknowledgments}
	J.~D\'avila has been supported  by  a Royal Society  Wolfson Fellowship, UK and  Fondecyt grant 1170224, Chile.
	M.~del Pino has been supported by a Royal Society Research Professorship, UK.
	M. Musso has been supported by EPSRC research Grant EP/T008458/1. The  research  of J.~Wei is partially supported by NSERC of Canada.
We are grateful to Federico Buseghin for many valuable comments and corrections.

\bibliographystyle{siam}
\bibliography{KS}

\blfootnote{Data sharing not applicable.}

\end{document}